\newcounter{theorem}
\newtheorem{theo}[theorem]{Theorem}%
\newtheorem{prop}[theorem]{Proposition}%
\newtheorem{defi}[theorem]{Definition}%
\newtheorem{lemm}[theorem]{Lemma}%
\newtheorem{coro}[theorem]{Corollary}%
\newtheorem{rema}[theorem]{Remark}%
\newtheorem{exam}[theorem]{Example}%
\newcommand{\myar}{\ar@[|(3)]}
\newcommand{\dar}{\ar@{-->}}
\def\odc{[\![}
\def\fdc{]\!]}
\def\ac{\mathcal A}
\def\bc{\mathcal B}
\def\cc{\mathcal C}
\def\dc{\mathcal D}
\def\ec{\mathcal E}
\def\fc{\mathcal F}
\def\gc{\mathcal G}
\def\mc{\mathcal M}
\def\pc{\mathcal P}
\def\tc{\mathcal T}
\def\uc{\mathcal U}
\def\vc{\mathcal V}
\def\xc{\mathcal X}
\def\yc{\mathcal Y}
\def\zc{\mathcal Z}
\def\eef{{\mathbb E}}
\def\nnf{{\mathbf N}}
\def\ppf{{\mathbb P}}
\def\rrf{{\mathbb R}}
\def\ttf{{\mathbb T}}
\def\zzf{{\mathbf Z}}
\def\one{{\mathbf 1}}
\def\eef{{\mathbb E}}
\def\d{{\rm d}}
\def\ep{{\varepsilon}}
\def\var{{\rm Var}}
\def\sign{{\rm sign}}
\font\quinzerm=cmr15
\font\dixneufrm=cmr19
\def\moy{\mathop{\kern0pt \raise-4pt \hbox{\dixneufrm M}}\limits}
\def\petmoy{\mathop{\kern0pt \raise-2pt \hbox{\quinzerm M}}\limits}
\begin{document}
\title{Complementability and maximality in different contexts: 
ergodic theory, Brownian and poly-adic filtrations}
%\date{}
\author{Christophe Leuridan}
\maketitle

\begin{abstract}
The notions of complementability and maximality were introduced 
in 1974 by Ornstein and Weiss in the context of the automorphisms 
of a probability space, in 2008 by Brossard and Leuridan in the 
context of the Brownian filtrations, and in 2017 by Leuridan in the 
context of the poly-adic filtrations indexed by the non-positive 
integers. We present here some striking analogies and also some differences 
existing between these three contexts.
\end{abstract}

\begin{flushleft}
{\it Mathematics Subject Classification}~: 37A05, 60J05.\\
{\it Keywords}~: Automorphisms of Lebesgue spaces, factors, entropy, 
filtrations indexed by the non-positive integers, poly-adic filtrations, 
Brownian filtrations, immersed filtrations, complementability, maximality, 
exchange property.
\end{flushleft}

\section{Introduction}

\subsection{General context}

In the present paper, we will work with three types of objects: 
automorphisms of Lebesgue spaces, Brownian filtrations and 
filtrations indexed by $\zzf$ or $\zzf_-$; the reason for choosing 
$\zzf$ or $\zzf_-$ and to rule out $\zzf_+$ is that for discrete-time 
filtrations, the interesting phenomena occur near time $-\infty$. 

Among the invertible measure-preserving maps, Bernoulli shifts form 
a remarkable class. Similarly, the {\it product-type} filtrations (i.e., 
generated modulo the null sets by sequences of independent random variables) 
are considered as a well-understood class. The Brownian filtrations 
(generated modulo the null sets by Brownian motions) form a natural 
and widely studied class of continuous-time filtrations, although 
less simple. 

Measure-preserving maps considered here will be taken on diffuse 
Lebesgue spaces. Various equivalent definitions of Lebesgue spaces 
are available. A simple definition of a Lebesgue space is a probability 
space which is isomorphic modulo the null sets to the union of some 
sub-interval of $[0,1]$, endowed with the Lebesgue $\sigma$-field 
and the Lebesgue measure, and a countable set of atoms. 
Most of the time, the Lebsegue space considered is non-atomic, so 
the sub-interval is $[0,1]$ itself. The class of Lebesgue spaces 
includes the completion of every Polish space.
See~\cite{Kalikow - McCutcheon} to find the main properties of 
Lebesgue spaces or~\cite{de la Rue} to get equivalent definitions.  
Working on Lebesgue spaces 
provides non-trivial measurability results, existence of generators...  
We recall in section~\ref{Annex} the definitions and the main properties 
of partitions, generators, entropy used in the present paper. 

Similarly, the filtrations considered here will be defined on a 
standard Borel probability space $(\Omega,\fc,\ppf)$, i.e. $(\Omega,\fc)$ 
is the Borel space associated to some Polish space, to ensure the existence 
of regular conditional probabilities.
Given two sub-$\sigma$-fields $\ac$ and $\bc$, the inclusion 
$\ac \subset \bc \mod \ppf$ means that for every $A \in \ac$, 
there exists $B \in \bc$ such that $\ppf(A \triangle B) = 0$. 
We say that $\ac$ and $\bc$ are equal modulo the null sets (or modulo $\ppf$) 
when $\ac \subset \bc \mod \ppf$ and $\bc \subset \ac \mod \ppf$. 
We do not systematically complete the $\sigma$-fields to avoid troubles 
when working with conditional probabilities.  

\subsection{Reminders on filtrations indexed by $\zzf$ or $\zzf_-$}

We now recall some classical but less known definitions and facts 
on filtrations. Given a filtration $(\fc_n)_n$ indexed by $\zzf$ or 
$\zzf_-$, one says that $(\fc_n)_n$ {\it is product-type} if $(\fc_n)_n$
can be generated modulo $\ppf$ by some sequence $(I_n)_n$ of 
(independent) random variables. 

One says that $(\fc_n)_n$ {\it has independent increments} 
if there exists a sequence $(I_n)_n$ of random variables 
such that for every $n$ in $\zzf$ or $\zzf_-$, 
$$\fc_n = \fc_{n-1} \vee \sigma(I_n) \mod \ppf \text{ and } 
I_n \text{ is independent of } \fc_{n-1}.$$ 
Such a sequence $(I_n)_n$ is called a {\it sequence of innovations} 
and is necessarily a sequence of independent random variables.  

One says that $(\fc_n)_n$ is {\it $(a_n)_n$-adic} when it admits some sequence 
$(I_n)_n$ of innovations such that each $I_n$ is uniformly distributed 
on some finite set with size $a_n$. One says that $(\fc_n)_n$ 
is {\it poly-adic} when $(\fc_n)_n$ is $(a_n)_n$-adic for some sequence 
$(a_n)_n$ of positive integers, called {\it adicity}. 
  
One says that $(\fc_n)_n$ is {\it Kolmogorovian} if the tail $\sigma$-field
$\fc_{-\infty} := \bigcap_n \fc_n$
is trivial (i.e., contains only events with probability $0$ or $1$). 

By the definition and by Kolmogorov's zero-one law, any filtration 
indexed by $\zzf$ or $\zzf_-$ must have independent increments and must be 
Kolmogorovian to be product-type. But Vershik showed in~\cite{Vershik}
that the converse is not true. A simple counter-example is given by 
Vershik's decimation process (example 2 in \cite{Vershik}). 
Actually Vershik worked with decreasing 
sequences of measurable partitions indexed by $\zzf_+$
and this frame was translated into filtrations indexed by $\zzf_-$
by M.~\'Emery and W.~Schachermayer~\cite{Emery - Schachermayer}. 

\subsection{$K$-automorphisms}

The Kolmogorov property for filtrations indexed by $\zzf$ or $\zzf_-$
has an analogue for dynamical systems, although the definition is 
less simple in this frame: one says that an automorphism $T$ 
of a probability space $(Z,\zc,\pi)$ is a {\it $K$-automorphism} 
(or that $T$ has completely positive entropy) if 
for every $A \in \zc$, one has $h(T,\{A,A^c\})>0$ whenever $0<\pi(A)<1$.
This condition is nothing but the triviality of the $\sigma$-field
$$\Pi(T) := \{A \in \zc : h(T,\{A,A^c\})=0\},$$
called {\it Pinsker's factor}. Actually, the `events' of Pinsker's 
factor can be seen as the 
`asymptotic events'. Indeed, if $\gamma$ is a countable generator 
of $(Z,\zc,\pi,T)$, then 
$$\Pi(T) = \overline{\bigcap_{n \ge 0} \bigvee_{k \ge n} T^{-k}\gamma}
= \overline{\bigcap_{n \ge 0} \bigvee_{k \ge n} T^k\gamma},$$
where the upper bar indicates completion with regard to $\pi$. 
To make the analogy clearer, set $\gamma=\{A_\lambda , \lambda \in \Lambda\}$. 
For each $x \in Z$, call $f(x) \in \Lambda$ the only index $\lambda$ such that
$x \in A_\lambda$. 
% The sequence $\Phi(x) = (f(T^k(x)))_{k \in \zzf} \in \Lambda^\zzf$ 
% is called the $\gamma$-name of $x$. 
For every $k \in \zzf$, the $\sigma$-field generated by $T^{-k}\gamma$ 
is the $\sigma$-field associated to $f \circ T^k$ viewed as a $\Lambda$-valued
random variable on $(Z,\zc,\pi)$. Therefore, $\Pi(T)$ is the asymptotic 
$\sigma$-field generated by the sequence $(f \circ T^k)_{k \ge 0}$.

\subsection{Content of the paper}

We have just viewed the analogy between the `Kolmogorovianity' 
of a filtration indexed by $\zzf_-$ and the $K$-property of an 
automorphism of a Lebesgue space.  
 
The next section is devoted to a parallel presentation of analogous
notions and results in the three following contexts: 
automorphisms of Lebesgue spaces, filtrations indexed by $\zzf_-$ 
and Brownian filtrations. We investigate two notions 
- complementability and maximality - involving factors 
or poly-adic immersed filtrations or Brownian immersed filtrations 
according to the context. 
The results presented are essentially due to 
Ornstein and Weiss~\cite{Ornstein - Weiss}, 
Ornstein~\cite{Ornstein}, and Thouvenot~\cite{Thouvenot} 
for automorphisms of Lebesgue spaces; 
They come from~\cite{Leuridan} for filtrations indexed by $\zzf_-$.  
They are due to Brossard, \'Emery and Leuridan~\cite{Brossard - Leuridan,Brossard - Emery - Leuridan,Brossard - Emery - Leuridan 2}
for Brownian filtrations. 

Section~\ref{conditions for maximality: proofs} provides proofs of 
results on maximality which are not easy yo find in the literature. 
With some restrictions on the nature of the complement, 
complementability implies maximality. 
Section~\ref{complementability implies maximality: proofs} is devoted 
to the proof of this implication. 
The converse was already known to be false for factors of automorphisms 
of Lebesgue spaces and for poly-adic immersed filtrations. 
In section~\ref{maximal but non-complementable Brownian filtration}, 
we provide a counter-example in the context of 
Brownian filtrations. The construction relies on a counter-example 
for poly-adic immersed filtrations which is inspired by non-published 
notes of Tsirelson~\cite{Tsirelson}. 

In spite of the similitude of the notions regardless the context, 
some differences exist. 
In section~\ref{complementable filtration yielding a complementable factor}, 
we provide a non-complementable filtration (associated to a stationary process)
yielding a complementable factor. This example is inspired by Vershik's 
decimation process (example 2 in \cite{Vershik}). 

In section~\ref{Annex}, we recall the definitions and the main properties 
of partitions, generators, entropy used in the present paper.

\section{Parallel notions and results}

\subsection{Factors and Immersed filtrations}\label{factors}

Given an invertible measure preserving map $T$ of a Lebesgue space 
$(Z,\zc,\pi)$, we call {\it factor of $T$}, or more rigorously a factor 
of the dynamical system $(Z,\zc,\pi,T)$, any sub-$\sigma$-field 
$\bc$ of $\zc$ such that $T^{-1}\bc = \bc = T\bc \mod \pi$. 
Actually, the factor is the dynamical system $(Z,\bc,\pi|_\bc,T)$, 
which will be abbreviated in $(T,\bc)$ in the present paper. 
This definition of a factor is equivalent to the usual one.
\footnote{Actually, Rokhlin's theory ensures that if $\bc$ is a factor of 
a Lebesgue space $(Z,\zc,\pi,T)$, then there exists a map $f$ from $Z$ to 
some Polish space $E$ such that $\bc$ is generated up to the negligible events
by the map $\Phi : x \mapsto (f(T^k(x)))_{k \in \zzf}$ from $Z$ to the product 
space $E^\zzf$. Call $\nu = \Phi(\pi) = \pi \circ \Phi^{-1}$ the image measure 
of $\mu$ by $\Phi$ . Then the completion $(E^\zzf,\bc(E^\zzf),\nu)$ is a 
Lebesgue space, the shift operator 
$S : (y_k)_{k \in \zzf} \mapsto (y_{k+1})_{k \in \zzf}$ is an automorphism of 
$E^\zzf$, and $S \circ \Phi = \Phi \circ T$. 

Conversely, if $(Y,\yc,\nu,S)$ is a dynamical system and $\Phi$ a 
measurable map from $Z$ to $Y$ such that $\Phi(\pi)=\nu$ and 
$S \circ \Phi = \Phi \circ T$, then the $\sigma$-field 
$\Phi^{-1}(\yc)$ is a factor of $(Z,\zc,\pi,T)$.}

\vfill\eject

Given two filtrations $(\uc_t)_{t \in \ttf}$ and $(\zc_t)_{t \in \ttf}$ on some 
probability space $(\Omega,\ac,\ppf)$, indexed by a common subset 
$\ttf$ of $\rrf$, one says that $(\uc_t)_{t \in \ttf}$ is {\it immersed in} 
$(\zc_t)_{t \in \ttf}$ if every martingale in $(\uc_t)_{t \in \ttf}$ is still a  
martingale in $(\zc_t)_{t \in \ttf}$. The notion of immersion is stronger than 
the inclusion. Actually, $(\uc_t)_{t \in \ttf}$ is immersed 
in $(\zc_t)_{t \in \ttf}$ if and only if the two conditions below hold:
\begin{enumerate}
\item for every $t \in \ttf$, $\uc_t \subset \zc_t$. 
\item for every $s<t$ in $\ttf$, $\uc_t$ and $\zc_s$ are independent 
conditionally on $\uc_s$.  
\end{enumerate}
The additional condition means that the largest filtration does not give 
information in advance on the smallest one. We also make the useful 
following observation. 

\begin{lemm}\label{immersion and final sigma-field}
Assume that $(\uc_t)_{t \in \ttf}$ is immersed in $(\zc_t)_{t \in \ttf}$. 
Then $(\uc_t)_{t \in \ttf}$ is completely determined (up to null sets) 
by its final $\sigma$-field 
$$\uc_\infty := \bigvee_{t \in \ttf} \uc_t.$$
More precisely, $\uc_t = \uc_\infty \cap \zc_t \mod \ppf$ for every $t \in \ttf$.
In particular, if $\uc_\infty = \zc_\infty \mod \ppf$, then 
$\uc_t = \zc_t \mod \ppf$ for every $t \in \ttf$.
\footnote{The inclusion $\uc_t = \uc_\infty \cap \zc_t$ is immediate. 
To prove the converse, take $A \in \uc_\infty \cap \zc_t$. Since $\uc_\infty$ 
and $\zc_t$ are independent conditionally on $\uc_t$, we get 
$\ppf[A|\uc_t] = \ppf[A|\zc_t] = \one_A$ a.s., so $A \in \uc_t \mod \ppf$.}
\end{lemm}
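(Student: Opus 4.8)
The plan is to prove the single identity $\uc_t = \uc_\infty \cap \zc_t \mod \ppf$, from which both displayed consequences follow at once: the first assertion is precisely that each $\uc_t$ is recovered from $\uc_\infty$ together with the ambient filtration, and the last assertion is obtained by substituting $\uc_\infty = \zc_\infty$, so that $\uc_t = \zc_\infty \cap \zc_t = \zc_t \mod \ppf$. The inclusion $\uc_t \subset \uc_\infty \cap \zc_t$ is immediate, since $\uc_t \subset \uc_\infty$ by definition of the join and $\uc_t \subset \zc_t$ is condition~(1) of immersion. Hence all the content lies in the reverse inclusion $\uc_\infty \cap \zc_t \subset \uc_t \mod \ppf$.

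First I would isolate the conditional-independence statement that drives the computation, namely that $\uc_\infty$ and $\zc_t$ are independent conditionally on $\uc_t$, for each fixed $t \in \ttf$. This is not literally condition~(2), which only compares $\uc_t$ with the coarser $\zc_s$ for $s<t$; instead I would apply condition~(2) to the pair $t<r$ (taking its ``$s$'' to be the present $t$), which yields that $\uc_r$ and $\zc_t$ are independent conditionally on $\uc_t$ for every $r>t$. Since $(\uc_r)_r$ is increasing, the union $\bigcup_{r>t} \uc_r$ is an algebra generating $\uc_\infty$ (the sub-$\sigma$-fields indexed by $r \le t$ being contained in $\uc_t \subset \zc_t$), so a monotone-class argument upgrades the pairwise conditional independence to the full statement $\uc_\infty \perp \zc_t \mid \uc_t$. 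I expect this passage to the join to be the main obstacle: conditional independence must be checked on the generating $\pi$-system $\bigcup_{r>t} \uc_r$ and then extended by a functional monotone-class theorem, and one should dispose separately of the trivial edge case where $t$ is the largest element of $\ttf$, so that $\uc_t = \uc_\infty$ and the claim holds by inspection.

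With this conditional independence in hand, I would carry out the footnote's computation. Fix $A \in \uc_\infty \cap \zc_t$. Because $\one_A$ is $\uc_\infty$-measurable and $\uc_\infty \perp \zc_t \mid \uc_t$ with $\uc_t \subset \zc_t$, one has $\eef[\one_A \mid \zc_t] = \eef[\one_A \mid \uc_t]$; this is the standard reduction of conditioning a $\uc_\infty$-measurable variable on $\zc_t$ to conditioning it on $\uc_t$, which I would verify by testing against $\one_V$ for $V \in \zc_t$ and inserting the conditional independence inside the tower property. On the other hand $A \in \zc_t$ forces $\eef[\one_A \mid \zc_t] = \one_A$, whence $\eef[\one_A \mid \uc_t] = \one_A$ almost surely. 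The right-hand side is $\uc_t$-measurable and $\{0,1\}$-valued, so $A$ agrees modulo $\ppf$ with the $\uc_t$-event $\{\eef[\one_A \mid \uc_t] = 1\}$, giving $A \in \uc_t \mod \ppf$. This closes the reverse inclusion and completes the proof.
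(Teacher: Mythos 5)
Your proof is correct and follows essentially the same route as the paper's own argument (the footnote computation: for $A \in \uc_\infty \cap \zc_t$, conditional independence gives $\ppf[A\mid\uc_t]=\ppf[A\mid\zc_t]=\one_A$ a.s., hence $A\in\uc_t \mod \ppf$). The only difference is that you explicitly justify the conditional independence of $\uc_\infty$ and $\zc_t$ given $\uc_t$ by applying condition~(2) of immersion at the pairs $t<r$ and upgrading via a monotone-class argument on the generating $\pi$-system $\bigcup_{r>t}\uc_r$ — a step the paper asserts without proof — and your justification is sound.
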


% \begin{proof}
% The inclusion $\uc_t \subset \uc_\infty \cap \zc_t$ is immediate. 
% To prove the converse, let $A \in \uc_\infty \cap \zc_t$. Since $\uc_\infty$ 
% and $\zc_t$ are independent conditionally on $\uc_t$, we get 
% $\ppf[A|\uc_t] = \ppf[A|\zc_t] = \one_A$ a.s., so $A \in \uc_t \mod \ppf$.
% \end{proof}

When one works with Brownian filtrations, i.e. with filtrations 
generated by Brownian motions, then the immersion has many equivalent 
translations. The next statements are very classical (close statements 
are proved in~\cite{Attal - Burdzy - Emery - Hu}) and they rely on the 
stochastic calculus and the predictable representation property of 
Brownian filtrations. 

\begin{prop}\label{immersion of a Brownian filtration}
Let $(B_t)_{t \ge 0}$ be a finite-dimensional 
Brownian motion adapted to some filtration 
$(\zc_t)_{t \ge 0}$, and $(\bc_t)_{t \ge 0}$ its natural filtration. 
The following statements are equivalent.  
\begin{enumerate}
\item $(B_t)_{t \ge 0}$ is a martingale in $(\zc_t)_{t \ge 0}$. 
\item $(\bc_t)_{t \ge 0}$ is immersed in $(\zc_t)_{t \ge 0}$. 
\item For every $t \ge 0$, the process $B_{t+\cdot}-B_t$ is 
independent of $\zc_t$.  
\item $(B_t)_{t \ge 0}$ is a Markov process in $(\zc_t)_{t \ge 0}$.
\end{enumerate}
\end{prop}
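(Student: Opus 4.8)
The plan is to establish the equivalences by proving a cycle of implications, relying throughout on the predictable representation property of the Brownian filtration $(\bc_t)_{t \ge 0}$ and on Lemma~\ref{immersion and final sigma-field}. I would organize the argument as $(2) \Rightarrow (1) \Rightarrow (3) \Rightarrow (4) \Rightarrow (2)$, or alternatively prove that each of $(1)$, $(3)$, $(4)$ is separately equivalent to $(2)$, since $(2)$ is the pivotal notion of immersion.

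First I would treat $(2) \Rightarrow (1)$, which is immediate: each coordinate of $(B_t)_{t \ge 0}$ is a martingale in its own filtration $(\bc_t)_{t \ge 0}$, and immersion says precisely that martingales in the small filtration remain martingales in the large one, so $(B_t)_{t \ge 0}$ is a $(\zc_t)$-martingale. Next, for $(1) \Rightarrow (3)$, I would observe that if $(B_t)$ is a $(\zc_t)$-martingale then, being already a Brownian motion in its own filtration, its quadratic variation is unchanged; by L\'evy's characterization $(B_t)$ remains a Brownian motion relative to $(\zc_t)_{t \ge 0}$. For such a $(\zc_t)$-Brownian motion, the increment process $B_{t+\cdot}-B_t$ is a Brownian motion independent of the past $\zc_t$, which is exactly statement~$(3)$. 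The implication $(3) \Rightarrow (4)$ is then routine: if $B_{t+\cdot}-B_t$ is independent of $\zc_t$ for every $t$, then for any bounded measurable $\varphi$ the conditional law of $B_{t+s}$ given $\zc_t$ depends only on $B_t$, which is the Markov property in $(\zc_t)_{t \ge 0}$.

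The implication I expect to require the most care is $(4) \Rightarrow (2)$, where the predictable representation property enters decisively. Assuming $(B_t)$ is Markov in $(\zc_t)$, I would first recover that $(B_t)$ is a $(\zc_t)$-martingale by computing $\eef[B_{t+s}-B_t \mid \zc_t]$ through the Markov transition and using that $(B_t)$ is a martingale in its own filtration. Once $(B_t)$ is a $(\zc_t)$-martingale, every $(\bc_t)$-martingale $(M_t)$ admits, by the predictable representation property of the Brownian filtration, a representation $M_t = M_0 + \int_0^t H_s \, \d B_s$ with $(H_s)$ predictable in $(\bc_t)_{t \ge 0}$, hence \emph{a fortiori} predictable in $(\zc_t)_{t \ge 0}$. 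Since $(B_t)$ is a $(\zc_t)$-martingale, this stochastic integral is a $(\zc_t)$-martingale, so $(M_t)$ is a martingale in $(\zc_t)$; this is precisely immersion, giving $(2)$.

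The main obstacle, and the point where the Brownian hypothesis is genuinely used rather than incidental, is this last step: one must justify that the predictable integrand representing a $(\bc_t)$-martingale can be used as a $(\zc_t)$-predictable integrand and that the stochastic integral retains the martingale property in the larger filtration. This requires knowing that $(B_t)$ is a semimartingale (indeed a Brownian motion) in $(\zc_t)$ and that the It\^o integral is stable under enlargement once the driving process remains a martingale. I would cite the stochastic calculus facts and the predictable representation property as the essential inputs, exactly as in~\cite{Attal - Burdzy - Emery - Hu}, keeping the measure-theoretic verifications brief.
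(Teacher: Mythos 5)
The paper does not actually prove Proposition~\ref{immersion of a Brownian filtration}; it declares the equivalences ``very classical'' and points to~\cite{Attal - Burdzy - Emery - Hu}, so there is no in-paper argument to compare against. Your cycle $(2)\Rightarrow(1)\Rightarrow(3)\Rightarrow(4)\Rightarrow(2)$ is correct and is the standard one: $(2)\Rightarrow(1)$ is the definition of immersion applied to the coordinates of $B$; $(1)\Rightarrow(3)$ is L\'evy's characterization (the bracket being a pathwise limit, it is filtration-independent); $(3)\Rightarrow(4)$ is the usual computation of $\eef[f(B_{t+s})\mid\zc_t]$ through the independent increment; and in $(4)\Rightarrow(1)$ your identification of the $(\zc_t)$-transition kernel with the Brownian one via the tower property over $\sigma(B_t)\subset\bc_t\subset\zc_t$ is the right argument (modulo truncating $f(x)=x_i$ for integrability). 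Two small points deserve attention in the last leg. First, in the PRP step you should pass through $L^2$: represent $M_t=\eef[X\mid\bc_t]$ for $X\in L^2(\bc_\infty)$ with $\eef\bigl[\int_0^\infty H_s^2\,\d s\bigr]<\infty$, so that $\int H\,\d B$ is a genuine $L^2$-bounded $(\zc_t)$-martingale and not merely a local martingale, then conclude by density; as written, ``the stochastic integral is a $(\zc_t)$-martingale'' glosses over this. Second, the predictable representation property is actually dispensable here: once you have $(3)$, writing any bounded $\bc_\infty$-measurable $X$ as a function of $(B_{\cdot\wedge t},\,B_{t+\cdot}-B_t)$ and using the independence of the second argument from $\zc_t$ gives $\eef[X\mid\zc_t]=\eef[X\mid\bc_t]$ directly by a monotone class argument, which is the conditional-independence form of immersion; this shortens the cycle to $(3)\Rightarrow(2)$ and uses only L\'evy's theorem. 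Either way your proof is sound.
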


\begin{defi}\label{Brownian motion in a filtration}
When these statements hold, we say that $(B_t)_{t \ge 0}$ is a 
Brownian motion in the filtration $(\zc_t)_{t \ge 0}$. 
\end{defi}

Note the analogy between the following two results. 

\begin{theo}[Ornstein \cite{Ornstein-1971}]
Every factor of a Bernoulli shift is equivalent to a Bernoulli shift.  
\end{theo}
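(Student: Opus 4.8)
The plan is to use Ornstein's isomorphism theory, in which a stationary finite-state process is Bernoulli if and only if it is \emph{very weakly Bernoulli} (VWB), the condition that the conditional law of a future block given the past is close, in Ornstein's $\bar d$-metric, to its unconditional law. Since two Bernoulli shifts of equal entropy are isomorphic, and since an increasing union of Bernoulli factors is again Bernoulli, it suffices to prove that for every \emph{finite} partition $\beta$ measurable with respect to the factor, the process generated by $\beta$ under $T$ is VWB; the factor itself, being the increasing limit of the sub-factors $\bigvee_{k \in \zzf} T^k \beta$ over finer and finer finite $\beta$, is then Bernoulli.

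First I would fix an independent generating partition $\alpha$ of the Bernoulli shift $(Z,\zc,\pi,T)$, so that the $\alpha$-coordinates $(\ldots,a_{-1},a_0,a_1,\ldots)$ are i.i.d. under $\pi$ (after a harmless finite approximation if $\alpha$ is infinite). Given $\ep>0$, since $\beta$ lies in $\bigvee_{k \in \zzf} T^k \alpha$, I would choose a window size $m$ and a partition $\beta'$ measurable with respect to $\bigvee_{|k| \le m} T^k \alpha$ with $\pi(\beta \triangle \beta') < \ep$; thus each symbol $\beta'_j$ of the $\beta'$-process depends only on the $\alpha$-coordinates $a_{j-m},\ldots,a_{j+m}$.

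The heart of the argument is the VWB estimate for $\beta'$, carried out by coupling. The past symbols $\beta'_k$ with $k<0$ are functions of $(a_j)_{j<m}$, whereas a future block $\beta'_0,\ldots,\beta'_{n-1}$ is a function of $(a_j)_{-m \le j \le n+m-1}$; these two families of $\alpha$-coordinates overlap only in the $2m$ coordinates $a_{-m},\ldots,a_{m-1}$. Because the $a_j$ are independent, conditioning on the past constrains the future block only through these overlapping coordinates, while $a_m,\ldots,a_{n+m-1}$ remain i.i.d. and independent of the past. I would therefore couple the conditioned and unconditioned future blocks by keeping $a_m,\ldots,a_{n+m-1}$ common and only resampling $a_{-m},\ldots,a_{m-1}$; since each $a_j$ influences at most the symbols $\beta'_i$ with $|i-j| \le m$, this resampling alters only positions $i \in [0,2m)$, so the two blocks differ in at most $2m$ of their $n$ coordinates. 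Hence the $\bar d_n$-distance between the conditional and unconditional laws of $\beta'_0,\ldots,\beta'_{n-1}$ is at most $2m/n$ for every past, and adding the approximation error between $\beta$ and $\beta'$ gives a bound of the shape $2m/n + O(\ep)$ for the $\beta$-process. Fixing $m$ (hence $\ep$) and letting $n \to \infty$ sends this to $O(\ep)$, which is exactly the VWB condition.

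The genuinely hard input, which I would cite rather than reprove, is Ornstein's characterization of Bernoullicity by the VWB (equivalently, finitely determined) property together with the isomorphism theorem; granting these, the only delicate points in the argument above are the bookkeeping that combines the window-approximation error $\ep$ with the boundary error $2m/n$ and controls the measure of ``bad'' pasts, and the reduction from a general factor to finite generators via increasing limits. The boundary-versus-length mechanism, namely that only $O(m)$ of the $n$ symbols of a long factor block can feel the past, is precisely what forces the factor to inherit the asymptotic independence characteristic of Bernoulli systems.
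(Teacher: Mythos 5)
The paper offers no proof of this statement: it is quoted from Ornstein's 1971 article purely as a classical black box, to set up the analogy with Vershik's theorem on immersed poly-adic filtrations. So there is no in-paper argument to compare yours against, and I can only assess your sketch on its own terms. It is a correct outline of the standard route --- in fact closer to the Ornstein--Weiss very-weak-Bernoulli formulation of 1974 than to Ornstein's original finitely-determined argument of 1971. The combinatorics of your coupling is right: the past of the $\beta'$-process sits in $\sigma(a_j : j<m)$, the future $n$-block in $\sigma(a_j : -m\le j\le n+m-1)$, the overlap is the $2m$ coordinates $a_{-m},\dots,a_{m-1}$, and resampling only those while keeping $a_m,\dots,a_{n+m-1}$ common changes at most the positions $0\le i<2m$ of the block, giving $\bar d_n\le 2m/n$ \emph{uniformly} over pasts --- stronger than VWB requires. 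The two points you flag as delicate are exactly where the depth sits, and they are precisely the parts you delegate to Ornstein's machinery: passing from $\beta'$ to $\beta$ is cleanest via the $\bar d$-closedness of the class of Bernoulli processes (re-verifying VWB for $\beta$ directly forces you to control the measure of bad pasts, since conditioning does not commute with $\bar d$-approximation); and the exhaustion of the factor by the sub-factors generated by finite partitions needs both that an increasing union of Bernoulli factors is Bernoulli and the isomorphism theorem to identify equal-entropy Bernoulli shifts. Granting those cited results --- legitimate here, since the theorem being proved is itself of that calibre and the paper likewise only cites it --- your proposal is sound.
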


\begin{theo}[Vershik \cite{Vershik}]\label{Vershik}
If $(\zc_n)_{n \le 0}$ is a product-type filtration such that 
the final sigma-field $\zc_0$ is essentially separable, then 
every poly-adic filtration immersed in $(\zc_n)_{n \le 0}$ is product-type.  
\end{theo}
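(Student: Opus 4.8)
The plan is to reduce the statement to Vershik's standardness criterion and then to verify that criterion by transporting the product-type structure of $(\zc_n)_{n\le 0}$ along the immersion. First I would record the easy necessary conditions. Being poly-adic, $(\uc_n)_{n\le0}$ has independent increments. Since it is immersed in $(\zc_n)_{n\le0}$ we have $\uc_n\subset\zc_n$ for every $n$, hence $\uc_{-\infty}=\bigcap_n\uc_n\subset\bigcap_n\zc_n=\zc_{-\infty}$; a product-type filtration is Kolmogorovian, so $\zc_{-\infty}$ is trivial and so is $\uc_{-\infty}$. Thus $(\uc_n)_{n\le0}$ is a Kolmogorovian poly-adic filtration: it satisfies all the necessary conditions for being product-type, and by Vershik's theory \cite{Vershik} the only remaining obstruction is a possible failure of standardness. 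As $\uc_0\subset\zc_0$ and $\zc_0$ is essentially separable, $\uc_0$ is essentially separable as well, so the standardness criterion (I-cosiness, in the form of \'Emery--Schachermayer \cite{Emery - Schachermayer}) applies: for a poly-adic filtration with essentially separable final $\sigma$-field, being product-type is equivalent to being I-cosy. It therefore suffices to prove that $(\uc_n)_{n\le0}$ is I-cosy.

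To prove I-cosiness I would build a self-coupling of the ambient filtration from its independent innovations. Let $(\xi_n)_{n\le0}$ be a sequence of independent random variables generating $(\zc_n)_{n\le0}$, so that $\zc_n=\bigvee_{m\le n}\sigma(\xi_m)\bmod\ppf$. Fix a bounded $\uc_0$-measurable target $X$ and $\ep>0$. Because $\zc_0$ is essentially separable, reverse martingale convergence yields some $N\le0$ such that $X_N:=\eef[X\mid\sigma(\xi_{N+1},\dots,\xi_0)]$ satisfies $\|X-X_N\|_2\le\ep/2$. On a product space I would then realise two families of innovations $(\xi_m^1)$ and $(\xi_m^2)$ that coincide for $m>N$ (shared recent randomness) and are independent fresh copies for $m\le N$, the whole family being independent. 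Writing $\gc_n=\sigma(\xi_m^1,\xi_m^2:m\le n)$, each filtration $\zc_n^i=\bigvee_{m\le n}\sigma(\xi_m^i)$ is a copy of $(\zc_n)_{n\le0}$ generated by a subfamily of the independent innovations, hence immersed in $(\gc_n)_{n\le0}$; moreover $\zc_n^1$ and $\zc_n^2$ are independent for $n\le N$. The copies $X^1,X^2$ of $X$ are each within $\ep/2$ of the common variable $X_N$, which is measurable with respect to the shared innovations, so $\|X^1-X^2\|_2\le\ep$.

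It remains to push this coupling down to $(\uc_n)_{n\le0}$. Using Lemma~\ref{immersion and final sigma-field}, which in the $\zzf_-$ setting gives $\uc_n=\uc_0\cap\zc_n\bmod\ppf$, I define in each copy $\uc_n^i:=\uc_0^i\cap\zc_n^i$; these are genuine copies of $(\uc_n)_{n\le0}$. Since immersion is transitive and $(\uc_n^i)$ is immersed in $(\zc_n^i)$, which is immersed in $(\gc_n)$, each $(\uc_n^i)$ is immersed in the common filtration $(\gc_n)_{n\le0}$; moreover $\uc_n^1\subset\zc_n^1$ and $\uc_n^2\subset\zc_n^2$ are independent for $n\le N$, and the copies of $X$ are $\ep$-close. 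This is precisely the I-cosiness coupling associated with $X$ and $\ep$, so $(\uc_n)_{n\le0}$ is I-cosy and therefore product-type.

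The main obstacle, and the point demanding the most care, is the joint-immersion bookkeeping in the last paragraph: conditional independence does not survive restriction to sub-$\sigma$-fields in general, so one cannot expect $(\uc_n^i)$ to be immersed in its own join $\uc_n^1\vee\uc_n^2$ directly. The clean way around this is to use the ambient filtration $(\gc_n)$ generated by all innovations as the common filtration in the cosiness criterion, which is legitimate provided one uses the formulation of I-cosiness allowing the two copies to be immersed in a larger common filtration rather than only in their join. Verifying this equivalence, together with the separability and measurable-selection details ensuring that the $\uc_n^i$ are honest isomorphic copies of $(\uc_n)_{n\le0}$, is where the real work sits; everything else is the soft transport of the product structure of $(\zc_n)_{n\le0}$ through the immersion.
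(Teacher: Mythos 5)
The paper does not prove this statement at all: it is quoted as Vershik's theorem and used as a black box (notably in Section 5), so there is no internal proof to compare against. Your argument is, in substance, the standard modern proof of this result (it is essentially how \'Emery--Schachermayer and Laurent present it): a product-type filtration is I-cosy via the ``share the recent innovations, refresh the old ones'' self-coupling; I-cosiness is inherited by immersed filtrations because the copies of $(\uc_n)_{n\le 0}$ carved out of the copies of $(\zc_n)_{n\le 0}$ by Lemma~\ref{immersion and final sigma-field} remain jointly immersed in the ambient filtration $(\gc_n)_{n\le 0}$ and independent up to time $N$; and I-cosiness plus poly-adicity plus essential separability gives product-type. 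The construction itself is sound (your copies $(\zc^i_n)$ are indeed immersed in $(\gc_n)$ because they are generated by consistent subfamilies of the independent innovations, and the convergence you invoke is the forward martingale convergence theorem along $\sigma(\xi_{N+1},\ldots,\xi_0)\uparrow\zc_0$, not a reverse one). Two remarks on where you locate the difficulty. First, the worry in your last paragraph is misplaced: if each $(\uc^i_n)$ is immersed in $(\gc_n)$ and $\uc^1_n\vee\uc^2_n\subset\gc_n$, then each $(\uc^i_n)$ is automatically immersed in the join, since a $\gc$-martingale adapted to an intermediate filtration is a martingale there; so the two formulations of joint immersion coincide and no extra care is needed. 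Second, and more importantly, the genuine depth of Vershik's theorem is entirely concentrated in the equivalence you invoke as an external input, namely that an I-cosy poly-adic filtration with essentially separable final $\sigma$-field is product-type (Vershik's standardness criterion). Your proof is therefore an honest and correct reduction of the ``factor'' form of Vershik's theorem to the ``standardness criterion'' form, rather than a self-contained argument; what it buys is a clean conceptual mechanism (hereditarity of I-cosiness under immersion) at the price of outsourcing the hard implication.
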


One-dimensional Brownian filtrations can be viewed as continuous time 
versions of dyadic product-type filtrations. In this analogy, the predictable 
representation property of the continuous-time filtration corresponds 
to the dyadicity of the discrete-time filtration. 
Yet, the situation is much more involved when one works with Brownian 
filtrations, and the following question remains open.

If a filtration $(\fc_t)_{t \ge 0}$ is immersed in some (possibly 
infinite-dimensional) Brownian filtration and has the predictable 
representation property with regard to some one-dimensional Brownian 
motion $\beta$ (i.e., each martingale in $(\fc_t)_{t \ge 0}$ can be 
obtained as a stochastic integral with regard to $\beta$), then is 
$(\fc_t)_{t \ge 0}$ necessarily a Brownian filtration?

A partial answer was given by \'Emery (it follows from 
corollary 1 in~\cite{Emery}). 

\begin{theo}[\'Emery~\cite{Emery}]
\label{Brownian after $0$ and immersed in a Brownian filtration}
Let $d \in \nnf \cup\{+\infty\}$. 
Assume that the filtration $(\fc_t)_{t \ge 0}$ is $d$-Brownian after $0$, 
i.e., there exists a $d$-dimensional Brownian motion 
$(B_t)_{t \ge 0}$ in $(\fc_t)_{t \ge 0}$ such that for every 
$t \ge \ep > 0$, $\fc_t$ is generated by $\fc_\ep$ and the 
increments $(B_s-B_\ep)_{\ep \le s \le t}$. If $(\fc_t)_{t \ge 0}$
is immersed in some (possibly infinite-dimensional) Brownian filtration, 
then $(\fc_t)_{t \ge 0}$ is a $d$-dimensional Brownian filtration.  
\end{theo}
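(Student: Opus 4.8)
The plan is to prove the strong statement that $(B_t)_{t\ge0}$ alone generates the filtration, namely $\fc_t=\sigma(B_s:0\le s\le t)\mod\ppf$ for every $t$; writing $\bc_t$ for this natural filtration of $B$, the conclusion then follows at once from the definition of a $d$-dimensional Brownian filtration, since $B$ is a $d$-dimensional Brownian motion and $(\bc_t)$ is its own natural filtration. Because the ``$d$-Brownian after $0$'' hypothesis already gives $\fc_t=\fc_\ep\vee\sigma(B_s-B_\ep:\ep\le s\le t)$ and the increments on the right lie in $\bc_t$, everything reduces to establishing $\fc_\ep=\bc_\ep\mod\ppf$ for each $\ep>0$: once this holds, $\fc_t=\bc_\ep\vee\sigma(B_s-B_\ep:\ep\le s\le t)=\bc_t$.

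First I would use immersion to control time $0$. Let $(\zc_t)_{t\ge0}$ be the ambient Brownian filtration. Immersion forces $\fc_t\subset\zc_t$ for all $t$, hence $\bigcap_{\ep>0}\fc_\ep\subset\bigcap_{\ep>0}\zc_\ep$; since $(\zc_t)$ is Brownian, Blumenthal's zero-one law makes $\bigcap_{\ep>0}\zc_\ep$ trivial, so the germ $\sigma$-field $\fc_{0+}:=\bigcap_{\ep>0}\fc_\ep$ is trivial as well.

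Then comes the crux. Fix $\ep>0$ and a bounded $\fc_\ep$-measurable random variable $X$, and set $M_t=\eef[X\mid\fc_t]$. Immersion makes $(M_t)$ a martingale of $(\zc_t)$, hence a continuous martingale, every martingale of a Brownian filtration being continuous by the predictable representation property; moreover $\lim_{t\downarrow0}M_t=\eef[X\mid\fc_{0+}]=\eef[X]$ is constant. On each interval $[\delta,\infty)$ with $\delta>0$, the filtration $(\fc_t)_{t\ge\delta}$ is the Brownian filtration of $(B_{\delta+\cdot}-B_\delta)$ enlarged by the independent initial $\sigma$-field $\fc_\delta$, and such an independent initial enlargement preserves the predictable representation property of $B$; thus $M_\ep-M_\delta=\int_\delta^\ep\psi_s\,dB_s$ for a predictable process $\psi$ that, by consistency on overlapping intervals, does not depend on $\delta$. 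Since $M$ is continuous with $M_\delta\to\eef[X]$, we have $\langle M\rangle_\delta\to0$, so $\int_0^\ep|\psi_s|^2\,ds=\langle M\rangle_\ep<\infty$ and the stochastic integral closes up at $0$: $X-\eef[X]=\int_0^\ep\psi_s\,dB_s$ is $\bc_\ep$-measurable. This yields $\fc_\ep\subset\bc_\ep$, and the reverse inclusion being clear, $\fc_\ep=\bc_\ep$.

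The main obstacle is exactly the passage $\delta\downarrow0$, that is, verifying that the representation $\int_\delta^\ep\psi\,dB$ available away from $0$ converges as $\delta\to0$. This is where the immersion hypothesis is indispensable: it delivers both the triviality of the germ (so that $M_{0+}$ is deterministic) and the continuity of $M$ at $0$ (so that $\langle M\rangle_\delta\to0$), and these together force the integral to close. Without immersion, Tsirelson-type filtrations that are Brownian after $0$ with trivial germ but carry a non-classical component fail precisely at this step and are not Brownian. The value of $d$, finite or infinite, plays no special role, the argument running componentwise.
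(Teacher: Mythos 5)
There is a genuine gap, and it sits exactly at the step you yourself single out as the crux. Your argument correctly shows that $\fc_{0+}$ is trivial, that $M_t=\eef[X|\fc_t]$ is a continuous martingale with $M_{0+}=\eef[X]$, and that on each $[\delta,\ep]$ one has $M_\ep-M_\delta=\int_\delta^\ep\psi_s\,\d B_s$ for a single $(\fc_t)$-predictable $\psi$; letting $\delta\downarrow 0$ then gives $X=\eef[X]+\int_0^\ep\psi_s\,\d B_s$. But this only establishes that $(\fc_t)$ has the predictable representation property with respect to $B$. The final assertion that this integral is $\bc_\ep$-measurable is unjustified: $\psi$ is predictable for $(\fc_t)$, not for the natural filtration of $B$, so $\int_0^\ep\psi_s\,\d B_s$ is $\fc_\ep$-measurable and nothing more (compare $\int_0^t\xi\,\d B_s=\xi B_t$ for a coin $\xi$ independent of $B$, or the L\'evy-transform example in the paper, where the integrand $\mathrm{sgn}(W_s)$ is predictable only for the larger filtration). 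In effect you have reduced the theorem to the question stated immediately before it in the paper --- does PRP with respect to a Brownian motion together with immersion in a Brownian filtration imply Brownian? --- which is explicitly recorded there as open.

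Moreover, the strong intermediate claim you aim at, $\fc_t=\sigma(B_s: 0\le s\le t)\bmod\ppf$, is false. Section~\ref{maximal but non-complementable Brownian filtration} provides a counterexample: the two-dimensional Brownian filtration $\fc^{X,Y}$ is $2$-Brownian after $0$ with respect to the Brownian motion $(B,C)$ constructed there (this is precisely the ``complementability after $0$'' computation), and it is immersed in a Brownian filtration (namely itself), yet $\fc^{B,C}\neq\fc^{X,Y}$: otherwise, by lemma~\ref{immersion and final sigma-field}, $\fc^C$ would be an independent complement of $\fc^B$, contradicting the non-complementability proved in that section. So the conclusion of \'Emery's theorem is only that $\fc$ is generated by \emph{some} $d$-dimensional Brownian motion, in general different from the $B$ witnessing the ``Brownian after $0$'' property. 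The paper does not reproduce \'Emery's proof (it quotes corollary~1 of~\cite{Emery}); that proof must construct a new generating Brownian motion rather than identify $\fc$ with $\fc^B$, and your componentwise closing-up argument cannot be repaired to yield the theorem.
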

 
In the statement, the role of the stronger hypothesis that $(\fc_t)_{t \ge 0}$ 
is Brownian after $0$ is to guarantee that the difficulties arise only 
at time $0+$, so the situation gets closer to filtrations indexed by 
$\zzf$ or $\zzf_-$, for which the difficulties arise only at time $-\infty$.  

\subsection{Complementability}

By complementability, we will mean the existence of some independent 
complement, although we will have to specify the nature of the complement.  

The following definition is abridged from~\cite{Ornstein - Weiss}.

\begin{defi}
Let $(Z,\zc,\pi,T)$ be a Lebesgue dynamical system and $\bc$ be a 
factor of $T$. 
One says that $\bc$ is complementable if $\bc$ possesses an independent 
complement in $(Z,\zc,\pi,T)$, i.e. a factor $\cc$ of $T$ which is 
independent of $\bc$ (with regard to $\pi$) such that 
$\bc \vee \cc = \zc \mod \pi$. 
\end{defi}

If $(Z,\zc,\pi,T)$ is the direct product of two dynamical systems 
$(Z_1,\zc_1,\pi_1,T_1)$ and $(Z_2,\zc_2,\pi_2,T_2)$, then 
$\zc_1 \otimes \{\emptyset,Z_2\}$ and $\{\emptyset,Z_1\} \otimes \zc_2$ 
are factors of $(Z,\zc,\pi,T)$ and each of them is a complement of 
the other one. Now, let us look at a counterexample.

\begin{exam}\label{head and tail}
Let $T$ be the Bernoulli shift
on $Z = \{-1,1\}^\zzf$ endowed with product $\sigma$-field $\zc$ 
and the uniform law. The map $\Phi : Z \to Z$ defined by 
$\Phi((x_n)_{n \in \zzf}) = (x_{n-1}x_n)_{n \in \zzf}$ commutes with $T$, 
so $\Phi^{-1}(\zc)$ is a factor of $T$. Call $p_0 : Z \to \{-1,1\}$ 
the canonical projection defined by $p_0((x_n)_{n \in \zzf}) = x_0$.  
Then the $\sigma$-field $p_0^{-1}(\zc)$ is an independent complement of 
$\Phi^{-1}(\zc)$, but this complement is not a factor. Actually, we will 
come back to this example after definition~\ref{maximal Brownian filtration}
to show as an application of theorem~\ref{implication: factor case}
that no factor can be an independent complement of $\Phi^{-1}(\zc)$.  
\end{exam}

We now define the notion of complementability in the world of filtrations.   

\begin{defi}
Consider two filtrations $(\uc_t)_{t \in \ttf}$ and $(\zc_t)_{t \in \ttf}$ on 
some probability space $(\Omega,\ac,\ppf)$, indexed by a common subset 
$\ttf$ of $\rrf$. One says that $(\uc_t)_{t \in \ttf}$ is complementable 
in $(\zc_t)_{t \in \ttf}$ if there exists a filtration $(\vc_t)_{t \in \ttf}$ 
such that for every $t \in \ttf$, $\uc_t$ and $\vc_t$ are independent 
and $\uc_t \vee \vc_t = \zc_t \mod \ppf$. 
\end{defi}

Since independent enlargements of a filtration always produce 
filtrations in which the initial filtration is immersed, 
$(\uc_t)_{t \in \ttf}$ needs to be immersed in $(\zc_t)_{t \in \ttf}$ 
to possess an independent complement. 

We will use many times the next result, abridged from~\cite{Leuridan}. 

\begin{prop}\label{complementability and conditionning} 
Keep the notations of the last definition. Let $U$ be a random 
variable valued in some measurable space $(E,\ec)$, 
such that $\sigma(U) = \bigvee_{t \in \ttf}\uc_t$, and $(\ppf_u)_{u \in E}$ 
a regular version of the conditional probability $\ppf$ given $U$. 
Assume that $(\uc_t)_{t \in \ttf}$ is complementable in $(\zc_t)_{t \in \ttf}$ by 
a filtration $(\vc_t)_{t \in \ttf}$. 
Then for $U(\ppf)$-almost every $u \in E$, the filtered probability space 
$(\Omega,\ac,\ppf_u,(\zc_t)_{t \le \ttf})$ is isomorphic to the filtered probability 
space $(\Omega,\ac,\ppf,(\vc_t)_{t \le \ttf})$. 
\end{prop}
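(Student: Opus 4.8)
The plan is to exploit the fact that complementability gives us a concrete decomposition of $(\zc_t)$ as an independent product, and then to recognize that conditioning on $U$ "freezes" the $\uc$-component while leaving the complement $(\vc_t)$ intact in distribution.

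First I would set up the isomorphism candidate explicitly. Since $(\uc_t)$ is complemented by $(\vc_t)$, we have $\uc_t \vee \vc_t = \zc_t \mod \ppf$ with $\uc_t$ and $\vc_t$ independent for each $t$. Let $V$ be a random variable with $\sigma(V) = \bigvee_{t \in \ttf} \vc_t$; then $\sigma(U) \vee \sigma(V) = \zc_\infty \mod \ppf$ and $U, V$ are independent (independence of the final $\sigma$-fields follows from independence at each $t$ together with a monotone-class argument). So under $\ppf$, the pair $(U,V)$ realizes $(\Omega,\ac)$ as a product up to null sets, and $(\vc_t)$ is, modulo $\ppf$, the filtration generated by the second coordinate. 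I would then observe that for a regular conditional law $(\ppf_u)$ given $U$, independence of $U$ and $V$ forces the law of $V$ under $\ppf_u$ to coincide with its law under $\ppf$ for $U(\ppf)$-almost every $u$.

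The key step is to transport not just the law of $V$ but the whole \emph{filtered} structure. Here I would use Lemma~\ref{immersion and final sigma-field}: since $(\vc_t)$ is immersed in $(\zc_t)$ (as an independent complement, it arises from an independent enlargement, so immersion holds), each $\vc_t$ is recovered as $\vc_t = \sigma(V) \cap \zc_t \mod \ppf$. Under $\ppf_u$, on the other hand, $U$ is essentially constant, so $\zc_t = \uc_t \vee \vc_t$ collapses (modulo $\ppf_u$) to $\vc_t$ itself, for almost every $u$. The map realizing the isomorphism is thus the identity on $\Omega$ read through $V$: it sends the filtered space $(\Omega,\ac,\ppf_u,(\zc_t))$ to $(\Omega,\ac,\ppf,(\vc_t))$ by matching $\zc_t$ under $\ppf_u$ with $\vc_t$ under $\ppf$ via the common random variable $V$. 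I would verify this matches each $\zc_t$ with $\vc_t$ and pushes $\ppf_u$ forward to $\ppf$ (restricted to $\sigma(V)$).

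The main obstacle will be the measure-theoretic bookkeeping needed to make "for almost every $u$" hold \emph{simultaneously} for all $t \in \ttf$ and for the collapse of $\zc_t$ to $\vc_t$ under $\ppf_u$. If $\ttf$ is uncountable (the Brownian case), I cannot simply intersect countably many full-measure sets indexed by $t$; I would instead work with a countable dense subset of $\ttf$ and use right-continuity or the standard-Borel structure to extend the identities to all $t$, together with the existence of $V$ as a single random variable generating $\bigvee_t \vc_t$ so that the conditional-law statement is about one object rather than a continuum. Establishing that the conditional filtered space is genuinely isomorphic, and not merely equal in one-dimensional marginal law, is where Lemma~\ref{immersion and final sigma-field} does the essential work, since it guarantees that knowing $\vc_\infty$ under $\ppf$ determines the entire immersed filtration.
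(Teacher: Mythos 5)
Your argument is correct, and I cannot compare it to an in-paper proof because the paper does not prove Proposition~\ref{complementability and conditionning}: it is imported from~\cite{Leuridan}. Your route is the natural one and matches the intended argument: independence of $\uc_t$ and $\vc_t$ at each $t$ upgrades (by monotone class) to independence of $U$ and $V$; conditioning on $U$ trivializes each $\uc_t$, so $\zc_t=\uc_t\vee\vc_t$ collapses to $\vc_t$ under $\ppf_u$, while $\ppf_u$ restricted to $\vc_\infty$ agrees with $\ppf$ for almost every $u$; the identity on the measure algebra of $\vc_\infty$ then realizes the isomorphism. The only points needing care are the ones you already flag -- essential separability of the $\sigma$-fields (available since the ambient space is standard Borel) and a countable dense set of times in the continuous-parameter case so that the almost-sure identities hold simultaneously; note also that the appeal to Lemma~\ref{immersion and final sigma-field} is a convenience rather than a necessity, since the collapse $\zc_t=\vc_t\mod\ppf_u$ together with the equality of $\ppf_u$ and $\ppf$ on $\vc_\infty$ already yields the conclusion.
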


Let us give applications of the last result, that will be used in the 
present paper. 

\begin{coro}\label{conditionning} (Particular cases)
\begin{itemize}
\item If a filtration $(\uc_n)_{n \le 0}$ is complementable in 
$(\zc_n)_{n \le 0}$ by some product-type filtration, then for 
$U(\ppf)$-almost every $u \in E$, 
$(\zc_n)_{n \le 0}$ is product-type under $\ppf_u$. 
\item If a filtration $(\uc_t)_{t \ge 0}$ is complementable in 
$(\zc_t)_{t \ge 0}$ by some Brownian filtration, then for 
$U(\ppf)$-almost every $u \in E$, 
$(\zc_t)_{t \ge 0}$ is a Brownian filtration under $\ppf_u$.  
\end{itemize}
\end{coro}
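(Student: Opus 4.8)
The plan is to derive the corollary as a direct specialization of Proposition~\ref{complementability and conditionning}. That proposition already gives the key structural fact: under the complementability hypothesis, for $U(\ppf)$-almost every $u$, the filtered space $(\Omega,\ac,\ppf_u,(\zc_t)_t)$ is isomorphic to $(\Omega,\ac,\ppf,(\vc_t)_t)$, where $(\vc_t)_t$ is the independent complement. So the entire content of each bullet reduces to the observation that the relevant property (being product-type, respectively being a Brownian filtration) is an isomorphism invariant and that, by hypothesis, the complement $(\vc_t)_t$ has that property under $\ppf$.

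First I would treat the poly-adic bullet. Here $(\vc_n)_{n\le 0}$ is assumed product-type under $\ppf$. Being product-type is defined purely in terms of the filtered probability space (existence of a generating sequence of independent random variables, modulo $\ppf$), hence it is preserved under isomorphism of filtered probability spaces. Applying Proposition~\ref{complementability and conditionning}, for $U(\ppf)$-almost every $u$ the space $(\Omega,\ac,\ppf_u,(\zc_n)_{n\le 0})$ is isomorphic to $(\Omega,\ac,\ppf,(\vc_n)_{n\le 0})$, and the latter is product-type; transporting the generating independent sequence through the isomorphism shows $(\zc_n)_{n\le 0}$ is product-type under $\ppf_u$.

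The second bullet is entirely parallel. The property of being a Brownian filtration — the existence of a finite- or infinite-dimensional Brownian motion in the filtration that generates it, in the sense of Definition~\ref{Brownian motion in a filtration} — is again a property of the filtered probability space invariant under isomorphism, since an isomorphism carries a Brownian motion adapted to and generating $(\vc_t)_t$ to a process with the same finite-dimensional distributions adapted to and generating $(\zc_t)_t$. Thus the conclusion follows from Proposition~\ref{complementability and conditionning} in the same way.

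Strictly speaking there is essentially no obstacle: the only point requiring a word of care is the invariance of ``product-type'' and ``Brownian filtration'' under isomorphism of filtered probability spaces, which is immediate from the fact that both notions are formulated solely via the $\ppf$-measure-theoretic data of the filtration. I would therefore state the corollary as an immediate consequence, perhaps recording explicitly that the defining generating structure (an independent generating sequence, or a generating Brownian motion) is transported by the isomorphism supplied by Proposition~\ref{complementability and conditionning}.
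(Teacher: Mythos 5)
Your proposal is correct and matches the paper's intent exactly: the paper presents the corollary as "particular cases" of Proposition~\ref{complementability and conditionning} with no further argument, the whole content being precisely the isomorphism-invariance of the properties "product-type" and "Brownian" that you spell out. Nothing is missing.
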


Determining whether a $1$-dimensional Brownian filtration immersed in 
a $2$-dimen\-sional Brownian filtration is complementable or not is often 
difficult. Except trivial cases, the only known cases are related to 
skew-product decomposition of the planar Brownian motion, 
see~\cite{Brossard - Emery - Leuridan 2}. 

\subsection{Maximality}

The definition of the maximality requires a tool to measure 
the quantity of information. When one works with factors of 
an automorphism of a Lebesgue space, the quantity of information 
is the entropy. When one works with poly-adic filtrations, the 
quantity of information is the sequence of positive integers 
giving the adicity. When one works with Brownian filtrations, 
the quantity of information is the dimension of any generating 
Brownian motion. The classical statements below show hove these
quantities vary when one considers a factor, a poly-adic 
immersed filtration, or a Brownian immersed filtration.  

\begin{rema}\label{quantity of information} 
(Quantity of information in subsystems)
\begin{enumerate}
\item If $\bc$ is a factor of $(Z,\zc,\pi,T)$, then $h(T,\bc) \le h(T)$. 
\item If a $(b_n)_{n \le 0}$-adic filtration is immersed in 
an $(r_n)_{n \le 0}$-adic filtration, then $b_n$ divides $r_n$ for every $n$.
\item If a $m$-dimensional Brownian filtration is immersed in a 
$n$-dimensional Brownian filtration, then $m \le n$. 
\end{enumerate}
\end{rema}  

The first statement is very classical. The second one is proved 
in~\cite{Leuridan}. The last one is classical and shows that the 
dimension of a Brownian filtration makes sense; a proof is given 
in the footnote. 
\footnote{Let $Z$ be a $n$-dimensional Brownian motion and $B$ be a 
$m$-dimensional Brownian motion in $\fc^Z$. Then one can find 
an $\fc^Z$-predictable process $M$ taking values in the set 
of all $p \times n$ real matrices whose lines form an 
orthonormal family, such that $B = \int_0^\cdot M_s \d Z_s.$
In particular, the $m$ lines of each matrix $M_s$ are independent and 
lie in a $n$-dimensional vector space, so $m \le n$.}  

Let us give precise definitions, respectively abridged from~\cite{Ornstein},
~\cite{Leuridan} and~\cite{Brossard - Leuridan} 
or~\cite{Brossard - Emery - Leuridan}. 

\begin{defi}\label{maximal factor}
Let $(Z,\zc,\pi,T)$ be a Lebesgue dynamical system and $\bc$ be a 
factor of $T$. 
One says that $\bc$ is maximal if $(T,\bc)$ has a finite entropy 
and if for any factor $\ac$, the conditions $\ac \supset \bc$ and 
$h(T,\ac) = h(T,\bc)$ entail $\ac = \bc$ modulo null sets.  
\end{defi}

\begin{defi}\label{maximal poly-adic filtration}
Let $(\bc_n)_{n \le 0}$ be a $(b_n)_{n \le 0}$-adic filtration immersed in some 
filtration $(\zc_n)_{n \le 0}$. One says that $(\bc_n)_{n \le 0}$ is maximal in
$(\zc_n)_{n \le 0}$ if every $(b_n)_{n \le 0}$-adic filtration immersed in
$(\zc_n)_{n \le 0}$ and containing $(\bc_n)_{n \le 0}$ 
is equal to $(\bc_n)_{n \le 0}$ modulo null events.  
\end{defi}

\begin{defi}\label{maximal Brownian filtration}
Let $(\bc_t)_{t \ge 0}$ be a $d$-dimensional Brownian filtration immersed in 
some filtration $(\zc_t)_{t \ge 0}$. One says that $(\bc_t)_{t \ge 0}$ is maximal 
in $(\zc_t)_{t \ge 0}$ if every $d$-dimensional Brownian filtration immersed 
in $(\zc_t)_{t \ge 0}$ and containing $(\bc_t)_{t \le 0}$ is equal to 
$(\bc_t)_{t \ge 0}$ modulo null events.  
\end{defi}

Let us come back to example~\ref{head and tail} in which $T$ be the 
Bernoulli shift on $Z = \{-1,1\}^\zzf$ endowed with product 
$\sigma$-field $\zc$ and the uniform law. Since the map 
$\Phi : Z \to Z$ defined by $\Phi((x_n)_{n \in \zzf}) = (x_{n-1}x_n)_{n \in \zzf}$ 
commutes with $T$ and preserves the uniform law on $Z$, 
the factor $(T,\Phi^{-1}(\zc))$ is a Bernoulli $(1/2,1/2)$ shift 
like $T$ itself. The factor $\Phi^{-1}(\zc)$ is strictly contained 
in $\zc$ but has the same (finite) entropy as $T$, so it is not maximal.
But every factor of $T$ is a $K$-automorphism since $T$ is. Hence, 
theorem~\ref{implication: factor case} will show that the factor 
$\Phi^{-1}(\zc)$ is not complementable. 
  
This example above can be abridged in the context of filtrations 
indexed by the relative integers : consider a sequence 
$(\xi_n)_{n \in \zzf}$ of independent uniform random variables 
taking values in $\{-1,1\}$. Then the sequence 
$(\eta_n)_{n \in \zzf} := \Phi((\xi_n)_{n \in \zzf})$ 
has the same law as $(\xi_n)_{n \in \zzf}$. One checks that the inclusions 
$\fc^\eta_n \subset \fc^\xi_n$ are strict modulo $\ppf$, although the 
tail $\sigma$-field $\fc^\xi_{-\infty}$ are trivial and although
$(\eta_n)_{n \in \zzf}$ is an innovation sequence for $(\fc^\xi_n)_{n \in \zzf}$.
Actually, one bit of information is lost 
when one transforms $(\xi_n)_{n \in \zzf}$ into $(\eta_n)_{n \in \zzf}$: 
for each $n_0 \in \zzf$, the value $\xi_{n_0}$ is independent of 
$(\eta_n)_{n \in \zzf}$, and the knowledge of $\xi_{n_0}$ and 
$(\eta_n)_{n \in \zzf}$ is sufficient to recover $(\xi_n)_{n \in \zzf}$.
The paradox is that this loss of information is asymptotic 
at time $-\infty$ but invisible when one looks at 
$\fc^\xi_{-\infty}$ and $\fc^\eta_{-\infty}$.

The situation can be much more complex when one works with Brownian 
filtrations. For example, consider a linear Brownian motion $W$.  
Since $W$ spends a null-time at $0$, the stochastic integral
$$W' = \int_0^\cdot \mathrm{sgn}(W_s) \d W_s = |W|-L$$
(where $L$ denotes the local time of $W$ at $0$) 
is still a linear Brownian motion. The natural filtration $\fc^{W'}$ 
is immersed and strictly included in $\fc^{W}$, therefore it is not 
maximal in $\fc^{W}$. Actually, $W'$ generates the same filtration 
as $|W|$ up to null events, so the L\'evy transformation - which 
transforms the sample paths of $W$ into the sample paths of $W'$
 - forgets the signs of all excursions of $W$, which are independent 
of $|W|$. Here, the loss of information occurs at each beginning of 
excursion of $W$, and not at time $0+$. 

\subsection{Necessary or sufficient conditions for maximality}
\label{conditions for maximality}

Given a finite-entropy factor, a poly-adic immersed filtration or a 
Brownian immersed filtration, one wishes to enlarge it to get a maximal one 
having the same entropy, adicity or dimension. This leads to the 
following constructions, abridged 
from~\cite{Ornstein,Leuridan,Brossard - Emery - Leuridan}. 
In the next three propositions, the bars above the $\sigma$-fields 
indicate completions with regard to $\pi$ or $\ppf$.   

\begin{defi}
Let $(Z,\zc,\pi)$ be a probability space, $T$ be an invertible 
measure-preserving map on $(Z,\zc,\pi)$ and $\bc$ be a factor 
with finite entropy. The conditional Pinsker factor associated 
to $\bc$ is defined by $$\bc' := \{A \in \zc : h(T,\{A,A^c\}|\bc)=0\}.$$
where 
$$h(T,\{A,A^c\}|\bc) 
= \lim_{n \to +\infty} \frac{1}{n} 
H \Big( \bigvee_{k=0}^{n-1} \{T^{-k}A,T^{-k}A^c\} \Big| \bc \Big).$$
\end{defi}

\begin{prop}\label{conditional Pinsker factor}
The collection $\bc'$ thus defined is the largest factor containing 
$\bc$ and having the same entropy as $\bc$. In particular, $\bc'$ is maximal. 
\end{prop}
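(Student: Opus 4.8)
The plan is to show that $\bc'$ is a factor, that it contains $\bc$ with equal entropy, and that it is the largest such factor; maximality then follows immediately from Proposition~\ref{conditional Pinsker factor}'s own formulation, since ``largest factor having the same entropy'' is exactly the maximality condition of Definition~\ref{maximal factor}.

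First I would verify that $\bc'$ is a $\sigma$-field and a factor. The key technical tool is the conditional entropy functional $h(T,\{A,A^c\}\,|\,\bc)$ and its subadditivity/monotonicity properties. I would check that $A \in \bc'$ iff $A$ contributes no conditional entropy, and that this property is stable under complementation (trivially), and under finite unions and intersections (using that $\{A\cup B,(A\cup B)^c\}$ is coarser than $\{A,A^c\}\vee\{B,B^c\}$, so its conditional entropy is controlled). Stability under increasing limits requires a continuity/monotone-convergence argument for conditional entropy. The $T$-invariance $T^{-1}\bc' = \bc'$ should follow from the shift-invariance built into the definition of $h(T,\cdot\,|\,\bc)$ together with the invariance of $\bc$ itself, so that $h(T,\{T^{-1}A,T^{-1}A^c\}\,|\,\bc) = h(T,\{A,A^c\}\,|\,\bc)$.

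Next I would establish the two inclusions that pin down $\bc'$. That $\bc \subset \bc'$ is clear: if $A \in \bc$ then $\{A,A^c\}$ is already $\bc$-measurable, so its conditional entropy given $\bc$ vanishes. To see that $\bc'$ has the same entropy as $\bc$, I would use the conditional entropy decomposition $h(T,\bc') = h(T,\bc) + h(T,\bc'\,|\,\bc)$, and argue that $h(T,\bc'\,|\,\bc) = 0$: every generating partition of $\bc'$ can be approximated by partitions into sets from $\bc'$, each of which has zero conditional entropy given $\bc$, and one controls the conditional entropy of the join. Here the finite-entropy hypothesis on $\bc$ is essential to make these manipulations legitimate and to keep all quantities finite.

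Finally, for maximality I would show $\bc'$ is largest: suppose $\ac$ is a factor with $\ac \supset \bc$ and $h(T,\ac) = h(T,\bc)$; then $h(T,\ac\,|\,\bc) = 0$, so for every $A \in \ac$ the partition $\{A,A^c\}$ satisfies $h(T,\{A,A^c\}\,|\,\bc) = 0$, whence $A \in \bc'$ and $\ac \subset \bc'$. This is the direction that identifies $\bc'$ as the unique maximal factor above $\bc$ with the given entropy. The main obstacle I anticipate is the careful justification that $\bc'$ is genuinely closed under countable operations and is a bona fide sub-$\sigma$-field (rather than merely an algebra), since this rests on delicate continuity properties of conditional entropy; the conditional Pinsker formalism must be handled so that the relative analogue of the classical Pinsker-factor theory goes through, and verifying the additivity $h(T,\bc') = h(T,\bc) + h(T,\bc'\,|\,\bc)$ together with the vanishing of the relative term is where the real work lies.
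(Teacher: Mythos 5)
Your proposal is correct and follows essentially the same route as the paper: complementation and finite unions via subadditivity of $h(T,\{\cdot\}|\bc)$, closedness via continuity of conditional entropy in the measure-algebra pseudo-metric, $T$-invariance from the shift-invariance of the conditional entropy, $h(T,\bc'|\bc)=0$ by subadditivity over finite partitions in $\bc'$, and the maximality step via $h(T,\{A,A^c\}|\bc)\le h((T,\ac)|\bc)=h(T,\ac)-h(T,\bc)=0$. The only cosmetic difference is that the paper phrases the $\sigma$-field property as ``an algebra closed for $\delta(A,B)=\pi(A\triangle B)$ is a complete $\sigma$-field'' rather than as a monotone-limit argument, but these are the same continuity fact.
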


\begin{prop}\label{slightly larger factor}
Furthermore, assume $(Z,\zc,\pi,T)$ is a Lebesgue dynamical space, that $T$ is 
aperiodic~\footnote{Aperiodicity of $T$ means that 
$\pi\{z \in Z : \exists n \ge 1, T^n(z)=z\}=0$. 
We make this assumption to ensure the existence of generator.} 
and has finite entropy. Then for every generator $\gamma$ of $T$,
$$\bc'= 
\overline{\bigcap_{n \ge 0} \Big(\bc \vee \bigvee_{k \ge n}T^{-k}\gamma \Big)}.$$
\end{prop}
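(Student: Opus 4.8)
The plan is to identify the completed $\sigma$-field
$$\gc := \overline{\bigcap_{n \ge 0} \Big(\bc \vee \bigvee_{k \ge n}T^{-k}\gamma \Big)}$$
with the conditional Pinsker factor $\bc'$, using Proposition~\ref{conditional Pinsker factor} as the characterization of $\bc'$ as the largest factor containing $\bc$ with the same entropy. First I would check that $\gc$ is indeed a factor, i.e.\ that $T^{-1}\gc = \gc \mod \pi$. This is the easiest step: each term $\bc \vee \bigvee_{k \ge n}T^{-k}\gamma$ is sent by $T^{-1}$ to $\bc \vee \bigvee_{k \ge n+1}T^{-k}\gamma$ (using $T^{-1}\bc=\bc$), so applying $T^{-1}$ merely shifts the index $n$ by one; since the intersection is over all $n \ge 0$ and the sequence is decreasing, the intersection is invariant, and completion preserves this. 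I would also record the obvious inclusion $\bc \subset \gc$, since $\bc$ sits inside every term of the intersection.

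Next I would show the two inclusions $\gc \subset \bc'$ and $\bc' \subset \gc$. For $\gc \subset \bc'$, by Proposition~\ref{conditional Pinsker factor} it suffices to prove that $\gc$ is a factor with $h(T,\gc \,|\, \bc) = 0$, equivalently $h(T,\gc) = h(T,\bc)$; this identifies $\gc$ as a factor between $\bc$ and $\bc'$ having the entropy of $\bc$, whence $\gc \subset \bc'$ by maximality of $\bc'$. The vanishing of the conditional entropy is where the asymptotic (tail) nature of $\gc$ enters: events in $\bigcap_{n\ge0}\big(\bc \vee \bigvee_{k\ge n}T^{-k}\gamma\big)$ are measurable with respect to the ``arbitrarily far future past'' of $\gamma$ augmented by $\bc$, and such events carry no new entropy over $\bc$. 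I would make this precise through the conditional Kolmogorov--Sinai / Pinsker machinery: the conditional Pinsker factor over $\bc$ can itself be written as an intersection of the form $\bigcap_{n\ge0}\big(\bc \vee \bigvee_{k\ge n}T^{-k}\gamma\big)$ when $\gamma$ is a generator (the conditional analogue of the formula $\Pi(T)=\overline{\bigcap_{n\ge0}\bigvee_{k\ge n}T^{-k}\gamma}$ recalled in the introduction for the absolute Pinsker factor).

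For the reverse inclusion $\bc' \subset \gc$, I would argue that every set $A \in \bc'$ lies in $\bc \vee \bigvee_{k\ge n}T^{-k}\gamma$ modulo $\pi$ for each $n \ge 0$. The key point is that $\bc'$ is a factor containing $\bc$, so $T^{-k}\bc' = \bc'$, and $\gamma$ restricted to $\bc'$ is a generator for the factor $(T,\bc')$ relative to $\bc$; applying the conditional Pinsker description inside the factor $\bc'$ shows that the conditional-zero-entropy events are exactly captured in the tail $\bigvee_{k\ge n}T^{-k}\gamma$ together with $\bc$, for every $n$. Intersecting over $n$ and completing gives $A \in \gc$.

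I expect the main obstacle to be the rigorous justification of the conditional Pinsker formula, i.e.\ establishing that
$$\bc' = \overline{\bigcap_{n \ge 0} \Big(\bc \vee \bigvee_{k \ge n}T^{-k}\gamma \Big)}$$
is the correct conditional analogue of the absolute Pinsker factor formula, and in particular that $\gamma$ being a \emph{generator} of $T$ (not merely of the factor) is exactly what makes the formula hold. The delicate measurability and entropy estimates rest on Rokhlin's theory of Lebesgue spaces and on the finite-entropy and aperiodicity hypotheses, which guarantee the existence of the generator $\gamma$ and the validity of the conditional Kolmogorov--Sinai theorem; I would invoke the standard conditional-entropy results and the properties of generators recalled in Section~\ref{Annex}, rather than reprove them, and concentrate the work on verifying that the intersection over $n$ collapses precisely to the set of conditionally-zero-entropy events.
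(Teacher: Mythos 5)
Your skeleton (prove two inclusions, use Proposition~\ref{conditional Pinsker factor} to characterize $\bc'$ as the largest factor over $\bc$ with no extra entropy) matches the paper's, but the justification of both inclusions is circular: in each direction you propose to invoke ``the conditional Pinsker description'' of $\bc'$ as $\overline{\bigcap_{n}\big(\bc \vee \bigvee_{k\ge n}T^{-k}\gamma\big)}$ --- which is exactly the identity to be proved. You even flag this yourself by saying the main obstacle is ``the rigorous justification of the conditional Pinsker formula''; that justification \emph{is} the proof, and the proposal contains none of it. Two specific problems. First, for the inclusion $\gc \subset \bc'$, the intuition ``tail events carry no new entropy over $\bc$'' cannot be made precise by pointing at the tail $\sigma$-field of $\gamma$: as the paper emphasizes just before Example~\ref{Levy}, $\bigcap_n(\bc \vee \dc_n)$ may be strictly larger than $\bc \vee \bigcap_n \dc_n$, so $\gc$ is in general \emph{not} contained in $\overline{\bc \vee \Pi(T)}$ and one cannot reduce to the unconditional Pinsker factor. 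The paper instead proves the genuine entropy identity $H(\alpha\,|\,\alpha_1^\infty \vee \bc^\gamma) = H(\alpha\,|\,\alpha_1^\infty \vee \bc)$ (Lemma~\ref{conditional entropy given B gamma}) by a Ces\`aro-type comparison of the quantities $U_n, V_n, W_n$; note that this direction holds for \emph{any} finite-entropy partition $\gamma$, generator or not. Second, for $\bc' \subset \gc$ your phrase ``$\gamma$ restricted to $\bc'$ is a generator for the factor $(T,\bc')$ relative to $\bc$'' is not meaningful: $\gamma$ is a generator of $T$ on all of $\zc$ and need not be $\bc'$-measurable. The paper's actual route is: for $A \in \bc'$ put $\alpha=\{A,A^c\}$, use $h(T,\alpha|\bc)=0$ to get $\dc_0 = \dc_n \vee \bc \bmod \pi$ for all $n$, hence $A \in \bc^\alpha := \bigcap_n\big(\bc\vee\bigvee_{k\ge n}T^{-k}\alpha\big)$, and then prove the inclusion $\bc^\alpha \subset \bc^\gamma \bmod \pi$ --- this is the only place where the generator hypothesis and finite entropy of $T$ enter, via Lemma~\ref{B alpha gamma} ($H(\eta|(\bc^\alpha)^\gamma)=H(\eta|\bc^\gamma)$ for $\eta$ coarser than $\gamma_{-N}^N$) together with a closure/density argument over the algebra $\bigcup_N \sigma(\gamma_{-N}^N)$. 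None of this machinery appears in the proposal, so the proof has a genuine gap in both directions.
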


\begin{prop}\label{slightly larger poly-adic filtration}
Let $(\bc_n)_{n \le 0}$ be a $(b_n)_{n \le 0}$-adic filtration immersed 
in some filtration $(\zc_n)_{n \le 0}$. Then $(\bc_n)_{n \le 0}$ is immersed in the 
filtration $(\bc'_n)_{n \le 0}$ defined by
$$\bc'_n := \overline{\bigcap_{s \le 0} (\bc_n \vee \zc_s)}.$$
Moreover $(\bc'_n)_{n \le 0}$ is the largest $(b_n)_{n \le 0}$-adic filtration 
containing $(\bc_n)_{n \le 0}$ and immersed in $(\zc_n)_{n \le 0}$.  
In particular, $(\bc'_n)_{n \le 0}$ is maximal in $(\zc_n)_{n \le 0}$. 
\end{prop}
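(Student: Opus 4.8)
\emph{The plan} is to show that $(\bc'_n)_{n\le 0}$ is a $(b_n)_{n\le 0}$-adic filtration, immersed in $(\zc_n)_{n\le 0}$ and containing $(\bc_n)_{n\le 0}$, which shares the \emph{same} innovation sequence as $(\bc_n)_{n\le 0}$, and then to obtain the maximality by a conditional counting argument. Fix a sequence $(I_n)_{n\le 0}$ of innovations of $(\bc_n)_{n\le 0}$, with $I_n$ uniform on a set of size $b_n$. Two reductions are convenient. First, since $(\zc_s)_s$ is decreasing, the family $(\bc_n\vee\zc_s)_s$ stabilises, $\bigcap_{s\le 0}(\bc_n\vee\zc_s)=\bigcap_{s\le n}(\bc_n\vee\zc_s)$, so $\bc'_n$ only involves the far past. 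Second, applying reverse-martingale convergence to the decreasing sequence $(\bc_n\vee\zc_s)_s$ shows that $\bc'_n=\bigcap_{s\le 0}\overline{\bc_n\vee\zc_s}$: if $\one_A$ is $(\bc_n\vee\zc_s)$-measurable modulo $\ppf$ for every $s$, then $\one_A=\eef[\one_A\mid\bc_n\vee\zc_s]$ for every $s$, and letting $s\to-\infty$ gives $\one_A=\eef[\one_A\mid\bigcap_s(\bc_n\vee\zc_s)]$. Finally, from immersion of $(\bc_n)$ in $(\zc_n)$ one deduces that $I_n$ is independent of $\zc_{n-1}$ and, more generally, that $\sigma(I_{m+1},\dots,I_n)$ is independent of $\zc_m$ for $m\le n$ (each $I_k$ being independent of $\zc_{k-1}$, which contains $\zc_m$ and the earlier innovations).

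Next I would establish that $(\bc'_n)$ is $(b_n)$-adic with innovations $(I_n)$. The inclusion $\bc'_n\subseteq\bc'_{n+1}$ and the inclusions $\bc_n\subseteq\bc'_n\subseteq\overline{\zc_n}$ are immediate, the last one combined with $I_n\perp\zc_{n-1}$ giving $I_n\perp\bc'_{n-1}$. The heart of this part is the commutation identity
$$\bigcap_s\big(\sigma(I_n)\vee\bc_{n-1}\vee\zc_s\big)=\sigma(I_n)\vee\bigcap_s\big(\bc_{n-1}\vee\zc_s\big)\ \ (\mathrm{mod}\ \ppf),$$
which, once granted, yields $\bc'_n=\sigma(I_n)\vee\bc'_{n-1}\bmod\ppf$ and hence the adicity. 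Here I would use that $\sigma(I_n)$ is independent of $\zc_{n-1}\supseteq\bc_{n-1}\vee\zc_s$: writing $\one_A=\sum_e\one_{\{I_n=e\}}\one_{A^{(s)}_e}$ with $A^{(s)}_e\in\bc_{n-1}\vee\zc_s$, independence gives $\eef[\one_A\one_{\{I_n=e\}}\mid\bc_{n-1}\vee\zc_s]=\ppf(I_n=e)\one_{A^{(s)}_e}$, a quantity taking only two values; letting $s\to-\infty$, reverse-martingale convergence forces the limit to be $\ppf(I_n=e)$ times the indicator of an event in $\bigcap_s(\bc_{n-1}\vee\zc_s)$, and recombining the $e$'s recovers $\one_A$ as a $\big(\sigma(I_n)\vee\bigcap_s(\bc_{n-1}\vee\zc_s)\big)$-measurable function.

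The two immersion statements then follow from a single observation: if a filtration has independent increments with innovations $(K_n)$ and a subfiltration admits the \emph{same} $(K_n)$ as innovations, then the subfiltration is immersed, because $\sigma(K_{m+1},\dots,K_n)$ is independent of the larger $\sigma$-field at time $m$. Applying this with $K_n=I_n$ to the pair $(\bc_n)\subseteq(\bc'_n)$ shows that $(\bc_n)$ is immersed in $(\bc'_n)$; applying the conditional-independence criterion to $(\bc'_n)\subseteq(\zc_n)$, using $\bc'_m\subseteq\overline{\zc_m}$ together with $\sigma(I_{m+1},\dots,I_n)\perp\zc_m$, shows that $(\bc'_n)$ is immersed in $(\zc_n)$. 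At this stage $(\bc'_n)$ is a $(b_n)$-adic filtration immersed in $(\zc_n)$ and containing $(\bc_n)$.

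The main obstacle is the maximality, i.e. showing that any $(b_n)$-adic filtration $(\dc_n)$ immersed in $(\zc_n)$ with $\bc_n\subseteq\dc_n$ satisfies $\dc_n\subseteq\bc'_n$. By the second reduction above it suffices to prove $\dc_n\subseteq\overline{\bc_n\vee\zc_s}$ for every $s\le n$. Fixing $s$ and writing $P_I,P_J$ for the finite partitions generated by the innovations of $\bc$ and of $\dc$ between times $s$ and $n$, the inclusion $\bc_n\subseteq\dc_n$ gives $\zc_s\vee\sigma(P_I)\subseteq\zc_s\vee\sigma(P_J)$, while $\dc_n\subseteq\zc_s\vee\sigma(P_J)$; hence it is enough to show $\sigma(P_J)\subseteq\overline{\zc_s\vee\sigma(P_I)}$. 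Here I would condition on $\zc_s$: since $\sigma(P_I)$ and $\sigma(P_J)$ are each independent of $\zc_s$ and uniform on $\prod_{s<k\le n}b_k$ cells, a regular version of the conditional law shows that, conditionally on $\zc_s$, $P_I$ is a deterministic image of $P_J$ pushing the uniform law to the uniform law between two sets of equal cardinality; such a map is a bijection, so $P_J$ is in turn measurable with respect to $\zc_s$ and $P_I$. This equal-adicity counting step (the discrete analogue of Remark~\ref{quantity of information}(2)) is where the finiteness of the $b_k$ is essential. Intersecting over $s$ yields $\dc_n\subseteq\bc'_n$; in particular $(\bc'_n)$ is the largest such filtration, and since any $(b_n)$-adic filtration immersed in $(\zc_n)$ and containing $(\bc'_n)$ also contains $(\bc_n)$, it must coincide with $(\bc'_n)$, which is the asserted maximality.
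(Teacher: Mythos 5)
The paper does not actually prove this proposition: Section~\ref{conditions for maximality: proofs} explicitly excludes it and refers the reader to~\cite{Leuridan}. So there is no in-paper proof to compare against; judged on its own, your argument is correct and self-contained, and it uses exactly the toolbox the paper develops for the neighbouring statements. Your two reductions are sound: the monotonicity of $s\mapsto\bc_n\vee\zc_s$ lets you restrict to $s\le n$, and the backward-martingale argument giving $\overline{\bigcap_s\gc_s}=\bigcap_s\overline{\gc_s}$ for a decreasing family is the right way to dispose of the completions. The commutation identity you isolate is precisely Lemma~\ref{independent exchange} (or Corollary~\ref{independence allows exchange}) specialised to $\ac=\sigma(I_n)$, and your direct proof of it — two-valued conditional expectations converging along the decreasing family — is valid; it yields $\bc'_n=\sigma(I_n)\vee\bc'_{n-1}\bmod\ppf$ and hence the $(b_n)$-adicity and both immersions, since a block of innovations $\sigma(I_{m+1},\dots,I_n)$ is independent of $\zc_m$ (you correctly chain the one-step independences $I_k\perp\zc_{k-1}$ to get this joint independence, which is the only place where a careless reader could slip). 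The maximality step is the expected conditional counting argument, i.e.\ the conditional version of Remark~\ref{quantity of information}(2): conditionally on $\zc_s$ the vector of $\bc$-innovations is a measurable image of the vector of $\dc$-innovations, both uniform on sets of the same finite cardinality $\prod_{s<k\le n}b_k$, so the map is a.s.\ a bijection and can be inverted measurably (harmless, the state space being finite); this gives $\dc_n\subset\overline{\bc_n\vee\zc_s}$ for each $s$ and hence $\dc_n\subset\bc'_n$. The passage from ``largest'' to ``maximal'' at the end is also handled correctly. In short, this is a legitimate reconstruction of the omitted proof.
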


\begin{prop}\label{slightly larger Brownian filtration}
Let $(\bc_t)_{t \ge 0}$ be a $d$-dimensional Brownian filtration immersed 
in some Brownian filtration $(\zc_t)_{t \ge 0}$. Then $(\bc_t)_{t \ge 0}$ is 
immersed in the filtration $(\bc'_t)_{t \ge 0}$ defined by
$$\bc'_t := \overline{\bigcap_{s > 0} (\bc_t \vee \zc_s)}.$$
Moreover $(\bc'_t)_{t \ge 0}$ is a $d$-dimensional Brownian filtration 
immersed in $(\zc_t)_{t \ge 0}$.
\end{prop}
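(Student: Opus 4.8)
The plan is to reduce the whole statement to one $\sigma$-field commutation lemma combined with Émery's Theorem~\ref{Brownian after $0$ and immersed in a Brownian filtration}. Throughout, write $B$ for a $d$-dimensional Brownian motion generating $(\bc_t)_{t\ge 0}$. Since $(\bc_t)_{t\ge 0}$ is immersed in the Brownian filtration $(\zc_t)_{t\ge 0}$, Proposition~\ref{immersion of a Brownian filtration} makes $B$ a Brownian motion in $(\zc_t)_{t\ge 0}$; in particular, for every $\ep>0$ the increments $B_{\ep+\cdot}-B_\ep$ are independent of $\zc_\ep$.

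First I would dispatch the formal points. From $\bc_t\subset \bc_t\vee\zc_s$ and, for $0<s\le t$, $\bc_t\vee\zc_s\subset\zc_t$, one gets $\bc_t\subset\bc'_t\subset\zc_t$ for every $t$ (with $\bc'_0$ trivial by Blumenthal's zero-one law), while monotonicity of $s\mapsto \bc_\cdot\vee\zc_s$ shows that $(\bc'_t)_{t\ge 0}$ is a filtration. Immersion of $(\bc_t)_{t\ge 0}$ in $(\bc'_t)_{t\ge 0}$ is then \emph{downward transitivity}: any $\bc$-martingale $M$ satisfies $\eef[M_t|\bc'_s]=\eef[\eef[M_t|\zc_s]|\bc'_s]=\eef[M_s|\bc'_s]=M_s$, because $\bc'_s\subset\zc_s$, $M_s$ is $\bc_s\subset\bc'_s$-measurable, and $(\bc_t)_{t\ge 0}$ is immersed in $(\zc_t)_{t\ge 0}$. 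By Proposition~\ref{immersion of a Brownian filtration} again, $B$ is a Brownian motion in $(\bc'_t)_{t\ge 0}$.

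The heart of the matter is the identity, for $t\ge\ep>0$,
\begin{equation}\label{eq-bafter0}
\bc'_t=\bc'_\ep\vee\sigma(B_u-B_\ep : \ep\le u\le t)\mod\ppf,
\end{equation}
which says precisely that $(\bc'_t)_{t\ge 0}$ is $d$-Brownian after $0$ for the Brownian motion $B$. Setting $\gc:=\sigma(B_u-B_\ep:\ep\le u\le t)$ and $\hc_r:=\bc_\ep\vee\zc_r$, one has $\bc_t=\bc_\ep\vee\gc$, hence $\bc'_t=\bigcap_{r>0}(\gc\vee\hc_r)$, whereas $\bigcap_{r>0}\hc_r=\bc'_\ep$ by definition; so~\eqref{eq-bafter0} amounts to the commutation
$$\bigcap_{r>0}(\gc\vee\hc_r)=\gc\vee\bigcap_{r>0}\hc_r.$$
This is the step I expect to be the main obstacle, since join and intersection of $\sigma$-fields do not commute in general. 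It is rescued by independence: $\gc$ is independent of $\hc_\ep=\zc_\ep$, hence of every $\hc_r$ with $r\le\ep$. I would argue in $L^2$: independence identifies $L^2(\gc\vee\hc_r)$ with the closed tensor product $L^2(\gc)\otimes L^2(\hc_r)$, the reverse martingale convergence theorem gives strong convergence of the projections onto $L^2(\hc_r)$ to the projection $P_{L^2(\cap_r\hc_r)}$, and any element lying in every $L^2(\gc)\otimes L^2(\hc_r)$ is fixed by $\mathrm{id}\otimes P_{L^2(\cap_r\hc_r)}$, hence belongs to $L^2(\gc\vee\bigcap_r\hc_r)$.

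Finally I would deduce the two remaining assertions from~\eqref{eq-bafter0}. For immersion of $(\bc'_t)_{t\ge 0}$ in $(\zc_t)_{t\ge 0}$, fix $0<s<t$ and apply~\eqref{eq-bafter0} with $\ep=s$ to write $\bc'_t=\bc'_s\vee\gc_{s,t}$ with $\gc_{s,t}:=\sigma(B_u-B_s:s\le u\le t)$ independent of $\zc_s$; since $\bc'_s\subset\zc_s$, a short conditioning computation gives $\eef[F|\zc_s]=\eef[F|\bc'_s]$ and then $\eef[FW|\bc'_s]=\eef[F|\bc'_s]\,\eef[W|\bc'_s]$ for $F$ bounded $\bc'_t$-measurable and $W$ bounded $\zc_s$-measurable, which is the conditional independence $\bc'_t\perp\zc_s\mid\bc'_s$ (the case $s=0$ being trivial). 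Thus $(\bc'_t)_{t\ge 0}$ is $d$-Brownian after $0$ and immersed in the Brownian filtration $(\zc_t)_{t\ge 0}$, so Émery's Theorem~\ref{Brownian after $0$ and immersed in a Brownian filtration} yields that $(\bc'_t)_{t\ge 0}$ is a $d$-dimensional Brownian filtration, which completes the proof.
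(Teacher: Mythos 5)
Your proof is correct and follows essentially the same route as the paper's: the key identity $\bc'_t=\bc'_\ep\vee\sigma(B_{\ep+\cdot}-B_\ep)$ obtained by exchanging the intersection over $s$ with the join of an independent $\sigma$-field (the paper's corollary on the exchange property, which you reprove via an $L^2$ tensor-product argument), then Brownian-after-$0$ plus immersion in $\zc$ and \'Emery's theorem. The only cosmetic difference is that you verify the immersion of $\bc$ in $\bc'$ by a direct martingale computation where the paper uses the sandwich $\bc_t\subset\bc'_t\subset\zc_t$; these are the same observation.
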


Be careful: when $\fc$ is a sub-$\sigma$-field and $(\gc_n)_{n \ge 0}$ is a 
non-increasing sequence of sub-$\sigma$-fields of a probability space 
$(\Omega,\tc,\ppf)$, the trivial inclusion
$$\fc \vee \Big( \bigcap_{n \ge 0} \gc_n \Big) \subset 
\bigcap_{n \ge 0} \big(\fc \vee \gc_n \big)$$
may be strict modulo $\ppf$. Equality modulo $\ppf$ holds when $\fc$ 
and $\gc_0$ are independent (see corollary~\ref{independence allows exchange}). 
Von Weizs\"acker provides involved characterizations in \cite{Weizsacker}.
Therefore, the $\sigma$-fields $\bc'$, $\bc'_n$ and $\bc'_t$ considered in 
propositions~\ref{slightly larger factor},
\ref{slightly larger poly-adic filtration},
and~\ref{slightly larger Brownian filtration}
can be strictly larger than the $\sigma$-fields 
$\overline{\bc \vee \Pi(T)}$, $\overline{\bc_n \vee \zc_{-\infty}}$ 
and $\overline{\bc_t \vee \zc_{0+}} = \overline{\bc_t}$ respectively. 

Note the analogy between the formulas in 
propositions~\ref{slightly larger factor},
\ref{slightly larger poly-adic filtration},
and~\ref{slightly larger Brownian filtration}.
In these three contexts, we must have $\bc' = \bc$ 
up to null sets for $\bc$ to be maximal. 
Moreover, applying the same procedure to $\bc'$ leads to $\bc'' = \bc'$. 
Hence the condition $\bc' = \bc$ up to null sets is also sufficient 
for $\bc$ to be maximal in the first two cases (factors of finite-entropy 
aperiodic Lebesgue automorphisms and poly-adic filtrations). 
But once again, the situation is more complex when one works 
with Brownian filtrations, since the filtration $\bc'$ may be 
non-maximal. Here is a counter-example (the proof will be given in 
section~\ref{conditions for maximality: proofs}).

\begin{exam}\label{Levy}
Let $X$ be a linear Brownian motion in some filtration $\zc$. Set
$$B = \int_0^\cdot \mathrm{sgn}(X_s) \d X_s,$$
and call $\xc$ and $\bc$ the natural filtrations of $X$ and $B$. 
If $\xc$ is maximal in $\zc$, then the filtration 
$\bc'$ defined by proposition~\ref{slightly larger Brownian filtration}
coincides with $\bc$ up to null events. Therefore, the filtration $\bc'$ 
(included in $\xc$) cannot be maximal in $\zc$. 
\end{exam}

Actually, the maximality of Brownian filtrations is not 
an asymptotic property at $0+$, unlike the almost sure equality 
$\bc' = \bc$. To try to produce a maximal Brownian filtration 
containing a given Brownian filtration, one should perform the 
infinitesimal enlargement above at every time, but we do not see 
how to do that. 

Yet, proposition~\ref{sufficient condition for maximality : Brownian case} 
in the next subsection shows that that equality $\bc = \bc'$ ensures 
the maximality of $\bc$ under the (strong) additional hypothesis that 
$\bc$ is complementable after $0$. 

The next sufficient condition for the maximality of a poly-adic immersed 
filtration comes from~\cite{Leuridan}. 

\begin{prop}\label{sufficient condition for maximality}
Let $(\bc_n)_{n \le 0}$ be a $(b_n)_{n \le 0}$-adic filtration immersed 
in $(\zc_n)_{n \le 0}$. Let $U$ be a random variable valued in some 
measurable space $(E,\ec)$, such that $\sigma(U) = \bc_0$ and 
$(\ppf_u)_{u \in E}$ a regular version of the conditional probability 
$\ppf$ given $U$. 
If for $U(\ppf)$-almost every $u \in E$, the filtered probability space 
$(\Omega,\ac,\ppf_u,(\zc_n)_{n \le 0})$ is Kolmogorovian, then 
the filtration $(\bc_n)_{n \le 0}$ is maximal in $(\zc_n)_{n \le 0}$. 
\end{prop}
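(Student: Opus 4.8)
The plan is to unwind the definition of maximality and reduce everything to a single equality of final $\sigma$-fields. Let $(\ac_n)_{n\le 0}$ be any $(b_n)_{n\le 0}$-adic filtration immersed in $(\zc_n)_{n\le 0}$ with $\bc_n\subset\ac_n$ for every $n$; the goal is to prove $\ac_n=\bc_n\mod\ppf$ for all $n$. First I would observe that $(\bc_n)$ is immersed in $(\ac_n)$: if $M$ is a $(\bc_n)$-martingale it is a $(\zc_n)$-martingale by hypothesis, and pushing $\eef[M_n\mid\zc_{n-1}]=M_{n-1}$ down to $\ac_{n-1}\subset\zc_{n-1}$ by the tower property gives $\eef[M_n\mid\ac_{n-1}]=M_{n-1}$ (note $M_{n-1}$ is $\bc_{n-1}\subset\ac_{n-1}$-measurable). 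Lemma~\ref{immersion and final sigma-field} then yields $\bc_n=\bc_0\cap\ac_n\mod\ppf$ and, crucially, tells us it suffices to prove the single equality $\ac_0=\bc_0\mod\ppf$: once the final $\sigma$-fields agree, all the equalities $\ac_n=\bc_n$ follow. Finally, since $\ac_{-\infty}\subset\zc_{-\infty}$, the conditional Kolmogorov hypothesis passes from $(\zc_n)$ to $(\ac_n)$, so for $U(\ppf)$-a.e.\ $u$ the tail $\ac_{-\infty}$ is $\ppf_u$-trivial.

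The structural core is to compare the two sets of innovations. Write $\ac_n=\ac_{n-1}\vee\sigma(J_n)$ with $J_n$ uniform on $b_n$ values and independent of $\ac_{n-1}$, and let $I_n$ be the analogous innovation for $(\bc_n)$. Using condition (2) of immersion for $(\bc_n)$ in $(\ac_n)$ (namely that $\bc_n$ and $\ac_{n-1}$ are independent given $\bc_{n-1}$) together with the independence of $I_n$ from $\bc_{n-1}$, I would first upgrade this to: $I_n$ is independent of $\ac_{n-1}$ and uniform on $b_n$ values. Now comes the point where the matching adicity is indispensable. Conditionally on $\ac_{n-1}$, the variable $I_n$ is a deterministic function of $J_n$; since both range over sets of the same finite cardinality $b_n$ and both are (conditionally) uniform, this function must be a bijection for almost every value of $\ac_{n-1}$. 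Hence $J_n$ is recoverable from $I_n$ and $\ac_{n-1}$, so $\ac_n=\ac_{n-1}\vee\sigma(I_n)=\ac_{n-1}\vee\bc_n\mod\ppf$. Iterating and absorbing $\bc_{n}\subset\bc_0$ gives $\ac_0=\ac_{-N}\vee\bc_0\mod\ppf$ for every $N\ge 0$.

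At this stage one is tempted to let $N\to\infty$ and write $\ac_0=\bc_0\vee\ac_{-\infty}$, but exactly this exchange of $\bigvee$ and $\bigcap$ is the one flagged as generally false in the discussion preceding the statement, and here $\bc_0$ and $\ac_{-\infty}$ are far from independent. The device that rescues the argument is to condition on $U$, i.e.\ on $\bc_0$. Since $\sigma(U)=\bc_0$ is $\ppf_u$-trivial, adjoining it changes nothing modulo $\ppf_u$, so the relation $\ac_0=\ac_{-N}\vee\sigma(U)\mod\ppf$ descends, for $U(\ppf)$-a.e.\ $u$, to $\ac_0=\ac_{-N}\mod\ppf_u$ (the descent of a mod-$\ppf$ equality to a mod-$\ppf_u$ equality for a.e.\ $u$ is justified by $\ppf=\int\ppf_u\,U(\ppf)(\d u)$). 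As this holds for all $N$ at once, reverse martingale convergence gives $\ac_0\subset\overline{\ac_{-\infty}}^{\,\ppf_u}\mod\ppf_u$; but $\ac_{-\infty}$ is $\ppf_u$-trivial by the hypothesis, so $\ac_0$ is $\ppf_u$-trivial. Therefore, for every $A\in\ac_0$ we get $\ppf[A\mid U]=\ppf_U(A)\in\{0,1\}$ a.s., i.e.\ $A\in\sigma(U)\mod\ppf$. This is precisely $\ac_0\subset\bc_0\mod\ppf$, and combined with $\bc_0\subset\ac_0$ it gives $\ac_0=\bc_0\mod\ppf$, whence $\ac_n=\bc_n\mod\ppf$ for all $n$ by Lemma~\ref{immersion and final sigma-field}.

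I expect the main obstacle to be the passage flagged above: controlling $\bigcap_N(\ac_{-N}\vee\bc_0)$. The whole role of the conditional-Kolmogorov hypothesis is to replace the illegitimate exchange of $\bigvee$ and $\bigcap$ by a legitimate one, by first trivialising $\bc_0$ through conditioning on $U$ and only then intersecting. The two places demanding genuine care are the bijection argument (which breaks if the adicities of $(\bc_n)$ and $(\ac_n)$ do not coincide, and which is the combinatorial heart of the proof) and the measure-theoretic descent from $\ppf$ to the fibre measures $\ppf_u$, including the exchange of the $\ppf_u$-completion with the countable intersection $\bigcap_N\ac_{-N}$, which is handled cleanly by reverse martingale convergence.
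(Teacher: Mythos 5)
Your proof is correct. The paper itself does not prove this proposition --- it only cites \cite{Leuridan} for it --- so there is no in-text argument to compare against, but your route is exactly the expected one: upgrade the innovations of $(\bc_n)$ to innovations of $(\ac_n)$ via the equal-cardinality bijection argument to get $\ac_n=\ac_{n-1}\vee\bc_n \mod \ppf$, iterate to $\ac_0=\ac_{-N}\vee\sigma(U)$, then condition on $U$ to legitimise the passage to the limit and invoke the conditional Kolmogorov hypothesis together with reverse martingale convergence, finishing with Lemma~\ref{immersion and final sigma-field}. The two delicate points you flag are indeed the ones that need care, and both are handled correctly (the bijection step uses that a map between two $b_n$-point sets pushing uniform to uniform is a bijection, and the descent from $\ppf$ to $\ppf_u$ uses essential separability of $\ac_0$ and countability in $N$ to get a single exceptional null set of $u$'s).
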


The assumption that $(\zc_n)_{n \le 0}$ is Kolmogorovian under 
almost every conditional probability $\ppf_u$ echoes to the 
alternative terminology of conditional $K$-automorphisms used by 
Thouvenot in~\cite{Thouvenot}. 
% Indeed, up to null sets, the factor $\bc'$ defined in 
% proposition~\ref{slightly larger factor} coincides with the 
% $\bc$-conditional Pinsker factor, which can be defined as
% $$\Pi(T|\bc) := \{A \in \zc : h(T,\{A,A^c\}|\bc)=0\},$$
% where 
% $$h(T,\{A,A^c\}|\bc) 
% = \lim_{n \to +\infty} \frac{1}{n} 
% H \Big( \bigvee_{k=0}^{n-1} \{T^{-k}A,T^{-k}A^c\} \Big| \bc \Big).$$
% When $\Pi(T,\bc) = \bc$ up to null sets, Thouvenot says that $T$ has 
% property $K$ conditionally on $\bc$; this condition is 
% equivalent to the maximality of $\bc$. 

\subsection{Complementability and maximality}

In the three contexts (factors of an automorphism of a Lebsgue space, 
poly-adic filtrations immersed in a filtration indexed by $\zzf_-$, 
Brownian filtrations immersed in a Brownian filtration), we get very 
similar results.  

The first one is stated by Ornstein in~\cite{Ornstein} as a direct 
consequence of a lemma stated in~\cite{Ornstein - Weiss}. 

\begin{prop}\label{implication: factor case}
Assume that $T$ with finite entropy. Let $\bc$ be a factor of $T$. If $\bc$
is complementable by some $K$-automorphism, then $\bc$ is maximal.  
\end{prop}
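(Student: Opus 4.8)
The plan is to reduce maximality of $\bc$ to the single equality $\bc'=\bc\mod\pi$, where $\bc'$ is the conditional Pinsker factor attached to $\bc$ in Proposition~\ref{conditional Pinsker factor}. First note that $(T,\bc)$ has finite entropy, since $h(T,\bc)\le h(T)<\infty$ by item~1 of Remark~\ref{quantity of information}. By Proposition~\ref{conditional Pinsker factor}, $\bc'$ is the largest factor containing $\bc$ with the same entropy as $\bc$; hence any factor $\ac$ appearing in Definition~\ref{maximal factor}, i.e. with $\ac\supset\bc$ and $h(T,\ac)=h(T,\bc)$, automatically satisfies $\ac\subset\bc'$. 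Thus $\bc'=\bc$ forces $\ac=\bc\mod\pi$, which is exactly maximality. Let $\cc$ be the independent complement supplied by the hypothesis, so that $\bc$ and $\cc$ are independent factors with $\bc\vee\cc=\zc\mod\pi$ and $(T,\cc)$ is a $K$-automorphism. If $\cc$ is trivial, then $\bc=\zc\mod\pi$ and $\bc$ is maximal for trivial reasons, so I may assume $\cc$ nontrivial; then $h(T)\ge h(T,\cc)>0$, which forces $T$ to be aperiodic and guarantees the existence of the generators used below as well as the applicability of Proposition~\ref{slightly larger factor}.

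Next I would compute $\bc'$ from the formula in Proposition~\ref{slightly larger factor}, using a generator adapted to the product structure. Choose a countable generator $\beta$ of $(T,\bc)$ and a countable generator $\delta$ of $(T,\cc)$; then $\gamma:=\beta\vee\delta$ is a generator of $(T,\zc)$, because $\bigvee_{k\in\zzf}T^{-k}\gamma=\bc\vee\cc=\zc\mod\pi$. Since $\beta\subset\bc$ and $\bc$ is $T$-invariant, one has $\bigvee_{k\ge n}T^{-k}\beta\subset\bc$ for every $n$, so that
$$\bc\vee\bigvee_{k\ge n}T^{-k}\gamma=\bc\vee\bigvee_{k\ge n}T^{-k}\delta\mod\pi.$$
Writing $\gc_n:=\bigvee_{k\ge n}T^{-k}\delta$, Proposition~\ref{slightly larger factor} gives $\bc'=\overline{\bigcap_{n\ge 0}(\bc\vee\gc_n)}$.

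The crux, and the step I expect to be the main obstacle, is to exchange the intersection over $n$ with the join with $\bc$. A priori one only has the inclusion $\bc\vee\bigcap_{n\ge 0}\gc_n\subset\bigcap_{n\ge 0}(\bc\vee\gc_n)$, and the excerpt emphasizes that this inclusion may be strict. Here, however, the sequence $(\gc_n)_{n\ge 0}$ is nonincreasing and $\gc_0=\bigvee_{k\ge 0}T^{-k}\delta\subset\cc$ is independent of $\bc$; so corollary~\ref{independence allows exchange} applies and yields
$$\bc'=\overline{\bc\vee\bigcap_{n\ge 0}\gc_n}=\overline{\bc\vee\bigcap_{n\ge 0}\bigvee_{k\ge n}T^{-k}\delta}\mod\pi.$$
Now $\overline{\bigcap_{n\ge 0}\bigvee_{k\ge n}T^{-k}\delta}$ is precisely the Pinsker factor of the subsystem $(T,\cc)$, since $\delta$ generates $\cc$; and this factor is trivial exactly because $(T,\cc)$ is a $K$-automorphism. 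Hence $\bc'=\overline{\bc}=\bc\mod\pi$, so $\bc$ is maximal. The only points requiring care are the legitimacy of the exchange (which is where independence of $\bc$ and $\cc$ is genuinely used) and the reduction to the aperiodic case needed to invoke Proposition~\ref{slightly larger factor}.
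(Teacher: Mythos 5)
Your argument is correct in substance but takes a genuinely different route from the paper's. The paper proves Proposition~\ref{implication: factor case} by showing that any intermediate factor $\ac\supset\bc$ with $h(T,\ac)=h(T,\bc)$ remains independent of the complement $\cc$ (Lemma~\ref{Ornstein - Weiss}, proved either by the Ornstein--Weiss entropy computation with the factors $\cc_n$ of $T^n$, or via Pinsker's formula and Berg's lemma), and then cancels $\cc$ with Lemma~\ref{simplifying by C}. You instead reduce maximality to the identity $\bc'=\bc$ for the conditional Pinsker factor and compute $\bc'$ from the formula of Proposition~\ref{slightly larger factor} applied to the generator $\beta\vee\delta$ adapted to the splitting $\zc=\bc\vee\cc$: the join with $\bc$ absorbs the $\beta$-part, the independence of $\bc$ and $\cc$ legitimates the exchange of the decreasing intersection with the join (the exchange property of Corollary~\ref{independence allows exchange}, in the form stated just before Example~\ref{Levy}), and what remains is $\bc$ joined with the Pinsker factor of $(T,\cc)$, which is trivial by the $K$-property. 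This is where each hypothesis visibly enters, which is a pedagogical gain; on the other hand the route is not cheaper, since it rests on the full strength of Propositions~\ref{conditional Pinsker factor} and~\ref{slightly larger factor}, whose proofs occupy most of Section~\ref{conditions for maximality: proofs}, whereas the paper's proof needs only Lemma~\ref{Ornstein - Weiss}.

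One justification is wrong as stated: positive entropy does not force aperiodicity (take the disjoint union, each piece of mass $1/2$, of a Bernoulli shift and a fixed atom; the entropy is positive but the atom is a periodic set of positive measure). Aperiodicity is nevertheless true here, and you do need it to invoke Proposition~\ref{slightly larger factor}. The correct argument is: since $\bc$ and $\cc$ are independent factors with $\bc\vee\cc=\zc \mod \pi$ on a Lebesgue space, $(Z,\zc,\pi,T)$ is isomorphic to the direct product of the systems $(T,\bc)$ and $(T,\cc)$, and a point of a product is periodic only if both of its coordinates are; moreover a nontrivial $K$-automorphism is aperiodic, because each set of $n$-periodic points is invariant, hence lies in its (trivial) Pinsker factor, and cannot have full measure since that would force zero entropy. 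Replace your sentence by this argument and the proof goes through.
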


The second one comes from~\cite{Leuridan}.

\begin{prop}\label{implication: poly-adic case}
Let $(\bc_n)_{n \le 0}$ be a $(b_n)_{n \le 0}$-adic filtration immersed 
in $(\zc_n)_{n \le 0}$. If $(\bc_n)_{n \le 0}$ can be complemented by 
some Kolmogorovian filtration, then $(\bc_n)_{n \le 0}$ is maximal
in $(\zc_n)_{n \le 0}$.
\end{prop}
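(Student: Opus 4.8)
The plan is to derive this proposition directly from the sufficient condition for maximality (Proposition~\ref{sufficient condition for maximality}) combined with the conditioning result (Proposition~\ref{complementability and conditionning}). The guiding idea is that complementing $(\bc_n)_{n \le 0}$ by a Kolmogorovian filtration forces $(\zc_n)_{n \le 0}$ to become Kolmogorovian under almost every conditional probability given the final $\sigma$-field $\bc_0$, which is precisely the hypothesis feeding Proposition~\ref{sufficient condition for maximality}.

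First I would pick a random variable $U$, valued in some measurable space $(E,\ec)$, such that $\sigma(U) = \bigvee_{n \le 0}\bc_n = \bc_0$ (such a $U$ exists since $\bc_0$ is countably generated modulo null sets, being the join of the $\sigma$-fields of finitely-valued innovations), and I would fix a regular version $(\ppf_u)_{u \in E}$ of the conditional probability of $\ppf$ given $U$. Let $(\vc_n)_{n \le 0}$ denote the Kolmogorovian filtration complementing $(\bc_n)_{n \le 0}$ in $(\zc_n)_{n \le 0}$, so that $\bc_n$ and $\vc_n$ are independent and $\bc_n \vee \vc_n = \zc_n \mod \ppf$ for every $n \le 0$.

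The core step applies Proposition~\ref{complementability and conditionning} with $\uc = \bc$ and complement $(\vc_n)_{n \le 0}$: for $U(\ppf)$-almost every $u$, the filtered probability space $(\Omega,\ac,\ppf_u,(\zc_n)_{n \le 0})$ is isomorphic to $(\Omega,\ac,\ppf,(\vc_n)_{n \le 0})$. Since $(\vc_n)_{n \le 0}$ is Kolmogorovian by hypothesis, and since triviality of a tail $\sigma$-field is transported by any isomorphism of filtered probability spaces (such an isomorphism carries the tail $\sigma$-field $\bigcap_n \vc_n$ onto $\bigcap_n \zc_n$ modulo null sets and preserves the measures of events, hence preserves triviality), I conclude that $(\Omega,\ac,\ppf_u,(\zc_n)_{n \le 0})$ is Kolmogorovian for $U(\ppf)$-almost every $u$.

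With this in hand, the hypothesis of Proposition~\ref{sufficient condition for maximality} holds verbatim, the same $U$ serving in both statements since each requires $\sigma(U) = \bc_0$, so that proposition yields the maximality of $(\bc_n)_{n \le 0}$ in $(\zc_n)_{n \le 0}$ and closes the argument. I do not anticipate any serious obstacle here: all the genuine work is packaged inside the two propositions invoked, and the only point deserving explicit mention is that Kolmogorovianity is an isomorphism invariant, which is immediate from the definition of the tail $\sigma$-field.
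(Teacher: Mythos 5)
Your proof is correct and follows essentially the same route as the paper, which disposes of this proposition in one line by saying it follows from Proposition~\ref{sufficient condition for maximality} together with the conditioning result (Corollary~\ref{conditionning}, i.e.\ Proposition~\ref{complementability and conditionning}). The only cosmetic difference is that you invoke the general Proposition~\ref{complementability and conditionning} and observe explicitly that Kolmogorovianity is an isomorphism invariant, rather than citing the corollary's listed particular cases; this is exactly the intended argument.
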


A particular case of the third one (in which the dimension of the 
Brownian filtrations are 1 and 2) can be found in~\cite{Brossard - Leuridan}
or~\cite{Brossard - Emery - Leuridan}. 

\begin{prop}\label{implication: Brownian case}
Let $(\bc_t)_{t \ge 0}$ be a Brownian filtration immersed 
in a Brownian filtration $(\zc_t)_{t \ge 0}$ with larger dimension. 
If $(\bc_t)_{t \ge 0}$ can be complemented by 
some Brownian filtration, then $(\bc_t)_{t \ge 0}$ is maximal
in $(\zc_t)_{t \ge 0}$.
\end{prop}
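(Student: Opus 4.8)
The plan is to reduce the maximality statement to a single equality of final $\sigma$-fields and then to exploit the predictable representation property. Let $(\hat\bc_t)_{t\ge 0}$ be any $d$-dimensional Brownian filtration immersed in $(\zc_t)_{t\ge 0}$ and containing $(\bc_t)_{t\ge 0}$; I must show $\hat\bc_t=\bc_t\mod\ppf$ for every $t$. First I observe that immersion is inherited by intermediate filtrations: since $\bc_t\subset\hat\bc_t\subset\zc_t$ and $(\bc_t)$ is immersed in $(\zc_t)$, the filtration $(\bc_t)$ is immersed in $(\hat\bc_t)$ (any $\bc$-martingale is a $\zc$-martingale, hence stays a martingale once projected through $\hat\bc_s\subset\zc_s$). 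By Lemma~\ref{immersion and final sigma-field} applied to this immersion, $\bc_t=\bc_\infty\cap\hat\bc_t\mod\ppf$, so it suffices to prove the single equality $\hat\bc_\infty=\bc_\infty\mod\ppf$; the inclusion $\bc_\infty\subset\hat\bc_\infty$ being clear, only $\hat\bc_\infty\subset\bc_\infty$ remains, and the ``in particular'' clause of Lemma~\ref{immersion and final sigma-field} will then conclude.

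I fix generating Brownian motions: $B$ for $(\bc_t)$, $V$ for the Brownian complement $(\vc_t)$, and $\hat B$ for $(\hat\bc_t)$; by Proposition~\ref{immersion of a Brownian filtration} all three are Brownian motions in $(\zc_t)$. A preliminary step is to check that $(B,V)$ is a Brownian motion generating $(\zc_t)$. The independence of $\bc_t$ and $\vc_t$ for every $t$ upgrades to the independence of $\bc_\infty$ and $\vc_\infty$ by a martingale-convergence argument (using $\eef[F|\bc_s]\to F$ and $\eef[G|\vc_s]\to G$ along $s\to\infty$), so $B$ and $V$ are independent; being independent Gaussian processes with the correct brackets, $(B,V)$ is an $(d+e)$-dimensional Brownian motion (with $e=\dim\vc$) whose increments are independent of $\zc_t$, and $\zc_\infty=\bc_\infty\vee\vc_\infty$, so $(B,V)$ generates $(\zc_t)$ and the predictable representation property holds with respect to it.

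The heart of the argument is a predictable-representation computation. Writing $\hat B=\int_0^\cdot P_s\,\d B_s+\int_0^\cdot Q_s\,\d V_s$ by the predictable representation property of $(\zc_t)$, and $B=\int_0^\cdot N_s\,\d\hat B_s$ by that of $(\hat\bc_t)$ (here $N$ is $O(d)$-valued, since $B$ has bracket $t\,I_d$), I substitute the second representation into the first and identify integrands by uniqueness: $NP=I_d$ and $NQ=0$, whence $Q=0$. Consequently $\hat B=\int_0^\cdot P_s\,\d B_s$ is orthogonal to $V$, i.e. $\langle\hat B_i,V_j\rangle=0$, so the matrix of brackets of $(\hat B,V)$ equals $t\,I_{d+e}$ and Lévy's characterization makes $(\hat B,V)$ a Brownian motion; in particular $\hat\bc_\infty=\sigma(\hat B)$ is independent of $\vc_\infty=\sigma(V)$.

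It remains to turn this independence into the inclusion $\hat\bc_\infty\subset\bc_\infty$, and this is the delicate point where the hypothesis $\bc_\infty\subset\hat\bc_\infty$ is indispensable: from ``$\hat\bc_\infty\subset\bc_\infty\vee\vc_\infty$ and $\hat\bc_\infty$ independent of $\vc_\infty$'' alone one cannot conclude $\hat\bc_\infty\subset\bc_\infty$, as the Lévy-transformation mechanism shows (for independent $U,W$, the variable $U\,\mathrm{sgn}(W)$ is independent of $W$ yet is not a function of $U$). What rescues the conclusion is the following rigidity lemma for independent complements: if $\gc_1$ and $\gc_2$ are independent and $\gc_1\subset\hc\subset\gc_1\vee\gc_2$ with $\hc$ independent of $\gc_2$, then $\hc=\gc_1\mod\ppf$. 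I expect this lemma to be the main obstacle to write cleanly; I would prove it on the product space $(\Omega_1\times\Omega_2,\mu_1\otimes\mu_2)$ realizing the independence of $\gc_1$ and $\gc_2$: since $\hc\supset\gc_1$, every $\hc$-measurable map has the form $(\omega_1,\omega_2)\mapsto(\omega_1,R(\omega_1,\omega_2))$, and independence from $\gc_2$ forces, after disintegration over $\omega_1$, the conditional law of $R(\omega_1,\cdot)$ to be a Dirac mass not depending on $\omega_2$, so that $R$ is a function of $\omega_1$ alone. Applying this with $\gc_1=\bc_\infty$, $\gc_2=\vc_\infty$ and $\hc=\hat\bc_\infty$ gives $\hat\bc_\infty=\bc_\infty$, and the reduction of the first paragraph then yields $\hat\bc_t=\bc_t\mod\ppf$ for every $t$, establishing the maximality of $(\bc_t)_{t\ge 0}$.
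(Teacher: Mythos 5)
Your proof is correct and follows essentially the same route as the paper's: a stochastic-integral/bracket computation showing that the candidate larger filtration is independent of the complement (the paper's Lemma~\ref{key step, Brownian case}, which obtains $\langle \hat B, V\rangle=0$ a bit more directly by writing $B=\int N\,\d\hat B$ with $N$ orthogonal, inverting to $\hat B=\int N^\top\,\d B$ and using $\langle B,V\rangle=0$, so that the joint predictable representation of $\zc$ with respect to $(B,V)$ is never needed), followed by cancellation of the independent complement and the reduction to final $\sigma$-fields via Lemma~\ref{immersion and final sigma-field}. The ``rigidity lemma'' you single out as the delicate point is exactly the paper's Lemma~\ref{simplifying by C} and has a one-line proof with no product-space disintegration: for $A\in\hc$, the $\sigma$-field $\sigma(A)\vee\gc_1\subset\hc$ is independent of $\gc_2$, hence $\ppf[A|\gc_1]=\ppf[A|\gc_1\vee\gc_2]=\one_A$ almost surely (the last equality because $A\in\gc_1\vee\gc_2$), so $A\in\gc_1$ modulo $\ppf$.
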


The proofs of these three statements are rather simple, and present 
some similarities, although they are different. In the next section, we 
provide two different proofs of proposition~\ref{implication: factor case}. 
The first one relies on Ornstein and Weiss' lemma 
(lemma 2 in~\cite{Ornstein - Weiss}). 
The second one is a bit simpler but requires that $T$ has finite entropy, 
and relies on Berg's lemma (lemma 2.3 in~\cite{Berg}). 
We also provide a proof of proposition~\ref{implication: Brownian case}. 
Proposition~\ref{implication: poly-adic case} follows from 
proposition~\ref{sufficient condition for maximality} and 
corollary~\ref{conditionning}.

The converses of the three implications above are false, but providing 
counter-exam\-ples is difficult. 
Ornstein gived in~\cite{Ornstein} an example of maximal but 
non-complementable factor in~\cite{Ornstein}, but the proof 
is difficult to read. Two counter-examples of a maximal but 
non-complementable poly-adic filtration are given in~\cite{Leuridan}. 
In the present paper, we use a third example to construct a maximal but 
non-complementable Brownian filtration. 

In the present paper, we will also use a small refinement of 
proposition~\ref{implication: Brownian case}, using the notion of 
complementability after $0$.

\begin{defi}
Let $(\bc_t)_{t \ge 0}$ be a Brownian filtration immersed in a Brownian 
filtration $(\zc_t)_{t \ge 0}$ with larger dimension. One says that 
$(\bc_t)_{t \ge 0}$ is complementable after $0$ in $(\zc_t)_{t \ge 0}$ if 
there exists some Brownian filtration $\cc$ immersed in $\zc$ and 
independent of $\bc$ such that,  
$$\forall t \ge 0, \zc_t 
= \bigcap_{s > 0} (\bc_t \vee \cc_t \vee \zc_s) \mod \pi.$$
\end{defi}

% See~\cite{Brossard - Emery - Leuridan} (theorem 1) 
% for a proof of this statement.

\begin{prop}\label{sufficient condition for maximality : Brownian case}
Let $(\bc_t)_{t \ge 0}$ be a $d$-dimensional Brownian filtration immersed 
in a Brownian filtration $(\zc_t)_{t \ge 0}$ with larger dimension. 
If $(\bc_t)_{t \le 0}$ is complementable after $0$, then the filtration 
provided by proposition~\ref{slightly larger Brownian filtration}
is the largest $d$-dimensional Brownian filtration immersed in 
$(\zc_t)_{t \le 0}$ and containing $(\bc_t)_{t \le 0}$. In particular, 
$(\bc'_t)_{t \le 0}$ is maximal in $(\zc_t)_{t \le 0}$.
\end{prop}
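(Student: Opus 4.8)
The plan is to prove the main assertion — that the filtration $(\bc'_t)_{t\ge0}$ of proposition~\ref{slightly larger Brownian filtration} is the \emph{largest} $d$-dimensional Brownian filtration immersed in $(\zc_t)_{t\ge0}$ and containing $(\bc_t)_{t\ge0}$ — and to deduce maximality for free. Indeed, proposition~\ref{slightly larger Brownian filtration} already guarantees that $(\bc'_t)$ is itself such a filtration and contains $(\bc_t)$; so once it is known to be the largest, any $d$-dimensional Brownian filtration immersed in $(\zc_t)$ and containing $(\bc'_t)$ a fortiori contains $(\bc_t)$, hence is contained in $(\bc'_t)$, hence equals it, which is exactly maximality in the sense of definition~\ref{maximal Brownian filtration}. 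So I fix an arbitrary $d$-dimensional Brownian filtration $(\dc_t)$ immersed in $(\zc_t)$ with $\bc_t\subseteq\dc_t$ for all $t$, and must show $\dc_t\subseteq\bc'_t=\overline{\bigcap_{s>0}(\bc_t\vee\zc_s)}$. Because the family $(\bc_t\vee\zc_s)_{s>0}$ is nondecreasing in $s$, it suffices to prove $\dc_t\subseteq\bc_t\vee\zc_s$ for every $s\in(0,t]$ (the range $s>t$ being trivial, since then $\dc_t\subseteq\zc_t\subseteq\zc_s$).

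First I would turn complementability after $0$ into an exact statement away from $0$. Since $\zc_t=\bigcap_{s'>0}(\bc_t\vee\cc_t\vee\zc_{s'})$ and the intersected family increases with $s'$, for any fixed $s\in(0,t]$ one reads off the exchange-free identity
\[ \zc_t=\bc_t\vee\cc_t\vee\zc_s \mod\ppf,\qquad 0<s\le t.\quad(\star) \]
Then, working ``after time $s$'', I would introduce finite-dimensional Brownian motions $B$, $C$, $\delta$ generating $(\bc_t)$, $(\cc_t)$, $(\dc_t)$; by immersion and proposition~\ref{immersion of a Brownian filtration} these are Brownian motions in $(\zc_t)$, their increments after $s$ are independent of $\zc_s$, and, since $\bc$ and $\cc$ are independent, $(B,C)$ is an $n$-dimensional Brownian motion with $\tilde B\perp\tilde C$ (writing $\tilde B_r=B_{s+r}-B_s$, and similarly $\tilde C,\tilde\delta$). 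Setting $\widehat{\bc}_r=\bc_{s+r}\vee\zc_s$ and likewise $\widehat{\cc}_r,\widehat{\dc}_r,\widehat{\zc}_r$, identity $(\star)$ gives $\widehat{\zc}_r=\widehat{\bc}_r\vee\widehat{\cc}_r$, and the independent increments make $\widehat{\bc}$ and $\widehat{\cc}$ independent conditionally on $\zc_s$. Moreover $\bc$ is immersed in $\dc$ (immersion being transitive under $\bc\subseteq\dc\subseteq\zc$), so $B$ is a Brownian motion in $\dc$ and its increments after $s$ are $\widehat{\dc}$-measurable; hence $\widehat{\bc}_r\subseteq\widehat{\dc}_r\subseteq\widehat{\zc}_r$.

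The heart of the proof is to show that $\widehat{\dc}$ is independent of $\widehat{\cc}$ conditionally on $\zc_s$. Using the predictable representation property of $\widehat{\zc}_\bullet$ with respect to $(\tilde B,\tilde C)$ I would write $\tilde\delta=\int\tilde H_1\,d\tilde B+\int\tilde H_2\,d\tilde C$ with a predictable $d\times n$ integrand of orthonormal rows. The containment $\bc\subseteq\dc$ means $\tilde B=\int K'\,d\tilde\delta$ for an $\widehat{\dc}$-predictable orthogonal $K'$; substituting the representation of $\tilde\delta$ and invoking uniqueness forces $K'\tilde H_2=0$, hence $\tilde H_2=0$, so $\tilde\delta=\int\tilde H_1\,d\tilde B$ with $\tilde H_1$ orthogonal. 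Conditioning on $\widehat{\cc}$ (legitimate via regular conditional probabilities on the standard Borel space), the process $\tilde B$ remains a Brownian motion — it is independent of $\widehat{\cc}$ given $\zc_s$, and adjoining an independent $\sigma$-field preserves immersion — while $\tilde H_1$ stays predictable and orthogonal; by Lévy's characterization, $\tilde\delta$ is under almost every conditional law a standard $d$-dimensional Brownian motion. Since this conditional law does not depend on $\widehat{\cc}$, I conclude $\tilde\delta\perp\widehat{\cc}$ given $\zc_s$, i.e. $\widehat{\dc}_r\perp\widehat{\cc}_r\mid\zc_s$.

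Finally I would invoke a conditional form of the independence-exchange principle behind corollary~\ref{independence allows exchange}: if $\zc_s\subseteq\fc_1\subseteq\fc_2$, if $\gc$ is independent of $\fc_2$ conditionally on $\zc_s$, and if $\gc\vee\fc_1=\gc\vee\fc_2$, then $\fc_1=\fc_2$ modulo $\ppf$ (the conditional independence upgrades to independence given $\fc_1$, whence every $\fc_2$-event is $\fc_1$-measurable). Applying it with $\fc_1=\widehat{\bc}_r$, $\fc_2=\widehat{\dc}_r$, $\gc=\widehat{\cc}_r$ — noting $\widehat{\dc}_r\vee\widehat{\cc}_r=\widehat{\zc}_r=\widehat{\bc}_r\vee\widehat{\cc}_r$ — yields $\widehat{\dc}_r=\widehat{\bc}_r$, that is $\dc_t\vee\zc_s=\bc_t\vee\zc_s$, whence $\dc_t\subseteq\bc_t\vee\zc_s$. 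Intersecting over $s\in(0,t]$ gives $\dc_t\subseteq\bc'_t$, completing the argument. The main obstacle is the conditional-independence step of the previous paragraph: it is precisely there that complementability after $0$ (through $(\star)$) is used to reduce, time by time, to a clean independent Brownian product, thereby separating the genuine asymptotic pathology at $0$ — which is exactly why $\bc'$ rather than $\bc$ is maximal — from the regular behaviour on $(0,\infty)$; the remaining care concerns the initial enlargement by $\zc_s$ and the manipulation of regular conditional probabilities.
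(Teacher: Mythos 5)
Your proof is correct and follows essentially the same route as the paper's: the exchange-free identity $\zc_t=\bc_t\vee\cc_t\vee\zc_s$ extracted from complementability after $0$, the independence of the enlarged filtration from the complement $\cc$ obtained via predictable representation and a vanishing cross-bracket (the paper's lemma~\ref{key step, Brownian case}), and the cancellation lemma~\ref{simplifying by C} (which you correctly state and use in a conditional form). The only differences are presentational: the paper reduces to final $\sigma$-fields via lemma~\ref{immersion and final sigma-field} and gets the independence of $\delta$ and $C$ directly from L\'evy's characterization of the pair, whereas you localize in time and take a slightly heavier detour through conditional laws --- note that your computation $\tilde H_2=0$ already yields $\langle\tilde\delta,\tilde C\rangle=0$, so the joint-Brownian argument would conclude immediately without any conditioning.
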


\section{Conditions for maximality: proofs}
\label{conditions for maximality: proofs}

In this section, we provide proofs of the statements given in 
subsection~\ref{conditions for maximality}, except 
proposition~\ref{slightly larger poly-adic filtration} 
which is proved in~\cite{Leuridan}.

\subsection{Proof of proposition~\ref{conditional Pinsker factor}}

By definition, $\bc'$ is closed under taking complements. 
For every $A$ and $B$ in $\zc$, the partition $\{A \cup B, (A \cup B)^c\}$ 
is less fine that $\{A,A^c\} \vee \{B,B^c\}$, hence 
\begin{eqnarray*}
h(T,\{A \cup B, (A \cup B)^c\}|\bc) 
&\le& h(T,\{A,A^c\}\vee \{B,B^c\}|\bc) \\
&\le& h(T,\{A,A^c\}|\bc) + h(T,\{B,B^c\}|\bc).
\end{eqnarray*}
One deduce that $\bc'$ is closed under finite union.

But $h(T,\{A,A^c\}|\bc)$ depends continuously on $A$
when $\zc$ is endowed with the pseudo-metric defined by 
$\delta(A,B) = \pi(A \triangle B)$ (see proposition~\ref{continuity}), 
so $\bc'$ is a closed subset. Hence, $\bc'$ is a complete $\sigma$-field. 

The equalities $h(T,\{A,A^c\}|\bc) = h(T,\{T^{-1}A,T^{-1}A^c\}|\bc) 
= h(T,\{TA,TA^c\}|\bc)$ for every $A \in \zc$ show that $\bc'$ is a 
factor. 

Moreover, $\bc \subset \bc'$ since for every $B \in \bc$, 
$h(T,\{B,B^c\}|\bc) \le H(\{B,B^c\}|\bc) = 0$.

The sub-additivity of entropy shows that $h(T,\alpha|\bc) = 0$
for every finite partition $\alpha \subset \bc'$. Hence 
$h(T,\bc')-h(T,\bc) = h((T,\bc')|\bc) = 0$. 

Last, let $\ac$ be a factor containing $\bc$ and having the same entropy 
as $\bc$. Then for every $A \in \ac$, 
$$h(T,\{A,A^c\}|\bc) \le h((T,\ac)|\bc) = h(T,\ac)-h(T,\bc) = 0,$$ 
so $\ac \subset \bc'$. The proof is complete. 

\subsection{Proof of proposition~\ref{slightly larger factor}}

The proofs given here are inspired by the proofs of the similar results 
involving (non-conditional) Pinsker factor given 
in~\cite{Cornfeld - Fomin - Sinai}.

For every countable measurable partition $\alpha$ of $(Z,\zc,\pi)$ 
and for every integers $p \le q$, we introduce the notations 
$$\alpha_p^q := \bigvee_{k=p}^{q} T^{-k}\alpha, \quad
\alpha_1^\infty = \bigvee_{k \ge 1}T^{-k}\alpha, \quad 
\bc^\alpha := \bigcap_{n \ge 0} 
\Big(\bc \vee \bigvee_{k \ge n}T^{-k}\alpha \Big).$$
Let us recall that the inclusion 
$$\bc^\alpha \supset \bc \vee \bigcap_{n \ge 0} 
\Big(\bigvee_{k \ge n}T^{-k}\alpha \Big)$$ 
can be strict modulo $\ppf$. We also note that the larger is $n$, 
the smaller is the partition 
$$T^{-n}\alpha_1^\infty = \bigvee_{k \ge n+1}T^{-k}\alpha,$$
so $\bc^\alpha$ is also the intersection of the non-increasing sequence  
$(\bc \vee T^{-n}\alpha_1^\infty)_{n \ge 0}$.

We begin with the following lemma. 

\begin{lemm}\label{conditional entropy given B gamma}
Let $\alpha$ and $\gamma$ be countable measurable partitions of 
$(Z,\zc,\pi)$, with finite entropy. Then
$H(\alpha|\alpha_1^\infty \vee \bc^\gamma) = H(\alpha|\alpha_1^\infty \vee \bc)$. 
\end{lemm}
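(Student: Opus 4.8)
The plan is to prove the two inequalities separately; the inequality ``$\le$'' is immediate and all the work lies in ``$\ge$''. Since $\bc\subset\bc^\gamma$, conditioning $\alpha$ on the larger $\sigma$-field $\alpha_1^\infty\vee\bc^\gamma$ can only decrease entropy, so $H(\alpha|\alpha_1^\infty\vee\bc^\gamma)\le H(\alpha|\alpha_1^\infty\vee\bc)$. For the reverse inequality I would first recast both sides as conditional entropy rates. The collection $\bc^\gamma$ is a factor: since $T^{-1}(\bc\vee\gamma_n^\infty)=\bc\vee\gamma_{n+1}^\infty$, one has $T^{-1}\bc^\gamma=\bc^\gamma$. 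Hence, by the conditional Kolmogorov--Sinai formula $H(\xi|\xi_1^\infty\vee\dc)=h(T,\xi|\dc)$, valid for any factor $\dc$ and any finite-entropy partition $\xi$ (recalled in section~\ref{Annex}), the statement is equivalent to $h(T,\alpha|\bc^\gamma)=h(T,\alpha|\bc)$.

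Next I would isolate a general principle: enlarging the conditioning factor by a factor of zero relative entropy leaves conditional entropy rates unchanged. Precisely, if $\dc$ is a factor with $\bc\subset\dc$ and $h(T,\dc|\bc)=0$, then $h(T,\alpha|\dc)=h(T,\alpha|\bc)$. Indeed, monotonicity gives $h(T,\alpha|\dc)\le h(T,\alpha|\bc)$, while the conditional addition formula yields $h(T,\alpha|\bc)\le h(T,\alpha\vee\dc|\bc)=h(T,\dc|\bc)+h(T,\alpha|\dc)=h(T,\alpha|\dc)$. Applying this with $\dc=\bc^\gamma$ (after reducing, by an increasing-limit argument, to the factors generated over $\bc$ by finitely many atoms of $\bc^\gamma$, so that all relative entropies stay finite) reduces the lemma to showing $h(T,\bc^\gamma|\bc)=0$, i.e. $h(T,\delta|\bc)=0$ for every finite partition $\delta\subset\bc^\gamma$.

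The heart of the proof is this last point. Fix a finite $\delta\subset\bc^\gamma$ and let $\dc_\delta:=\bc\vee\bigvee_{k\in\zzf}T^{-k}\delta$ be the factor it generates over $\bc$. The key observation is that, although $\delta$ is defined only through the decreasing intersection $\bc^\gamma$ (so it lives in the tail $\sigma$-field $\bc\vee\gamma_n^\infty$ for \emph{every} $n$), the whole factor $\dc_\delta$ is trapped in one fixed $\sigma$-field: since $\bc^\gamma$ is a factor, $T^{-k}\delta\subset\bc^\gamma$ for all $k$, whence $\dc_\delta\subset\bc^\gamma\subset\bc\vee\gamma_1^\infty$. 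I would then apply the conditional addition formula to $\gamma\vee\delta$ in the two possible orders. Written as $h(T,\gamma\vee\delta|\bc)=h(T,\gamma|\bc)+h(T,\delta|\dc_\gamma)$, and using $\delta\subset\bc\vee\gamma_1^\infty\subset\dc_\gamma$, the last term vanishes, so $h(T,\gamma\vee\delta|\bc)=h(T,\gamma|\bc)$. Written the other way, $h(T,\gamma\vee\delta|\bc)=h(T,\delta|\bc)+H(\gamma|\gamma_1^\infty\vee\dc_\delta)$; but $\bc\subset\dc_\delta\subset\bc\vee\gamma_1^\infty$ forces $\gamma_1^\infty\vee\dc_\delta=\gamma_1^\infty\vee\bc$, so the last term equals $H(\gamma|\gamma_1^\infty\vee\bc)=h(T,\gamma|\bc)$. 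Comparing the two expressions and cancelling the finite quantity $h(T,\gamma|\bc)\le H(\gamma)<\infty$ gives $h(T,\delta|\bc)=0$, as wanted.

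The delicate point I would watch, and the reason the argument is not circular, is exactly the failure of the exchange $\bc\vee\bigcap_n\gamma_n^\infty=\bigcap_n(\bc\vee\gamma_n^\infty)$ warned about just before the statement: one must \emph{not} try to replace $\bc^\gamma$ by $\bc$ joined with the tail of $\gamma$, nor pass the intersection through the conditional entropy. What rescues the computation is not any such exchange but the verbatim containment $\dc_\delta\subset\bc\vee\gamma_1^\infty$, which is what makes the two readings of the addition formula collapse. I would finally double-check that the conditional Kolmogorov--Sinai and Abramov--Rokhlin addition formulas are stated in section~\ref{Annex} in their versions relative to a factor, and that the finite-entropy hypotheses on $\alpha$ and $\gamma$ keep every term finite.
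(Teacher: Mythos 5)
Your proof is correct, but it takes a genuinely different route from the paper's. The paper proves the lemma head-on: it compares the three quantities $H(\alpha_{-n+1}^0\vee\gamma_{-n+1}^0\,|\,\cdot\,)$ under the conditionings $\alpha_1^\infty\vee\bc\vee\gamma_1^\infty$, $\alpha_1^\infty\vee\bc$ and $\bc$, shows their Ces\`aro limits coincide, extracts from the term-by-term comparison that $n^{-1}H(\alpha_{-n+1}^0|\alpha_1^\infty\vee\bc\vee\gamma_1^\infty)\to H(\alpha|\alpha_1^\infty\vee\bc)$, and concludes by a backward-martingale passage through the decreasing sequence $\alpha_1^\infty\vee\bc\vee T^{-n}\gamma_1^\infty$, whose intersection contains $\alpha_1^\infty\vee\bc^\gamma$; the corollary $\bc^\gamma\subset\bc'$ is then \emph{deduced} from the lemma. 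You invert this logic: you first establish the corollary-level fact $h(T,\delta|\bc)=0$ for every finite $\delta\subset\bc^\gamma$, by the neat device of reading the relative Pinsker formula for $\gamma\vee\delta$ in both orders and exploiting the verbatim inclusions $\delta\subset\bc^\gamma\subset\bc\vee\gamma_1^\infty$ (so $\gamma_1^\infty\vee\dc_\delta=\gamma_1^\infty\vee\bc$), and you then upgrade to the lemma via the reusable principle that adjoining a factor of zero relative entropy over $\bc$ does not change conditional entropy rates, together with an increasing-limit reduction to finitely generated sub-factors of $\bc^\gamma$ (legitimate here since $\bc^\gamma$ is essentially separable on a Lebesgue space, and Proposition~\ref{conditional entropy and monotone sequence of sigma-fields} supplies the limit). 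Your argument is shorter and more conceptual, and it gets the subsequent corollary for free; its cost is the dependency you yourself flag at the end: the addition/Pinsker formula \emph{relative to a background factor} $\bc$, namely $h(T,\alpha\vee\beta|\bc)=h(T,\beta|\bc)+H\bigl(\alpha\,\big|\,\alpha_1^\infty\vee\bc\vee\bigvee_{k\in\zzf}T^{-k}\beta\bigr)$, is \emph{not} in section~\ref{Annex}, which states Proposition~\ref{Pinsker's formula} only in the absolute case; since $\bc$ need not admit a finite-entropy generator, you cannot simply absorb it into $\beta$, and you would have to prove the relative version yourself. This is a routine relativization of the paper's proof (carry $\bc$ through every conditioning; the monotone-convergence proposition already handles arbitrary $\sigma$-fields), so it is a gap of bookkeeping rather than of substance, but it must be filled for your write-up to be self-contained, whereas the paper's route uses only the tools already stated in the Annex.
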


\begin{proof}
The addition formula for conditional entropy yields for every $n \ge 1$,
$$H(\alpha \vee \cdots \vee T^{n-1}\alpha|\alpha_1^\infty \vee \bc) 
= \sum_{k=0}^{n-1}H(T^k\alpha|T^k\alpha_1^\infty \vee \bc)
= nH(\alpha|\alpha_1^\infty \vee \bc).$$
Replacing $\alpha$ with $\alpha \vee \gamma$ gives 
$$U_n := 
H(\alpha_{-n+1}^0 \vee \gamma_{-n+1}^0|\alpha_1^\infty \vee \bc \vee \gamma_1^\infty) 
= nH(\alpha \vee \gamma|\alpha_1^\infty \vee \bc \vee \gamma_1^\infty).$$
Set
$$V_n := H(\alpha_{-n+1}^0 \vee \gamma_{-n+1}^0|\alpha_1^\infty \vee \bc)
\text{ and } W_n := H(\alpha_{-n+1}^0 \vee \gamma_{-n+1}^0|\bc).$$ 
Since $U_n \le V_n \le W_n$ and
$$\lim_n W_n/n
= h((T,\alpha \vee \gamma)|\bc)
= H(\alpha \vee \gamma|\alpha_1^\infty \vee \bc \vee \gamma_1^\infty),$$ 
we get $\lim_n U_n/n = \lim_n V_n/n$.
% Therefore, the inequalities 
% $$H(\alpha_{-n+1}^0 \vee \gamma_{-n+1}^0 | 
% \alpha_1^\infty \vee \bc \vee \gamma_1^\infty) 
% \le H(\alpha_{-n+1}^0 \vee \gamma_{-n+1}^0|\alpha_1^\infty \vee \bc)\le 
% H(\alpha_{-n+1}^0 \vee \gamma_{-n+1}^0|\bc)$$
% yield 
% $$\lim_n n^{-1}H(\alpha_{-n+1}^0 \vee \gamma_{-n+1}^0|\alpha_1^\infty \vee \bc \vee \gamma_1^\infty)
% = \lim_n n^{-1}H(\alpha_{-n+1}^0 \vee \gamma_{-n+1}^0|\alpha_1^\infty \vee \bc).$$
But 
$$U_n = H(\alpha_{-n+1}^0|\alpha_1^\infty \vee \bc \vee \gamma_1^\infty)
+ H(\alpha_{-n+1}^0 \vee \gamma_{-n+1}^0|T^n\alpha_1^\infty \vee \bc \vee \gamma_1^\infty),$$ 
$$V_n = H(\alpha_{-n+1}^0|\alpha_1^\infty \vee \bc)
+ H(\alpha_{-n+1}^0 \vee \gamma_{-n+1}^0|T^n\alpha_1^\infty \vee \bc).$$ 
In these two expressions, each term in $U_n$ is less or equal that the 
corresponding term in $V_n$. 
Since $H(\alpha_{-n+1}^0|\alpha_1^\infty \vee \bc) 
= n H(\alpha|\alpha_1^\infty \vee \bc)$ 
we derive that 
$$\lim_n n^{-1} H(\alpha_{-n+1}^0|\alpha_1^\infty \vee \bc \vee \gamma_1^\infty)
= H(\alpha|\alpha_1^\infty \vee \bc).$$
Furthermore, we note that   
$$\alpha_1^\infty \vee \bc^\gamma \subset \bigcap_{n \ge 0} 
\Big(\alpha_1^\infty \vee \bc \vee T^{-n}\gamma_1^\infty \Big),$$
so proposition~\ref{conditional entropy and monotone sequence of sigma-fields},
Ces\`aro's lemma and addition formula for conditional entropy yield
\begin{eqnarray*}
H(\alpha|\alpha_1^\infty \vee \bc^\gamma) 
&\ge& \lim_n H(\alpha|\alpha_1^\infty \vee \bc \vee T^{-n}\gamma_1^\infty) \\
&=& \lim_n H(T^n\alpha|T^n\alpha_1^\infty \vee \bc \vee \gamma_1^\infty) \\
&=& \lim_n n^{-1}\sum_{k=0}^{n-1}H(T^k\alpha|T^k\alpha_1^\infty \vee \bc \vee \gamma_1^\infty) \\
&=& \lim_n n^{-1}H(\alpha_{-n+1}^0|\alpha_1^\infty \vee \bc \vee \gamma_1^\infty) \\
&=& H(\alpha|\alpha_1^\infty \vee \bc).
\end{eqnarray*}
But the reverse inequality follows from the inclusion 
$\alpha_1^\infty \vee \bc \subset \alpha_1^\infty \vee \bc^\gamma$. 
Hence the equality holds. 
\end{proof}

Lemma~\ref{conditional entropy given B gamma} yields one inclusion
in the equality of proposition~\ref{slightly larger factor}, thanks 
to the next corollary.

\begin{coro}
For every countable measurable partition $\gamma$ 
of $(Z,\zc,\pi)$, with finite entropy, $\overline{\bc^\gamma} \subset \bc'$. 
\end{coro}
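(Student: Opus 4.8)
The plan is to deduce the corollary directly from Lemma~\ref{conditional entropy given B gamma}, combined with the conditional Kolmogorov--Sinai formula $h(T,\alpha|\bc)=H(\alpha|\alpha_1^\infty \vee \bc)$ that was already used (for $\beta=\alpha\vee\gamma$) in the proof of that lemma. Since proposition~\ref{conditional Pinsker factor} establishes that $\bc'$ is a complete $\sigma$-field, it suffices to prove the inclusion $\bc^\gamma \subset \bc'$; the completed inclusion $\overline{\bc^\gamma} \subset \bc'$ then follows for free.

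First I would fix an arbitrary $A \in \bc^\gamma$ and consider the finite partition $\alpha := \{A,A^c\}$, whose entropy is at most $\log 2$, so that both $\alpha$ and the given $\gamma$ meet the finite-entropy hypothesis of the lemma. Because $\bc^\gamma$ is a $\sigma$-field containing $A$, we have $\sigma(\alpha)\subset\bc^\gamma$, hence $H(\alpha|\bc^\gamma)=0$; and since enlarging the conditioning $\sigma$-field can only decrease conditional entropy, also $H(\alpha|\alpha_1^\infty \vee \bc^\gamma)=0$. Invoking Lemma~\ref{conditional entropy given B gamma} with this $\alpha$ and $\gamma$ then gives
$$H(\alpha|\alpha_1^\infty \vee \bc) = H(\alpha|\alpha_1^\infty \vee \bc^\gamma) = 0.$$
By the formula $h(T,\alpha|\bc)=H(\alpha|\alpha_1^\infty \vee \bc)$, this reads precisely $h(T,\{A,A^c\}|\bc)=0$, i.e. $A \in \bc'$. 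As $A$ was arbitrary in $\bc^\gamma$, this yields $\bc^\gamma \subset \bc'$, and completeness of $\bc'$ upgrades this to $\overline{\bc^\gamma} \subset \bc'$.

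In this argument there is essentially no remaining obstacle: all the analytic content has been absorbed into Lemma~\ref{conditional entropy given B gamma}, and what is left is bookkeeping. The only points that need a little care are verifying that $\alpha=\{A,A^c\}$ genuinely lies inside the $\sigma$-field $\bc^\gamma$ (so that its conditional entropy collapses to zero) and recalling that $\bc'$ is complete so that passing to the closure is harmless. The conceptual message is simply that replacing $\bc$ by $\bc^\gamma$ leaves unchanged the conditional entropy of $\alpha$ given its own future together with the factor, so any event trivial for the dynamics relative to $\bc^\gamma$ is already trivial relative to $\bc$, i.e. belongs to $\bc'$.
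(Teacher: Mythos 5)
Your proof is correct and follows exactly the paper's own argument: reduce to $\bc^\gamma \subset \bc'$ by completeness, take $\alpha=\{A,A^c\}$ for $A\in\bc^\gamma$, and use Lemma~\ref{conditional entropy given B gamma} together with $h(T,\alpha|\bc)=H(\alpha|\alpha_1^\infty\vee\bc)$ and the vanishing of $H(\alpha|\alpha_1^\infty\vee\bc^\gamma)$. The only difference is that you spell out why $H(\alpha|\alpha_1^\infty\vee\bc^\gamma)=0$, which the paper leaves implicit.
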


\begin{proof}
Since $\bc'$ is complete, one only needs to check that 
$\bc^\gamma \subset \bc'$.
Let $A \in \bc^\gamma$ and $\alpha = \{A,A^c\}$. 
Lemma~\ref{conditional entropy given B gamma} yields
$h((T,\alpha)|\bc) = H(\alpha|\alpha_1^\infty \vee \bc) 
= H(\alpha|\alpha_1^\infty \vee \bc^\gamma) = 0$, so $A \in \bc'$. 
The inclusion follows.
\end{proof}

Lemma~\ref{conditional entropy given B gamma} will also help us to prove 
the next useful lemma. 

\begin{lemm}\label{B alpha gamma}
Let $\alpha$ and $\gamma$ be countable measurable partitions 
of $(Z,\zc,\pi)$, with finite entropy. Let $N \ge 0$ and 
$\eta$ be a finite partition less fine that 
$\gamma_{-N}^N = \bigvee_{k=-N}^N T^{-k}\gamma$. Then  
$$H(\eta|(\bc^\alpha)^\gamma) = H(\eta|\bc^\gamma \vee \bc^\alpha) 
= H(\eta|\bc^\gamma).$$
\end{lemm}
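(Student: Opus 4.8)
The plan is to squeeze the middle quantity between the two outer ones by inclusions, and then to prove that the two outer quantities coincide. First I would record the chain $\bc^\gamma \subset \bc^\gamma \vee \bc^\alpha \subset (\bc^\alpha)^\gamma$. The left inclusion is trivial; for the right one I use the non-increasing representation $(\bc^\alpha)^\gamma = \bigcap_{n \ge 0}(\bc^\alpha \vee \gamma_{n+1}^\infty)$, where $\gamma_{n+1}^\infty := \bigvee_{k \ge n+1}T^{-k}\gamma$, which contains $\bc^\alpha$ trivially and contains $\bc^\gamma = \bigcap_{n\ge 0}(\bc \vee \gamma_{n+1}^\infty)$ because $\bc \subset \bc^\alpha$. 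Since enlarging the conditioning $\sigma$-field only lowers conditional entropy, this yields $H(\eta|(\bc^\alpha)^\gamma) \le H(\eta|\bc^\gamma \vee \bc^\alpha) \le H(\eta|\bc^\gamma)$, so everything reduces to the single reverse inequality $H(\eta|(\bc^\alpha)^\gamma) \ge H(\eta|\bc^\gamma)$. I stress that one cannot shortcut this by replacing $(\bc^\alpha)^\gamma$ with $\bc^\alpha \vee \bc^\gamma$, since intersection and join do not commute (the warning preceding this section).

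I would in fact prove the sharper statement that, for every $n \ge N$, $H(\eta|\bc \vee \gamma_{n+1}^\infty) = H(\eta|\bc^\alpha \vee \gamma_{n+1}^\infty)$; letting $n \to \infty$ and applying Proposition~\ref{conditional entropy and monotone sequence of sigma-fields} to the non-increasing sequences $\bc \vee \gamma_{n+1}^\infty \downarrow \bc^\gamma$ and $\bc^\alpha \vee \gamma_{n+1}^\infty \downarrow (\bc^\alpha)^\gamma$ then gives $H(\eta|\bc^\gamma) = H(\eta|(\bc^\alpha)^\gamma)$ and closes the sandwich. By monotonicity of conditional mutual information it is enough to treat $\eta = \gamma_{-N}^N$ (which has finite entropy, bounded by $(2N+1)H(\gamma)$): equality there means $\gamma_{-N}^N$ and $\bc^\alpha$ are conditionally independent given $\bc \vee \gamma_{n+1}^\infty$, a property that descends to every coarser $\eta$. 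Now for a factor $\dc \in \{\bc,\bc^\alpha\}$, I apply the chain rule to the full block $\gamma_{-N}^n$ conditionally on $\dc \vee \gamma_{n+1}^\infty$: each one-step term is $H(T^{-k}\gamma \mid \dc \vee \gamma_{k+1}^\infty)$, which equals $H(\gamma \mid \gamma_1^\infty \vee \dc)$ by $T$-invariance, so $H(\gamma_{-N}^n \mid \dc \vee \gamma_{n+1}^\infty) = (n+N+1)\,H(\gamma \mid \gamma_1^\infty \vee \dc)$. The symmetric form of Lemma~\ref{conditional entropy given B gamma}, obtained by exchanging the roles of $\alpha$ and $\gamma$, states exactly that $H(\gamma \mid \gamma_1^\infty \vee \bc^\alpha) = H(\gamma \mid \gamma_1^\infty \vee \bc)$, so this block entropy takes the same value for $\dc = \bc$ and $\dc = \bc^\alpha$.

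Finally I would split the same block the other way, $H(\gamma_{-N}^n \mid \dc \vee \gamma_{n+1}^\infty) = H(\gamma_{-N}^N \mid \dc \vee \gamma_{n+1}^\infty) + H(\gamma_{N+1}^n \mid \gamma_{-N}^N \vee \dc \vee \gamma_{n+1}^\infty)$, and subtract the two instances $\dc = \bc$ and $\dc = \bc^\alpha$. The left-hand sides agree by the previous paragraph, so the difference of the first terms, namely $H(\gamma_{-N}^N \mid \bc \vee \gamma_{n+1}^\infty) - H(\gamma_{-N}^N \mid \bc^\alpha \vee \gamma_{n+1}^\infty)$, equals the negative of $H(\gamma_{N+1}^n \mid \gamma_{-N}^N \vee \bc \vee \gamma_{n+1}^\infty) - H(\gamma_{N+1}^n \mid \gamma_{-N}^N \vee \bc^\alpha \vee \gamma_{n+1}^\infty)$. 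Because $\bc \subset \bc^\alpha$, both of these differences are nonnegative, so a nonnegative number equals the negative of a nonnegative number and both must vanish; hence $H(\gamma_{-N}^N \mid \bc \vee \gamma_{n+1}^\infty) = H(\gamma_{-N}^N \mid \bc^\alpha \vee \gamma_{n+1}^\infty)$ for every $n \ge N$, as wanted. The step I expect to demand the most care is the combination at the core: getting the block identity $H(\gamma_{-N}^n \mid \dc \vee \gamma_{n+1}^\infty) = (n+N+1)H(\gamma \mid \gamma_1^\infty \vee \dc)$ with correct index bookkeeping, and then exploiting the \emph{opposite} monotonicity of the two pieces of the alternative split to turn an a priori inequality into an exact equality; finiteness of $H(\gamma)$ is used throughout to keep all conditional entropies finite.
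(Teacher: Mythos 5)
Your proof is correct and follows essentially the same route as the paper's: both rest on Lemma~\ref{conditional entropy given B gamma} with the roles of $\alpha$ and $\gamma$ exchanged, combined with the addition formula for conditional entropy and the observation that two nonnegative differences summing to zero must both vanish, followed by a monotone passage to the limit in the conditioning $\sigma$-field. If anything, your explicit use of the decreasing tails $\gamma_{n+1}^\infty$ and of the block $\gamma_{-N}^{n}$ makes the final limit step (via Proposition~\ref{conditional entropy and monotone sequence of sigma-fields}) more transparent than the paper's formulation in terms of $(\gamma_{-n}^{n})_1^\infty = T^{n}\gamma_1^\infty$.
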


\begin{proof}
For every $n \ge 0$, $(\gamma_{-n}^n)_1^\infty = T^n\gamma_1^\infty$, so 
$H(\gamma_{-n}^n|T^n\gamma_1^\infty \vee \bc^\alpha) =
H(\gamma_{-n}^n|T^n\gamma_1^\infty \vee \bc)$ 
by lemma~\ref{conditional entropy given B gamma}.
When $n \ge N$, $\gamma_{-n}^n = \eta \vee \gamma_{-n}^n$, so
$$H(\gamma_{-n}^n|T^n\gamma_1^\infty \vee \bc^\alpha) =
H(\eta|T^n\gamma_1^\infty \vee \bc^\alpha) + 
H(\gamma_{-n}^n|T^n\gamma_1^\infty \vee \bc^\alpha \vee \eta),$$ 
$$H(\gamma_{-n}^n|T^n\gamma_1^\infty \vee \bc) =
H(\eta|T^n\gamma_1^\infty \vee \bc) + 
H(\gamma_{-n}^n|T^n\gamma_1^\infty \vee \bc \vee \eta).$$ 
Since $H(\beta|\ac \vee \bc^\alpha) \le H(\beta|\ac \vee \bc)$ 
for every countable measurable partition $\beta$ and every 
$\sigma$-field $\ac \subset \zc$, we get 
$H(\eta|T^n\gamma_1^\infty \vee \bc^\alpha) = H(\eta|T^n\gamma_1^\infty \vee \bc)$.
Letting $n$ go to infinity yields 
$H(\eta|(\bc^\alpha)^\gamma) = H(\eta|\bc^\gamma)$. Since
$\bc^\gamma \subset \bc^\gamma \vee \bc^\alpha \subset (\bc^\alpha)^\gamma$,
the result follows.
\end{proof}

\begin{coro}
Assume that $T$ has finite entropy and admits a generator $\gamma$. 
Then for every countable measurable partition $\alpha$ ,
% If $\alpha$ is measurable 
% with regard to the $\sigma$-field generated by 
% $\gamma^T:=\bigvee_{k \in \zzf} T^k\gamma$, then 
$\bc^\alpha \subset \bc^\gamma = (\bc^\gamma)^\gamma \mod \pi$. 
\end{coro}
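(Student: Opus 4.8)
The plan is to read both assertions off Lemma~\ref{B alpha gamma}, using throughout the elementary remark that a sub-$\sigma$-field $\dc$ which is independent of itself conditionally on a sub-$\sigma$-field $\ec$ must satisfy $\dc \subset \overline{\ec} \mod \pi$: indeed every $D \in \dc$ then obeys $\pi(D|\ec) = \pi(D \cap D|\ec) = \pi(D|\ec)^2$ almost surely, so $\pi(D|\ec) \in \{0,1\}$ almost surely and $\one_D$ is $\overline{\ec}$-measurable. Since $T$ has finite entropy, I take the generator $\gamma$ to have finite entropy, so that Lemma~\ref{B alpha gamma} applies to it.

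I would first establish the equality $\bc^\gamma = (\bc^\gamma)^\gamma \mod \pi$, the inclusion $\bc^\gamma \subset (\bc^\gamma)^\gamma$ being immediate. Apply Lemma~\ref{B alpha gamma} with $\alpha$ replaced by $\gamma$: for every $N \ge 0$ and every finite partition $\eta$ less fine than $\gamma_{-N}^N$ it gives $H(\eta|(\bc^\gamma)^\gamma) = H(\eta|\bc^\gamma)$. As $\bc^\gamma \subset (\bc^\gamma)^\gamma$, this says that $\eta$ and $(\bc^\gamma)^\gamma$ are independent conditionally on $\bc^\gamma$. The finite partitions $\eta$ arising this way generate $\zc$ (because $\gamma$ is a generator), and conditional independence is preserved when one passes to the $\sigma$-field they generate, so $\zc$ and $(\bc^\gamma)^\gamma$ are independent conditionally on $\bc^\gamma$. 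Since $(\bc^\gamma)^\gamma \subset \zc$, the field $(\bc^\gamma)^\gamma$ is independent of itself conditionally on $\bc^\gamma$, and the remark above forces $(\bc^\gamma)^\gamma \subset \overline{\bc^\gamma}$.

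The inclusion $\bc^\alpha \subset \bc^\gamma$ follows by the same argument as soon as $\alpha$ has finite entropy, in particular whenever $\alpha$ is finite. This time I use the middle equality of Lemma~\ref{B alpha gamma}, $H(\eta|\bc^\gamma \vee \bc^\alpha) = H(\eta|\bc^\gamma)$ for every finite $\eta$ less fine than some $\gamma_{-N}^N$, which says that such $\eta$ and $\bc^\alpha$ are independent conditionally on $\bc^\gamma$. Letting these $\eta$ generate $\zc$ gives that $\zc$, hence also $\bc^\alpha \subset \zc$, is independent of itself conditionally on $\bc^\gamma$, so $\bc^\alpha \subset \overline{\bc^\gamma} \mod \pi$.

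The one genuinely delicate point, which I expect to be the main obstacle, is the extension from finite-entropy to arbitrary countable $\alpha$. Writing $\alpha^{[m]}$ for the finite partition that lumps together all atoms of $\alpha$ beyond the $m$-th, one has $\alpha^{[m]} \uparrow \alpha$ with each $\alpha^{[m]}$ finite, so the finite case already yields $\bc^{\alpha^{[m]}} \subset \bc^\gamma$ and thus $\bigvee_m \bc^{\alpha^{[m]}} \subset \bc^\gamma$. What is missing is the reverse bound $\bc^\alpha \subset \bigvee_m \bc^{\alpha^{[m]}}$, i.e. the commutation of the decreasing intersection over $n$ defining $\bc^\alpha$ with the increasing join over $m$. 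This exchange fails in general --- it is exactly the phenomenon stressed just before proposition~\ref{slightly larger factor} --- and would have to be justified through the finite-entropy estimates of Lemmas~\ref{conditional entropy given B gamma} and~\ref{B alpha gamma} rather than naively. In all the uses made of the corollary in this paper, however, $\alpha$ is a finite partition, for which the argument of the preceding paragraphs is complete.
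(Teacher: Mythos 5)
Your argument is correct and is essentially the paper's own proof: both rest on Lemma~\ref{B alpha gamma} applied once to $(\gamma,\gamma)$ and once to $(\alpha,\gamma)$ on the generating algebra $\bigcup_{N}\sigma(\gamma_{-N}^N)$, and then extend to all of $\zc$ --- the paper by closedness of the entropy identity for the pseudo-metric $\delta(A,B)=\pi(A\triangle B)$, you by a monotone-class extension of the equivalent conditional-independence statement, after which the conclusions ($H(\{A,A^c\}|\bc^\gamma)=0$ versus self-conditional-independence) coincide. Your caveat about countable $\alpha$ of infinite entropy is fair but puts you at no disadvantage relative to the paper: its proof also invokes Lemma~\ref{B alpha gamma} for the pair $(\alpha,\gamma)$ and hence tacitly assumes $H(\alpha)<+\infty$ (as both of you assume $H(\gamma)<+\infty$), and the corollary is only ever applied to two-block partitions.
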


\begin{proof}
The collection of all $A \in \zc$ such that 
$$H(\{A,A^c\}|\bc^\gamma) = H(\{A,A^c\}|\bc^\gamma \vee \bc^\alpha) =  
H(\{A,A^c\}|(\bc^\gamma)^\gamma)$$
is a closed subset for the pseudo-metric defined by 
$\delta(A,B) = \pi(A \triangle B)$, 
and contains the algebra $\bigcup_{N \in \nnf} \sigma(\gamma_{-N}^N)$ 
by lemma~\ref{B alpha gamma} applied once to $(\alpha,\gamma)$
and once to $(\gamma,\gamma)$. Therefore, these collection equals 
the whole $\sigma$-field $\zc$ itself. In particular, 
$H(\{A,A^c\}|\bc^\gamma) = 0$ whenever $A \in \bc^\gamma \vee \bc^\alpha$ or 
$A \in (\bc^\gamma)^\gamma$. 
Hence $\bc^\gamma \vee \bc^\alpha$ and 
$(\bc^\gamma)^\gamma$ are contained in $\bc^\gamma$ modulo the null sets. 
The result follows. 
\end{proof}

We now achieve the proof of proposition~\ref{slightly larger factor}. 
Assume that $T$ has finite entropy and that $\gamma$ is a generator of $T$. 
We have to prove that $\bc' \subset \overline{\bc^\gamma}$.
So let $A \in \bc'$ and $\alpha = \{A,A^c\}$. For every $n \ge 0$, set 
$$\dc_n := \sigma\big(\bigvee_{k \ge n} T^{-k}\alpha\big).$$ 
Since, $\dc_1=\alpha_1^\infty$, the equality
$H(\alpha|\alpha_1^\infty \vee \bc) = h(T,\alpha|\bc) = 0$ yields
$\alpha \subset \dc_1 \vee \bc \mod \pi$, so 
$\dc_0 = \dc_1 \vee \bc \mod \pi$. By applying $T^{-n}$, we get more generally 
$\dc_n = \dc_{n+1} \vee \bc \mod \pi$. By induction, 
$\dc_0 = \dc_n \vee \bc \mod \pi$ for every $n \ge 0$. 
Hence $\dc_0 = \bc^\alpha \subset \bc^\gamma \mod \pi$, 
thanks to the last corollary, so $A \in \overline{\bc^\gamma}$. 
We are done.

\subsection{Proof of proposition~\ref{slightly larger Brownian filtration}}

Fix a $d$-dimensional Brownian motion $B$ generating the filtration 
$\bc$ modulo the null sets. 

Let $t>\ep>0$. Since $\bc$ is immersed in $\zc$, the Brownian motion 
$B^{(\ep)} := B_{\ep+\cdot}-B_\ep$ is independent of $\zc_\ep$, which is the 
terminal $\sigma$-field of the filtration $(\bc_\ep \vee \zc_s)_{s \in [0,\ep]}$. 
Therefore
\begin{eqnarray*}
\bc'_t 
&=& \bigcap_{s \in ]0,\ep]} 
\big( \sigma((B^{(\ep)}_r)_{r \in [0,t-\ep]}) \vee \bc_\ep \vee \zc_s \big) \\
&=& \sigma((B^{(\ep)}_r)_{r \in [0,t-\ep]}) 
\vee \bigcap_{s \in ]0,\ep]} \big( \bc_\ep \vee \zc_s \big) \\
&=& \sigma((B^{(\ep)}_r)_{r \in [0,t-\ep]}) \vee \bc'_\ep \mod \ppf.
\end{eqnarray*}
Hence, the filtration $\bc'$ has independent increments after $\ep$, 
provided by the increments of $B$ after $\ep$, so $\bc'$ is Brownian 
after $0$. 

Moreover, since $B^{(\ep)}$ is independent of $\zc_\ep$, the equality 
modulo $\ppf$ above shows that for every $t>\ep>0$, 
$\bc'_t$ and $\zc_\ep$ are independent 
conditionally on $\bc'_\ep$. This conditional independence still holds 
when $\ep=0$, since $\zc_0$ is trivial. Thus $\bc'$ is immersed in $\zc$. 
By proposition~\ref{Brownian after $0$ and immersed in a Brownian filtration}
(or corollary 1 in~\cite{Emery}), $\bc'$ is a $d$-dimensional 
Brownian filtration. 

For every $t>0$, $\bc_t \subset \bc'_t \subset \bc_t \vee \zc_t 
= \zc_t \mod \pi$. These inclusions modulo $\ppf$ still hold when 
$t=0$ since $\bc_0$ and $\zc_{0+} = \bigcap_{s>0}\zc_s$ are trivial. 
Since $\bc$ is immersed in $\zc$, we deduce that $\bc$ is immersed in $\bc'$. 

Last, let $t \ge 0$. For every $n \ge 1$, 
$$\bc''_t \subset \overline{\bc'_t \vee \zc_{1/n}}
\subset \overline{\overline{(\bc_t \vee \zc_{1/n})} \vee \zc_{1/n}} 
= \overline{\bc_t \vee \zc_{1/n}}.$$ 
If $A \in \bc''_t$, then for each $n \ge 1$, one can find 
$B_n \in \bc_t \vee \zc_{1/n}$ such that $\ppf(A \triangle B_n) = 0$; 
hence $A \in \bc'_t$ since $B := \limsup_n B_n$ belongs to 
$\bigcap_{n \ge 1}(\bc_t \vee \zc_{1/n})$ and $\ppf(A \triangle B) = 0$. 
The equality $\bc''_t=\bc'_t$ follows.

\subsection{Proof of the statements of example~\ref{Levy}}

Assume that $\xc$ is maximal in $\zc$. 
Since $\bc$ is immersed $\bc'$, we have only to check the inclusion 
$\bc'_\infty \subset \bc_\infty \mod \ppf$. 
The maximality of $\xc$ in $\zc$ yields 
$$\bc'_\infty \subset \bigcap_{s>0} (\xc_\infty \vee \zc_s) 
= \xc_\infty \mod \ppf,$$
so one only needs to check that 
$\eef[h(X)|\bc'_\infty] = \eef[h(X)|\bc_\infty]$ almost surely %a.s. 
for every real bounded measurable functional $h$ defined on the space 
$\cc(\rrf_+)$ of all continous functions from $\rrf_+$ to $\rrf$. 
Since the topology of uniform convergence on compact subsets 
on the space $\cc(\rrf_+)$ is metrizable, it is sufficient to 
check the equality when $h$ is continuous. In this case, the 
random variable $h(X)$ is the limit in $L^1(\ppf)$ of the sequence 
$(h(X^{(n)}))_{n \ge 1}$, where $T_n$ denotes the first zero of $X$ 
after time $1/n$, and $X^{(n)}_t = X_{T_n+t}$ for every $t \ge 0$. 
Since $B$ generates the same filtration as $|X|$ up to null sets, 
$\bc_\infty \vee \zc_{T_n} = \sigma(|X^{(n)}|) \vee \zc_{T_n} \mod \ppf$. 
But $X^{(n)}$ is independent of $\zc_{T_n}$ since $\xc$ is immersed in $\zc$,
so $$\eef[h(X^{(n)})|\bc_\infty \vee \zc_{T_n}] 
= \eef[h(X^{(n)})|\sigma(|X^{(n)}|) \vee \zc_{T_n}]
= \eef[h(X^{(n)})|\sigma(|X^{(n)}|)] \text{ a.s.}.$$
But $\sigma(|X^{(n)}|) \subset \bc_\infty \subset \bc'_\infty \subset \bc_\infty \vee \zc_{1/n} \subset \bc_\infty \vee \zc_{T_n}$, so 
$$\eef[h(X^{(n)})|\bc'_\infty] = \eef[h(X^{(n)})|\bc_\infty].$$
The statements follow.

\section{Complementability implies maximality: proofs}
\label{complementability implies maximality: proofs}

\subsection{Key lemma for factors of a Lebesgue automorphism}

Proposition~\ref{implication: factor case} follows from the next lemma, 
which can derived from lemma in~\cite{Ornstein - Weiss} or from 
lemma~2.3 in~\cite{Berg}.

\begin{lemm}\label{Ornstein - Weiss}
Let $\ac,\bc,\cc$ be three factors of $T$. Assume that: 
\begin{enumerate}
\item $\ac \supset \bc$;
\item $h(T,\ac) = h(T,\bc)<+\infty$;
\item $(T,\cc)$ is a $K$-automorphism and has finite entropy. 
\item $\bc$ and $\cc$ are independent. 
\end{enumerate}
Then $\ac$ and $\cc$ are independent. 
\end{lemm}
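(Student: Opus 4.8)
I want to show that under the four hypotheses, the factors $\ac$ and $\cc$ are independent. The key tension is that I am given independence of $\cc$ from the \emph{smaller} factor $\bc$, plus the entropy equality $h(T,\ac)=h(T,\bc)$, and I must upgrade this to independence of $\cc$ from the \emph{larger} factor $\ac$. Intuitively, the entropy equality says $\ac$ carries ``no new information'' beyond $\bc$ in the sense of dynamical entropy, and the $K$-property of $\cc$ should force that any correlation between $\ac$ and $\cc$ would manifest as genuine extra entropy, giving a contradiction.

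**The approach.** The plan is to measure the failure of independence of $\ac$ and $\cc$ by the conditional entropy $h((T,\ac\vee\cc)\,|\,\cc)$, and to compare it against $h(T,\ac)$. Since $\cc$ is independent of $\bc$, I first note $h((T,\bc\vee\cc)\,|\,\cc)=h(T,\bc)$, i.e.\ conditioning on the independent complement $\cc$ costs nothing for $\bc$. The chain of inequalities I would set up is
\[
h(T,\bc)=h((T,\bc\vee\cc)\,|\,\cc)\le h((T,\ac\vee\cc)\,|\,\cc)\le h(T,\ac),
\]
where the first $\le$ uses $\bc\subset\ac$ (monotonicity of conditional entropy) and the last $\le$ is the general bound $h((T,\dc)\,|\,\cc)\le h(T,\dc)$ applied to $\dc=\ac\vee\cc$ together with $h((T,\ac\vee\cc)\,|\,\cc)=h((T,\ac)\,|\,\cc)\le h(T,\ac)$. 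Because $h(T,\ac)=h(T,\bc)$, all these inequalities are equalities; in particular $h((T,\ac)\,|\,\cc)=h(T,\ac)$, which says that conditioning $\ac$ on $\cc$ does not lower its entropy.

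**Extracting independence from the entropy equality.** The equality $h((T,\ac)\,|\,\cc)=h(T,\ac)$ is the natural ``entropy independence'' statement, and the remaining work is to convert it into genuine stochastic independence of the two factors. This is exactly where I expect to invoke the cited tools: either Ornstein--Weiss' lemma (lemma in \cite{Ornstein - Weiss}), which is designed to pass from such entropy equalities to independence when one factor is a $K$-automorphism, or Berg's lemma (lemma~2.3 in \cite{Berg}), which under the finite-entropy hypothesis gives the same conclusion more directly. The role of the $K$-property of $\cc$ is essential here: it guarantees that $\cc$ has a trivial Pinsker factor, so that the only way for $h((T,\ac)\,|\,\cc)$ to equal $h(T,\ac)$ is for $\cc$ to contribute nothing predictable about $\ac$, forcing independence. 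The finite-entropy assumption on $\cc$ in hypothesis~(3) is what lets me use the cleaner Berg-style argument rather than the more delicate Ornstein--Weiss machinery.

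**Anticipated obstacle.** The main difficulty is not the entropy bookkeeping, which is routine given the standard monotonicity and subadditivity properties of conditional dynamical entropy, but the final implication ``entropy equality $+$ $K$-property $\Rightarrow$ independence.'' This step is genuinely nontrivial and is precisely the content of the external lemmas; my proof would not reprove it but would carefully verify that the hypotheses of \cite{Ornstein - Weiss} or \cite{Berg} are met (notably that $\cc$ is a $K$-automorphism of finite entropy and that the relevant entropy defect vanishes). I would therefore present the argument in two cleanly separated parts: the elementary reduction to $h((T,\ac)\,|\,\cc)=h(T,\ac)$, followed by citation of the appropriate lemma to conclude independence.
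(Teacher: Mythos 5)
Your proposal is correct and follows essentially the same route as the paper's ``alternative proof'' of this lemma: your chain of (conditional) entropy inequalities is just a repackaging of the two-line computation $h(T,\ac)+h(T,\cc)\ge h(T,\ac\vee\cc)\ge h(T,\bc\vee\cc)=h(T,\bc)+h(T,\cc)$, which forces $h(T,\ac\vee\cc)=h(T,\ac)+h(T,\cc)<+\infty$, after which Berg's lemma~2.3 gives independence. The only difference is that the paper goes on to prove Berg's lemma itself via Pinsker's formula, whereas you cite it.
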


First, we show how to deduce proposition~\ref{implication: factor case}
from lemma~\ref{Ornstein - Weiss}.

\begin{proof} (Proof of proposition~\ref{implication: factor case})
Let $\cc$ be an independent complement of $\bc$ having the property $K$. 
Let $\ac$ be a factor of $T$ such that $\ac \supset \bc$ and 
$h(T,\ac) = h(T,\bc)$. Then lemma~\ref{Ornstein - Weiss} yields that 
$\ac$ and $\cc$ are independent. But 
$\zc = \bc \vee \cc \subset \ac \vee \cc \subset \zc$, so 
$\ac \vee \cc = \bc \vee \cc$. Hence $\ac = \bc$ by the next lemma. 
\end{proof}

We have just used the follwing general statement, 
which will also help us in the context of Brownian filtrations.    

\begin{lemm}\label{simplifying by C}
Let $\ac,\bc,\cc$ be three sub-$\sigma$-fields of any probability space
$(Z,\zc,\pi)$ such that 
\begin{itemize}
\item $\ac \supset \bc$;
\item $\ac$ and $\cc$ are independent;
\item $\ac \vee \cc = \bc \vee \cc$. 
\end{itemize}
Then $\ac = \bc \mod \pi$. 
\end{lemm}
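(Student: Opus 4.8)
The plan is to prove the nontrivial inclusion $\ac \subset \bc \mod \pi$, since the reverse inclusion is among the hypotheses. Concretely, I would fix an arbitrary $A \in \ac$ and aim to show that $\one_A$ is $\bc$-measurable modulo null sets, that is, $\eef[\one_A|\bc] = \one_A$ almost surely. The strategy is to compute the conditional expectation $\eef[\one_A|\bc \vee \cc]$ in two different ways and match the results.

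First I would exploit the hypothesis $\ac \vee \cc = \bc \vee \cc$: since $A \in \ac \subset \ac \vee \cc = \bc \vee \cc \mod \pi$, the event $A$ is $(\bc \vee \cc)$-measurable, so $\eef[\one_A|\bc \vee \cc] = \one_A$ almost surely. It therefore suffices to establish the identity
$$\eef[\one_A \mid \bc \vee \cc] = \eef[\one_A \mid \bc] \quad\text{a.s.}$$
which is the heart of the argument and the step where the independence of $\ac$ and $\cc$ enters. To prove it, I would test both sides against products $g\,h$, where $g$ is bounded and $\bc$-measurable and $h$ is bounded and $\cc$-measurable. On one side, $\one_A\, g$ is $\ac$-measurable (here I use $\bc \subset \ac$), so it is independent of the $\cc$-measurable factor $h$, giving $\eef[\one_A\, g\, h] = \eef[\one_A\, g]\,\eef[h]$. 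On the other side, $\eef[\one_A|\bc]\, g$ is $\bc$-measurable, hence $\ac$-measurable and thus again independent of $h$, so $\eef\big[\eef[\one_A|\bc]\, g\, h\big] = \eef\big[\eef[\one_A|\bc]\, g\big]\,\eef[h] = \eef[\one_A\, g]\,\eef[h]$ by the defining property of conditional expectation. The two sides agree.

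Since the products $g\,h$ generate $\bc \vee \cc$, a functional monotone class argument upgrades this equality of integrals to the almost sure identity of the two conditional expectations displayed above. Combining with the first step yields $\one_A = \eef[\one_A|\bc]$ almost surely, whence $A \in \bc \mod \pi$, completing the proof. The only point requiring a little care is the passage from equality on the generating products to the full conditional expectation identity, that is, checking that the family of test functions $g\,h$ is rich enough; this is routine but should be stated, as the two-sided independence used ($\ac \perp \cc$ applied both to $\one_A\,g$ and to $\eef[\one_A|\bc]\,g$) is exactly what makes the computation close.
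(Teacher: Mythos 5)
Your proof is correct and follows essentially the same route as the paper: the paper also fixes $A\in\ac$, notes that $\sigma(A)\vee\bc\subset\ac$ is independent of $\cc$ so that $\pi[A|\bc]=\pi[A|\bc\vee\cc]$, and then uses $\bc\vee\cc=\ac\vee\cc$ to conclude $\pi[A|\bc]=\one_A$ a.s. The only difference is that you spell out, via the product test functions and a monotone class argument, the standard conditioning-under-independence identity that the paper invokes in one line.
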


\begin{proof} % (Proof of lemma~\ref{simplifying by C})
Let $A \in \ac$. Then $\sigma(A) \vee \bc \subset \ac$, so 
$\sigma(A) \vee \bc$ is independent of $\cc$, and 
$$\pi[A|\bc] = \pi[A|\bc  \vee \cc] = \pi[A|\ac  \vee \cc] 
= \one_A~\pi\mathrm{-almost~surely}.$$
Hence $A \in \bc \mod \pi$. 
\end{proof}

We now give two different proofs of lemma~\ref{Ornstein - Weiss}. 
The second one relies on Pinsker's formula and is a bit simpler.

\subsection{Proof of lemma~\ref{Ornstein - Weiss}}

The proof below is a reformulation of the proof given 
in~\cite{Ornstein - Weiss}. 

Assume that the assumptions hold. Let $\alpha$, $\beta$, $\gamma$ be 
countable partitions generating $(T,\ac)$, $(T,\bc)$, $(T,\cc)$, respectively. 
Since $(T,\ac)$, $(T,\bc)$, $(T,\cc)$ have finite entropy, 
$\alpha$, $\beta$, $\gamma$ have also finite entropy. 
Given $n \ge 1$, set $\alpha_0^{n-1} = \alpha \vee \cdots \vee T^{-(n-1)}\alpha$, 
$\beta_0^{n-1} = \beta \vee \cdots \vee T^{-(n-1)}\beta$,
$$\cc_n = \sigma \Big( \bigvee_{q \in \zzf} T^{-qn}\gamma \Big) \text{ and } 
\dc_n = \sigma \Big( \bigvee_{k \ge n} T^{-k}\gamma \Big).$$
Then $\cc_n$ is a factor of $T^n$, $\alpha_0^{n-1} \vee \gamma$ is a 
generator of  $(T^n,\ac \vee \cc_n)$ whereas 
$\beta_0^{n-1} \vee \gamma$ is a generator of  
$(T^n,\bc \vee \cc_n)$.  

Therefore, on the one hand,
\begin{eqnarray*}
h(T^n,\ac \vee \cc_n) 
&=& H \Big(\alpha_0^{n-1} \vee \gamma \Big| 
\bigvee_{q \ge 1} T^{-qn}(\alpha_0^{n-1} \vee \gamma)\Big)\\
&=& H \Big(\alpha_0^{n-1} \vee \gamma \Big| 
\bigvee_{k \ge n} T^{-k}\alpha \vee \bigvee_{q \ge 1} T^{-qn}\gamma \Big)\\
&=& H \Big(\alpha_0^{n-1} \Big| 
\bigvee_{k \ge n} T^{-k}\alpha \vee \bigvee_{q \ge 1} T^{-qn}\gamma \Big)
+ H \Big(\gamma \Big| 
\bigvee_{k \ge 0} T^{-k}\alpha \vee \bigvee_{q \ge 1} T^{-qn}\gamma \Big)\\
&\le& H \Big(\alpha_0^{n-1} \Big| 
\bigvee_{k \ge n} T^{-k}\alpha \Big) + H(\gamma|\alpha)\\
&=& h(T^n,\ac) + H(\gamma|\alpha) = nh(T,\ac) + H(\gamma|\alpha).
\end{eqnarray*}

On the other hand, by independence of $\bc$ and $\cc$, 
\begin{eqnarray*}
h(T^n,\bc \vee \cc_n) 
&=& H \Big(\beta_0^{n-1} \Big| 
\bigvee_{k \ge n} T^{-k}\beta \vee \bigvee_{q \ge 1} T^{-qn}\gamma \Big)
+ H \Big(\gamma \Big| 
\bigvee_{k \ge 0} T^{-k}\beta \vee \bigvee_{q \ge 1} T^{-qn}\gamma \Big)\\
&=& H \Big(\beta_0^{n-1} \Big| \bigvee_{k \ge n} T^{-k}\beta \big) 
+ H \Big(\gamma|\bigvee_{q \ge 1} T^{-qn}\gamma \Big)\\
&\ge& h(T^n,\bc) + H(\gamma|\dc_n) = nh(T,\bc) + H(\gamma|\dc_n).
\end{eqnarray*}
But $h(T^n,\bc \vee \cc_n) \le h(T^n,\ac \vee \cc_n)$ since $\bc \subset \ac$. 
Putting things together and using the assumption 
$h(T,\ac) = h(T,\bc)<+\infty$ yields $H(\gamma|\dc_n) \le H(\gamma|\alpha)$. 

But $(\dc_n)_{n \ge 1}$ is a decreasing sequence of $\sigma$-fields with 
trivial intersection since $(T,\cc)$ has the property $K$, so 
$H(\gamma|\dc_n) \to H(\gamma)$ as $n \to +\infty$. 
Hence, $H(\gamma) \le H(\gamma|\alpha)$, so $\alpha$ and 
$\gamma$ are independent. This conclusion is preserved if one replaces the 
generators $\alpha$ and $\gamma$ by the supremum of $T^{-k}\alpha$ and 
$T^{-k}\gamma$ over all $k \in \odc -n,n \fdc$. Letting $n$ go to  
infinity yields the independence of $\ac$ and $\cc$.

\subsection{Alternative proof of lemma~\ref{Ornstein - Weiss}}

The inclusion $\ac \supset \bc$ and the independence of $\bc$ and $\cc$ yield 
$$h(T,\ac) + h(T,\cc) \ge h(T,\ac \vee \cc) 
\ge h(T,\bc \vee \cc) = h(T,\bc) + h(T,\cc).$$
But $h(T,\ac) = h(T,\bc)$, hence $h(T,\ac \vee \cc) = h(T,\ac) + h(T,\cc)$. 
Since $(T,\cc)$ is a $K$-automorphism with finite entropy, 
Berg's lemma below shows that $\ac$ and $\cc$ independent. 

\begin{lemm}(lemma 2.3 in~\cite{Berg})
Let $\ac$ and $\cc$ be two factors of the dynamical system $(Z,\zc,\pi,T)$, 
such that $h(T,\ac \vee \cc) = h(T,\ac) + h(T,\cc) < +\infty$ and 
$(T,\cc)$ is a $K$-automorphism. Then $\ac$ and $\cc$ independent.
\end{lemm}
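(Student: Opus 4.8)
The plan is to prove Berg's lemma by converting the additivity of entropy into a conditional-independence statement at the level of a single coordinate, then propagating it along the orbit, and finally letting the $K$-property of $\cc$ collapse the conditioning onto a trivial tail. First I would fix a countable generating partition $\gamma$ of $(T,\cc)$ with finite entropy (one exists since $h(T,\cc)\le h(T,\ac\vee\cc)<+\infty$), and write $\gamma_p^q=\bigvee_{k=p}^q T^{-k}\gamma$ and $\gamma_N^\infty=\bigvee_{k\ge N}T^{-k}\gamma$ as in the previous proofs. The hypothesis $h(T,\ac\vee\cc)=h(T,\ac)+h(T,\cc)$ together with the addition formula $h(T,\ac\vee\cc)=h(T,\ac)+h((T,\cc)|\ac)$ gives $h((T,\cc)|\ac)=h(T,\cc)$. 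Expressing both sides through the generator by the relative Kolmogorov--Sinai formula, this reads $H(\gamma\,|\,\gamma_1^\infty\vee\ac)=H(\gamma\,|\,\gamma_1^\infty)$. Since $\gamma_1^\infty\subset\gamma_1^\infty\vee\ac$, the equality of these two conditional entropies is exactly the conditional independence of $\gamma$ and $\ac$ given $\gamma_1^\infty$.

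Next I would propagate this one-step independence along the orbit. Applying $T^j$, which fixes the factor $\ac$ and preserves entropy, turns $H(\gamma\,|\,\gamma_1^\infty\vee\ac)$ into $H(T^{-j}\gamma\,|\,\gamma_{j+1}^\infty\vee\ac)$, so the addition formula for conditional entropy yields, for every $N\ge1$, the identities $H(\gamma_0^{N-1}\,|\,\gamma_N^\infty\vee\ac)=N\,H(\gamma\,|\,\gamma_1^\infty\vee\ac)$ and $H(\gamma_0^{N-1}\,|\,\gamma_N^\infty)=N\,H(\gamma\,|\,\gamma_1^\infty)$. By the previous step these two quantities coincide, hence $\gamma_0^{N-1}$ and $\ac$ are independent conditionally on $\gamma_N^\infty$. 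Restricting this to the sub-$\sigma$-field generated by $\gamma_0^m$ (with $N>m$), I obtain that for every fixed $m$ and every $N>m$, one has $\pi[C\,|\,\gamma_N^\infty\vee\ac]=\pi[C\,|\,\gamma_N^\infty]$ almost surely for all $C\in\sigma(\gamma_0^m)$.

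Finally I would send $N\to+\infty$, and this is where the one delicate point lies. One cannot simply invoke reverse-martingale convergence on the decreasing fields $\gamma_N^\infty\vee\ac$, because $\bigcap_N(\gamma_N^\infty\vee\ac)$ may be strictly larger than $\ac$: this is precisely the failure of the exchange property warned about before corollary~\ref{independence allows exchange}, and showing that this intersection equals $\ac$ would already be tantamount to the conclusion sought. I avoid the difficulty by conditioning the displayed identity on $\ac\subset\gamma_N^\infty\vee\ac$, so that the tower property gives $\pi[C\,|\,\ac]=\eef\big[\pi[C\,|\,\gamma_N^\infty]\,\big|\,\ac\big]$ for every $N>m$. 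Since $(T,\cc)$ is a $K$-automorphism, the tail $\bigcap_N\gamma_N^\infty$ coincides modulo $\pi$ with the Pinsker factor of $(T,\cc)$ by the description of $\Pi(T)$ recalled in the introduction, hence is trivial; therefore $\pi[C\,|\,\gamma_N^\infty]\to\pi(C)$ in $L^1(\pi)$ by the reverse-martingale theorem, and the right-hand side converges to $\pi(C)$. As the left-hand side does not depend on $N$, this forces $\pi[C\,|\,\ac]=\pi(C)$, i.e. $\sigma(\gamma_0^m)$ is independent of $\ac$. Because $\ac$ is $T$-invariant, every block $\sigma(\gamma_p^q)$ is then independent of $\ac$, and these blocks generate $\cc$, so $\ac$ and $\cc$ are independent. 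The main obstacle is exactly this limiting step: the full strength of the $K$-hypothesis is spent on killing the tail, and the slightly indirect passage to the limit — conditioning down onto $\ac$ rather than taking the reverse-martingale limit of the enlarged fields $\gamma_N^\infty\vee\ac$ — is what keeps the argument valid despite the possible strict inequality in the exchange property.
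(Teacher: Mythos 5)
Your proof is correct, and it departs from the paper's argument in its middle portion, so a brief comparison is in order. Both proofs open identically: Pinsker's formula (proposition~\ref{Pinsker's formula}) converts the additivity hypothesis into the single identity $H(\gamma\,|\,\gamma_1^\infty\vee\ac)=H(\gamma\,|\,\gamma_1^\infty)$, and both close identically, by using the $K$-property of $(T,\cc)$ to make the tail $\bigcap_N\gamma_N^\infty$ trivial. Where you differ is in how the one-step identity is propagated. The paper symmetrizes: it transfers the identity to finite-entropy partitions $\delta\subset\ac$ via two applications of the addition formula, obtaining $H(\delta\,|\,\gamma_1^\infty)=H(\delta\,|\,\gamma\vee\gamma_1^\infty)$, and then shows that $H\big(\bigvee_{|k|\le m}T^{-k}\alpha\,\big|\,\bigvee_{k\ge n}T^{-k}\gamma\big)$ does not depend on $n$, letting $n\to-\infty$ to recover conditioning on all of $\cc$ and $n\to+\infty$ to kill it; this keeps every step inside proposition~\ref{properties of conditional entropy} and only invokes the equality case of item~3 there at the very end, when the conditioning field has become trivial. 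You never flip sides: you read the identity as conditional independence of $\gamma$ and $\ac$ given $\gamma_1^\infty$ (the equality case of the \emph{conditional} entropy inequality, i.e.\ vanishing of a conditional mutual information --- a standard fact, though slightly stronger than what the annex records), propagate it blockwise to $\sigma(\gamma_0^{N-1})\perp\ac$ given $\gamma_N^\infty$, and pass to the limit by conditioning down onto $\ac$ with the tower property before applying reverse-martingale convergence to $\pi[C\,|\,\gamma_N^\infty]$. That last manoeuvre is the right one: you correctly identify that taking the decreasing limit of the fields $\gamma_N^\infty\vee\ac$ directly would founder on the possible failure of the exchange property discussed before corollary~\ref{independence allows exchange}, which is exactly the trap the paper's route also sidesteps by never conditioning on $\gamma_N^\infty\vee\ac$ in its limiting step. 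Your version buys a more probabilistic, single-limit argument at the cost of needing the conditional-independence characterization; the paper's buys self-containedness (only the tools of its annex) at the cost of an extra symmetrization and a two-sided limit.
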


\begin{proof}
Let $\alpha$ and $\gamma$ be countable generating 
partitions of $(T,\ac)$ and $(T,\cc)$, respectively. Set 
$$\alpha_1^\infty = \bigvee_{k \ge 1} T^{-k}\alpha 
\text{ and } \gamma_1^\infty = \bigvee_{k \ge 1} T^{-k}\gamma$$
Then $h(T,\ac) = H(\alpha|\alpha_1^\infty)$, $h(T,\cc) = H(\gamma|\gamma_1^\infty)$ and 
$h(T,\ac \vee \cc) = H(\alpha \vee \gamma|\alpha_1^\infty \vee \gamma_1^\infty)$.  
But Pinsker's formula (proposition~\ref{Pinsker's formula} in 
section~\ref{Annex} or theorem 6.3 in~\cite{Parry}) gives 
$$H(\alpha \vee \gamma|\alpha_1^\infty \vee \gamma_1^\infty) 
= H(\alpha|\alpha_1^\infty) + H(\gamma|\ac \vee \gamma_1^\infty).$$ 
So the assumption $h(T,\ac \vee \cc) = h(T,\ac) + h(T,\cc) < +\infty$ 
yields $H(\gamma|\gamma_1^\infty) = H(\gamma|\ac \vee \gamma_1^\infty)$.  

For any partition $\delta \subset \ac$ with finite entropy, we derive
$H(\gamma|\delta \vee \gamma_1^\infty) = H(\gamma|\gamma_1^\infty)$, so
$$H(\delta \vee \gamma|\gamma_1^\infty) 
= H(\delta|\gamma_1^\infty) + H(\gamma|\delta \vee \gamma_1^\infty)
= H(\delta|\gamma_1^\infty) + H(\gamma|\gamma_1^\infty).$$
But we have also
$$H(\delta \vee \gamma|\gamma_1^\infty) 
= H(\gamma|\gamma_1^\infty) + H(\delta|\gamma \vee \gamma_1^\infty).$$
Hence $H(\delta|\gamma_1^\infty) = H(\delta|\gamma \vee \gamma_1^\infty)$.

Let $m \ge 0$ and $n$ be integers. Applying the last equality to
$\delta:=\bigvee_{|k| \le m} T^{n-k}\alpha$ yields
$$H\Big(\bigvee_{|k| \le m} T^{n-k}\alpha~\Big| 
\bigvee_{k \ge 1} T^{-k}\gamma \Big) 
= H\Big(\bigvee_{|k| \le m} T^{n-k}\alpha~\Big| 
\bigvee_{k \ge 0} T^{-k}\gamma \Big).$$
Since $T$ preserves $\pi$, this is equivalent to
$$H\Big(\bigvee_{|k| \le m} T^{-k}\alpha~\Big| 
\bigvee_{k \ge n+1} T^{-k}\gamma \Big) 
= H\Big(\bigvee_{|k| \le m} T^{-k}\alpha~\Big| 
\bigvee_{k \ge n} T^{-k}\gamma \Big).$$
As a result, the entropy above does not depend on $n$. 
Letting $n$ go to $-\infty$ and to $+\infty$, and using 
the fact that $(T,\cc)$ is a $K$-automorphism, we get 
at the limit
$$H\Big(\bigvee_{|k| \le m} T^{-k}\alpha~\Big| \cc \Big) 
= H\Big(\bigvee_{|k| \le m} T^{-k}\alpha \Big),$$
so the partition $\bigvee_{|k| \le m}T^{-k}\alpha$ is independent of $\cc$. 
Letting $m$ go to $+\infty$ yields the independence of 
$\ac$ and $\cc$.
\end{proof}

\subsection{Proof in the context of Brownian filtrations}

The proof of proposition~\ref{implication: Brownian case} below may 
look suspiciously simple, but actually, it relies on non-trivial theorems 
of stochastic integration, namely the predictable representation property 
and the bracket characterization of multi-dimensional Brownian motions 
among local martingales. 
The immersion of a filtration into another one is a strong property, 
as shown for example by the characterizations for a Brownian filtration 
recalled in the introduction 
(proposition~\ref{immersion of a Brownian filtration}). 
The key step is very similar to lemma~\ref{Ornstein - Weiss}.

\begin{lemm}\label{key step, Brownian case}
Let $A,B,C$ be three Brownian motions in some filtration $\zc$.  
Assume that: 
\begin{enumerate}
\item $\sigma(A) \supset \sigma(B)$;
\item $A$ and $B$ have the same finite dimension;
\item $B$ and $C$ are independent. 
\end{enumerate}
Then $A$ and $C$ are independent. 
\end{lemm}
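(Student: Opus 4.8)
The plan is to transport the whole problem into the Brownian filtration $\fc^A$ generated by $A$, where the predictable representation property is available, and then to compute quadratic covariations. Write $d$ for the common dimension of $A$ and $B$, and let $\fc^A,\fc^B,\fc^C$ denote the natural filtrations. Since $A$ and $B$ are Brownian motions in $\zc$, both $\fc^A$ and $\fc^B$ are immersed in $\zc$, so Lemma~\ref{immersion and final sigma-field} gives $\fc^A_t=\sigma(A)\cap\zc_t$ and $\fc^B_t=\sigma(B)\cap\zc_t$ for every $t$. The hypothesis $\sigma(B)\subset\sigma(A)$ then yields $\fc^B_t\subset\fc^A_t$ for every $t$; since $B$ is a $\zc$-martingale and $\fc^A\subset\zc$, the tower property shows that $B$ is an $\fc^A$-martingale, hence (Proposition~\ref{immersion of a Brownian filtration}) a $d$-dimensional Brownian motion in $\fc^A$.

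First I would use the predictable representation property of $A$ in $\fc^A$ to write $B=\int_0^\cdot H_s\,\d A_s$, where $H$ is an $\fc^A$-predictable process with values in the $d\times d$ real matrices. Computing the matrix bracket gives $\langle B^i,B^j\rangle_t=\int_0^t (H_sH_s^{\top})_{ij}\,\d s$; but $B$ is a $d$-dimensional Brownian motion, so $\langle B^i,B^j\rangle_t=\delta_{ij}t$, whence $H_sH_s^{\top}=I_d$ for almost every $s$, almost surely. This is exactly where the equal-dimension hypothesis is used: because $H_s$ is a \emph{square} matrix with $H_sH_s^{\top}=I_d$, it is orthogonal, hence invertible. (Had $B$ had strictly smaller dimension, $H_s$ would only have orthonormal rows and could fail to be invertible; this mirrors the role of the entropy equality $h(T,\ac)=h(T,\bc)$ in Lemma~\ref{Ornstein - Weiss}.)

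Next I would bring in $C$ through quadratic covariations. Since $A$ and $C$ are Brownian motions in $\zc$, each cross-bracket $\langle A^k,C^j\rangle$ is, by the Kunita--Watanabe inequality, absolutely continuous in $t$ with derivative bounded by $1$; write $\langle A^k,C^j\rangle_t=\int_0^t \rho^{kj}_s\,\d s$ and let $\rho_s$ be the corresponding matrix. From $B=\int H\,\d A$ one gets $\langle B^i,C^j\rangle_t=\int_0^t (H_s\rho_s)_{ij}\,\d s$. Now $B$ and $C$ are independent Brownian motions in $\zc$, so $\langle B,C\rangle\equiv 0$, which forces $H_s\rho_s=0$ for almost every $s$, almost surely. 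Multiplying on the left by $H_s^{-1}$ gives $\rho_s=0$, that is $\langle A,C\rangle\equiv 0$.

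Finally, $A$ and $C$ are continuous $\zc$-local martingales with $\langle A^i,A^j\rangle_t=\delta_{ij}t$, $\langle C^i,C^j\rangle_t=\delta_{ij}t$ and $\langle A^i,C^j\rangle\equiv 0$; stacking them, $(A,C)$ has bracket $\langle (A,C)^i,(A,C)^j\rangle_t=\delta_{ij}t$, so by L\'evy's characterization $(A,C)$ is a Brownian motion, and its two blocks $A$ and $C$ are therefore independent. I expect the only genuinely delicate point to be the reduction in the first paragraph --- namely checking, via immersion, that $B$ is a Brownian motion in $\fc^A$ so that $A$ really does represent $B$ --- together with the observation that the equality of dimensions is precisely what makes $H$ invertible; the remaining bracket computations are routine consequences of stochastic calculus.
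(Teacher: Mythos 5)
Your proof is correct and follows essentially the same route as the paper: represent $B=\int_0^\cdot H_s\,\d A_s$ with $H$ orthogonal (the equal-dimension hypothesis making $H_s$ a square, hence invertible, matrix), deduce $\langle A,C\rangle\equiv 0$ from $\langle B,C\rangle\equiv 0$, and conclude by L\'evy's characterization. The only cosmetic difference is that the paper inverts the representation to write $A=\int_0^\cdot H_s^\top\,\d B_s$ and computes $\langle A,C\rangle$ directly, whereas you compute $\langle B,C\rangle=\int H_s\rho_s\,\d s$ and left-multiply by $H_s^{-1}$; these are the same linear-algebra step.
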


\begin{proof}
Call $p$ the dimension of $A$ and $B$ and $q$ the (possibly infinite) 
dimension of $C$. 
Since $B$ is a Brownian motion in $\zc$ and its own filtration, 
it is also a Brownian motion in the intermediate filtration $\fc^A$. 
Hence, one can find an $\fc^A$-predictable process $H$ with values 
in the group of all orthogonal $p \times p$ matrices such that  
$$B = \int_0^\cdot H_s \d A_s.$$
Since $H_s^\top H_s = I_p$ for every $s \ge 0$ (where $H_s^\top$ denotes 
the transpose of $H_s$), we have also 
$$A = \int_0^\cdot H_s^\top \d B_s.$$
Looking at the components, we get for every $i \in \odc 1,p \fdc$,
$$A^{(i)} = \sum_{j=1}^p \int_0^\cdot H_s(j,i) \d B^{(j)}_s.$$
For every $i \in \odc 1,p \fdc$ and $k \in \odc 1,q \fdc$, we get 
$$\langle A^{(i)},C^{(k)} \rangle = \sum_{j=1}^p \int_0^\cdot H_s(j,i) 
\d\langle B^{(j)},C^{(k)} \rangle_s = 0,$$
since $\langle B^{(j)},C^{(k)} \rangle = 0$ by independence 
of $B$ et $C$. 
We derive that $(A,C)$ is a $p+q$-dimensional Brownian motion in $\zc$, 
so $A$ and $C$ are independent.  
\end{proof}

Deducing proposition~\ref{implication: Brownian case} 
from the last lemma involves almost the same arguments as deducing 
proposition~\ref{implication: factor case}
from lemma~\ref{Ornstein - Weiss}.

\begin{proof}
Proof of proposition~\ref{implication: Brownian case}. 
Let $\zc$ be a finite Brownian filtration, and $\ac,\bc,\cc$ be 
three Brownian filtrations in $\zc$ such that $\ac_t \supset \bc_t$ 
for every $t \ge 0$, $\ac$ and $\bc$ have the same dimension, 
and $\cc$ is an independent complement of $\bc$ in $\zc$. 

Let $A,B,C$ be Brownian motions generating $\ac,\bc,\cc$
modulo the null events. Then 
lemma~\ref{key step, Brownian case} applies, so $A$ is independent 
of $C$. Then lemma~\ref{simplifying by C} applies, so 
$\sigma(A) = \sigma(B) \mod \ppf$.  
But $\fc^B$ is immersed in $\fc^A$. Since the final 
$\sigma$-fields $\fc^A_\infty=\sigma(A)$ and $\fc^B_\infty=\sigma(B)$ 
coincide almost surely, we get $\fc^A_t = \fc^B_t \mod \ppf$ 
for every $t \ge 0$ by lemma~\ref{immersion and final sigma-field}.
\end{proof}

We now prove 
proposition~\ref{sufficient condition for maximality : Brownian case}.

\begin{proof}
Let $\cc$ be a complement of $\bc$ after $0$, 
and $\ac$ be a $d$-dimensional Brownian filtration immersed in $\zc$ 
and containing $\bc$.  
Let $A$, $B$, $C$ be Brownian motions in $\zc$ 
generating $\ac$, $\bc$, $\cc$ respectively modulo the null events. 
Since $\ac$ and $\bc'$ are immersed in $\zc$, it is sufficient 
to prove the inclusion $\ac_\infty \subset \bc'_\infty$. 
Hence, given $s>0$, we have to check that 
$\ac_\infty \subset \bc_\infty \vee \zc_s \mod \ppf$. 

By lemma~\ref{key step, Brownian case}, we know that $A$ and $C$ 
are independent Brownian motions in $\fc^Z$. 
Thus $\zc_s$, $A_{s+\cdot}-A_s$ and $C_{s+\cdot}-C_s$ are independent. 
Let 
$$\tilde{\ac} := \ac_\infty \vee \zc_s 
= \sigma(A_{s+\cdot}-A_s) \vee \zc_s \mod \ppf,$$ 
$$\tilde{\bc} := \bc_\infty \vee \zc_s 
= \sigma(B_{s+\cdot}-B_s) \vee \zc_s \mod \ppf,$$
$$\tilde{\cc} := \sigma(C_{s+\cdot}-C_s).$$
Then $\tilde{\ac} \supset \tilde{\bc}$, $\tilde{\bc}$ and $\tilde{\cc}$ 
are independent, and 
$\tilde{\ac} \vee \tilde{\cc} = \tilde{\bc} \vee \tilde{\cc} \mod \ppf$, since 
$$\zc_\infty \supset \tilde{\ac} \vee \tilde{\cc} \supset 
\tilde{\bc} \vee \tilde{\cc} = \bc_\infty \vee \cc_\infty \vee \zc_s 
= \zc_\infty \mod \ppf.$$
Hence lemma~\ref{simplifying by C} applies, so 
$\ac_\infty \subset \tilde{\ac} = \tilde{\bc} = \bc_\infty \vee \zc_s \mod \ppf$. 
\end{proof}

\section{A maximal but not-complementable Brownian filtration}
\label{maximal but non-complementable Brownian filtration}

\subsection{A maximal but 
not-complementable filtration in a dyadic product-type filtration}

This subsection is devoted to the proof of the following lemma, 
which is the first step of the construction of a maximal 
but not-complementable Brownian filtration.

\begin{lemm}\label{dyadic}
One can construct : 
\begin{itemize}
\item a dyadic product-type filtration $(\zc_n)_{n \le 0}$,
\item a poly-adic filtration $(\uc_n)_{n \le 0}$ immersed in $(\zc_n)_{n \le 0}$,
\item a random variable $U$ with values in some Polish 
space $(E,\ec)$ and generating $\uc_0$,
\end{itemize} 
such that for $U(\ppf)$-almost every $u \in E$, 
$(\zc_n)_{n \le 0}$ is Kolmogorovian but not product-type under 
$\ppf_u = \ppf[\cdot|U=u]$. 
Therefore, the filtration $(\uc_n)_{n \le 0}$ is maximal 
but non complementable in $(\zc_n)_{n \le 0}$. 
\end{lemm}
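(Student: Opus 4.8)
The plan is to exhibit the three objects explicitly and then read off both maximality and non-complementability from the two conditional results already in hand, namely Proposition~\ref{sufficient condition for maximality} and Corollary~\ref{conditionning}, so that the entire difficulty is concentrated in the construction itself. First I would build $(\zc_n)_{n \le 0}$ as a dyadic product-type filtration generated by an independent family of finite blocks of fair bits, one block entering at each time $n$, so that each increment $\zc_n/\zc_{n-1}$ carries several independent bits. Following Vershik's decimation process and Tsirelson's unpublished notes, I would then prescribe the innovations $I_n$ of $(\uc_n)_{n \le 0}$ as suitable decimation/parity functionals of those bits, arranged so that $I_n$ is independent of $\zc_{n-1}$ and $\uc_n := \uc_{n-1} \vee \sigma(I_n) \subset \zc_n$. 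Immersion of $(\uc_n)_{n \le 0}$ in $(\zc_n)_{n \le 0}$ is then routine: inclusion holds by construction, and the conditional independence of $\uc_n$ and $\zc_{n-1}$ given $\uc_{n-1}$ follows from the independence of $I_n$ from $\zc_{n-1}$. Finally I would let $U$ be any Polish-valued random variable generating $\uc_0 = \bigvee_n \uc_n$, which exists since $\zc_0$ is essentially separable.

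The heart of the argument is the conditional analysis: one must show that for $U(\ppf)$-almost every $u$, the filtration $(\zc_n)_{n \le 0}$ is Kolmogorovian but \emph{not} product-type under $\ppf_u = \ppf[\,\cdot\,|U=u]$. Kolmogorovianity (triviality of the tail $\sigma$-field under $\ppf_u$) is the easier half: after conditioning on the full $\uc$-information, the complementary bits still form an independent-increment process with trivial tail, which I would check from the block independence built into $(\zc_n)_{n \le 0}$. The failure of the product-type property under $\ppf_u$ is the main obstacle, and it is exactly here that the decimation must be calibrated so that the conditional filtration violates Vershik's standardness criterion (equivalently, fails $I$-cosiness) for almost every $u$, in the spirit of Tsirelson's construction; this is the genuinely non-trivial, Vershik-non-standardness part of the proof.

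Granting the conditional claim, both conclusions are immediate. Maximality follows by applying Proposition~\ref{sufficient condition for maximality} with $\bc_n = \uc_n$ and $\sigma(U) = \uc_0$: Kolmogorovianity of $(\zc_n)_{n \le 0}$ under almost every $\ppf_u$ forces $(\uc_n)_{n \le 0}$ to be maximal in $(\zc_n)_{n \le 0}$. For non-complementability, I would argue by contradiction: if $(\uc_n)_{n \le 0}$ admitted an independent complement $(\vc_n)_{n \le 0}$, then $(\vc_n)_{n \le 0}$ would be immersed in $(\zc_n)_{n \le 0}$ and, being the complement of a poly-adic filtration inside a dyadic product-type filtration, would itself be poly-adic; Vershik's Theorem~\ref{Vershik} would then make it product-type. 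Corollary~\ref{conditionning} (first bullet), equivalently Proposition~\ref{complementability and conditionning}, would then force $(\zc_n)_{n \le 0}$ to be product-type under almost every $\ppf_u$, contradicting the conditional claim. Hence no complement exists, and $(\uc_n)_{n \le 0}$ is maximal but non-complementable in $(\zc_n)_{n \le 0}$.
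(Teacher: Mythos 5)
Your deductive skeleton is exactly the paper's: prove the conditional claim (Kolmogorovian but not product-type under $\ppf_u$), then get maximality from Proposition~\ref{sufficient condition for maximality} and non-complementability by contradiction via Corollary~\ref{conditionning} (your extra remark that a hypothetical complement would be poly-adic, hence product-type by Theorem~\ref{Vershik}, is a correct filling-in of a step the paper leaves implicit). But the lemma asserts ``one can construct'', so the construction \emph{is} the proof, and you have left both it and the key non-standardness estimate as acknowledged placeholders (``suitable decimation/parity functionals \ldots calibrated so that the conditional filtration violates Vershik's standardness criterion''). That is a genuine gap, not a routine detail: parity functionals in the spirit of Vershik's decimation process will not obviously survive the conditioning on $U=u$ with a \emph{quantitative} failure of I-cosiness, and nothing in your sketch explains why the conditioned filtration should fail to be product-type for almost every $u$ rather than merely for the unconditioned law.

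Concretely, the paper takes $K_n$ the field with $q_n=2^{2^{|n|}}$ elements, innovations $Z_{2n}=(X_n,Y_n)$ uniform on $K_n^4\times K_n^4$ and $Z_{2n-1}=B_n$ uniform on $K_n^4$, reads a uniform matrix $A_n\in\mc_4(K_n)$ off the previous innovation $Z_{2n-2}$, and sets $U_{2n}=Y_n-A_nX_n-B_n$; since $(x,y)\mapsto(x,y-A_nx-B_n)$ is a $\fc^Z_{2n-1}$-measurable bijection, $(X_n,U_{2n})$ is again an innovation, which gives the immersion and the independent splitting $\fc^Z_{2n}=\fc^U_{2n}\vee\fc^{B,X}_n$ used for Kolmogorovianity under $\ppf_u$. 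The failure of the product-type property under $\ppf_u$ is then a concrete I-cosiness computation: for two copies with independent germ $\sigma$-fields, two distinct affine maps $x\mapsto A'_nx+B'_n$ and $x\mapsto A''_nx+B''_n$ agree on at most $q_n^3$ of the $q_n^4$ points, whence $\bar\ppf_u[Z'_0\ne Z''_0]\ge\prod_{k\le 0}(1-1/q_k)>0$ by induction, the product converging precisely because $q_n$ grows doubly exponentially. This choice of alphabet sizes and of the affine-coding mechanism is the actual content of the lemma; without it (or an equally explicit substitute) your proposal establishes only that the two conclusions would follow from a construction you have not produced.
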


\begin{proof}
We begin with a variant of an example given 
in~\cite{Ceillier - Leuridan}, 
which was itself inspired from an unpublished note of 
Tsirelson~\cite{Tsirelson}. 

For every $n \le 0$, call $K_n$ the finite field with $q_n = 2^{2^{|n|}}$ 
elements. Start with a sequence of independent random variables 
$(Z_n)_{n \le 0}$ such that for every $n \le 0$, $Z_{2n} = (X_n,Y_n)$ 
is uniform on $K_n^4 \times K_n^4$ and $Z_{2n-1} = B_n$ is uniform on $K_n^4$. 
By construction, the filtration $(\fc^Z_n)_{n \le 0}$ is product-type 
and $(r_n)_{n \le 0}$-adic, with $r_{2n-1} = q_n^4$ and $r_{2n} = q_n^8$ 
for every $n \le 0$. 

Since $|K_{n-1}| = 2^{2^{|n|+1}} = |K_n|^2$, one can fix a bijection between
$K_{n-1}^4 \times K_{n-1}^4$ and the set $\mc_4(K_n)$ of all $4 \times 4$ 
matrices with entries in $K_n$. Call $A_n$ the uniform random variable 
on $\mc_4(K_n)$ corresponding to $Z_{2n-2}$ through this bijection, and 
set $U_{2n-1} = 0$ and $U_{2n} = Y_n - A_nX_n - B_n$.     

For every $n \le 0$, $(X_n,Y_n)$ is independent of 
$\fc^Z_{2n-1}$ and uniform on $K_n^4 \times K_n^4$. Since the random map   
$(x,y) \mapsto (x,y-A_nx-B_n)$ from $K_n^4 \times K_n^4$ to itself 
is $\fc^Z_{2n-1}$-measurable and bijective, $(X_n,U_{2n})$ is also 
independent of $\fc^Z_{2n-1}$ and uniform on $K_n^4 \times K_n^4$ and is 
still an innovation at time $2n$ of the filtration $\fc^Z$.
Therefore, the filtration $(\fc^U_n)_{n \le 0}$ is immersed in 
$(\fc^Z_n)_{n \le 0}$, product-type and $(r_n/q_n^4)_{n \le 0}$-adic. 

As the integers $(r_n)_{n \le 0}$ and $(r_n/q_n^4)_{n \le 0}$ 
are powers of $2$, one can interpolate the filtrations 
$(\fc^Z_n)_{n \le 0}$ and $(\fc^U_n)_{n \le 0}$ to get two 
filtrations $\zc = (\zc_n)_{n \le 0}$ and 
$\uc = (\uc_n)_{n \le 0}$ such that: 
\begin{itemize}
\item $\zc$ is a dyadic product-type filtration, 
\item $\uc$ is poly-adic and immersed in $\zc$,
\item $(\fc^Z_n)_{n \le 0} = (\zc_{t_n})_{n \le 0}$ and 
$(\fc^U_n)_{n \le 0} = (\uc_{t_n})_{n \le 0}$ for some sequence 
$0 = t_0 > t_{-1} > t_{-2} > \ldots$ of integers. 
\end{itemize}
To get such filtrations, it suffices to split the 
random variables $(B_n)_{n \le 0}$, $(X_n)_{n \le 0}$ 
and $(U_{2n})_{n \le 0}$ into independent random bits. 
The bits coming from the $B_n$ and the $X_n$ 
provide innovations of the filtration $\zc$ at the times when the 
filtration $\zc$ only increases. The bits coming from the $U_{2n}$ 
provide innovations of the two filtrations $\zc$ and $\uc$ 
at the times when both of them increase. 

The random variable $U = (U_n)_{n \le 0}$ generates $\uc_0 = \fc^U_0$. 
Let us check that for $U(\ppf)$-almost every $u \in E$, 
$(\zc_n)$ is Kolmogorovian but not product-type under 
$\ppf_u = \ppf[\cdot|U=u]$. 
By corollary~\ref{conditionning}   
and proposition~\ref{sufficient condition for maximality} 
of the present paper
(propositions 3,4 and corollary 9 of \cite{Leuridan}), 
the last two statements will follow. 

First, we note that for every $n \le 0$, 
$\fc^Z_{2n} = \fc^U_{2n} \vee \fc^{B,X}_{n}$, where 
$\fc^U_{2n}$ and $\fc^{B,X}_{n}$ are independent. 
By corollary~\ref{conditionning}, for $U(\ppf)$-almost every $u \in E$,
the filtration $(\fc^Z_{2n})_{n \le 0}$ seen under 
$\ppf_u$ is isomorphic to the filtration 
$(\fc^{B,X}_n)_{n \le 0}$ seen under $\ppf$, which is product-type. 
Thus, $\zc_{-\infty} = \fc^Z_{-\infty}$ is trivial under 
$\ppf_u$. 

To show that $(\zc_n)$ is not product-type under $\ppf_u$, 
it suffices to show that the extracted filtration 
$(\fc^Z_n)_{n \le 0}$ is not product-type under $\ppf_u$. 
To do this, we check that the random variable $Z_0$ 
does not satisfy the I-cosiness criterion.
Let $Z'=(X',Y')$ and $Z''=(X'',Y'')$ be two copies of the process 
$Z$ under $\ppf_u$, defined on some probability space 
$(\bar{\Omega},\bar{\ac},\bar{\ppf}_u)$, such that both natural 
filtrations $\fc^{Z'}$ and $\fc^{Z''}$ are
immersed in some filtration $\gc$.

For every $n \le 0$, define the copies $A'_n,A''_n$ and $B'_n,B''_n$ 
of the random variables $A_n$ and $B_n$ by the obvious way, and set 
$S_n = \{x \in K_n^4 : A'_nx+B'_n = A''_nx+B''_n\}$.
Then for $U(\ppf)$-almost every $u \in E$, the equalities 
$Y'_n = A'_nX'_n+B'_n+u_{2n}$ and $Y''_n = A''_nX''_n+B''_n+u_{2n}$ 
hold $\bar{\ppf}_u$-almost surely. Therefore, 
$$\one_{[Z'_{2n}=Z''_{2n}]} = \one_{[X'_n = X''_n \in S_n]} 
\le \one_{[X'_n \in S_n]} \ \bar{\ppf}_u \text{-almost surely}.$$ 
But the random variable $S_n$ is $\gc_{2n-1}$-measurable whereas
$X'_n$ is uniform on $K_n^4$ conditionally on $\gc_{2n-1}$ since 
$\fc^{Z'}$ is immersed in $\gc$. Thus
$$\bar{\ppf}_u[Z'_{2n}=Z''_{2n}|\gc_{2n-1}] 
\le \frac{|S_n|}{q_n^4} 
\le \one_{[A'_n=A''_n]} + \frac{1}{q_n} \one_{[A'_n \ne A''_n]}
\ \bar{\ppf}_u \text{-almost surely},$$ 
since $S_n$ contains at most $q_n^3$ points when $A'_n \ne A''_n$. 
Passing to the complements and taking the expectations yields
$$\bar{\ppf}_u[Z'_{2n} \ne Z''_{2n}] 
\ge \Big(1-\frac{1}{q_n}\Big) \bar{\ppf}_u[A'_{n} \ne A''_{n}] 
= \Big(1-\frac{1}{q_n}\Big) \bar{\ppf}_u[Z'_{2n-2} \ne Z''_{2n-2}].$$
By induction, one gets that for every $n \le 0$
$$\bar{\ppf}_u[Z'_0 \ne Z''_0] 
\ge \prod_{k=n+1}^0 \Big(1-\frac{1}{q_k}\Big) 
\times \bar{\ppf}_u[Z'_{2n} \ne Z''_{2n}].$$
If, for some $N > -\infty$, the $\sigma$-fields 
$\fc^{Z'}_N$ and $\fc^{Z''}_N$ are independent, 
then $\bar{\ppf}_u[Z'_{2n} \ne Z''_{2n}] \to 1$ as $n \to -\infty$, so
$$\bar{\ppf}_u[Z'_0 \ne Z''_0] 
\ge \prod_{k \le 0} \Big(1-\frac{1}{q_k}\Big) > 0.$$
The proof is complete. 
\end{proof}

\subsection{Embedding dyadic filtrations in Brownian filtrations}

We start with the two filtrations provided by lemma~\ref{dyadic}. 
By construction, the filtration $(\zc_n)_{n \le 0}$ can be generated by 
some i.i.d. sequence $(\ep_n)_{n \le 0}$ of uniform random variables 
with values in $\{-1,1\}$. 

% Fix an increasing sequence $(t_n)_{n \le 0}$ of positive real numbers 
% such that $t_0=1$ and $t_n \to 0$ as $n \to -\infty$ (e.g. 
% $t_n=2^n$ for every $n \le 0$). By symmetry and independence of Brownian 
% increments, one may construct a Brownian motion $X$ such that for every 
% $n \le 0$, $\ep_n = \sign(X_{t_n}-X_{t_{n-1}})$. Let $Y$ be another  
% Brownian motion, independent of $X$. 

The filtration $(\uc_n)_{n \le 0}$ is $(a_n)_{n \le 0}$-adic for some 
sequence $(a_n)_{n \le 0}$ taking values $1$ and $2$ only. 
Call $D \subset \zzf_-$ 
the set of all $n \le 0$ such that $a_n=2$. Since $(\uc_n)_{n \le 0}$
is poly-adic and immersed in a product-type filtration, 
$(\uc_n)_{n \le 0}$ is also product-type. 
Therefore, the filtration $(\uc_n)_{n \le 0}$ can be generated by 
some sequence $(\eta_n)_{n \le 0}$ of independent random variables with
$\eta_n$ uniform on $\{-1,1\}$ if $n \in D$, $\eta_n=0$ if $n \notin D$. 

By immersion of $(\uc_n)_{n \le 0}$ in $(\zc_n)_{n \le 0}$, each  
$\eta_n$ is $\zc_n$-measurable and independent of $\zc_{n-1}$. 
So when $n \in D$, $\eta_n$ can be written $\eta_n = H_n \ep_n$, 
where $H_n$ is some $\zc_{n-1}$-random variable taking values 
in $\{-1,1\}$. 

Fix an increasing sequence $(t_n)_{n \le 0}$ of positive real numbers 
such that $t_0=1$ and $t_n \to 0$ as $n \to -\infty$ (e.g. 
$t_n=2^n$ for every $n \le 0$). By symmetry and independence of Brownian 
increments, one may construct a Brownian motion $X$ such that for every 
$n \le 0$, $\ep_n = \sign(X_{t_n}-X_{t_{n-1}})$. Let $Y$ be another  
Brownian motion, independent of $X$. 

Since $\zc_{n-1} \subset \fc^{X,Y}_{t_{n-1}}$ for every $n \le 0$, one gets 
a predictable process $(A_t)_{0 < t \le 1}$ with values in $O_2(\rrf)$ and 
two independent Brownian motions $B$ and $C$ in $\fc^{X,Y}$ 
on the time-interval $[0,1]$ by setting for every $t \in ]t_{n-1},t_n]$,
$$A_t = \left(\begin{array}{cc}
H_n & 0 \\
0 & 1
\end{array}\right) \text{ if } n \in D,\quad 
A_t = \left(\begin{array}{cc}
0 & 1 \\
1 & 0
\end{array}\right) \text{ if } n \notin D,$$ 
and for every $t>0$,
$$\left(\begin{array}{c}
dB_t \\
dC_t
\end{array}\right)
= A_t\left(\begin{array}{c}
dX_t \\
dY_t
\end{array}\right).$$

\begin{theo}
The filtration generated by the Brownian motion $B$ thus defined is 
complementable after $0$, maximal, but not complementable in $\fc^{X,Y}$. 
\end{theo}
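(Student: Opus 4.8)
The plan is to establish the three asserted properties of $\fc^B$ — complementability after $0$, maximality, and non-complementability — by transferring the discrete-time information carried by lemma~\ref{dyadic} into the continuous-time Brownian setting, using the two auxiliary Brownian motions $B$ and $C$ that were constructed by rotating $(X,Y)$ through the predictable orthogonal process $A_t$.

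First I would verify that $B$ and $C$ are genuinely independent Brownian motions in $\fc^{X,Y}$ and that $\fc^{X,Y}$ is itself a $2$-dimensional Brownian filtration: since each $A_t \in O_2(\rrf)$ is $\fc^{X,Y}$-predictable, the process $(B,C)$ has the right bracket $\langle B,C\rangle = 0$, $\d\langle B\rangle = \d\langle C\rangle = \d t$, so by Lévy's characterization $(B,C)$ is a planar Brownian motion, hence generates $\fc^{X,Y}$ modulo null sets (the rotation is invertible, as $A_t^\top A_t = I_2$). This gives the decomposition $\fc^{X,Y} = \fc^B \vee \fc^C$ with $\fc^B$ and $\fc^C$ independent. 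The first claim, complementability after $0$, should then follow almost directly: $\fc^C$ is a $1$-dimensional Brownian filtration immersed in $\fc^{X,Y}$ and independent of $\fc^B$, and one checks that $\zc_t = \bigcap_{s>0}(\fc^B_t \vee \fc^C_t \vee \zc_s)$ holds because $B$ and $C$ jointly regenerate $X$ and $Y$ off a null time at $0$. The second claim, maximality, is then immediate from proposition~\ref{sufficient condition for maximality : Brownian case} (or from proposition~\ref{implication: Brownian case}), since complementability after $0$ forces $\fc^B = (\fc^B)'$ to be maximal.

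The heart of the argument — and the step I expect to be the main obstacle — is the \emph{non-complementability} of $\fc^B$ in $\fc^{X,Y}$. Here the strategy is to show that a genuine Brownian independent complement cannot exist by reducing to the discrete-time obstruction of lemma~\ref{dyadic}. If $\fc^B$ were complementable in $\fc^{X,Y}$ by some Brownian filtration, then by the second item of corollary~\ref{conditionning}, conditionally on $U$ the filtration $\fc^{X,Y}$ would be a Brownian filtration, hence in particular product-type at the sampling times $(t_n)$. The construction was arranged so that sampling $B$ and $X$ at the times $t_n$ recovers exactly the filtrations $\uc$ and $\zc$ of lemma~\ref{dyadic}: indeed $\ep_n = \sign(X_{t_n} - X_{t_{n-1}})$, and the matrix $A_t$ applied on $]t_{n-1},t_n]$ encodes, via $\eta_n = H_n \ep_n$, precisely the innovations $(\eta_n)$ of $\uc$ inside those of $\zc$. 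The discrete sampled filtration of $(X,Y)$ must therefore coincide (modulo null sets) with $\zc$, which by lemma~\ref{dyadic} fails to be product-type under $\ppf_u$ for $U(\ppf)$-almost every $u$. This contradicts the conclusion drawn from corollary~\ref{conditionning}, so no Brownian complement exists.

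The technical care in that last paragraph lies in matching the continuous-time conditioning (regular conditional probabilities $\ppf_u$ given $U$) with the discrete-time I-cosiness obstruction, and in confirming that the interpolation used to build $\zc$ and $\uc$ survives the embedding into $\fc^{X,Y}$ so that the sampled filtration is exactly $\zc$ and not something strictly larger. Concretely, I would check that $\fc^X_{t_n} = \zc_{t_n}$ and $\fc^B_{t_n} = \uc_{t_n}$ modulo $\ppf$ for the sampling sequence of lemma~\ref{dyadic}, relying on the fact that the orthogonal rotations $A_t$ are built from the $\zc_{n-1}$-measurable signs $H_n$ and the transposition off $D$. Once that identification is in place, the non-product-type conclusion of lemma~\ref{dyadic} transfers verbatim through corollary~\ref{conditionning}, completing the proof.
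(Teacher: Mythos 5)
Your treatment of complementability after $0$ is essentially the paper's, and your overall strategy for non-complementability points in the right direction, but the maximality step contains a genuine gap. Proposition~\ref{sufficient condition for maximality : Brownian case} does \emph{not} assert that complementability after $0$ makes $\fc^B$ itself maximal; it asserts that the enlarged filtration $(\fc^B)'_t = \overline{\bigcap_{s>0}(\fc^B_t \vee \fc^{X,Y}_s)}$ is maximal. Complementability after $0$ does not ``force $\fc^B=(\fc^B)'$'': the defining identity of a complement after $0$ is $\zc_t=\bigcap_{s>0}(\bc_t\vee\cc_t\vee\zc_s)$, which says nothing about $\bigcap_{s>0}(\bc_t\vee\zc_s)$ collapsing to $\bc_t$ (the paper explicitly warns that $\bc'_t$ may be strictly larger than $\overline{\bc_t}$). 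Proving $\fc^B_1=\bigcap_{n\le 0}(\fc^B_1\vee\fc^{X,Y}_{t_n})$ is exactly where the discrete input of lemma~\ref{dyadic} must enter a second time: one writes $\fc^B_1=\ac\vee\bc$ with $\ac=\sigma((\eta_n)_{n\in D})$ and $\bc=\sigma((\ep_n\Delta X_n)_{n\in D})\vee\sigma((\Delta Y_n)_{n\in D^c})$, uses the \emph{maximality of $\fc^\eta$ in $\fc^\ep$} to handle $\ac$, the exchange property for independent $\sigma$-fields (corollary~\ref{independence allows exchange}) to handle $\bc$, and lemma~\ref{independent exchange} to combine them. Without this, the maximality claim is unproved. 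Your parenthetical alternative, proposition~\ref{implication: Brownian case}, is also unavailable: it assumes full complementability by a Brownian filtration, which is precisely what the third part of the theorem denies.

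Two further imprecisions in your non-complementability argument. First, corollary~\ref{conditionning} requires conditioning on a variable generating $\bigvee_t\uc_t=\sigma(B)$, i.e.\ on the whole path of $B$, not on the discrete variable $U$ of lemma~\ref{dyadic}; one must then argue, using the independence of $\ep$, $(\ep_n\Delta X_n)_{n\le 0}$ and $(\Delta Y_n)_{n\le 0}$, that the law of $\ep$ under $\ppf[\cdot\,|\,B=b]$ coincides with its law under $\ppf[\cdot\,|\,\eta=\Phi(b)]$. Second, the sampled filtration $(\fc^{X,Y}_{t_n})_{n\le 0}$ is not $\zc$ but a strictly larger independent enlargement of $\fc^\ep$, so lemma~\ref{dyadic} does not apply to it directly: the correct route is to show that $(\fc^\ep_n)_{n\le 0}$ is immersed in it and fails to be product-type under $\ppf[\cdot\,|\,B=b]$, and then invoke Vershik's theorem (theorem~\ref{Vershik}) to conclude that $(\fc^{X,Y}_{t_n})_{n\le 0}$ itself cannot be product-type, hence $\fc^{X,Y}$ cannot be Brownian, under that conditional law.
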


\begin{proof}

{\bf Complementability after $0$} 

Let us check that $C$ is a complement after $0$ of $B$, or equivalently that 
$$\forall s \in ]0,1],~\fc^{B,C}_1 \vee \fc^{X,Y}_s = \fc^{X,Y}_1.$$
Since $t_0=1$ and $t_m \to 0+$ as $m \to -\infty$, it is sufficient to consider
instants $s$ which are some subdivision time $t_m$ with $m \le 0$. 
Since for every $n \ge m$, the process $A$ coincides on each 
time-interval $]t_n,t_{n+1}]$ with an $\fc^{X,Y}_{t_n}$-measurable 
random variable, the formula 
$$\left(\begin{array}{c}
dX_t \\
dY_t
\end{array}\right)
= A_t^{-1}\left(\begin{array}{c}
dB_t \\
dC_t
\end{array}\right)$$
enables us to recover $(X,Y)$ from the 
knowledge of $((X_s,Y_s))_{0 \le s \le t_m}$ and $(B,C)$.  

{\bf Maximality}

By proposition~\ref{sufficient condition for maximality : Brownian case},
the maximality of $B$ will follow from its complementability after $0$ 
once we will have proved the equality 
$$\fc^B_1 = \bigcap_{s \in ]0,1]} (\fc^B_1 \vee \fc^{X,Y}_s).$$
The intersection above, over all $s \in ]0,1]$ can be restricted to the 
instants $t_m$ with $m \le 0$.   

It is now convenient to introduce the notations
$$\Delta X_n = (X_t-X_{t_{n-1}})_{t_{n-1} \le t \le t_n} \text{ and }
\ep_n\Delta X_n = (\ep_n(X_t-X_{t_{n-1}}))_{t_{n-1} \le t \le t_n}.$$
Recall that $\ep_n = \sign(X_{t_n}-X_{t_{n-1}})$. Therefore,
$\sigma(\Delta X_n) = \sigma(\ep_n) \vee \sigma(\ep_n\Delta X_n)$, 
with $\sigma(\ep_n)$ and $\sigma(\ep_n\Delta X_n)$ independent
by symmetry of Brownian increments.

We define in the same way the random variables 
$\Delta Y_n$, $\Delta B_n$, $\Delta C_n$ and $\eta_n\Delta B_n$. Then
%By definition of $B$ and $C$, 
$$\Delta B_n = H_n\Delta X_n = \eta_n\ep_n\Delta X_n 
\text{ and } \Delta C_n = \Delta Y_n \text{ if } n \in D,$$ 
$$\Delta B_n = \Delta Y_n 
\text{ and } \Delta C_n = \Delta X_n \text{ if } n \in D^c.$$ 
Moreover, when $n \in D$, 
$\eta_n = \sign(B_{t_n}-B_{t_{n-1}})$ is independent of
$\eta_n\Delta B_n = \ep_n\Delta X_n$.

Therefore, $\fc^B_1 = \ac \vee \bc$, with 
$$\ac = \sigma((\eta_n)_{n \in D}),\quad 
\bc = \sigma((\ep_n\Delta X_n)_{n \in D}) \vee \sigma((\Delta Y_n)_{n \in D^c}).$$
For $n \in \zzf_-$, set $D_n = D \cap ]-\infty,n]$,
$D_n^c = ]-\infty,n] \setminus D$, and
$$\cc_n = \fc^\ep_n, \quad \dc_n = \sigma((\ep_k\Delta X_k)_{k \in D_n^c}) 
\vee \sigma((\Delta Y_k)_{k \in D_n}).$$
Then $\fc^\ep_n \vee \fc^C_{t_n} = \cc_n \vee \dc_n$.

The maximality of $\fc^\eta$ in $\fc^\ep$ yields the equality 
$$\ac = \bigcap_{n \le 0} (\ac \vee \cc_n) \mod \ppf.$$
By independence of $B$ and $C$, the $\sigma$-fields $\bc$ and $\dc_0$ are 
independent, so corollary~\ref{independence allows exchange} 
applies and the following exchange property holds
$$\bc = \bc \vee \dc_{-\infty} = \bigcap_{n \le 0} (\bc \vee \dc_n) \mod \ppf.$$
Since the three sequences $(\ep_n)_{n \le 0}$, $(\ep_n \Delta X_n)_{n \le 0}$
and $(\Delta Y_n)_{n \le 0}$ are independent, the $\sigma$-fields
$\ac \vee \cc_0 = \fc^{\ep}_0$ and $\bc \vee \dc_0 = \fc^{\ep \Delta X,Y}_0$ 
are independent. Hence, lemma~\ref{independent exchange} yields  
$$\fc^B_1 = \ac \vee \bc 
= \bigcap_{n \le 0} (\ac \vee \bc \vee \cc_n \vee \dc_n)
= \bigcap_{n \le 0} (\fc^B_1 \vee \fc^\ep_n \vee \fc^C_{t_n})
= \bigcap_{n \le 0} (\fc^B_1 \vee \fc^{X,Y}_{t_n}) \mod \ppf.$$
This proves the maximality of $B$. 

{\bf Non-complementability}

Keep the notations introduced in the proof of the maximality and set  
$\ep:=(\ep_n)_{n \le 0}$, $\eta := (\eta_n)_{n \le 0}$. 
Remind that $\ep$, $(\ep_n \Delta X_n)_{n \le 0}$ and $(\Delta Y_n)_{n \le 0}$ 
are independent families of independent random variables and that 
$\fc^B_1$ is the $\sigma$-field 
generated by $\eta$, $(\ep_n \Delta X_n)_{n \in D}$ and $(\Delta Y_n)_{n \in D^c}$. 

The filtration $(\fc^{X,Y}_{t_n})_{n \le 0}$ can be splitted into 
three independent parts, namely 
$$\fc^{X,Y}_{t_n} = \fc^\ep_n 
\vee \sigma((\ep_k \Delta X_k)_{k \in D_n} \cup (\Delta Y_k)_{k \in D_n^c})
\vee \sigma((\Delta Y_k)_{k \in D_n} \cup (\ep_k \Delta X_k)_{k \in D_n^c}).$$
The second part is a function of $B$ whereas the third part is independent 
of $(\ep,B)$. By independent enlargement, we get that for 
$B(\ppf)$-almost every $b \in C([0,1],\rrf)$,
the filtration $(\fc^\ep_n)_{n \le 0}$ is immersed in 
$(\fc^{X,Y}_{t_n})_{n \le 0}$ under $\ppf[\cdot|B=b]$. 

But $\eta$ is some measurable function $\Phi$ of $B$ and is 
also a function of $\ep$. 
Since $\ep$, $(\ep_n \Delta X_n)_{n \in D}$ and $(\Delta Y_n)_{n \in D^c}$ 
are independent, the law of $\ep$ under  
$\ppf[\cdot|B=b]$ coincides with the law of $\ep$ under 
$\ppf[\cdot|\eta = \Phi(b)]$. 

Since $\eta$ generates the same 
$\sigma$-field as the random variable $U$ of lemma~\ref{dyadic}, 
we derive that for $B(\ppf)$-almost every $b \in C([0,1],\rrf)$,
the filtration $(\fc^\ep_n)_{n \le 0}$ is $(2/a_n)$-adic but not 
product-type under $\ppf[\cdot|B=b]$. 
But this filtration is immersed in 
$(\fc^{X,Y}_{t_n})_{n \le 0}$ under $\ppf[\cdot|B=b]$, 
hence by Vershik's theorem (theorem~\ref{Vershik} in the present paper),
$(\fc^{X,Y}_{t_n})_{n \le 0}$ cannot be product-type 
so $\fc^{X,Y}$ cannot be Brownian under $\ppf[\cdot|B=b]$. 
Thus, the Brownian filtration $\fc^B$ is not complementable in $\fc^{X,Y}$.  
\end{proof}

\begin{lemm}\label{independent exchange} 
Let $\ac$, $\bc$ be two sub-$\sigma$-fields and $(\cc_n)_{n \le 0}$, 
$(\dc_n)_{n \le 0}$ be two filtrations of the probability space 
$(\Omega,\tc,\ppf)$. If 
$$\ac = \bigcap_{n \le 0} (\ac \vee \cc_n) \mod \ppf, \quad
\bc = \bigcap_{n \le 0} (\bc \vee \dc_n) \mod \ppf,$$
and if $\ac \vee \cc_0$ and $\bc \vee \dc_0$ are independent, then
$$\ac \vee \bc 
= \bigcap_{n \le 0} (\ac \vee \bc \vee \cc_n \vee \dc_n) \mod \ppf.$$
\end{lemm}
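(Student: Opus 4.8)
The plan is to reduce the statement to a clean exchange property for two \emph{independent} non-increasing sequences of $\sigma$-fields, and then to obtain that property by applying corollary~\ref{independence allows exchange} twice. The inclusion $\ac \vee \bc \subset \bigcap_{n \le 0}(\ac \vee \bc \vee \cc_n \vee \dc_n)$ is immediate, so only the reverse inclusion is at stake. I would set $\fc_n := \ac \vee \cc_n$ and $\gc_n := \bc \vee \dc_n$; since $(\cc_n)$ and $(\dc_n)$ are filtrations indexed by $\zzf_-$, both $(\fc_n)_{n \le 0}$ and $(\gc_n)_{n \le 0}$ are non-increasing as $n \to -\infty$, and one has $\fc_n \vee \gc_n = \ac \vee \bc \vee \cc_n \vee \dc_n$, while the hypotheses read $\bigcap_{n \le 0}\fc_n = \ac$ and $\bigcap_{n \le 0}\gc_n = \bc \mod \ppf$. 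The independence of $\fc_0 = \ac \vee \cc_0$ and $\gc_0 = \bc \vee \dc_0$ then entails the independence of $\fc_m$ and $\gc_m$ for \emph{every} $m \le 0$, since these are sub-$\sigma$-fields of $\fc_0$ and $\gc_0$. Thus everything reduces to proving $\bigcap_{n \le 0}(\fc_n \vee \gc_n) \subset (\bigcap_{n} \fc_n) \vee (\bigcap_n \gc_n) \mod \ppf$.

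For the reduced statement I would fix $m \le 0$ and argue in two stages. Because the sequence $(\fc_n \vee \gc_n)$ is non-increasing, its intersection over $n \le 0$ equals its intersection over $n \le m$; and for $n \le m$ one has $\fc_n \subset \fc_m$, hence $\fc_n \vee \gc_n \subset \fc_m \vee \gc_n$. Therefore
$$\bigcap_{n \le 0}(\fc_n \vee \gc_n) \subset \bigcap_{n \le m}(\fc_m \vee \gc_n) = \fc_m \vee \bigcap_{n \le m}\gc_n = \fc_m \vee \gc_{-\infty},$$
where the middle equality is corollary~\ref{independence allows exchange} applied to the fixed $\sigma$-field $\fc_m$ and the non-increasing sequence $(\gc_n)_{n \le m}$, which is legitimate because $\fc_m$ is independent of $\gc_m$. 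Intersecting this inclusion over all $m \le 0$ and applying the corollary a second time, now to the fixed $\sigma$-field $\gc_{-\infty}$ and the non-increasing sequence $(\fc_m)_{m \le 0}$ (legitimate since $\gc_{-\infty} \subset \gc_0$ is independent of $\fc_0$), gives
$$\bigcap_{n \le 0}(\fc_n \vee \gc_n) \subset \bigcap_{m \le 0}(\fc_m \vee \gc_{-\infty}) = \gc_{-\infty} \vee \bigcap_{m \le 0}\fc_m = \fc_{-\infty} \vee \gc_{-\infty} \mod \ppf.$$
Since $\fc_{-\infty} = \ac$ and $\gc_{-\infty} = \bc$, this is exactly the desired inclusion.

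The argument is essentially bookkeeping around corollary~\ref{independence allows exchange}, and the only point that requires genuine care is checking that the independence hypothesis of that corollary is met at each of the two applications. This is where the single hypothesis that $\ac \vee \cc_0$ and $\bc \vee \dc_0$ are independent is spent: in the first application through $\fc_m \subset \fc_0$ being independent of $\gc_m \subset \gc_0$, and in the second through $\gc_{-\infty} \subset \gc_0$ being independent of $\fc_0$. The one subtlety to keep track of throughout is that all equalities and inclusions hold only modulo $\ppf$, so I would take the completed versions of the $\sigma$-fields when invoking the exchange corollary, exactly as the surrounding results (for instance the proof of proposition~\ref{slightly larger Brownian filtration}) already do.
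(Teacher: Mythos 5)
Your reduction is sound in outline and takes a genuinely different route from the paper: the paper proves the identity directly, by checking that $\eef[XY\mid\ac\vee\bc\vee\cc_n\vee\dc_n]=\eef[X\mid\ac\vee\cc_n]\,\eef[Y\mid\bc\vee\dc_n]$ for product random variables $X\in L^1(\ac\vee\cc_0)$, $Y\in L^1(\bc\vee\dc_0)$ spanning a dense subspace of $L^1(\ac\vee\cc_0\vee\bc\vee\dc_0)$, and then letting $n\to-\infty$ by backward martingale convergence. You instead factor the statement through two applications of a one-sided exchange property, and your bookkeeping is correct: the monotonicity of $(\fc_n\vee\gc_n)$, the inclusion $\fc_n\vee\gc_n\subset\fc_m\vee\gc_n$ for $n\le m$, and the verification of the independence hypotheses via $\fc_m\subset\fc_0$ and $\gc_{-\infty}\subset\gc_0$ are all fine.

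The genuine problem is the tool you invoke. First, corollary~\ref{independence allows exchange} as stated gives $\ac=\bigcap_n(\ac\vee\dc_n)$; read off from its derivation (the lemma with $\bc=\{\emptyset,\Omega\}$, whose hypothesis $\bc=\bigcap_n(\bc\vee\dc_n)$ then forces $\bigcap_n\dc_n$ to be trivial), it is the exchange property only in the case where the limit $\sigma$-field $\bigcap_n\dc_n$ is trivial. Your first application needs the stronger form $\bigcap_{n\le m}(\fc_m\vee\gc_n)=\fc_m\vee\bigcap_{n\le m}\gc_n$ with $\bigcap_{n\le m}\gc_n=\bc$ \emph{nontrivial}, which the corollary as stated does not deliver (and would be false without some hypothesis relating $\bigcap_n\gc_n$ to the fixed $\sigma$-field). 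Second, and more seriously, in the paper's logical order the corollary is \emph{deduced from} lemma~\ref{independent exchange}, so quoting it to prove the lemma is circular. Both defects are repaired at once by proving, as a preliminary and self-contained step, the one-sided exchange property: if $\fc$ is independent of $\gc_0$ and $(\gc_n)_{n\le 0}$ is non-increasing as $n\to-\infty$, then $\bigcap_n(\fc\vee\gc_n)=\fc\vee\bigcap_n\gc_n\mod\ppf$; the proof is exactly the paper's argument with a single factor ($\eef[XY\mid\fc\vee\gc_n]=X\,\eef[Y\mid\gc_n]$ plus backward martingale convergence). With that lemma in hand your two-step reduction goes through; as written, however, the proof rests on a statement that is both stronger than the cited corollary and downstream of the result being proved.
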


\begin{proof}
Since both sides of the equality to be proved are sub-$\sigma$-fields 
of $\ac \vee \bc \vee \cc_0 \vee \dc_0$, it is sufficient to prove 
that for every $Z \in L^1(\ac \vee \cc_0 \vee \bc \vee \dc_0)$, 
one has 
$$\eef[Z|\ac \vee \bc] 
= \eef\Big[ Z \Big| 
\bigcap_{n \le 0} (\ac \vee \bc \vee \cc_n \vee \dc_n)\Big].$$
Considering only random variables $Z = XY$ with $X \in L^1(\ac \vee \cc_0)$ and 
$Y \in L^1(\bc \vee \dc_0)$ is sufficient since these random variables 
span a dense subspace in $L^1(\ac \vee \cc_0 \vee \bc \vee \dc_0)$. 
Given $Z$ as above, one has $\eef[Z|\ac \vee \cc_0 \vee \bc] = X \eef[Y|\bc]$, 
so
$$\eef[Z|\ac \vee \bc] = \eef[X|\ac \vee \bc] \eef[Y|\bc] 
= \eef[X|\ac] \eef[Y|\bc],$$
since $\sigma(X) \vee \ac$ is independent of $\bc$. 
% one checks that 
% $$\eef[Z|\ac \vee \bc] = \eef[X|\ac] \eef[Y|\bc].$$
% Indeed, the random variable $\eef[X|\ac] \eef[Y|\bc]$ is 
% bounded and $\ac \vee \bc$-measurable, and by independence 
% of $\ac \vee \cc_0$ and $\bc \vee \dc_0$, one has for every 
% $A \in \ac$ and $B \in \bc$, 
% $$\int_{A \cap B} Z \d\ppf = \eef[(X\one_A)(Y\one_B)] = 
% \eef[X\one_A] \eef[Y\one_B],$$
% so this integral is unchanged if one replaces $X$ and $Y$ by 
% $\eef[X|\ac]$ and $\eef[Y|\bc]$. 
In the same way, one gets that for every $n \le 0$,  
$$\eef[Z|\ac \vee \bc \vee \cc_n \vee \dc_n] 
= \eef[X|\ac \vee \cc_n] \eef[Y|\bc \vee \dc_n].$$
Thus, taking the limit as $n \to -\infty$ yields the result by the 
martingale convergence theorem and the assumption. 
\end{proof}

The particular case where $\bc$ and the $\sigma$-fields $\cc_n$ are 
equal to $\{\emptyset,\Omega\}$ yields the following classical and 
useful result. 

\begin{coro}\label{independence allows exchange}
Let $\ac$ be a sub-$\sigma$-field and $(\dc_n)_{n \le 0}$ be a filtration 
of the probability space $(\Omega,\tc,\ppf)$. If $\ac$ and $\dc_0$ are 
independent, then 
$$\ac = \bigcap_{n \le 0} (\ac \vee \dc_n) \mod \ppf.$$
\end{coro}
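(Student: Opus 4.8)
The plan is to read the corollary directly off Lemma~\ref{independent exchange}, using the degenerate substitution flagged in the statement: take $\bc = \{\emptyset,\Omega\}$ and $\cc_n = \{\emptyset,\Omega\}$ for every $n \le 0$. With this choice the left-hand side $\ac \vee \bc$ of the lemma's conclusion becomes $\ac$, and the right-hand side $\bigcap_{n \le 0}(\ac \vee \bc \vee \cc_n \vee \dc_n)$ becomes $\bigcap_{n \le 0}(\ac \vee \dc_n)$, so that the conclusion of the lemma is exactly the equality $\ac = \bigcap_{n \le 0}(\ac \vee \dc_n) \mod \ppf$ to be proved. The whole task thus reduces to matching the three hypotheses of the lemma against the sole assumption of the corollary, namely that $\ac$ and $\dc_0$ are independent.

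First I would discharge the two inexpensive hypotheses. Hypothesis~(1), $\ac = \bigcap_{n \le 0}(\ac \vee \cc_n)$, collapses to the tautology $\ac = \bigcap_{n \le 0}\ac$ once the $\cc_n$ are trivial. Hypothesis~(3) requires $\ac \vee \cc_0$ and $\bc \vee \dc_0$ to be independent; since $\cc_0$ and $\bc$ are trivial, this is verbatim the hypothesis that $\ac$ and $\dc_0$ are independent. All the content therefore sits in hypothesis~(2), $\bc = \bigcap_{n \le 0}(\bc \vee \dc_n)$, which with $\bc$ trivial asserts that the tail $\sigma$-field $\bigcap_{n \le 0}\dc_n$ is $\ppf$-trivial.

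Hypothesis~(2) is where I expect the only genuine difficulty, and it is worth being explicit about it. The independence of $\ac$ and $\dc_0$ alone controls the interaction of $\ac$ with the decreasing family $(\dc_n)$, but it does not by itself reduce the tail $\bigcap_{n \le 0}\dc_n$ to $\{\emptyset,\Omega\}$; in full generality a reverse--martingale computation with $\ac$ independent of $\dc_0$ only delivers the exchange identity $\bigcap_{n \le 0}(\ac \vee \dc_n) = \ac \vee \bigcap_{n \le 0}\dc_n \mod \ppf$, of which the displayed equality is the specialization to a trivial tail. In each use of the corollary in the present paper this triviality is automatic: there the $\dc_n$ are generated by independent families indexed by sets shrinking to the empty set, so Kolmogorov's zero--one law forces $\bigcap_{n \le 0}\dc_n$ to be trivial and hypothesis~(2) of Lemma~\ref{independent exchange} to hold (this is precisely why the application writes $\bc = \bc \vee \dc_{-\infty} = \bigcap_{n\le 0}(\bc \vee \dc_n)$, the first equality encoding the triviality of $\dc_{-\infty}$). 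Once hypothesis~(2) is in force, the three hypotheses are met and the lemma yields the stated equality; so the crux of the plan is to make that triviality explicit whenever the corollary is invoked.
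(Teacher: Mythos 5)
Your proposal is correct and is exactly the paper's proof: the paper disposes of the corollary in one line, as the particular case of Lemma~\ref{independent exchange} in which $\bc$ and all the $\cc_n$ are the trivial $\sigma$-field, which is precisely your substitution. Your scruple about hypothesis~(2) is well taken and in fact goes beyond the paper's one-liner: with $\bc=\{\emptyset,\Omega\}$ that hypothesis reads $\{\emptyset,\Omega\}=\bigcap_{n\le 0}\dc_n \mod \ppf$, i.e.\ triviality of the tail $\dc_{-\infty}$, and this does \emph{not} follow from the independence of $\ac$ and $\dc_0$ (take $\dc_n=\dc_0$ non-trivial and independent of $\ac$ for all $n\le 0$: then $\bigcap_{n\le 0}(\ac\vee\dc_n)=\ac\vee\dc_0\ne\ac \mod \ppf$), so as literally stated the corollary is missing this hypothesis, and the correct unconditional form is the exchange identity $\bigcap_{n\le 0}(\ac\vee\dc_n)=\ac\vee\dc_{-\infty} \mod \ppf$ that you cite --- which is, incidentally, how the corollary is described when it is announced in subsection~\ref{conditions for maximality}. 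You are also right that in the paper's actual invocation the missing hypothesis is automatic: there $\dc_0$ is generated by independent random variables indexed by a set shrinking to $\emptyset$, Kolmogorov's zero-one law makes $\dc_{-\infty}$ trivial, and the displayed line $\bc=\bc\vee\dc_{-\infty}=\bigcap_{n\le 0}(\bc\vee\dc_n)$ records exactly that. So your argument matches the paper's and, in addition, correctly isolates the tacit tail-triviality assumption that the paper's statement and one-line proof pass over in silence.
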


\section{A complementable factor arising from a 
non-complemen\-table filtration}
\label{complementable filtration yielding a complementable factor}

\subsection{Definition of a uniform randomised decimation 
process}

We denote by $\{a,b\}^\infty$ the set of all infinite words on the 
alphabet $\{a,b\}$, namely the set of all maps from 
$\nnf = \{1,2,\ldots\}$ to $\{a,b\}$. We endow this 
set with the uniform probability measure $\mu$: 
a random infinite word $X$ is chosen according to $\mu$ 
if the successive letters 
$X(1),X(2),\ldots$ form a sequence of independent and uniform 
random variables taking values in $\{a,b\}$.  

We denote by $\pc(\nnf)$ the power set of $\nnf$, i.e. the set 
of all subsets of $\nnf$. Given $p \in ]0,1[$, we define the 
probability measure $\nu_p$ on $\pc(\nnf)$ as follows: 
the law of a random subset $I$ of $\nnf$ is $\nu_p$ if  
$\one_I(1),\one_I(2),\ldots$ form an i.i.d. sequence of Bernoulli
random variables with parameter $p$. Equivalently, this means 
that $\ppf[F \subset I] = p^{|F|}$ for every finite subset 
$F$ of $\nnf$. In this case, we note that almost surely, 
$I$ is infinite with infinite complement. 
The law $\nu := \nu_{1/2}$ will be called the uniform law on 
$\pc(\nnf)$. 

When $A$ is an infinite subset of $\nnf$, we denote by 
$\psi_A(1)<\psi_A(2)<\ldots$ its elements. This defines an 
increasing map $\psi_A$ from $\nnf$ to $\nnf$ whose range is 
$A$. Conversely, for every increasing map $f$ from 
$\nnf$ to $\nnf$, there is a unique infinite subset $A$ of $\nnf$, 
namely the range of $f$, such that $f = \psi_A$. These remarks
lead to the following statement. 

\begin{lemm}\label{composition of random extractions}
Let $I$ and $J$ be independent random infinite subsets of $\nnf$ 
with respective laws $\nu_p$ and $\nu_q$, and 
$R = \psi_I \circ \psi_J(\nnf) = \psi_I(J)$ be the range of 
$\psi_I \circ \psi_J$. Then $\psi_I \circ \psi_J = \psi_R$ and 
the law of $R$ is $\nu_{pq}$. 
\end{lemm}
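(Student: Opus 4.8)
The plan is to treat the two assertions separately: the identity $\psi_I \circ \psi_J = \psi_R$ follows directly from the remark preceding the statement, while the law of $R$ is computed by conditioning on $I$ and invoking the characterization $\ppf[F \subset I] = p^{|F|}$ of the measure $\nu_p$.

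First I would dispatch the identity. Almost surely $J$ is infinite and $\psi_I$ is injective, so its image $R = \psi_I(J)$ is infinite and $\psi_R$ is well defined. The composition $\psi_I \circ \psi_J$ is a composition of two increasing maps from $\nnf$ to $\nnf$, hence increasing, and its range is $R$ by definition. By the uniqueness observed just before the statement (an increasing map from $\nnf$ to $\nnf$ equals $\psi_A$, where $A$ is its range), we conclude $\psi_I \circ \psi_J = \psi_R$.

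For the law of $R$, since $\nu_{pq}$ is characterized by the equalities $\ppf[F \subset R] = (pq)^{|F|}$ ranging over finite subsets $F$ of $\nnf$, it suffices to establish these. Fix $F = \{n_1 < \cdots < n_k\}$. The key reformulation is that for each $n \in \nnf$ one has $n \in R$ if and only if $n \in I$ and the rank $r_I(n) := |I \cap \{1,\dots,n\}|$ of $n$ in $I$ belongs to $J$; indeed $n = \psi_I(r_I(n))$ exactly when $n \in I$, while $n \notin I$ lies outside the image of $\psi_I$ altogether. Consequently $F \subset R$ if and only if $F \subset I$ and $\{r_I(n_1),\dots,r_I(n_k)\} \subset J$, and on the event $\{F \subset I\}$ these $k$ ranks are distinct since $r_I$ is strictly increasing on $I$. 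I would then condition on $I$: on $\{F \subset I\}$ the finite set $\{r_I(n_1),\dots,r_I(n_k)\}$ is $\sigma(I)$-measurable with exactly $k$ elements, so by independence of $I$ and $J$ and the defining property of $\nu_q$,
$$\ppf[F \subset R \mid I] = \one_{[F \subset I]}\, \ppf\big[\{r_I(n_1),\dots,r_I(n_k)\} \subset J \,\big|\, I\big] = q^k\, \one_{[F \subset I]}.$$
Taking expectations and using $\ppf[F \subset I] = p^k$ gives $\ppf[F \subset R] = q^k p^k = (pq)^{|F|}$.

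The computation itself is routine once set up; the only delicate point — and the step I expect to carry the whole argument — is the rank reformulation of membership in $R$. It is precisely this reformulation that turns the inclusion $F \subset R$ into the inclusion of a set that becomes deterministic of size $k$ after conditioning on $I$, thereby letting the defining property of $\nu_q$ apply cleanly and the independence of $I$ and $J$ factor the two probabilities.
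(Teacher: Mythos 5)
Your proof is correct and follows essentially the same route as the paper: the identity $\psi_I\circ\psi_J=\psi_R$ via the uniqueness of increasing maps with a given range, and the law of $R$ by conditioning on $I$, noting that $F\subset R$ iff $F\subset I$ and the set of ranks $\psi_I^{-1}(F)$ (which then has exactly $|F|$ elements) is contained in $J$, and then applying independence together with the characterization $\ppf[F\subset\cdot]=(\text{parameter})^{|F|}$. Your explicit rank reformulation is just the paper's $\psi_I^{-1}(F)$ written out elementwise.
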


\begin{proof}
The equality $\psi_I \circ \psi_J = \psi_R$ follows from the 
remarks above. Let $F$ be a finite subset of $\nnf$. By 
injectivity of $\psi_I$, 
$$[F \subset R] = [F \subset I\ ;\ \psi_I^{-1}(F) \subset J]$$
and $[F \subset I] = [|\psi_I^{-1}(F)|=|F|]$, therefore by 
independence of $I$ and $J$, 
$$\ppf[F \subset R\ |\ \sigma(I)] 
= \one_{[F \subset I]}\ \ppf[\psi_I^{-1}(F) \subset J\ |\ \sigma(I)] 
= \one_{[F \subset I]}\ q^{|\psi_I^{-1}(F)|}
= \one_{[F \subset I]}\ q^{|F|}.$$
Thus $\ppf[F \subset R] = \ppf[F \subset I] q^{|F|} = (pq)^{|F|}$.  
\end{proof}

Here is another property that we will use to define the uniform 
randomised decimation process on $\{a,b\}$, and also later, in 
the proof of proposition~\ref{rejected subsequences}.

\begin{lemm}\label{complementary subwords}
Let $X$ be a uniform random word on $\{a,b\}^\infty$. 
Let $I$ be a random subset of $\nnf$ with law $\nu_p$, 
independent of $X$. Then 
\begin{itemize}
\item $I$, $X \circ \psi_I$, $X \circ \psi_{I^c}$ are independent 
\item $X \circ \psi_I$, $X \circ \psi_{I^c}$ are uniform random words 
on $\{a,b\}^\infty$.
\end{itemize}
\end{lemm}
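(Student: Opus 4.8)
The plan is to condition on the random set $I$ and exploit the independence of $X$ and $I$ to reduce everything to a statement about a \emph{fixed} infinite set $A$ with infinite complement. The key observation is that once $A$ is frozen, the two increasing enumerations $\psi_A$ and $\psi_{A^c}$ together realise a bijection of $\nnf$ onto itself: every positive integer lies in exactly one of $A$ or $A^c$, so the two subsequences $(X(\psi_A(k)))_{k \ge 1}$ and $(X(\psi_{A^c}(k)))_{k \ge 1}$ are obtained by splitting the single i.i.d. family $(X(n))_{n \ge 1}$ along the disjoint index sets $A$ and $A^c$. Since disjoint subfamilies of an i.i.d. family are independent and each is i.i.d. with the common marginal law, I get that $X \circ \psi_A$ and $X \circ \psi_{A^c}$ are independent, each uniform on $\{a,b\}^\infty$, that is, each of law $\mu$.

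To turn this into the stated joint independence, I would test against bounded measurable functions. For bounded measurable $h$ on $\pc(\nnf)$ and $f,g$ on $\{a,b\}^\infty$, independence of $X$ and $I$ gives
\[
\eef\big[h(I)\, f(X \circ \psi_I)\, g(X \circ \psi_{I^c})\big]
= \eef\Big[ h(I)\, \Psi(I) \Big],
\quad \text{where } \Psi(A) := \eef\big[ f(X \circ \psi_A)\, g(X \circ \psi_{A^c}) \big],
\]
the inner expectation being computed with $X$ distributed as the i.i.d. word, its law being unchanged by the value of $I$, again by independence. By the previous paragraph, for $\nu_p$-almost every $A$ one has $\Psi(A) = \big(\int f\,\d\mu\big)\big(\int g\,\d\mu\big)$, a constant not depending on $A$. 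Substituting back yields
\[
\eef\big[h(I)\, f(X \circ \psi_I)\, g(X \circ \psi_{I^c})\big]
= \eef[h(I)] \cdot \Big(\int f\,\d\mu\Big) \cdot \Big(\int g\,\d\mu\Big).
\]
Specialising $h$, $f$ or $g$ to the constant $1$ identifies the three marginal factors and shows that $I$, $X \circ \psi_I$ and $X \circ \psi_{I^c}$ are mutually independent with the last two of law $\mu$, which is exactly the claim.

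The only genuinely technical point is measurability: I must know that $(A,x) \mapsto x \circ \psi_A$ is jointly measurable, so that $X \circ \psi_I$ and $X \circ \psi_{I^c}$ are bona fide random variables and the conditioning on $I$ is legitimate. This follows because, on the event that $A$ is infinite (of full $\nu_p$-measure), each coordinate map $A \mapsto \psi_A(k)$ is measurable, hence so is $(A,x) \mapsto x(\psi_A(k))$, and a countable family of measurable coordinate maps yields a measurable map into $\{a,b\}^\infty$. I expect this bookkeeping, rather than the probabilistic core, to be the main obstacle, and it is routine. Everything else reduces to the elementary fact that splitting an i.i.d. sequence along a fixed partition of its index set produces independent i.i.d. pieces.
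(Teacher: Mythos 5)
Your proof is correct and follows essentially the same route as the paper: condition on $I$, observe that the positions $\psi_I(1),\psi_{I^c}(1),\psi_I(2),\dots$ are distinct so the corresponding letters are conditionally i.i.d.\ uniform, and conclude that the conditional law of $(X\circ\psi_I, X\circ\psi_{I^c})$ is $\mu\otimes\mu$ independently of $I$. You merely spell out the final "the result follows" step (integration against test functions) and the measurability bookkeeping in more detail than the paper does.
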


\begin{proof}
Almost surely, $I$ is infinite with infinite complement, so the random 
maps $\psi_I$ and $\psi_{I^c}$ are well-defined.  
The integers $\psi_I(1),\psi_{I^c}(1),\psi_I(2),\psi_{I^c}(2)\ldots$ 
are distinct, so conditionally on $I$, the random variables 
$X(\psi_I(1)), X(\psi_{I^c}(1)), X(\psi_I(2)), X(\psi_{I^c}(2)),...$ 
are independent and uniform on $\{a,b\}$. The result follows.   
\end{proof}

% The construction of the Markov chain uses increasing maps associated 
% to random infinite subsets of $\nnf$. 

\begin{defi}
Call $\pc'(\nnf)$ the set of all infinite subsets of $\nnf$. 
A uniform randomised decimation process in the alphabet $\{a,b\}$ 
is a stationary Markow chain $(X_n,I_n)_{n \in \zzf}$ with values in 
$\{a,b\}^\infty \times \pc'(\nnf)$ defined as follows:
for every $n \in \zzf$,  
\begin{enumerate}
\item the law of $(X_n,I_n)$ is $\mu \otimes \nu$;
\item $I_n$ is independent of $(X_{n-1},I_{n-1})$ and uniform on $\pc(\nnf)$; 
\item $X_n = X_{n-1} \circ \psi_{I_n}$. 
\end{enumerate}
\end{defi}

Such a process is well-defined and unique in law since the law 
$\mu \otimes \nu$ is invariant by the transition kernel given by 
conditions 2 and 3 above, thanks to lemma~\ref{complementary subwords}. 
Moreover, $(I_n)_{n \in \zzf}$ is a sequence of 
innovations for the filtration $\fc^{X,I}$. Therefore, the 
filtration $\fc^{X,I}$ has independent increments or is 
locally of product-type, according to Laurent's 
terminology~\cite{Laurent (cosiness standardness)}. 

This process is a kind of randomisation of Vershik's decimation process 
given in example~3 of~\cite{Vershik}. Indeed, Vershik's decimation 
process is equivalent to the process that we would get by choosing 
the random sets $I_n$ uniformly among the set of all even
positive integers and the set of all odd positive integers. 
Although Vershik's decimation process generates a non-standard filtration, 
we will show that our randomised process generates a standard one. 

\begin{theo}\label{product-type}
The uniform randomised decimation process on the alphabet 
$\{a,b\}$ generates a product-type filtration. 
\end{theo}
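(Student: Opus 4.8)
The plan is to show that $\fc^{X,I}=(\fc^{X,I}_n)_{n\le 0}$ is standard, i.e. product-type, by verifying the \emph{I-cosiness} criterion (the very criterion that is shown to fail for the filtration of Lemma~\ref{dyadic}). First I would record the structural facts that make this the right tool. By Lemma~\ref{complementary subwords}, for each $n$ the set $I_n$, the kept subword $X_n=X_{n-1}\circ\psi_{I_n}$ and the discarded subword $W_n=X_{n-1}\circ\psi_{I_n^c}$ are independent; moreover $I_n$ is independent of $\fc^{X,I}_{n-1}$ and $X_n$ is $\fc^{X,I}_{n-1}\vee\sigma(I_n)$-measurable, so $\fc^{X,I}_n=\fc^{X,I}_{n-1}\vee\sigma(I_n)$ and $(I_n)_{n\le 0}$ is a sequence of innovations. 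Thus the filtration has independent increments (is locally of product-type in Laurent's sense) and is Kolmogorovian. The subtlety — and the reason the statement is not automatic — is that the current word $X_0$ is \emph{independent} of $\bigvee_{n\le 0}\sigma(I_n)$: conditioning on any finite family $(I_m)_{-N<m\le 0}$, one has $X_0=X_{-N}\circ\psi_{R}$ with $R$ a fixed set, so $X_0$ samples the fresh uniform word $X_{-N}$ at fixed positions and stays uniform. Hence $X_0$ sits in the gap $\fc^{X,I}_0\setminus\big(\fc^{X,I}_{-\infty}\vee\bigvee_n\sigma(I_n)\big)$, exactly as for Vershik's decimation; independent increments plus Kolmogorovianity do not by themselves give product-type, and one must genuinely control the remote past.

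Next I would set up the coupling demanded by I-cosiness. Fix $\epsilon>0$. I take two copies $(X^1,I^1)$ and $(X^2,I^2)$ of the process, carried by a common filtration $\G$ in which both are immersed, and \emph{independent before some time $-N$}, so that $X^1_{-N},X^2_{-N}$ are independent uniform words. The generating variable to be matched is the whole trajectory $V=(X_m,I_m)_{m\le 0}$ with a bounded metric weighting time $m$ by $2^{m}$, so that discrepancies at times $\le -N$ cost at most $O(2^{-N})$. The target is to \emph{coalesce the two chains}: to build an immersion-respecting coupling under which $X^1_{m_0}=X^2_{m_0}$ at some time $m_0$ close to $0$, after which I feed the copies identical innovations $I^1_m=I^2_m$, so that $(X^1_m,I^1_m)=(X^2_m,I^2_m)$ for all $m>m_0$ and $V^1,V^2$ then agree within $\epsilon$. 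The engine is Lemma~\ref{composition of random extractions}: over a window of length $M$ the composite extraction has law $\nu_{2^{-M}}$, so the word $X^i_{m_0}$ is a very sparse subword of the fixed word $X^i_{-N}$. To make these coincide I couple the two composite extractions through a maximal coupling of the two induced subword laws $\mu^1,\mu^2$ (the push-forwards of $\nu_{2^{-M}}$ under $R\mapsto X^i_{-N}\circ\psi_R$), reconstructing each $R^i$ from the conditional of $\nu_{2^{-M}}$ given its subword. The essential point, to be checked, is that this keeps each $R^i$ marginally $\nu_{2^{-M}}$ and, step by step, keeps the conditional law of $I^i_m$ given the joint past equal to $\nu$ — so that each copy remains a genuine decimation process immersed in $\G$.

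The hard part will be the quantitative \emph{coalescence estimate}: that sparse uniform decimation of a fixed generic word is asymptotically uniform, so that $\mu^1$ and $\mu^2$ become matchable and the coalescence probability tends to $1$ as the window grows. Concretely I expect that, for $\ppf$-almost every $x$, sampling $x$ at the positions of an independent $\nu_{2^{-M}}$-set (whose gaps are geometric with mean $2^{M}$) produces letters that are, on any fixed initial segment, arbitrarily close in total variation to i.i.d. uniform, uniformly enough in $(x^1,x^2)$ to drive the total-variation distance between the relevant pieces of $\mu^1$ and $\mu^2$ to $0$. This is precisely where the \emph{uniform} randomisation is used, and where the process parts ways with Vershik's decimation: the richness of $\nu=\nu_{1/2}$ lets one realise a prescribed subword with the correct marginal law, whereas the two-point even/odd decimation cannot, which is why the latter generates a non-standard filtration (Theorem~\ref{Vershik} then even forbids it inside any product-type filtration). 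Granting this equidistribution estimate and the immersion check above, the maximal coupling coalesces the two chains at a time $m_0$ with $\ppf[\,V^1\ne V^2\ \text{beyond weight }\epsilon\,]<\epsilon$ for $N$ large; this verifies I-cosiness and hence that the uniform randomised decimation process generates a product-type filtration.
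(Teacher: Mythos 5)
Your route --- verifying I-cosiness by a coalescing self-coupling --- is a legitimate alternative to the paper's argument, which instead verifies Vershik's \emph{first level criterion}: the paper builds new innovations $J_n=\Phi_{n-1}(I_n)$ out of the canonical coupling $\phi_{X_0}$ and the cascaded permutations $\Phi_n$, and shows (Lemma~\ref{better and better approximations}) that $X_n$ coincides on $\odc 1,L\fdc$ with $C\circ\psi_{J_1}\circ\cdots\circ\psi_{J_n}$, a function of innovations alone, with probability tending to $1$. Your structural observations (independent increments, $X_0$ independent of the $I_n$'s, sparsity of the composite extraction via Lemma~\ref{composition of random extractions}) are all correct and match the paper's. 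But as it stands the proposal has a genuine gap, concentrated exactly where you say the work lies.

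The crux is the construction of the coupling itself, and ``maximal coupling of the two induced subword laws'' does not deliver it. I-cosiness requires \emph{both} copies to be immersed in the ambient filtration $\gc$, i.e.\ for every $m$ the innovation $I^2_m$ must be independent of $\gc_{m-1}$ with law $\nu$, where $\gc_{m-1}$ contains the whole past of \emph{both} copies. If you choose the pair $(R^1,R^2)$ of composite ranges by a maximal coupling of the laws of $X^1_{-N}\circ\psi_{R^1}$ and $X^2_{-N}\circ\psi_{R^2}$ and then disintegrate $R^i$ into the individual $I^i_m$'s, each $I^2_m$ keeps the law $\nu$ conditionally on $\sigma(I^2_{-N+1},\dots,I^2_{m-1})$, but not in general conditionally on the larger $\sigma$-field $\gc_{m-1}$, which already reveals $I^1_{-N+1},\dots,I^1_{m-1}$ and hence partial information about the coalescence event. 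Repairing this --- e.g.\ by coupling the innovations one step at a time through joinings of the conditional laws, or by exhibiting explicit new innovations as the paper does --- is the actual content of the theorem, not a routine verification; you have flagged it (``to be checked'') but not done it. The second deferred step, the equidistribution of a $\nu_{2^{-M}}$-sparse sample of a fixed typical word, is true and provable by exactly the second-moment/geometric-gap estimates the paper uses in Lemma~\ref{better and better approximations}, so that part is a forgivable omission; but without the immersed coupling the proof is not complete.
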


\subsection{Proof of theorem~\ref{product-type}}

We have seen that the filtration $\fc^{X,I}$ admits $(I_n)_{n \in \zzf}$ 
as a sequence of innovations. Each innovation has diffuse law. 
Therefore, to prove that the filtration 
$(\fc^{X,I}_n)_{n \le 0}$, or equivalently, the filtration 
$(\fc^{X,I}_n)_{n \in \zzf}$ is product-type, 
it suffices to check Vershik's first level criterion 
(see reminders further and definition~2.6 and theorem~2.25 
in~\cite{Laurent (cosiness standardness)}). Concretely, 
we have to check any random variable in $L^1(\fc^{X,I}_0,\rrf)$ 
can be approached in $L^1(\fc^{X,I}_0,\rrf)$ by measurable 
functions of finitely many innovations of $(\fc^{X,I})_{n \le 0}$. 

The innovations $(I_n)_{n \in \zzf}$ are inadequate to do this, 
since the random variable $X_0$ is independent of the whole 
sequence $(I_n)_{n \in \zzf}$, so functions of the $(I_n)_{n \in \zzf}$
cannot approach non-trivial functions of $X_0$. 
Therefore, we will have to construct new innovations. The 
next lemma gives us a general procedure to get some. 

\begin{lemm}\label{new innovation}
Fix $n \in \zzf$. Let $\Phi$ be some $\fc^{X,I}_{n-1}$-measurable map 
from $\nnf$ to $\nnf$. If $\Phi$ is almost surely bijective, then 
the random variable $J_n = \Phi(I_n)$ is independent of $\fc^{X,I}_{n-1}$
and uniform on $\pc(\nnf)$. 
\end{lemm}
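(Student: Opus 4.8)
The plan is to establish both conclusions simultaneously by computing the conditional probability $\ppf[F \subset J_n \mid \fc^{X,I}_{n-1}]$ for an arbitrary finite subset $F$ of $\nnf$ and showing that it equals the deterministic constant $2^{-|F|}$. Since the uniform law $\nu = \nu_{1/2}$ is characterized by $\nu\{A : F \subset A\} = 2^{-|F|}$ for all finite $F$ (this is exactly the defining property of $\nu_{1/2}$ recalled earlier), such an identity tells us at once that $J_n$ has law $\nu$ (after taking expectations) and that $\sigma(J_n)$ is independent of $\fc^{X,I}_{n-1}$ (since the conditional probability does not depend on the history). This mirrors the computation of $\ppf[F \subset R \mid \sigma(I)]$ carried out in the proof of lemma~\ref{composition of random extractions}.

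First I would read $J_n = \Phi(I_n)$ as the image set $\{\Phi(k) : k \in I_n\}$, where $\Phi$ is a random element of $\nnf^\nnf$, measurable with respect to $\fc^{X,I}_{n-1}$ and almost surely bijective. Bijectivity yields the key rewriting $F \subset \Phi(I_n) \iff \Phi^{-1}(F) \subset I_n$, together with $|\Phi^{-1}(F)| = |F|$. Now $\Phi$, and hence the random finite set $\Phi^{-1}(F)$, is $\fc^{X,I}_{n-1}$-measurable, whereas $I_n$ is uniform on $\pc(\nnf)$ and independent of $\fc^{X,I}_{n-1}$. Freezing the history thus freezes $\Phi^{-1}(F)$ into a fixed finite set of cardinality $|F|$ while $I_n$ remains uniform, so the standard disintegration argument gives
$$\ppf[F \subset J_n \mid \fc^{X,I}_{n-1}]
= \ppf[\Phi^{-1}(F) \subset I_n \mid \fc^{X,I}_{n-1}]
= 2^{-|\Phi^{-1}(F)|} = 2^{-|F|} \quad \text{a.s.}$$

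Finally I would promote this $\pi$-system identity to genuine independence. The events $[F \subset J_n]$, indexed by finite $F$, form a $\pi$-system generating $\sigma(J_n)$, being stable under intersection ($[F_1 \subset J_n] \cap [F_2 \subset J_n] = [(F_1 \cup F_2) \subset J_n]$) and recovering each coordinate through $[k \in J_n] = [\{k\} \subset J_n]$. The collection of events $B$ with $\ppf[B \mid \fc^{X,I}_{n-1}] = \ppf[B]$ almost surely is a $\lambda$-system (stable under complements and under increasing limits by the matching properties of conditional expectation) that contains this $\pi$-system by the displayed computation, so Dynkin's theorem gives $\sigma(J_n)$ independent of $\fc^{X,I}_{n-1}$. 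I expect the only delicate point to be the measurability bookkeeping: confirming that $\Phi$ is a bona fide $\fc^{X,I}_{n-1}$-measurable random bijection, that $\Phi^{-1}(F)$ is a well-defined $\fc^{X,I}_{n-1}$-measurable random finite subset, and that the freezing step is legitimate (here it is immediate, since by independence the regular conditional law of $I_n$ given $\fc^{X,I}_{n-1}$ is just $\nu$). Everything else is routine.
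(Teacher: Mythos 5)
Your proposal is correct and follows exactly the paper's own argument: the paper's proof is precisely the one-line computation $\ppf[F \subset J_n\,|\,\fc^{X,I}_{n-1}] = \ppf[\Phi^{-1}(F) \subset I_n\,|\,\fc^{X,I}_{n-1}] = (1/2)^{|\Phi^{-1}(F)|} = (1/2)^{|F|}$ followed by ``the result follows.'' Your additional $\pi$-system/$\lambda$-system bookkeeping simply makes explicit what the paper leaves implicit, so there is nothing to change.
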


\begin{proof}
For every finite subset $F$ of $\nnf$,  
$$P[F \subset J_n|\fc^{X,I}_{n-1}] 
= P[\Phi^{-1}(F) \subset I_n|\fc^{X,I}_{n-1}] 
= (1/2)^{|\Phi^{-1}(F)|} = (1/2)^{|F|} \text{ almost surely.}$$ 
The result follows.
\end{proof}

Actually, the proof of theorem~\ref{product-type}   is similar 
to the proof of the standardness of the erased-words filtration 
by S.~Laurent~\cite{Laurent (erased)} and uses the same tools, 
namely canonical coupling and cascaded permutations. 

\begin{defi} {\bf (Canonical word and canonical coupling)}

The infinite canonical word $C$ on the alphabet 
$\{a,b\}$ is the word $abab \cdots$, namely the map from $\nnf$ 
to $\{a,b\}$ which sends the odd integers on $a$ and the even 
integers on $b$. 

If $x$ is an infinite word $x$ on the alphabet $\{a,b\}$, 
namely a map from $\nnf$ to $\{a,b\}$, we set for every $i \in \nnf$, 
\begin{eqnarray*}
\phi_x(i) = 2q-1 
\text{ if $x(i)$ is the $q$-th occurence of the letter $a$ in $x$,} \\
\phi_x(i) = 2q 
\text{ if $x(i)$ is the $q$-th occurence of the letter $b$ in $x$.} 
\end{eqnarray*}
\end{defi}

\begin{lemm}\label{canonical coupling}
By definition, the map $\phi_x$ thus defined from $\nnf$ to $\nnf$ 
is injective and satisfies the equality $x = C \circ \phi_x$. 
When each possible letter $a$ or $b$ appears 
infinitely many times in $x$, $\phi_x$ is a permutation of $\nnf$, 
(called {\it canonical coupling} by S.~Laurent). 
\end{lemm}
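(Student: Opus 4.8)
The plan is to read off all three assertions directly from the definition of $\phi_x$, which sorts the positions of $x$ according to the letter they carry and then interleaves the $a$-positions onto the odd integers and the $b$-positions onto the even integers. Since the statement is prefaced by ``by definition'', each verification should be short; the only place where a hypothesis really does work is the surjectivity claim.

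First I would establish injectivity. The key observation is that $\phi_x$ maps every position carrying the letter $a$ to an odd integer and every position carrying $b$ to an even integer, so two positions with a common image must carry the same letter. If $\phi_x(i) = \phi_x(j) = 2q-1$, then $x(i)$ and $x(j)$ are both the $q$-th occurrence of $a$ in $x$; but the $q$-th occurrence of a given letter is attained at a single position, forcing $i = j$. The argument for even images, corresponding to the letter $b$, is identical. Hence $\phi_x$ is injective on $\nnf$.

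Next I would check the factorisation $x = C \circ \phi_x$ pointwise. Fix $i \in \nnf$. If $x(i) = a$, say its $q$-th occurrence, then $\phi_x(i) = 2q-1$ is odd, and by the definition of the canonical word $C(2q-1) = a = x(i)$. Symmetrically, if $x(i) = b$ is the $q$-th occurrence of $b$, then $\phi_x(i) = 2q$ is even and $C(2q) = b = x(i)$. In either case $C(\phi_x(i)) = x(i)$, so the desired equality of words holds.

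The only assertion that genuinely uses an extra hypothesis --- and so is the real, if modest, point --- is surjectivity under the assumption that each letter occurs infinitely often. Having already proved injectivity, I would only need to show that the range of $\phi_x$ is all of $\nnf$. If $a$ appears infinitely many times, then for every $q \in \nnf$ there is a $q$-th occurrence of $a$, whose position is sent to $2q-1$, so every odd integer lies in the range; likewise the infinitely many occurrences of $b$ account for every even integer $2q$. Together these exhaust $\nnf$, so $\phi_x$ is onto, and being injective as well it is a permutation. The main (slight) obstacle is simply to notice that the infinitely-many-times hypothesis is exactly what guarantees that both parities are hit, without which $\phi_x$ would only be an injection into a proper subset of $\nnf$.
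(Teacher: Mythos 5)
Your proof is correct, and it fills in exactly the routine verifications that the paper leaves implicit (the lemma is stated "by definition" and no separate proof is given there): parity of the image determines the letter, whence injectivity and the factorisation $x = C\circ\phi_x$, and the infinitely-many-occurrences hypothesis is precisely what makes every odd and every even integer attained. Nothing is missing.
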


Roughly speaking, if $x$ is a typical word of $\{a,b\}^\infty$ endowed 
with the uniform law, the asymptotic proportions of $a$ and $b$ 
are $1/2$ are $1/2$, so $\phi_x$ is asymptotically close to the 
identity map. 

% From now on, we work on the amost sure event on which for every 
% $n \in \zzf$: 
% \begin{itemize} 
% \item each possible letter $a$ or $b$ appears infinitely many times in 
% the infinite word $X_n$;  
% \item each subset $I_n$ is infinite with infinite complement in $\nnf$. 
% \end{itemize}

\begin{defi}\label{new innovations}
{\bf (New innovations and cascaded permutations)}

Let $\Omega'$ be the almost sure event on which
\begin{itemize} 
\item each possible letter $a$ or $b$ appears infinitely many times in 
the infinite word $X_0$;  
\item each subset $I_n$ is infinite.
\end{itemize}
On $\Omega'$, we define by recursion a sequence $(\Phi_n)_{n \ge 0}$ of 
random permutations of $\nnf$ and a sequence $(J_n)_{n \ge 1}$ of random 
infinite subsets of $\nnf$ by setting $\Phi_0 = \phi_{X_0}$ and, 
for every $n \ge 1$, 
\begin{equation}\label{recursion formula}
J_n = \Phi_{n-1}(I_n) \text{ and } 
\Phi_{n-1} \circ \psi_{I_n} = \psi_{J_n} \circ \Phi_n. 
\end{equation}
\end{defi}

Let us check that the inductive construction above actually works $\Omega'$. 

On $\Omega'$, the map $\Phi_0 = \phi_{X_0}$ is bijective by 
lemma~\ref{canonical coupling}. 

Once we know that $\Phi_{n-1}$ is a random permutations 
of $\nnf$, the map $\Phi_{n-1} \circ \psi_{I_n}$ is a random 
injective map from $\nnf$ to $\nnf$ with range $\Phi_{n-1}(I_n) = J_n$.  
Therefore, $J_n$ is infinite and the map $\Phi_n$ is well defined by 
equation~\ref{recursion formula}:
for every $k \in \nnf$, $\Phi_n(k)$ is the rank of the integer 
$\Phi_{n-1}(\psi_{I_n}(k))$ in the set $J_n$. Moreover, $\Phi_n$ 
is a permutation of $\nnf$. 

Informally, the {\it cascaded permutations} $(\Phi_n)_{n \ge 0}$ are induced by 
$\Phi_0 = \phi_{X_0}$ and the successive extractions. 
More precisely, equation~\ref{recursion formula} is represented by a 
commutative diagramm which gives the correspondance between 
the positions of a same letter in different words. 
\begin{displaymath}
\xymatrixcolsep{5mm}
\xymatrix@R+2pc{
\text{position in }X_0 \ar[d]^{\Phi_0} 
& \text{position in } X_1 \ar[l]_{\psi_{I_1}} \ar[d]^{\Phi_1} 
& \text{position in } X_2 \ar[l]_{\psi_{I_2}} \ar[d]^{\Phi_2} 
& \cdots \\
\text{position in } C 
& \text{position in } C \circ \psi_{J_1} \ar[l]_{\psi_{J_1}} 
& \text{position in } C \circ \psi_{J_1} \circ \psi_{J_2} \ar[l]_{\psi_{J_2}} 
& \cdots \\  
}
\end{displaymath}

Here is a realisation of the first three steps. The boldface numbers 
form the subsets $I_1,J_1,I_2,J_2,...$. Among the arrows representing 
$\phi_{X_0}$, the plain arrows 
(from elements in $I_1$ to elements in $J_1$) provide the permutation
$\phi_{X_0,I_1}$ by renumbering of the elements. 

\begin{displaymath}
\xymatrixcolsep{5mm}
\xymatrix@R-1pc{
X_0& & b & {\bf b} & a & b & 
{\bf a} & a & {\bf a} & a & 
{\bf a} & {\bf b} & {\bf b} & b & \cdots \\
I_1 && 1\dar[ddr] & {\bf 2}\ar[ddrr] & 3\dar[ddll] & 4\dar[ddrr] & 
{\bf 5}\ar[ddll] & 6\dar[ddl] & {\bf 7}\ar[dd] & 8\dar[ddr] &
{\bf 9}\ar[ddrr] & {\bf 10}\ar[ddll] & {\bf 11}\ar[ddl] & 12\dar[dd] & \cdots\\ 
\Phi_0 & & & & & & & & & & & & \\
J_1 & & 1 & 2 & {\bf 3} & {\bf 4} & 
5 & 6 & {\bf 7} & {\bf 8} & 9 & 
{\bf 10} & {\bf 11} & 12 & \cdots \\
C & & a & b & {\bf a} & {\bf b} & a & b & {\bf a} 
& {\bf b} & a & {\bf b} & {\bf a} & b & \cdots       
}
\end{displaymath}
\vskip 5mm
\begin{displaymath}
\xymatrixcolsep{5mm}
\xymatrix@R-1pc{
X_1 = X_0 \circ \psi_{I_1}& & b & a & a & a & b & b & \cdots \\
I_2 && {\bf 1}\ar[ddr] & {\bf 2}\ar[ddl] & 3\dar[dd] 
& {\bf 4}\ar[ddrr] & 5\dar[ddl] & {\bf 6}\ar[ddl] & \cdots\\ 
\Phi_1 & & & & & &  \\
J_2 & & {\bf 1} & {\bf 2} & 3 & 4 & {\bf 5} & {\bf 6} & \cdots \\
C \circ \psi_{J_1} & & a & b & a & b & b & a & \cdots       
}
\end{displaymath}
\vskip 5mm
\begin{displaymath}
\xymatrixcolsep{5mm}
\xymatrix@R-1pc{
X_2 = X_1 \circ \psi_{I_2} & & b & a & a & b & \cdots \\
I_3 && {\bf 1}\ar[ddr] & 2\dar[ddl] & 3\dar[ddr] 
& {\bf 4}\myar[ddl] & \cdots\\ 
\Phi_2 & & & & & &  \\
J_3 & & 1 & {\bf 2} & {\bf 3} & 4 & \cdots \\
C \circ \psi_{J_1} \circ \psi_{J_2} & & a & b & b & a & \cdots       
}
\end{displaymath}

\vfill\eject

\begin{lemm}\label{relations involving new innovations}
On the almost sure event $\Omega'$, the following properties hold 
for every $n \ge 1$, 
\begin{enumerate}
\item $J_n$ is independent of $\fc^{X,I}_{n-1}$ and is uniform on $\pc(\nnf)$. 
\label{independence}
\item $\sigma(X_0,J_1,\ldots,J_n) = \sigma(X_0,I_1,\ldots,I_n)$.
\label{sigma-field} 
%\item $\phi_{X_0,J_1,\ldots,J_{n-1}} \circ \psi_{I_n} 
%= \psi_{J_n} \circ \phi_{X_0,J_1,\ldots,J_n}$.
%\label{commutative diagramm}
\item the random map $\Phi_n$ is 
$\sigma(X_0,J_1,\ldots,J_n)$-measurable. \label{measurability}
%\item the random map $\phi_{X_0,J_1,\ldots,J_n}$ is bijective.
%\label{bijection}
\item $\phi_{X_0} \circ \psi_{I_1} \circ \cdots \circ \psi_{I_n} 
= \psi_{J_1} \circ \cdots \circ \psi_{J_n} \circ \Phi_n$.
\label{cascaded permutations}
\item $X_n = C \circ \psi_{J_1} \circ \cdots \circ \psi_{J_n}
\circ \Phi_n$.
\label{formula for $X_n$}
\end{enumerate}
\end{lemm}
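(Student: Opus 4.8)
The plan is to prove all five statements simultaneously by induction on $n\ge 1$, carrying the two measurability statements (2) and (3) as the load-bearing part of the induction hypothesis and taking $\Phi_0=\phi_{X_0}$, which is $\sigma(X_0)$-measurable and, on $\Omega'$, bijective by Lemma~\ref{canonical coupling}, as the base case at level $0$. The recursion~(\ref{recursion formula}) and the a.s.\ bijectivity of the $\Phi_k$ on $\Omega'$ (already secured in the discussion following Definition~\ref{new innovations}) will be the only inputs beyond Lemma~\ref{new innovation} and Lemma~\ref{canonical coupling}.

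For the inductive step I would first record, from the defining relation $J_n=\Phi_{n-1}(I_n)$ and the bijectivity of $\Phi_{n-1}$, the inverse relation $I_n=\Phi_{n-1}^{-1}(J_n)$. Granting by the induction hypothesis that $\Phi_{n-1}$ is $\sigma(X_0,J_1,\ldots,J_{n-1})$-measurable and that this $\sigma$-field equals $\sigma(X_0,I_1,\ldots,I_{n-1})$, these two relations give the two inclusions yielding statement (2): $J_n$ is a function of $\Phi_{n-1}$ and $I_n$, hence $\sigma(X_0,I_1,\ldots,I_n)$-measurable, while $I_n=\Phi_{n-1}^{-1}(J_n)$ is $\sigma(X_0,J_1,\ldots,J_n)$-measurable. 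Statement (3) then follows by reading~(\ref{recursion formula}) as $\Phi_n=\psi_{J_n}^{-1}\circ\Phi_{n-1}\circ\psi_{I_n}$, where $\psi_{J_n}^{-1}$ is the rank function on $J_n$ (legitimate, since $\Phi_{n-1}(\psi_{I_n}(\cdot))\in\Phi_{n-1}(I_n)=J_n$): once statement (2) is available every factor on the right is $\sigma(X_0,J_1,\ldots,J_n)$-measurable, so $\Phi_n$ is too.

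With the measurability in hand, statement (1) is immediate from Lemma~\ref{new innovation}: by statements (2)--(3) at level $n-1$ the permutation $\Phi_{n-1}$ is $\fc^{X,I}_{n-1}$-measurable (here $X_0\in\fc^{X,I}_0\subset\fc^{X,I}_{n-1}$ because $n\ge 1$) and a.s.\ bijective, so $J_n=\Phi_{n-1}(I_n)$ is independent of $\fc^{X,I}_{n-1}$ and uniform on $\pc(\nnf)$. Finally statements (4) and (5) are purely algebraic: telescoping the recursion $\Phi_{k-1}\circ\psi_{I_k}=\psi_{J_k}\circ\Phi_k$ from $k=1$ to $n$, starting from $\Phi_0=\phi_{X_0}$, gives (4); and combining (4) with the unfolded identity $X_n=X_0\circ\psi_{I_1}\circ\cdots\circ\psi_{I_n}$ (from $X_k=X_{k-1}\circ\psi_{I_k}$) and the canonical decomposition $X_0=C\circ\phi_{X_0}$ of Lemma~\ref{canonical coupling} gives (5).

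I expect the only genuine subtlety to be the apparent circularity between statements (1), (2) and (3): the independence and uniformity of $J_n$ rest on $\Phi_{n-1}$ being adapted to the past, which is exactly the measurability one is trying to propagate. The induction must therefore be organized so that (2) and (3) are established at level $n$ before Lemma~\ref{new innovation} is invoked for (1), and the a.s.\ bijectivity of the $\Phi_k$ on $\Omega'$ must be used both to form $I_n=\Phi_{n-1}^{-1}(J_n)$ and to apply that lemma. Everything else is routine bookkeeping with the cascaded permutations.
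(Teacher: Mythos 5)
Your proposal is correct and follows essentially the same induction as the paper: base case $\Phi_0=\phi_{X_0}$, then propagation of the measurability statements via $J_n=\Phi_{n-1}(I_n)$, $I_n=\Phi_{n-1}^{-1}(J_n)$ and the rank description of $\Phi_n$, with Lemma~\ref{new innovation} giving statement (1) and telescoping of the recursion giving (4) and (5). The only cosmetic difference is the order within the inductive step (the paper deduces (1) first, directly from the level-$(n-1)$ measurability of $\Phi_{n-1}$, so your worry about having to establish (2)--(3) at level $n$ before invoking Lemma~\ref{new innovation} is unnecessary, though harmless).
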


\begin{proof}
Since $\Phi_0=\phi_{X_0}$, properties~\ref{sigma-field}, 
\ref{measurability}, \ref{cascaded permutations}, \ref{formula for $X_n$} 
above hold with $n$ replaced by $0$.  

Let $n \ge 1$. Assume that  
properties~\ref{sigma-field}, \ref{measurability}, 
\ref{cascaded permutations}, \ref{formula for $X_n$} 
hold with $n$ replaced by $n-1$. 

Then by lemma~\ref{new innovation}, property~\ref{independence} holds. 

By definition and by the induction hypothesis, the random set
$J_n = \Phi_{n-1}(I_n)$ is $\sigma(X_0,I_1,\ldots,I_n)$-measurable.
Conversely, since $I_n = \Phi_{n-1}^{-1}(J_n)$, the knowledge of 
$\Phi_{n-1}$ and $J_n$ is sufficient to recover $I_n$, 
so property~\ref{sigma-field} holds.  

For every $k \in \nnf$, $\Phi_n(k)$ is the rank of the integer 
$\Phi_{n-1}(\psi_{I_n}(k))$ in the set $\Phi_{n-1}(I_n)$. 
Thus the random map $\Phi_n$ is a measurable for 
$\sigma(X_0,J_1,\ldots,J_{n-1},I_n) = \sigma(X_0,J_1,\ldots,J_n)$. 
Therefore, property~\ref{measurability} holds. 

By induction hypothesis and by formula~\ref{recursion formula},
\begin{eqnarray*}
\phi_{X_0} \circ \psi_{I_1} \circ \cdots \circ \psi_{I_n}  
&=& (\phi_{X_0} \circ \psi_{I_1} \circ \cdots \circ \psi_{I_{n-1}}) 
\circ \psi_{I_n} \\
&=& (\psi_{J_1} \circ \cdots \circ \psi_{J_{n-1}} \circ \Phi_{n-1}) 
\circ \psi_{I_n} \\
&=& (\psi_{J_1} \circ \cdots \circ \psi_{J_{n-1}}) 
\circ (\Phi_{n-1} \circ \psi_{I_n}) \\
&=& (\psi_{J_1} \circ \cdots \circ \psi_{J_{n-1}}) 
\circ (\psi_{J_n} \circ \Phi_n) \\  
&=&  \psi_{J_1} \circ \cdots \circ \psi_{J_n} \circ \Phi_n,
\end{eqnarray*}
so
\begin{eqnarray*}
X_n &=& X_0 \circ \psi_{I_1} \circ \cdots \circ \psi_{I_n} \\
&=& C \circ \phi_{X_0} \circ \psi_{I_1} \circ \cdots \circ \psi_{I_n} \\  
&=& C \circ \psi_{J_1} \circ \cdots \circ \psi_{J_n} \circ \Phi_n,
\end{eqnarray*}
which yields properties~\ref{cascaded permutations} 
and~\ref{formula for $X_n$}. 

Lemma~\ref{new innovations} follows by recursion. 
\end{proof}

The next result shows that the innovations $(J_n)_{n \ge 1}$ 
constructed above provide better 
and better approximations of $X_n$ as $n \to +\infty$. 

\begin{lemm}\label{better and better approximations}
Fix $L \in \nnf$. Then 
$\ppf\big[X_n = C \circ \psi_{J_1} \circ \cdots \circ \psi_{J_n}
\text{ on } \odc 1,L \fdc \big] \to 1$ as $n \to +\infty$. 
% the probability that 
% $X_n$ coincides with $C \circ \psi_{J_1} \circ \cdots \circ \psi_{J_n}$
% on $\odc 1,L \fdc$ tends to $1$ as $n \to +\infty$. 
\end{lemm}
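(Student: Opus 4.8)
The plan is to exploit the exact factorisation $X_n = C \circ \psi_{J_1} \circ \cdots \circ \psi_{J_n} \circ \Phi_n$ supplied by property~5 of lemma~\ref{relations involving new innovations}. Writing $C_n := C \circ \psi_{J_1} \circ \cdots \circ \psi_{J_n}$, the two words $X_n = C_n \circ \Phi_n$ and $C_n$ agree on $\odc 1,L\fdc$ as soon as the random permutation $\Phi_n$ fixes $\odc 1,L\fdc$ pointwise. Hence it suffices to prove that $\ppf[\Phi_n(k) = k \text{ for all } k \le L] \to 1$, and by the union bound $\ppf[\exists k \le L : \Phi_n(k) \ne k] \le \sum_{k \le L} \ppf[\Phi_n(k) \ne k]$ this reduces to showing $\ppf[\Phi_n(k) = k] \to 1$ for each fixed $k$, i.e. that $\Phi_n(k) \to k$ in probability.

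The second step is to read off from the recursion~\ref{recursion formula} the dynamics of $\Phi_n$ on an initial segment. Unfolding the definitions gives $\Phi_n(k) = |\{ j \in I_n : \Phi_{n-1}(j) \le \Phi_{n-1}(\psi_{I_n}(k)) \}|$, the rank of $\Phi_{n-1}(\psi_{I_n}(k))$ in the set $J_n = \Phi_{n-1}(I_n)$. Two features drive the convergence. First, a stability property: if $\Phi_{n-1}$ fixes $\odc 1,M\fdc$ pointwise, then $\Phi_{n-1}$ permutes $\{M+1,M+2,\ldots\}$ among themselves, so $I_n$ and $J_n$ coincide on $\odc 1,M\fdc$ and $\Phi_n$ fixes $\{k : \psi_{I_n}(k) \le M\}$ pointwise. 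Second, a contraction property: since $I_n$ is uniform and independent of $\fc^{X,I}_{n-1}$, conditionally on the past the rank of a fixed value $v$ in the random set $\Phi_{n-1}(I_n)$ is a sum of independent Bernoulli variables concentrated around $v/2$; as $\psi_{I_n}(k) \approx 2k$, the displacement of $\Phi_n$ on an initial segment is, up to controlled fluctuations, roughly half that of $\Phi_{n-1}$, so $\Phi_n(k)$ is pushed geometrically towards $k$.

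The last step is to combine these two mechanisms with the behaviour of the starting permutation $\Phi_0 = \phi_{X_0}$. Since $X_0$ is a uniform word, the law of large numbers gives that the letters $a$ and $b$ have asymptotic density $1/2$, so $\phi_{X_0}(m)/m \to 1$ almost surely and the initial displacement $|\phi_{X_0}(m) - m|$ grows only sublinearly in $m$. Feeding this sublinear initial disorder into the geometric contraction of the second step, and using the stability property to lock in the initial segment once it is correctly ordered, yields $\Phi_n(k) \to k$ in probability for each fixed $k$. The main obstacle is to turn the heuristic contraction into a rigorous estimate: one must control the binomial fluctuations in the rank, handle the fact that $\psi_{I_n}(k)$ and $J_n = \Phi_{n-1}(I_n)$ are built from the same set $I_n$, and cope with the non-monotonicity of the permutation $\Phi_{n-1}$. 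This is precisely the point where the canonical-coupling and cascaded-permutation machinery of Laurent's standardness proof for the erased-words filtration is used.
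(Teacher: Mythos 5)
Your first reduction is exactly the one the paper makes: by property~\ref{formula for $X_n$} of lemma~\ref{relations involving new innovations} it suffices to show that $\Phi_n$ coincides with the identity on $\odc 1,L \fdc$ with probability tending to $1$. But the heart of your argument --- the ``contraction property'' --- is left as a heuristic, and in the form you state it it does not close. You describe the rank of a value $v$ in $J_n=\Phi_{n-1}(I_n)$ as a sum of independent Bernoullis concentrated around $v/2$, hence with fluctuations of order $\sqrt{v}$, and propose that the displacement therefore halves at each step. Taken literally, the step from $\Phi_{j-1}$ to $\Phi_j$ acts at positions of size $v\asymp k\,2^{n-j}$ and contributes an error of order $\sqrt{k\,2^{n-j}}$, which after the remaining $n-j$ halvings is still of order $\sqrt{k}\,2^{-(n-j)/2}$; summing over $j$ leaves an accumulated error of order $\sqrt{k}$ that does \emph{not} tend to $0$. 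The point (which you flag but do not resolve) is that $v=\Phi_{n-1}(\psi_{I_n}(k))$ and the set $J_n=\Phi_{n-1}(I_n)$ are built from the same $I_n$: the rank of $v$ in $J_n$ equals $k$ plus a signed count of the \emph{inversions} of $\Phi_{n-1}$ at $\psi_{I_n}(k)$ surviving in $I_n$, so one would have to propagate the whole inversion structure of $\Phi_{n-1}$ through the recursion~\ref{recursion formula}, not just a displacement bound. As written the proposal has a genuine gap at its central step, and the appeal to ``Laurent's machinery'' does not identify a statement that fills it.

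The paper sidesteps the iteration entirely. Lemma~\ref{composition of random extractions} collapses $\psi_{I_1}\circ\cdots\circ\psi_{I_n}$ and $\psi_{J_1}\circ\cdots\circ\psi_{J_n}$ into single extractions $\psi_{A_n}$ and $\psi_{B_n}$ with $A_n,B_n$ of law $\nu_{2^{-n}}$, and property~\ref{cascaded permutations} of lemma~\ref{relations involving new innovations} then exhibits $\Phi_n(k)$ as the rank of $\phi_{X_0}(\psi_{A_n}(k))$ in the set $\phi_{X_0}(A_n)$. The desired event thus becomes a one-shot order-preservation statement: the points $\tau_{n,k}=\psi_{A_n}(k)$ are separated by independent geometric gaps of mean $2^{n}$, while $\phi_{X_0}$ displaces a point $t$ by only $O(\sqrt{t})$ (the letter-count walk $2S_t-t$), so with probability tending to $1$ the images $\phi_{X_0}(\tau_{n,1}),\dots,\phi_{X_0}(\tau_{n,L})$ remain the $L$ smallest elements of $\phi_{X_0}(A_n)$, in the correct order; this is quantified with the generating function of $S_{\tau_{n,1}}$ and Chebyshev's inequality. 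If you want to salvage your approach, the cleanest fix is to replace the step-by-step contraction by this collapsed, single-extraction computation.
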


\begin{proof}
By equality~\ref{formula for $X_n$}, it suffices to check that, 
$\ppf(E_n) \to 1$ as $n \to +\infty$, where $E_n$ is the event
``$\Phi_n$ coincides on $\odc 1,L \fdc$ with the identity map''.

By lemma~\ref{composition of random extractions}, 
$\psi_{I_1} \circ \cdots \circ \psi_{I_n} = \psi_{A_n}$ and 
$\psi_{J_1} \circ \cdots \circ \psi_{J_n} = \psi_{B_n}$, 
where $A_n$ and $B_n$ are random subsets of $\nnf$ with 
law $\nu_{p_n}$, where $p_n= 2^{-n}$.

Therefore, by property~\ref{cascaded permutations} 
of lemma~\ref{relations involving new innovations},
$\phi_{X_0} \circ \psi_{A_n} = \psi_{B_n} \circ \Phi_n$, 
so for each $k \in \nnf$, $\Phi_n(k)$ is the rank of the integer 
$\phi_{X_0}(\psi_{A_n}(k))$ in the set $\phi_{X_0}(A_n) = B_n$.

Thus, the event $E_n$ holds if and only if the $L$ first elements 
of the set $\phi_{X_0}(A_n)$ in increasing order are 
$\phi_{X_0}(\psi_{A_n}(1)),\ldots,\phi_{X_0}(\psi_{A_n}(L))$.

Set $\tau_{n,k} = \psi_{A_n}(k)$ for every $k \in \nnf$. 
Since the law of $A_n$ is $\nu_{p_n}$, the random variables 
$\tau_{n,1}, \tau_{n,2}-\tau_{n,1}, \tau_{n,3}-\tau_{n,2},...$
are independent and geometric with parameter $p_n$.  

We have noted that 
$$E_n = \big[ \forall k \ge L+1, \phi_{X_0}(\tau_{n,1}) < \ldots 
< \phi_{X_0}(\tau_{n,L}) < \phi_{X_0}(\tau_{n,k}) \big]$$ 
Roughly speaking, the probability of this event tends to $1$ because 
$\phi_{X_0}$ is close to the identity map and the set 
$A_n$ gets sparser and sparser as $n$ increases to infinity.
Let us formalize this argument. 

Since $X_0$ is uniform on $\{a,b\}^\infty$, 
the random variables $(\eta_i)_{i \ge 1} = (\one_{[X_0(i)=b]})_{i \ge 1}$ form 
an i.i.d. sequence of Bernoulli random variables with parameter $1/2$. 
For every $t \in \nnf$, the random variable $S_t = \eta_1 + \cdots + \eta_t$ 
counts the number of $b$ in the subword $X_0(\odc 1,t \fdc)$, whereas $t-S_t$ 
counts the number of $a$ in the subword $X_0(\odc 1,t \fdc)$, so 
by definition of $\phi_{X_0}$, 
%$$\phi_{X_0}(t) = (2(t-S_t)-1) \one_{[X_0(t)=a]} + 2S_t\one_{[X_0(t)=b]}.$$
\begin{eqnarray*}
\phi_{X_0}(t) = 2(t-S_t)-1 & &\text{ if } X_0(t)=a, \\
\phi_{X_0}(t) = 2S_t & & \text{ if } X_0(t)=b.
\end{eqnarray*}

Given $t_1<t_2$ in $\nnf$, the inequality 
$\max(S_{t_1},t_1-S_{t_1})<\min(S_{t_2},t_2-S_{t_2})$ 
implies $\phi_{X_0}(t_1)<\phi_{X_0}(t)$ for every integer $t \ge t_2$. 
Therefore, % the event $E_n$ contains the event
$$E_n \supset \big[ \forall k \in \odc 1,L \fdc, 
\max(S_{\tau_{n,k}},\tau_{n,k}-S_{\tau_{n,k}})
< \min(S_{\tau_{n,k+1}},\tau_{n,k+1}-S_{\tau_{n,k+1}}) \big].$$ 
Thus it suffices to prove that for any fixed $k \in \nnf$, 
$$p_{n,k} := \ppf \big[ 
\max(S_{\tau_{n,k}},\tau_{n,k}-S_{\tau_{n,k}})
\ge \min(S_{\tau_{n,k+1}},\tau_{n,k+1}-S_{\tau_{n,k+1}}) \big] 
\to 0 \text{ as } n \to +\infty.$$ 

Since $X_0$ is independent of $I_1,\ldots,I_n$, 
the sequence $(S_t)_{t \ge 1}$ is independent 
of the sequence $(\tau_{n,k})_{k \ge 1}$. Moreover,  
$(S_t)_{t \ge 1}$ has the same law as $(t-S_t)_{t \ge 1}$
and $S_{\tau_{n,k+1}} -S_{\tau_{n,k}}$ has the same law as $S_{\tau_{n,1}}$. 
Therefore, for every integer $x \ge 1$, 
\begin{eqnarray*}
p_{n,k} 
&\le& 2 \ppf \big[ S_{\tau_{n,k+1}} \le \max(S_{\tau_{n,k}},\tau_{n,k}-S_{\tau_{n,k}}) \big] \\
&=& 2\ppf \big[ S_{\tau_{n,k+1}}-S_{\tau_{n,k}} \le \max(0,\tau_{n,k}-2S_{\tau_{n,k}}) \big] \\
&\le& 2\ppf \big[ S_{\tau_{n,k+1}} -S_{\tau_{n,k}} \le x-1 \big] 
+ 2\ppf \big[ \tau_{n,k}-2S_{\tau_{n,k}} \ge x\big] \\
&=& 2\ppf \big[ S_{\tau_{n,1}} \le x-1 \big] 
+ \ppf \big[ |2S_{\tau_{n,k}}-\tau_{n,k}| \ge x \big].
\end{eqnarray*}

On the one hand, the random variable $S_{\tau_{n,1}}$ is binomial with parameters 
$\tau_{n,1}$ and $1/2$ conditionally on $\tau_{n,1}$, so its generating 
function is given by 
\begin{eqnarray*}
\eef[z^{S_{\tau_{n,1}}}] 
&=& \eef\big[\eef[z^{S_{\tau_{n,1}}}\ | \sigma(\tau_{n,1})]\big] \\
&=& \eef\Big[\Big(\frac{1+z}{2}\Big)^{\tau_{n,1}}\Big] \\
&=& \frac{p_n(1+z)/2}{1-(1-p_n)(1+z)/2} \\
&=& \frac{p_n(1+z)}{1+p_n-(1-p_n)z}\\
&=& \frac{p_n(1+z)}{1+p_n} \sum_{m=0}^{+\infty} 
\Big(\frac{1-p_n}{1+p_n} \Big)^m z^m
\end{eqnarray*}
%for every complex number $z$ with modulus $\le 1$. 
This yields the law of $S_{\tau_{n,1}}$, namely 
$$P[S_{\tau_{n,1}} = 0] = \frac{p_n}{1+p_n},$$
$$P[S_{\tau_{n,1}} = s]
= \frac{p_n}{1+p_n} 
\Big(\Big(\frac{1-p_n}{1+p_n} \Big)^{s-1}+\Big(\frac{1-p_n}{1+p_n} \Big)^s\Big)
= 2p_n \frac{{(1-p_n)}^{s-1}}{{(1+p_n)}^{s+1}}
\text{ if } s \ge 1.$$
Therefore, $\ppf[S_{\tau_{n,1}} = s] \le 2p_n$ for every $s \ge 0$, so
$\ppf \big[ S_{\tau_{n,1}} \le x-1 \big] \le 2p_nx$.

On the other hand, $(2S_t-t)_{t \ge 0}$ is a simple symmetric random 
walk on $\zzf$, independent of $\tau_{n,k}$ so
$$\eef[2S_{\tau_{n,k}}-\tau_{n,k}|\sigma(\tau_{n,k})] = 0 \text{ and }
\var(2S_{\tau_{n,k}}-\tau_{n,k}|\sigma(\tau_{n,k})) = \tau_{n,k}.$$
Therefore, 
$$\eef[2S_{\tau_{n,k}}-\tau_{n,k}] = 0 \text{ and }
\var(2S_{\tau_{n,k}}-\tau_{n,k}) = \var(0) + \eef[\tau_{n,k}] = k/p_n,$$
so Bienaym\'e-Tchebicheff's inequality yields 
$\ppf \big[ |2S_{\tau_{n,k}}-\tau_{n,k}| \ge x \big] \le (k/p_n)x^{-2}$. 

Hence, for every $n$ and $x$ in $\nnf$, $p_{n,k} \le 4 p_n x + (k/p_n)x^{-2}$. 
Choosing $x = \lceil p_n^{-2/3} \rceil$ yields 
$p_{n,k} \le 4 (p_n^{1/3}+p_n) + kp_n^{1/3}$. The result follows. 
\end{proof}

To finish the proof of theorem~\ref{product-type}, we need to remind 
some standard facts about Vershik's first level criterion, namely 
definition 2.6, proposition 2.7 and proposition 2.17 
of~\cite{Laurent (cosiness standardness)}.   

Let $\fc = (\fc_n)_{n \le 0}$ be a filtration with independent increments 
(Laurent writes that $\fc $ is locally of product-type). 
 Given a separable metric space $(E,d)$, one says that a random variable 
$R \in L^1(\fc_0,E)$ satisfies Vershik's first level 
criterion (with respect to $\fc$) if for every $\delta>0$, 
one can find an integer $n_0 \le 0$, some innovations 
$V_{n_0+1},\ldots,V_0$ of $\fc$ at times $n_0+1,\ldots,0$ 
and some random variable $S \in L^1(\sigma(V_{n_0+1},\ldots,V_0),E)$ 
such that $\eef[d(R,S)]<\delta$. 

The subset of random variables in $L^1(\fc_0,E)$ satisfying 
Vershik's first level criterion (with respect to $\fc$) is closed
in $L^1(\fc_0,E)$. 
If $R \in L^1(\fc_0,E)$ satisfies Vershik's first level 
criterion, then any measurable real function of $R$ also satisfies 
Vershik's first level criterion. 

The first step of the proof is to check that for every $m \le 0$, 
the random variable $(X_m(1),\ldots,X_m(L))$, taking values in 
$\{a,b\}^L$ endowed with the discrete metric, 
satisfies Vershik's first level criterion with respect to 
$(\fc^{X,I}_n)_{n \le 0}$.
Indeed, by stationarity, the construction of lemma~\ref{new innovations} 
can be started at any time $n_0$ instead of time $0$. 
Starting this construction at time $n_0$ yields innovations 
$J^{n_0}_{n_0+1},J^{n_0}_{n_0+2},...$ at times $n_0+1,n_0+2,...$. 
Fix two integers $m \le 0$ and $L \ge 1$. By stationarity, 
for every $n_0 \le m$, 
$$\ppf\big[
X_m = C \circ \psi_{J^{n_0}_{n_0+1}} \circ \cdots \circ \psi_{J^{n_0}_m}
\text{ on } \odc 1,L \fdc \big] = 
\ppf\big[X_{m-n_0} = C \circ \psi_{J_1} \circ \cdots \circ \psi_{J_{m-n_0}}
\text{ on } \odc 1,L \fdc \big].$$ 
Lemma~\ref{better and better approximations} ensures that this probability
tends to $1$ as $n \to +\infty$. 

We derive successively that the following 
random variables also satisfies Vershik's first level criterion:
\begin{itemize} 
\item $X_m$, valued in $\{a,b\}^\infty$ endowed with the metric given by 
$$d(x,y) = 2^{-\inf\{i \ge 1 : x(i) \ne y(i)\}}.$$
\item $(X_m,I_{m+1},\ldots,I_0)$, valued in $\{a,b\}^\infty \times \pc(\nnf)^{|m|}$ 
endowed with the product of the metrics 
defined as above on each factor $\{a,b\}^\infty$ or $\pc(\nnf)$ 
identified with $\{0,1\}^\infty$; 
\item any measurable real function of $(X_m,I_{m+1},\ldots,I_0)$; 
\item any real random variable in $\fc^{X,I}_0$. 
\end{itemize}
The proof is complete.  
 
\subsection{A non complementable filtration yielding a 
complementable factor}

We still work with the filtration generated by the uniform randomised 
decimation process $((X_n,I_n))_{n \in \zzf}$ on the alphabet $\{a,b\}$. 
We call $\pc''(\nnf)$ the set of all infinite subsets of $\nnf$ with 
infinite complement, and we set $E = \{a,b\}^\infty \times \pc''(\nnf)$. 
Since $\nu(\pc''(\nnf))=1$, we may assume and we do assume that the 
Markov chain $((X_n,I_n))_{n \in \zzf}$ takes values in $E$. 
 
% Let $\Omega''$ be the almost sure event on which 
% the random subsets $I_n$ take values in $\pc''(\nnf)$. 
% On $\Omega''$, 
At each time $n$, we define the random variable 
$Y_n = \psi_{I_n^c}(X_{n-1})$ coding the portion of 
the infinite word $X_{n-1}$ rejected at time $n$ to get the word $X_n$. 
Of course, the knowledge of $I_n$, $X_n$ and $Y_n$ enables us to 
recover $X_{n-1}$: for every $i \in \nnf$, 
$X_{n-1}(i)$ equals $X_n(r)$ or $Y_n(r)$ according that 
$i$ is the $r^{{\rm th}}$ element of $I_n$ or of $I_n^c$. 
We can say more. 

\begin{prop}\label{rejected subsequences}
(Properties of the sequences $(Y_n)_{n \in \zzf}$ and $(I_n)_{n \in \zzf}$)
\begin{enumerate}
\item The random variables $Y_n$ are independent and uniform on 
$\{a,b\}^\infty$. 
\item The sequence $(Y_n)_{n \in \zzf}$ is independent of the sequence 
$(I_n)_{n \in \zzf}$ . 
\item Each $X_n$ is almost surely a measurable function of 
$I_{n+1},Y_{n+1},I_{n+2},Y_{n+2},...$.
\end{enumerate}
\end{prop}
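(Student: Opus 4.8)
The whole argument runs on Lemma~\ref{complementary subwords}: splitting a uniform word by an independent $\nu$-distributed set produces two \emph{independent} uniform words which are moreover independent of the splitting set. The plan is to promote this one-step statement into a global independence statement by induction along the chain. Concretely, I would first prove the following reinforcement of items~1 and~2: for every $m \le 0$, the random variables
$$X_0,\quad Y_{m+1},\ldots,Y_0,\quad I_{m+1},\ldots,I_0$$
are mutually independent, with respective laws $\mu$ (for $X_0$ and each $Y_k$) and $\nu$ (for each $I_k$), where $Y_k = X_{k-1}\circ\psi_{I_k^c}$ denotes the rejected subword. Letting $m\to-\infty$ gives mutual independence of $X_0,(Y_n)_{n\le 0},(I_n)_{n\le 0}$; by stationarity the analogous statement holds with $0$ replaced by any $N$, so every finite subfamily of $\{Y_n\}_{n\in\zzf}\cup\{I_n\}_{n\in\zzf}$ is mutually independent, whence the full family is. Items~1 and~2 are then immediate sub-statements.

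For the reinforcement I would argue by induction on $n$, the hypothesis $H(n)$ being that $X_n, Y_{m+1},\ldots,Y_n, I_{m+1},\ldots,I_n$ are mutually independent with the stated laws; $H(m)$ is just $X_m \sim \mu$. For the step $H(n)\Rightarrow H(n+1)$ I would first note that $X_n$, the $Y_k$ and the $I_k$ ($k\le n$) are all $\fc^{X,I}_n$-measurable (indeed $Y_k = X_{k-1}\circ\psi_{I_k^c}$ with $X_{k-1},I_k \in \fc^{X,I}_n$), so the innovation $I_{n+1}$, being independent of $\fc^{X,I}_n$, joins the independent block:
$$X_n, Y_{m+1},\ldots,Y_n, I_{m+1},\ldots,I_n, I_{n+1}\ \text{are mutually independent.}$$
In particular the pair $(X_n,I_{n+1})$ is independent of the block $G:=(Y_{m+1},\ldots,Y_n,I_{m+1},\ldots,I_n)$, with $X_n \sim \mu$ and $I_{n+1}\sim\nu$ independent. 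Applying Lemma~\ref{complementary subwords} to $(X_n,I_{n+1})$ splits $X_n$ into the mutually independent triple $(I_{n+1}, X_{n+1}, Y_{n+1})$; since this triple is a function of $(X_n,I_{n+1})$, it is independent of $G$. Combining the internal independence of $G$, the internal independence of the triple, and the independence between them yields $H(n+1)$.

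The point that deserves care — and the only real obstacle — is that the one-step independence of $(I_n,X_n,Y_n)$ is an \emph{unconditional} fact resting on $X_{n-1}$ being uniform, and it is \emph{false} conditionally on $\fc^{X,I}_{n-1}$ (where $X_{n-1}$ is frozen). Thus one cannot simply condition on the past and apply the lemma; the induction above sidesteps this by always exhibiting a genuine product structure and applying the lemma only to the pair $(X_n,I_{n+1})$, which is detached from the past block $G$. Item~3 is then handled separately and more directly: since $X_{n-1}$ is recovered from $(I_n,X_n,Y_n)$ by interleaving (placing $X_n$ on the positions $I_n$ and $Y_n$ on $I_n^c$), the value $X_0(p)$ at a position $p$ is obtained by tracking $p$ forward through the successive extractions until it first lands in some $I_k^c$, at which moment it is read off $Y_k$. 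The position survives through $I_1,\ldots,I_j$ exactly when $p$ lies in the range of $\psi_{I_1}\circ\cdots\circ\psi_{I_j}$, which by Lemma~\ref{composition of random extractions} is a $\nu_{2^{-j}}$-distributed set, so
$$\ppf[p\ \text{survives}\ j\ \text{steps}] = 2^{-j}\xrightarrow[j\to\infty]{}0.$$
Hence almost surely every position is eventually rejected, the reconstruction terminates, and $X_0$ — and by stationarity each $X_n$ — is an almost surely defined measurable function of $(I_k,Y_k)_{k\ge 1}$, which is item~3.
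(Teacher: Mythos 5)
Your proposal is correct and follows essentially the same route as the paper: items~1 and~2 by iterating Lemma~\ref{complementary subwords} (the paper merely says ``repeated application''; your induction makes precise the product structure that justifies it, correctly noting that the lemma must be applied unconditionally to the detached pair $(X_n,I_{n+1})$ rather than conditionally on the past), and item~3 by tracking each position forward until its first rejection time, whose tail probability $2^{-j}$ comes from Lemma~\ref{composition of random extractions} exactly as in the paper.
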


Note that proposition~\ref{rejected subsequences} provides a 
constructive method to get a uniform randomised decimation 
process on $\{a,b\}$.  

\begin{proof}
The first two statements follow from a repeated application of 
lemma~\ref{complementary subwords}. 
Since the formulas involving the processes $I, X, Y$ are invariant by 
time-translations, one needs only to check the third statement when 
$n=0$. For every $i \in \nnf$, call
$$N_i = \inf\{n \ge 1 : i \notin \psi_{I_1} \circ \cdots \circ 
\psi_{I_n}(\nnf)\}.$$
the first time $n$ at which the letter 
$X_0(i)$ is rejected when forming the word $X_n$. 
For every $n \ge 0$, 
$[N_i > n] = [i \in \psi_{I_1} \circ \cdots \circ \psi_{I_n}(\nnf)]$; 
but by lemma~\ref{composition of random extractions}, 
the law of the range of $\psi_{I_1} \circ \cdots \circ \psi_{I_n}$ 
is $\nu_{2^{-n}}$, so $\ppf[N_i > n] = 2^{-n}$. 
Therefore, $N_i$ is a measurable function of $(I_n)_{n \ge 1}$ and is 
almost surely finite. On the event $[N_i<+\infty]$, 
$X_0(i) = Y_{N_i}(R_i)$, where $R_i$ is the rank of $i$ in the set 
$\psi_{I_1} \circ \cdots \circ \psi_{I_{N_i}} \circ \psi_{I^c_{N_i+1}}(\nnf)$.
The proof is complete.
\end{proof}

We split each random variable $I_n$ into two independent random 
variables, namely $U_n = \{I_n,I_n^c\}$ and $V_n = \one_{[1 \in I_n]}$. 
The random variable $U_n$ takes values in the set $\Pi$ of all 
partitions of $\nnf$ into two infinite blocks. 
Given such a partition $u \in \Pi$, we denote by $u(1)$ the block 
containing $1$ and by $u(0)$ its complement.  
% Among the two subsets $I_n$ and $I_n^c$, only one contains $1$; 
% we denote by $U_n(1)$ this subset and by $U_n(0)$ the other one. 
Then $I_n = U_n(V_n)$ and each one of the random variables $U_n$, $U_n(0)$ 
and $U_n(1)$ carries the same information.

Call $\cc$ the cylindrical $\sigma$-field on $E^\zzf$ 
and $\pi$ the law of $((X_n,I_n))_{n \in \zzf}$. By stationarity, 
the shift operator $T$ is an automorphism of $(E^\zzf,\cc,\pi)$. 
% Let $E = \{a,b\}^\infty \times \pc''(\nnf)$. One could have defined the 
% stationary Markov chain $((X_n,I_n))_{n \in \zzf}$ on the canonical space 
% $E^\zzf$, endowed with the cylindrical $\sigma$-field 
% $\cc$ and the appropriate probability measure $\pi$. By stationarity, 
% the shift operator $T$ is an automorphism of $(E^\zzf,\cc,\pi)$. 
The formulas defining $U_n,V_n,Y_n$ from $I_n$ and $X_{n-1}$ 
are invariant by time-translations so the measurable maps 
$\Phi$ and $\Psi$ yielding $(U_n)_{n \in \zzf}$ and 
$((V_n,Y_n))_{n \in \zzf}$ from $((X_n,I_n))_{n \in \zzf}$ 
commute with $T$. Therefore, the $\sigma$-fields 
$\Phi^{-1}(\cc)$ and $\Psi^{-1}(\cc)$ are factors of $T$.

\begin{theo}\label{filtration versus factor}
\begin{enumerate}
\item The factor $\Phi^{-1}(\cc)$ is complementable with complement 
$\Psi^{-1}(\cc)$. Thus, if the Markov chain $((Y_n,I_n))_{n \in \zzf}$ is 
defined on the canonical space $(E^\zzf,\cc,\pi)$, then $\fc^U_\infty$ is a 
complementable factor of $T$ with complement $\fc^{V,Y}_\infty$. 
\item Yet, the filtration $\fc^U$ is not 
complementable in the filtration $\fc^{X,I}$. 
\end{enumerate}
\end{theo}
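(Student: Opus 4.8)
The plan is to prove the two assertions by quite different means: the first by a bare-hands independence computation, the second by conditioning together with a Vershik-type non-standardness obstruction. For the complementability of the factor, I would start from the bijection $i\mapsto(\{i,i^{c}\},\one_{[1\in i]})$ from $\pc(\nnf)$ onto $\Pi\times\{0,1\}$, which yields $\sigma(I_n)=\sigma(U_n)\vee\sigma(V_n)$; since the uniform law $\nu$ is invariant under $i\mapsto i^{c}$, the bit $V_n$ is uniform on $\{0,1\}$ and independent of $U_n$. Feeding this into the facts that $(I_n)_{n\in\zzf}$ is a sequence of independent innovations and that, by Proposition~\ref{rejected subsequences}, the $Y_n$ are i.i.d.\ and the family $(Y_n)$ is independent of $(I_n)$, I get that the three families $(U_n)$, $(V_n)$, $(Y_n)$ are globally independent. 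Hence $\Phi^{-1}(\cc)=\sigma((U_n))$ and $\Psi^{-1}(\cc)=\sigma((V_n,Y_n))$ are independent, while $\sigma((U_n,V_n,Y_n))=\sigma((I_n,Y_n))$; the third item of Proposition~\ref{rejected subsequences}, expressing each $X_n$ as a function of $(I_m,Y_m)_{m>n}$, shows this join contains every $X_n$ and therefore equals $\cc$. Transporting to the canonical space gives the factor statement.

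For the filtration I argue by contradiction. First, $\fc^U$ is immersed in $\fc^{X,I}$: each $U_n$ is a function of the innovation $I_n$, so $(U_n)$ is a sequence of innovations of $\fc^U$, and $\sigma(U_{s+1},\dots,U_t)$ is independent of $\fc^{X,I}_s$, which gives both immersion conditions. By Theorem~\ref{product-type} the ambient filtration $\fc^{X,I}$ is product-type. Suppose $\fc^U$ were complementable in $\fc^{X,I}$ by a filtration $\vc$. I would first check that $\vc$ is itself product-type, by splitting each increment of the product-type $\fc^{X,I}$ into an $\fc^U$-innovation and an independent $\vc$-innovation and then transferring Vershik's first-level criterion from $\fc^{X,I}$ down to $\vc$ using $\fc^U\perp\vc$. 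Granting this, the first item of Corollary~\ref{conditionning}, applied with $U=(U_n)_{n\le0}$ generating $\fc^U_0$, yields that $\fc^{X,I}$ is product-type under $\ppf_u=\ppf[\,\cdot\mid U=u]$ for $U(\ppf)$-almost every $u$.

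The contradiction comes from the opposite conclusion: $\fc^{X,I}$ is \emph{not} product-type under $\ppf_u$ for almost every $u$. Conditioning on the entire sequence of partitions freezes the decimation pattern, so that under $\ppf_u$ the only fresh randomness at step $n$ is the bit $V_n$ choosing which block of the fixed partition $u_n$ is kept; thus $\fc^{X,I}$ becomes a dyadic filtration whose dynamics is a decimation process with the deterministic partition sequence $(u_n)$ --- a generalization of the very Vershik decimation process whose randomization produced standardness in Theorem~\ref{product-type}. I would then show that a finite prefix $(X_0(1),\dots,X_0(L))$ violates Vershik's first-level criterion under $\ppf_u$, along the lines of Vershik's original argument and of the I-cosiness estimate in Lemma~\ref{dyadic}: two independent copies of the conditioned process, coupled so as to be independent at some finite past time, stay macroscopically apart at time $0$, because the information distinguishing them lives at $-\infty$. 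This makes $\fc^{X,I}$ non-product-type under $\ppf_u$, contradicting the previous paragraph and proving that $\fc^U$ is not complementable.

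The main obstacle is precisely this last step: establishing non-standardness of the deterministic-partition decimation for a $\ppf$-typical sequence $u$. Vershik's classical computation treats the single evens/odds partition, whereas here one must control a random environment $(u_n)$ and obtain failure of the first-level criterion for almost every $u$ simultaneously; the quantitative coupling bound of Lemma~\ref{dyadic} is the right template, but it must be run with enough uniformity in $u$ to hold almost surely. A secondary point that I would isolate as a lemma is the claim used in the second paragraph, namely that an independent complement of a product-type filtration inside a product-type filtration is again product-type; this is what legitimizes invoking Corollary~\ref{conditionning}.
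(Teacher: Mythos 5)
Your first part is correct and coincides with the paper's argument: independence of $(U_n)$, $(V_n)$, $(Y_n)$ from Proposition~\ref{rejected subsequences}, plus the recovery of each $X_n$ from $(I_m,Y_m)_{m>n}$, gives exactly the complementability of the factor. No issue there.

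For the second part, however, there is a genuine gap, and it sits exactly where you put the flag: you never actually prove that $\fc^{X,I}$ fails to be product-type (or standard) under $\ppf_u$ for $U(\ppf)$-almost every $u$. Your plan is to adapt the quantitative I-cosiness coupling of Lemma~\ref{dyadic} to a ``decimation with deterministic partition sequence $(u_n)$'' and to control the random environment uniformly in $u$; but that estimate is tailored to the affine maps $y\mapsto A_nx+B_n$ over the fields $K_n$ and does not transfer to the present situation, and Vershik's original computation for the evens/odds decimation does not either. The paper closes this gap by a structural identification rather than a computation: conditionally on $U=u$, the words $W^u_n$ read off along the frozen partition tree form, together with the bits $V_n$, a dyadic \emph{split-words} process (at each step one keeps the left or right half of $W^u_{n-1}$ according to $V_n$, with $V_n$ an innovation uniform on $\{0,1\}$), whose filtration is non-standard by Smorodinsky's theorem; since $(V_n)$ remains a sequence of innovations of $(\fc^{X,I}_n)$ under $\ppf_u$, the split-words filtration is immersed in $(\fc^{X,I}_n)$, which therefore cannot be standard under $\ppf_u$. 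Without this identification (or a full substitute argument), your proof does not go through. A secondary, smaller issue: your reduction also relies on the unproven claim that an independent complement of $\fc^U$ inside the product-type $\fc^{X,I}$ is itself product-type, so that Corollary~\ref{conditionning} applies; the paper sidesteps this by arguing with standardness (a complement is immersed in $\fc^{X,I}$, hence standard) and invoking Proposition~0.1 of~\cite{Leuridan} to transfer standardness to the conditioned filtrations, reaching the same contradiction.
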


\begin{proof} 
By proposition~\ref{rejected subsequences}, 
$\fc^U_\infty$ and $\fc^{V,Y}_\infty$ are independent, and 
$\fc^U_\infty \vee \fc^{V,Y}_\infty = \fc^{X,I}_\infty \mod \ppf$.  
Therefore, $\Phi^{-1}(\cc)$ and $\Psi^{-1}(\cc)$
are independent in $(E^\zzf,\cc,\pi)$ and 
$\Phi^{-1}(\cc) \vee \Psi^{-1}(\cc) = \cc \mod \pi$: 
the factor $\Phi^{-1}(\cc)$
is complementable with complement $\Psi^{-1}(\cc)$. 

Let $U = (U_n)_{n \le 0}$. The random variable $U$ takes values in 
$\Pi^{\zzf_-}$. 
For every $u = (u_n)_{n \le 0} \in \Pi^{\zzf_-}$ and $n \le 0$, call $W^u_n$ 
the map from $\{0,1\}^{|n|}$ to $\{a,b\}$ defined by 
$$W^u_n(v_{n+1},\ldots,v_0) 
= X_n \circ \psi_{u_{n+1}(v_{n+1})}\circ \cdots \circ \psi_{u_0(v_0)}(1).$$
By ordering the elements of $\{0,1\}^{|n|}$ in the lexicographic order, 
one identifies $W^u_n$ with an element of $\{a,b\}^{|n|}$. 

Since $X_n = X_{n-1} \circ \psi_{I_n} = X_{n-1} \circ \psi_{u_n(V_n)}$ 
$\ppf_u$-almost surely, we have  
$$W^u_n(v_{n+1},\ldots,v_0) = W^u_{n-1}(V_n,v_{n+1},\ldots,v_0)
~\ppf_u\text{-almost surely},$$
so $W^u_n$ is the left half or the right half of $W^u_{n-1}$ according 
$V_n$ equals $0$ or $1$. Moreover, under $\ppf_u$, the random variable 
$V_n$ is independent of $\fc^{W^u,V}_{n-1}$ and uniform on $\{0,1\}$.

Hence, under $\ppf_u$, the process $(W^u_n,V_n)_{n \le 0}$ is a dyadic 
split-words process with innovations $(V_n)_{n \le 0}$. The filtration 
of this process is known to be non-standard (see~\cite{Smorodinsky}). 
But one checks that $(V_n)_{n \le 0}$ is also a sequence of innovations
of the larger filtration $(\fc^{X,I})_{n \le 0}$ seen under 
$\ppf_u = \ppf[\cdot|U=u]$, 
so $(\fc^{W^u,V}_n)_{n \le 0}$ is immersed in $(\fc^{X,I})_{n \le 0}$ 
and $(\fc^{X,I})_{n \le 0}$ is also non-standard under $\ppf_u$.  
 
If $(\fc^U_n)_{n \le 0}$ admitted an independent 
complement $(\gc_n)_{n \le 0}$ in $(\fc^{X,I}_n)_{n \le 0}$, this complement 
would be immersed in the product-type filtration $(\fc^{X,I}_n)_{n \le 0}$ 
thus standard. Therefore, for $U(\ppf)$-almost every $u \in \Pi^{\zzf_-}$, 
the filtration $(\fc^{X,I}_n)_{n \le 0}$ would be standard under the probability 
$\ppf_u$, by proposition 0.1 of~\cite{Leuridan}. 
This leads to a contradiction. 
 
We are done.
\end{proof}

\section{Annex : reminders on partitions and entropy}\label{Annex}

We recall here classical definitions and results to make the paper 
self-contained. Most of them can be found in~\cite{Petersen}. 
See also~\cite{Parry}. 

In the whole section, we fix a measure-preserving map $T$ from a 
probability space $(Z,\zc,\pi)$ to itself, whereas $\alpha,\beta,\gamma$ 
denote measurable countable partitions of $Z$ 
(here, `measurable partition' means `partition into measurable blocks'), 
and $\fc,\gc$ denote sub-$\sigma$-fields of $\zc$. 

We will use the non-negative, continuous and strictly 
concave function $\varphi : [0,1] \to \rrf$ defined by 
$\varphi(x) = -x\log_2(x)$, with the convention $\varphi(0)=0$. 
The maximum of this function is $\varphi(1/e) = 1/(e\ln2)$.

\subsection{Partitions}

Defining the entropy requires discretizations of the ambient 
probability space, that is why we introduce countable measurable 
partitions. Equivalently, we could use discrete random variables. 
We need a few basic definitions.   

\begin{defi}
One says that $\beta$ is finer than $\alpha$ 
(and note $\alpha \le \beta$) when each block 
of $\alpha$ is the union of some collection of blocks of $\beta$, 
i.e. when $\sigma(\alpha) \subset \sigma(\beta)$. 
\end{defi}

\begin{defi}
The (non-empty) intersections $A \cap B$ with $A \in \alpha$ 
and $B \in \beta$ form a partition; this partition is the coarsest 
refinement of $\alpha$ and $\beta$ and is denoted by $\alpha \vee \beta$. 
\end{defi}

\begin{defi}
More generally, if $(\alpha_k)_{k \in K}$ is a countable family of countable 
measurable partitions of $Z$, we denote by $\bigvee_{k \in K}\alpha_k$ the 
partition whose blocks are the (non-empty) intersections 
$\bigcap_{k \in K}A_k$ where $A_k \in \alpha_k$ for every $k \in K$; this 
partition is the coarsest refinement of the $(\alpha_k)_{k \in K}$; it is 
still measurable but it can be uncountable. 
\end{defi}

\begin{defi}
The partitions $\alpha$ and $\beta$ are independent if and only if
$\pi(A \cap B) = \pi(A) \pi(B)$ for every $A \in \alpha$ 
and $B \in \beta$.
\end{defi}

\begin{defi}
We denote by $T^{-1}\alpha$ the partition defined by 
$$T^{-1}\alpha = \{T^{-1}(A) : A \in \alpha\}.$$
If $T$ is invertible (i.e. bimeasurable), we denote 
by $T\alpha$ the partition defined by 
$$T\alpha = \{T(A) : A \in \alpha\}.$$
\end{defi}

\subsection{Fischer information and entropy of a partition}

Given $A \in \zc$, we view $-\log_2 \pi(A)$ as the quantity of information 
provided by the event $A$ when $A$ occurs, with the convention 
$-\log_20 = +\infty$.~\footnote{Taking logarithms in base $2$ is an 
arbitrary convention which associates one unity of information to any 
uniform Bernoulli random variable.}
With this definition, the occurence of a rare event 
provide much information; moreover, the information provided 
by two independent events $A$ and $B$ occuring at the same time is 
the sum of the informations provided by each of them separately. 
The entropy of a countable measurable partition is the mean quantity 
of information provided by the blocks in it.

\begin{defi}
The Fischer information of the partition $\alpha$ is the random variable  
$$I_\alpha := \sum_{A \in \alpha} (-\log_2\pi(A)) \one_A.$$ 
The entropy of the partition $\alpha$ is the quantity
$$H(\alpha) = \eef_\pi[I_\alpha] = \sum_{A \in \alpha} \varphi(\pi(A)).$$ 
\end{defi} 

% Since $I_\alpha$ takes values in $[0,+\infty]$, the entropy 
% $H(\alpha)$ is well-defined an belongs to $[0,+\infty]$. 
% If we view $-\log_2 \pi(A)$ as the quantity of information provided by 
% the event $A$ when $A$ occurs, the entropy of $\alpha$ is the mean quantity 
% of information provided by the partition $\alpha$.
Note that null blocks in $\alpha$ do not give any contribution to the entropy 
of a partition. Non-trivial partitions have positive entropy. Finite 
partitions have finite entropy. Infinite countable partition can have 
finite or infinite entropy. 

The previous definition can be generalized as follows.

\begin{defi}
The conditional Fischer information of the partition $\alpha$ 
with regard to $\fc$ is the random variable 
$$I_{\alpha|\fc} = \sum_{A \in \alpha} (-\log_2\pi(A|\fc))\one_A .$$ 
The conditional entropy of the partition $\alpha$ with regard to $\fc$ 
is the quantity
$$H(\alpha|\fc) = \eef_\pi[I_{\alpha|\fc}].$$ 
\end{defi}
 
\begin{rema}\label{alternative formula}
By conditional Beppo-Levi theorem,
$$\eef[I_{\alpha|\fc}|\fc] = \sum_{A \in \alpha} \varphi(\pi(A|\fc)),$$
so
$$H(\alpha|\fc) = \sum_{A \in \alpha} \eef_\pi[\varphi(\pi(A|\fc))].$$ 
\end{rema} 

Given any partition $\eta$ into measurable blocks, we will use the 
following abbreviated notations:
$H(\alpha|\eta):=H(\alpha|\sigma(\eta))$,  
$H(\alpha|\eta \vee \fc):=H(\alpha|\sigma(\eta) \vee \fc)$.

Note that when $\fc$ is the trivial $\sigma$-field $\{\emptyset,Z\}$, 
$I_{\alpha|\fc}$ and $H(\alpha|\fc)$ are equal to $I_{\alpha}$ and $H(\alpha)$.

The following properties are very useful and are checked by direct 
computation, by using the positivity of Fischer information and the 
strict concavity of $\varphi$.
  
\begin{prop}\label{properties of conditional entropy}(First properties)
\begin{enumerate}
\item $I_{T^{-1}\alpha|T^{-1}\fc} = I_{\alpha|\fc} \circ T$ so
$H(T^{-1}\alpha|T^{-1}\fc)=H(\alpha|\fc)$. 
\item $H(\alpha|\fc) \ge 0$, with equality if and only if 
$\alpha \subset \fc \mod \pi$. 
\item $H(\alpha|\fc) \le H(\alpha)$. When $H(\alpha)<+\infty$, 
equality holds if and only if $\alpha$ is independent of $\fc$. 
\item If $\fc \subset \gc$, then 
$\eef[I_{\alpha|\fc}|\fc] \ge \eef[I_{\alpha|\gc}|\fc]$ so
$H(\alpha|\fc) \ge H(\alpha|\gc)$. 
\item If $\alpha \le \beta$, then $I_{\alpha|\fc} \le I_{\beta|\fc}$ so
$H(\alpha|\fc) \le H(\beta|\fc)$.
\item $H(\alpha \vee \beta|\fc) 
= H(\alpha|\fc) + H(\beta|\fc \vee \alpha) 
\le H(\alpha|\fc) + H(\beta|\fc)$.
% \item If $\beta$ has finite entropy, then 
% $H(\alpha|\beta) = H(\alpha \vee \beta) - H(\beta)$.   
\end{enumerate}
\end{prop}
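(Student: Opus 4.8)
The plan is to reduce every item to a pointwise statement about the Fischer information $I_{\alpha|\fc}$ and then integrate, the only analytic inputs being the bound $0 \le \pi(A|\fc) \le 1$, the strict concavity of $\varphi$ (through Jensen's inequality), and the tower property of conditional expectations. Throughout I would freely use the formula $H(\alpha|\fc) = \sum_{A \in \alpha}\eef_\pi[\varphi(\pi(A|\fc))]$ from Remark~\ref{alternative formula}, which turns entropy statements into statements about the single concave function $\varphi$.

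I would first dispatch (1), (2) and (5), which need no convexity. For (1), the $T$-invariance of $\pi$ shows that $\pi(A|\fc)\circ T$ is a version of $\pi(T^{-1}A|T^{-1}\fc)$ (check the defining integral identity over the sets $T^{-1}F$, $F \in \fc$); combined with $\one_{T^{-1}A} = \one_A\circ T$ this gives the pointwise identity for the Fischer information, and integrating against the invariant measure yields the equality of entropies. For (2), each summand $-\log_2\pi(A|\fc)$ is $\ge 0$ on $A$ because $\pi(A|\fc)\le 1$, so $I_{\alpha|\fc}\ge 0$; moreover $H(\alpha|\fc)=0$ forces $\varphi(\pi(A|\fc))=0$ a.s., i.e.\ $\pi(A|\fc)\in\{0,1\}$ a.s.\ for every block, which is exactly $\alpha\subset\fc\mod\pi$. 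For (5), if $B \in \beta$ sits inside $A \in \alpha$ then $\pi(B|\fc)\le\pi(A|\fc)$, so $-\log_2$ reverses the inequality pointwise and $I_{\alpha|\fc}\le I_{\beta|\fc}$.

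Items (3) and (4) are the two Jensen arguments. For (3) I would apply Jensen to the concave $\varphi$ to get $\eef[\varphi(\pi(A|\fc))]\le\varphi(\eef[\pi(A|\fc)])=\varphi(\pi(A))$, and sum over $A$. When $H(\alpha)<+\infty$ the two finite sums agree only if they agree term by term, and strict concavity of $\varphi$ then forces each $\pi(A|\fc)$ to be a.s.\ constant, i.e.\ $\alpha$ independent of $\fc$. For (4) I would use the conditional Jensen inequality with respect to $\fc$ together with $\pi(A|\fc)=\eef[\pi(A|\gc)|\fc]$ (valid since $\fc\subset\gc$): this gives $\varphi(\pi(A|\fc))\ge\eef[\varphi(\pi(A|\gc))|\fc]$ pointwise, which after summing is exactly $\eef[I_{\alpha|\fc}|\fc]\ge\eef[I_{\alpha|\gc}|\fc]$; taking expectations yields $H(\alpha|\fc)\ge H(\alpha|\gc)$.

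The main obstacle is the addition formula in (6). I would prove the pointwise identity $I_{\alpha\vee\beta|\fc}=I_{\alpha|\fc}+I_{\beta|\sigma(\alpha)\vee\fc}$, which reduces to the multiplicative relation
$$\pi(A\cap B|\fc)=\pi(A|\fc)\,\pi(B|\sigma(\alpha)\vee\fc)\quad\text{a.s.\ on }A.$$
The delicate point is to identify $\pi(B|\sigma(\alpha)\vee\fc)$ on the block $A$: I would check that the $\fc$-measurable function $\pi(A\cap B|\fc)/\pi(A|\fc)$, restricted to $A$, is a version of $\eef[\one_B|\sigma(\alpha)\vee\fc]$ there, by verifying that its integral over $A\cap F$ equals $\pi(A\cap B\cap F)$ for all $F\in\fc$ (the sets $A'\cap F$, with $A'$ a block of $\alpha$, generate $\sigma(\alpha)\vee\fc$). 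Multiplying by $\pi(A|\fc)$ gives the displayed identity, hence the information identity, and integrating yields $H(\alpha\vee\beta|\fc)=H(\alpha|\fc)+H(\beta|\sigma(\alpha)\vee\fc)$. Finally the stated inequality is immediate from (4) applied with $\gc=\sigma(\alpha)\vee\fc\supset\fc$, which gives $H(\beta|\sigma(\alpha)\vee\fc)\le H(\beta|\fc)$.
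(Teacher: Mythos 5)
Your proof is correct and follows exactly the route the paper indicates: the paper gives no written-out proof, saying only that these properties ``are checked by direct computation, by using the positivity of Fischer information and the strict concavity of $\varphi$,'' and your argument is precisely that computation carried out in detail (pointwise identities for $I_{\alpha|\fc}$, Jensen for items 3--4, and the multiplicative identity $\pi(A\cap B|\fc)=\pi(A|\fc)\,\pi(B|\sigma(\alpha)\vee\fc)$ on $A$ for the addition formula). Nothing to add.
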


The last item above (addition formula above and sub-additivity of entropy) 
is used repeatedly in the present paper. We will also use the next result.  

\begin{prop}
\label{conditional entropy and monotone sequence of sigma-fields}
(monotone sequence of $\sigma$-fields). Assume that $H(\alpha)<+\infty$.
\begin{enumerate}
\item If $(\fc_n)_{n \ge 0}$ is a non-decreasing sequence of $\sigma$-fields, 
then 
$$H(\alpha | \fc_n) \to H(\alpha | \fc_\infty) 
\text{ where } \fc_\infty = \bigvee_{n \ge 0} \fc_n.$$
\item If $(\dc_n)_{n \ge 0}$ is a non-increasing sequence of $\sigma$-fields, 
then 
$$H(\alpha | \dc_n) \to H(\alpha | \dc_\infty) 
\text{ where } \dc_\infty = \bigcap_{n \ge 0} \dc_n.$$
\end{enumerate}
\end{prop}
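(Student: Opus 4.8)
The plan is to reduce the countable partition $\alpha$ to finite ones, handle the finite case by L\'evy's martingale convergence theorem applied blockwise, and control the truncation error uniformly in the conditioning $\sigma$-field by means of the hypothesis $H(\alpha)<+\infty$.

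First I would dispose of the case where $\alpha$ is finite. By Remark~\ref{alternative formula}, $H(\alpha|\fc)=\sum_{A\in\alpha}\eef[\varphi(\pi(A|\fc))]$. In the non-decreasing case L\'evy's upward theorem gives $\pi(A|\fc_n)\to\pi(A|\fc_\infty)$ almost surely for each block $A$, while in the non-increasing case the reverse (downward) martingale theorem gives $\pi(A|\dc_n)\to\pi(A|\dc_\infty)$ almost surely. Since $\varphi$ is continuous and bounded on $[0,1]$ by $\varphi(1/e)=1/(e\ln2)$, bounded convergence yields $\eef[\varphi(\pi(A|\fc_n))]\to\eef[\varphi(\pi(A|\fc_\infty))]$ for each $A$, and a finite partition has only finitely many blocks, so $H(\alpha|\fc_n)\to H(\alpha|\fc_\infty)$ in both settings.

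The substance of the argument is the passage from finite to countable $\alpha$, and this is where the finiteness of $H(\alpha)$ is used. Enumerate $\alpha=\{A_1,A_2,\dots\}$, let $T_N=\bigcup_{i>N}A_i$, and let $\alpha^{(N)}$ be the finite partition with blocks $A_1,\dots,A_N,T_N$. Since $\alpha$ refines $\alpha^{(N)}$, the addition formula (item~6 of Proposition~\ref{properties of conditional entropy}) gives $H(\alpha|\fc)=H(\alpha^{(N)}|\fc)+H(\alpha|\fc\vee\alpha^{(N)})$, and the monotonicity in the conditioning field (item~4) bounds the remainder by $H(\alpha|\fc\vee\alpha^{(N)})\le H(\alpha|\alpha^{(N)})$. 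Taking $\fc$ trivial in the addition formula identifies this last quantity as $H(\alpha|\alpha^{(N)})=H(\alpha)-H(\alpha^{(N)})=:\ep_N$, which is independent of $\fc$ and tends to $0$ as $N\to\infty$ because $H(\alpha^{(N)})=\sum_{i\le N}\varphi(\pi(A_i))+\varphi(\pi(T_N))$ increases to $H(\alpha)<+\infty$. Hence $0\le H(\alpha|\fc)-H(\alpha^{(N)}|\fc)\le\ep_N$ holds uniformly over every sub-$\sigma$-field $\fc$; securing this uniformity is the main obstacle, and it is exactly what the finite-entropy hypothesis delivers.

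Finally I would combine the pieces by a three-$\ep$ argument. Given $\delta>0$, fix $N$ with $\ep_N<\delta/3$, so that $H(\alpha^{(N)}|\fc_n)$ differs from $H(\alpha|\fc_n)$ by less than $\delta/3$ for every $n$, and likewise $H(\alpha^{(N)}|\fc_\infty)$ from $H(\alpha|\fc_\infty)$. By the finite-partition convergence already established, there is $n_0$ with $|H(\alpha^{(N)}|\fc_n)-H(\alpha^{(N)}|\fc_\infty)|<\delta/3$ for $n\ge n_0$, and the triangle inequality then gives $|H(\alpha|\fc_n)-H(\alpha|\fc_\infty)|<\delta$. The same reasoning verbatim, with $\dc_n$ and $\dc_\infty$ in place of $\fc_n$ and $\fc_\infty$, settles the non-increasing case.
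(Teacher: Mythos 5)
Your proposal is correct and follows essentially the same route as the paper: the finite case via the (upward and downward) martingale convergence theorems, continuity and boundedness of $\varphi$, and Remark~\ref{alternative formula}; the countable case by approximating $\alpha$ with the truncated partitions $\alpha^{(N)}$ and controlling the error uniformly in the conditioning $\sigma$-field. The uniform bound $0\le H(\alpha|\fc)-H(\alpha^{(N)}|\fc)\le H(\alpha)-H(\alpha^{(N)})$ that you derive from the addition formula is exactly the combination of Propositions~\ref{density} and~\ref{continuity of entropy} that the paper invokes, so you have merely unfolded the cited lemmas in-line.
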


\begin{proof}
% By remark~\ref{alternative formula}, the almost sure convergences 
% $\eef[I_{\alpha | \fc_n} | \fc_n] \to \eef[I_{\alpha | \fc_\infty} | \fc_\infty]$ and 
% $\eef[I_{\alpha | \dc_n} | \dc_n] \to \eef[\eef[I_{\alpha | \dc_\infty} | \dc_\infty]$ 
% follow directly from martingale and backward martingale convergence theorems. 

% When the partition $\alpha$ is finite, all these random variables are 
% non-negative and bounded above by $N/(e \ln 2)$, where $N$ is the number 
% of blocks in $\alpha$. Convergence in $L^1(\pi)$ follows, yielding the 
% result. 

Given $A \in \alpha$, the martingale and backward martingale 
convergence theorems and the continuity of $\varphi$ yield 
$\varphi(\pi(A|\fc_n)) \to \varphi(\pi(A|\fc_\infty))$
and $\varphi(\pi(A|\dc_n)) \to \varphi(\pi(A|\dc_\infty))$ 
as $n \to +\infty$. 
When the partition $\alpha$ is finite, the convergences 
$H(\alpha | \fc_n) \to H(\alpha | \fc_\infty)$ and 
$H(\alpha | \dc_n) \to H(\alpha | \dc_\infty)$ follow 
by remark~\ref{alternative formula}. 

The result can be extended to the general case by approximating $\alpha$ 
with finite mesurable partitions and using the equicontinuity of the maps 
$\alpha \mapsto \eef[\alpha|\fc]$, where $\fc$ is any sub-$\sigma$-field 
of $\zc$. See propositions~\ref{density} and~\ref{continuity of entropy}
in the next subsection. 
\end{proof}

\subsection{Continuity properties}

\begin{prop}
The formula 
$$d(\alpha,\beta) = H(\alpha|\beta) + H(\beta|\alpha) 
= 2H(\alpha\vee\beta) - H(\alpha) - H(\beta)$$
defines a pseudo-metric on the set of all partitions of $Z$
with finite entropy. 
Moreover, $d(\alpha,\beta) = 0$ if and only if $\sigma(\alpha) 
= \sigma(\beta)$ modulo $\pi$ (i.e. the non-null blocks of 
$\alpha$ and $\beta$ are the same modulo $\pi$).
\end{prop}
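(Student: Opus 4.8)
The plan is to verify the pseudo-metric axioms one at a time, drawing entirely on the properties collected in proposition~\ref{properties of conditional entropy}, and then to read off the characterization of the degenerate case from the equality condition in its item~2. First I would record the equivalence of the two displayed expressions for $d$. Applying the addition formula (item~6 of proposition~\ref{properties of conditional entropy}) with the trivial $\sigma$-field gives $H(\alpha \vee \beta) = H(\alpha) + H(\beta|\alpha)$, hence $H(\beta|\alpha) = H(\alpha \vee \beta) - H(\alpha)$; since $\alpha \vee \beta = \beta \vee \alpha$, the symmetric identity $H(\alpha|\beta) = H(\alpha \vee \beta) - H(\beta)$ holds as well, and summing the two yields $H(\alpha|\beta)+H(\beta|\alpha) = 2H(\alpha\vee\beta)-H(\alpha)-H(\beta)$. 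From the first expression, symmetry $d(\alpha,\beta)=d(\beta,\alpha)$ is immediate, non-negativity follows from item~2 (each conditional entropy is $\ge 0$), and $d(\alpha,\alpha)=0$ holds because $\alpha \subset \sigma(\alpha)$ forces $H(\alpha|\alpha)=0$, again by item~2.

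The substantial step is the triangle inequality, which I would reduce to the single claim
$$H(\alpha|\gamma) \le H(\alpha|\beta) + H(\beta|\gamma).$$
To prove it, I first enlarge the conditioned partition: since $\alpha \le \alpha \vee \beta$, item~5 gives $H(\alpha|\gamma) \le H(\alpha \vee \beta|\gamma)$. Next I split the right-hand side with the addition formula (item~6), conditioning on $\sigma(\gamma)$, to get $H(\alpha \vee \beta|\gamma) = H(\beta|\gamma) + H(\alpha|\gamma \vee \beta)$. Finally, because $\sigma(\beta) \subset \sigma(\gamma)\vee\sigma(\beta)$, item~4 (more conditioning cannot increase entropy) gives $H(\alpha|\gamma \vee \beta) \le H(\alpha|\beta)$. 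Chaining these three steps proves the claim. Applying it once to the triple $(\alpha,\beta,\gamma)$ and once to $(\gamma,\beta,\alpha)$ and adding the two resulting inequalities produces exactly $d(\alpha,\gamma) \le d(\alpha,\beta)+d(\beta,\gamma)$.

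For the degenerate case, I would observe that $d(\alpha,\beta)=0$ forces both $H(\alpha|\beta)=0$ and $H(\beta|\alpha)=0$, since both summands are non-negative. By the equality condition in item~2 of proposition~\ref{properties of conditional entropy}, these two vanishings mean respectively $\sigma(\alpha)\subset\sigma(\beta) \mod \pi$ and $\sigma(\beta)\subset\sigma(\alpha) \mod \pi$, that is $\sigma(\alpha)=\sigma(\beta) \mod \pi$; conversely, that equality of $\sigma$-fields makes both conditional entropies vanish, so $d(\alpha,\beta)=0$. I expect no genuine obstacle beyond keeping the conditioning $\sigma$-fields straight in the triangle inequality: the whole argument is careful bookkeeping of the monotonicity and addition properties already established, and the subadditivity claim above is the only place where more than one of those properties must be combined.
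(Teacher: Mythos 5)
Your proof is correct and follows essentially the same route as the paper: the triangle inequality via the chain $H(\alpha|\gamma) \le H(\alpha\vee\beta|\gamma) = H(\beta|\gamma) + H(\alpha|\beta\vee\gamma) \le H(\beta|\gamma) + H(\alpha|\beta)$, with the remaining axioms and the degeneracy characterization read off from the basic properties of conditional entropy. Your write-up merely spells out the bookkeeping (equality of the two formulas, symmetry, the vanishing case) that the paper leaves implicit.
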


\begin{proof}
The triangle inequality follows from the inequality
$$H(\alpha|\gamma) 
\le H(\alpha\vee\beta|\gamma) 
= H(\beta|\gamma) + H(\alpha|\beta\vee\gamma)
\le H(\beta|\gamma) + H(\alpha|\beta).$$
The other statements follow from 
proposition~\ref{properties of conditional entropy}.
\end{proof}

\begin{prop}~\label{density}
For the pseudo-metric $d$ thus defined, 
the set of all finite measurable partitions of $Z$ is dense in 
the set of all (measurable) partitions on $Z$ with finite entropy.
\end{prop}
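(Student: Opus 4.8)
The plan is to approximate an arbitrary countable measurable partition $\alpha$ of finite entropy by its finite truncations. I would list the non-null blocks as $\alpha = \{A_k : k \ge 1\}$ and, for each $n \ge 1$, form the finite partition $\beta_n = \{A_1,\ldots,A_n,A_{>n}\}$, where $A_{>n} := \bigcup_{k>n}A_k$ lumps all the remaining pieces into a single block. The claim will be that $d(\alpha,\beta_n) \to 0$ as $n \to +\infty$.

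First I would observe that every block of $\beta_n$ is a block or a union of blocks of $\alpha$, so $\sigma(\beta_n) \subset \sigma(\alpha)$; that is, $\beta_n \le \alpha$ and $\alpha \vee \beta_n = \alpha$. By item~2 of proposition~\ref{properties of conditional entropy}, the inclusion $\beta_n \subset \sigma(\alpha) \mod \pi$ gives $H(\beta_n|\alpha)=0$, so the pseudo-distance collapses to $d(\alpha,\beta_n) = H(\alpha|\beta_n)$. Applying the addition formula (item~6 of the same proposition) with the trivial $\sigma$-field to $\beta_n \vee \alpha = \alpha$ yields $H(\alpha) = H(\beta_n) + H(\alpha|\beta_n)$, whence
$$d(\alpha,\beta_n) = H(\alpha|\beta_n) = H(\alpha) - H(\beta_n) = \sum_{k>n}\varphi(\pi(A_k)) - \varphi(\pi(A_{>n})).$$

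It then remains to check that this difference tends to $0$. The first term is the tail of the convergent series $H(\alpha) = \sum_{k \ge 1}\varphi(\pi(A_k)) < +\infty$, so it goes to $0$; the second term goes to $0$ because $\pi(A_{>n}) = \sum_{k>n}\pi(A_k) \to 0$ and $\varphi$ is continuous with $\varphi(0)=0$. Both quantities are non-negative and their difference is $\ge 0$, consistently with $H(\alpha|\beta_n) \ge 0$. This proves $d(\alpha,\beta_n) \to 0$ and hence the density.

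The argument is entirely routine: the only place the finite-entropy hypothesis enters is the convergence of the tail $\sum_{k>n}\varphi(\pi(A_k))$, and the only analytic input is the continuity of $\varphi$ at $0$, so I do not expect any genuine obstacle here.
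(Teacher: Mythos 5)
Your proof is correct and follows essentially the same route as the paper: truncate $\alpha$ to $\beta_n=\{A_1,\dots,A_n,A_{>n}\}$ and use $d(\alpha,\beta_n)=H(\alpha)-H(\beta_n)$, the only cosmetic difference being that you evaluate this difference exactly as $\sum_{k>n}\varphi(\pi(A_k))-\varphi(\pi(A_{>n}))$ and let both terms tend to $0$, whereas the paper simply bounds it above by the tail $\sum_{k>n}\varphi(\pi(A_k))$ via $H(\beta_n)\ge\sum_{k=1}^n\varphi(\pi(A_k))$.
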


\begin{proof}
Let $\alpha = \{A_n : n \ge 1\}$ be an infinite partition of
$Z$ with finite entropy. For every $n \ge 1$, set 
$\alpha_n = \{A_1,\cdots,A_n,(A_1 \cup \cdots \cup A_n)^c\}$. 
Since $\alpha$ is finer than $\alpha_n$, 
$$H(\alpha) \ge H(\alpha_n) \ge \sum_{k=1}^n \varphi(\pi(A_k)),$$
so $d(\alpha,\alpha_n) = H(\alpha)-H(\alpha_n) \to 0$ as $n \to +\infty$. 
\end{proof}

\begin{prop}~\label{continuity of entropy}
Let $\fc$ be a sub-$\sigma$-field of $\zc$. Then, for the pseudo-metric $d$, 
the map $\alpha \mapsto H(\alpha|\fc)$ is $1$-Lipschitz.  
\end{prop}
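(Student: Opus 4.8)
The plan is to prove directly that for any two partitions $\alpha,\beta$ of finite entropy one has $|H(\alpha|\fc)-H(\beta|\fc)| \le d(\alpha,\beta)$, and in fact to bound each one-sided difference by one of the two summands $H(\alpha|\beta)$, $H(\beta|\alpha)$ making up $d(\alpha,\beta)$. Everything rests on the properties collected in proposition~\ref{properties of conditional entropy}, so no new machinery is needed.

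First I would exploit monotonicity under refinement (item 5 of proposition~\ref{properties of conditional entropy}): since $\alpha \le \alpha \vee \beta$, we get $H(\alpha|\fc) \le H(\alpha \vee \beta|\fc)$. Then I would expand the right-hand side with the addition formula (item 6), conditioning on $\beta$ \emph{first}, which is the crucial choice: $H(\alpha \vee \beta|\fc) = H(\beta|\fc) + H(\alpha|\fc \vee \beta)$. Finally, enlarging the conditioning $\sigma$-field decreases conditional entropy (item 4), and $\sigma(\beta) \subset \fc \vee \sigma(\beta)$, so $H(\alpha|\fc \vee \beta) \le H(\alpha|\beta)$. Chaining these three steps gives $H(\alpha|\fc) - H(\beta|\fc) \le H(\alpha|\beta)$.

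Exchanging the roles of $\alpha$ and $\beta$, the identical argument yields $H(\beta|\fc) - H(\alpha|\fc) \le H(\beta|\alpha)$. Putting the two one-sided bounds together, $|H(\alpha|\fc)-H(\beta|\fc)| = \max\bigl(H(\alpha|\fc)-H(\beta|\fc),\, H(\beta|\fc)-H(\alpha|\fc)\bigr) \le \max\bigl(H(\alpha|\beta),H(\beta|\alpha)\bigr) \le H(\alpha|\beta)+H(\beta|\alpha) = d(\alpha,\beta)$, which is exactly the asserted $1$-Lipschitz estimate.

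I do not expect any genuine obstacle here: the statement is a formal consequence of the listed properties. The two points worth a moment's care are that the finite-entropy hypothesis guarantees all the quantities involved are finite, so the subtractions of entropies are legitimate; and that the addition formula must be applied conditioning on $\beta$ before $\alpha$, precisely so that the remainder term is $H(\alpha|\fc\vee\beta)$, which can then be dominated by the $\fc$-free quantity $H(\alpha|\beta)$. The symmetric choice would instead produce $H(\beta|\fc\vee\alpha)$, giving the complementary bound.
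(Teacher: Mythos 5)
Your proof is correct and is essentially the paper's own argument: the author likewise bounds the one-sided difference by $H(\alpha\vee\beta|\fc)-H(\alpha|\fc)=H(\beta|\fc\vee\alpha)\le H(\beta|\alpha)\le d(\alpha,\beta)$ and concludes by symmetry. Your remarks about the order of conditioning in the addition formula and about finiteness of the entropies are exactly the right points of care.
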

 
\begin{proof}
Let $\alpha$ and $\beta$ be two partitions of $Z$ with finite entropy.
Then
\begin{eqnarray*}
H(\beta|\fc) - H(\alpha|\fc) 
\le H(\alpha \vee \beta|\fc) - H(\alpha|\fc) 
= H(\beta | \fc \vee \alpha) 
\le H(\beta | \alpha) 
\le d(\alpha,\beta).
\end{eqnarray*}
The result follows.
\end{proof}

\begin{prop}\label{continuity w.r.t. the blocks} 
Let $\alpha = \{A_1,\ldots,A_n\}$ and $\beta = \{B_1,\ldots,B_n\}$ 
be two finite measurable partitions of $Z$ with the same finite number 
of blocks. 
For every $A$ and $B$ in $\zc$, set $\delta(A,B) = \pi(A \triangle B)$.Then 
$$d(\alpha,\beta) 
\le \sum_{i=1}^n 2\varphi\big(\delta(A_i,B_i)/2)\big)
+ \sum_{i=1}^n\delta(A_i,B_i)/\ln2.$$
Therefore, the partition $\alpha$ depends continuously on the blocks 
$A_1,\ldots,A_n$. In particular, the map $A \mapsto \{A,A^c\}$ is 
(uniformly) continuous for the pseudo-metrics $\delta$ and $d$. 
\end{prop}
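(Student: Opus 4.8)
My plan is to reduce everything to the joint distribution of the two partitions and then estimate the entropy deficit blockwise. Write $c_{ij}=\pi(A_i\cap B_j)$, $a_i=\pi(A_i)=\sum_j c_{ij}$ and $b_j=\pi(B_j)=\sum_i c_{ij}$. Since $\alpha\vee\beta=\{A_i\cap B_j\}$, the second expression for the pseudo-metric gives $d(\alpha,\beta)=2H(\alpha\vee\beta)-H(\alpha)-H(\beta)$. The first step is to turn this into the single closed formula
$$d(\alpha,\beta)=\sum_{i,j}c_{ij}\log_2\frac{a_ib_j}{c_{ij}^{\,2}},$$
which follows by expanding $\varphi(x)=-x\log_2 x$ and using $\sum_i a_i\log_2 a_i=\sum_{i,j}c_{ij}\log_2 a_i$ together with the analogous identity for the $b_j$. (This is just $H(\alpha)+H(\beta)$ minus twice the mutual information, written out.)

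Next I separate the diagonal and off-diagonal contributions. Put $q_i=\pi(A_i\setminus B_i)=a_i-c_{ii}=\sum_{j\ne i}c_{ij}$ and $r_i=\pi(B_i\setminus A_i)=b_i-c_{ii}=\sum_{k\ne i}c_{ki}$, so that $q_i+r_i=\pi(A_i\triangle B_i)=\delta(A_i,B_i)$. The diagonal part is $\sum_i c_{ii}\bigl(\log_2(a_i/c_{ii})+\log_2(b_i/c_{ii})\bigr)$; applying $\log_2(1+t)\le t/\ln 2$ to $\log_2(a_i/c_{ii})=\log_2(1+q_i/c_{ii})$ and to the $b_i$ term bounds each diagonal summand by $(q_i+r_i)/\ln 2$, whence the diagonal part is at most $\frac{1}{\ln 2}\sum_i\delta(A_i,B_i)$ — exactly the second sum in the claim.

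The heart of the argument, and the step I expect to be the main obstacle, is the off-diagonal part
$$\sum_{i\ne j}c_{ij}\Bigl(\log_2\frac{a_i}{c_{ij}}+\log_2\frac{b_j}{c_{ij}}\Bigr).$$
A naive estimate using only $a_i\le 1$ and $b_j\le 1$ replaces each summand by $2\varphi(c_{ij})$ and introduces a spurious factor of order $\log_2(\#\text{blocks})$: the off-diagonal ``spreading'' entropy must instead be charged to the correct blocks, which is what makes a blockwise splitting of $H(\alpha|\beta)$ and $H(\beta|\alpha)$ fail. The plan is to retain $a_i$ and $b_j$ and invoke the log-sum inequality. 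Summing the first family over the row index $i$ for fixed column $j$ gives
$$\sum_{i\ne j}c_{ij}\log_2\frac{a_i}{c_{ij}}\le r_j\log_2\frac{\sum_{i\ne j}a_i}{r_j}\le r_j\log_2\frac{1}{r_j}=\varphi(r_j),$$
the last step using $\sum_{i\ne j}a_i\le 1$; symmetrically, summing the second family over the column index for fixed row gives at most $\sum_i\varphi(q_i)$. Hence the off-diagonal part is at most $\sum_i\bigl(\varphi(q_i)+\varphi(r_i)\bigr)$, and the strict concavity of $\varphi$ yields $\varphi(q_i)+\varphi(r_i)\le 2\varphi\bigl((q_i+r_i)/2\bigr)=2\varphi\bigl(\delta(A_i,B_i)/2\bigr)$ for each $i$.

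Adding the diagonal and off-diagonal estimates gives the announced inequality. The remaining assertions are then immediate: $d(\alpha,\beta)$ is controlled by $\sum_i\delta(A_i,B_i)$ through a modulus that tends to $0$ as the $\delta(A_i,B_i)$ tend to $0$, since $\varphi$ is continuous at $0$ with $\varphi(0)=0$; and specializing to $n=2$ with $\alpha=\{A,A^c\}$ and $\beta=\{B,B^c\}$ (where $\delta(A,A^c{;}\,B,B^c)$ reduces to $\delta(A,B)$ in both blocks) gives the uniform continuity of $A\mapsto\{A,A^c\}$ from $\delta$ to $d$. The only genuinely delicate point is the log-sum step, namely bounding the off-diagonal entropy by $\sum_i\varphi(q_i)+\sum_i\varphi(r_i)$ with no dimensional factor; the rest is bookkeeping, with the relation $2\varphi(\delta/2)=\varphi(\delta)+\delta$ implicit in the final concavity bound.
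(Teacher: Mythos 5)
Your proof is correct and is essentially the paper's own argument in different notation: your log-sum step for the off-diagonal terms is exactly the paper's application of the concavity of $\varphi$ to $\sum_{j\ne i}\pi(B_j)\varphi(\pi(A_i|B_j))$, your bound $\log_2(1+t)\le t/\ln 2$ on the diagonal is the paper's $\varphi(x)\le(1-x)/\ln 2$, and the final concavity step $\varphi(q_i)+\varphi(r_i)\le 2\varphi(\delta(A_i,B_i)/2)$ is identical. The only cosmetic difference is that you bound the single combined sum $2H(\alpha\vee\beta)-H(\alpha)-H(\beta)$ whereas the paper estimates $H(\alpha|\beta)$ and $H(\beta|\alpha)$ separately and adds.
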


% \begin{prop}
% For every $A$ and $B$ in $\zc$, set $\delta(A,B) = \pi(A \triangle B)$. 
% Then the map $A \mapsto \{A,A^c\}$ is uniformly continuous for the 
% pseudo-metrics $\delta$ and $d$. 
% \end{prop}

\begin{proof}
% By remark~\ref{alternative formula}, 
% $$H(\alpha|\beta) = \sum_{i,j} \pi(B_j) \varphi(\pi(A_i|B_j)).$$
Fix $i \in \odc 1,n \fdc$. Then the concavity of $\varphi$ yields 
\begin{eqnarray*}
\sum_{j \ne i} \pi(B_j) \varphi(\pi(A_i|B_j))
&\le& \pi(B_i^c) \varphi 
\Big( \sum_{j \ne i} \frac{\pi(B_j)}{\pi(B_i^c)} \pi(A_i|B_j) \Big) \\
&=& \pi(B_i^c) \varphi 
\Big( \sum_{j \ne i} \frac{\pi(A_i \cap B_j)}{\pi(B_i^c)} \Big) \\
&=& \pi(B_i^c) \varphi 
\Big( \frac{\pi(A_i \cap B_i^c)}{\pi(B_i^c)} \Big) \\
&=& \pi(A_i \cap B_i^c) [-\log_2\pi(A_i \cap B_i^c)+\log_2\pi(B_i^c)] \\
&\le& \varphi(\pi(A_i \cap B_i^c)).
\end{eqnarray*}
But the concavity of $\varphi$ also yields $\varphi(x) \le (1-x)/\ln2$ 
for every $x \in [0,1]$, so
$$\pi(B_i) \varphi(\pi(A_i|B_i)) 
\le \pi(B_i) \pi(A_i^c|B_i) / \ln 2 
= \pi(A_i^c \cap B_i) / \ln 2.$$
Hence, by remark~\ref{alternative formula}, 
$$H(\alpha|\beta) = \sum_{i,j} \pi(B_j) \varphi(\pi(A_i|B_j))
\le \sum_i \varphi(\pi(A_i \cap B_i^c)) + \sum_i \pi(A_i^c \cap B_i) / \ln 2.$$
A similar upper bound holds for $H(\beta|\alpha)$. Summing these two 
inequalities and using once again the concavity of $\varphi$ yields the 
statement.   
% Let $A$ and $B$ in $\zc$. Set $\alpha = \{A,A^c\}$ and $\beta = \{B,B^c\}$. 
% Then 
% $$H(\alpha|\beta) 
% = \pi(B) \big(\varphi(\pi(A|B)) + \varphi(\pi(A^c|B))\big)
% + \pi(B^c) \big(\varphi(\pi(A|B^c)) + \varphi(\pi(A^c|B^c))\big).$$
% One checks the inequalities $-x\ln x \le 1-x$ and 
% $-x\ln x \le 2\sqrt{x}-2x$ for every $x \in [0,1]$. 
% Applying the former to $\pi(A|B)$ and $\pi(A^c|B^c)$, 
% and the latter to $\pi(A^c|B)$ and $\pi(A|B^c)$ yields
% \begin{eqnarray*}
% H(\alpha|\beta) \ln 2
% &\le& \pi(B)\pi(A^c|B) + \pi(B)\big(2\sqrt{\pi(A^c|B)}-2\pi(A^c|B)\big) \\
% & & \quad + \pi(B^c) \big(2\sqrt{\pi(A|B^c)}-2\pi(A|B^c)\big) 
% + \pi(B^c)\pi(A|B^c) \\
% &\le& \pi(A^c \cap B) + 2\sqrt{\pi(A^c \cap B)} - 2\pi(A^c \cap B) \\
% & & \quad + 2\sqrt{\pi(A \cap B^c)} - 2\pi(A \cap B^c) + \pi(A \cap B^c) \\
% &\le& 2\sqrt{\pi(A^c \cap B)} + 2\sqrt{\pi(A \cap B^c)} \\
% &\le& 2\sqrt{2}\sqrt{\pi(A \triangle B)},
% \end{eqnarray*}
% by concavity of the square root function. 
% Hence $d(\alpha,\beta) \le (4\sqrt{2}/\ln 2) \sqrt{\delta(A,B)}$. 
\end{proof}

\subsection{Entropy of a measure-preserving map}

First, we define quantities $h(T,\alpha)$.

\begin{prop}\label{entropy along a partition}
\begin{enumerate}(Definition and formula for $h(T,\alpha)$)
\item The sequence $(H_n(T,\alpha))_{n \ge 0}$ defined by 
$$H_n(T,\alpha) 
= H(\alpha \vee T^{-1}\alpha \vee \cdots \vee T^{-(n-1)}\alpha)$$
is concave. Since $H_0(T,\alpha)=0$, the sequence 
$(H_n(T,\alpha)/n)_{n \ge 1}$ is non-increasing so the 
limit $h(T,\alpha) = \lim_{n \to +\infty} H_n(T,\alpha)/n$ 
exists in $[0,+\infty]$. 
\item If $H(\alpha)<+\infty$, then $h(T,\alpha) = H(\alpha | \alpha_1^\infty)$, 
where $\alpha_1^\infty = \bigvee_{k \ge 1} T^{-k}\alpha$. 
% denotes the $\sigma$-field 
% generated by the partitions $(T^{-k}\alpha)_{k \ge 1}$. 
\end{enumerate}
\end{prop}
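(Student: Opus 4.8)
The plan is to reduce both assertions to one recursion for the sequence $(H_n(T,\alpha))_n$ and then read off everything from it. First I would write $H_n(T,\alpha)=H\big(\bigvee_{k=0}^{n-1}T^{-k}\alpha\big)$ and peel off the index $k=0$: since $\bigvee_{k=0}^{n-1}T^{-k}\alpha=\alpha\vee\bigvee_{k=1}^{n-1}T^{-k}\alpha$, the addition formula (item~6 of proposition~\ref{properties of conditional entropy}) gives
$$H_n(T,\alpha)=H\Big(\bigvee_{k=1}^{n-1}T^{-k}\alpha\Big)+H\Big(\alpha\,\Big|\,\bigvee_{k=1}^{n-1}T^{-k}\alpha\Big).$$
The first term equals $H_{n-1}(T,\alpha)$, because $\bigvee_{k=1}^{n-1}T^{-k}\alpha=T^{-1}\big(\bigvee_{k=0}^{n-2}T^{-k}\alpha\big)$ and entropy is $T$-invariant (item~1 of the same proposition). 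Setting $g_m:=H\big(\alpha\,\big|\,\bigvee_{k=1}^{m}T^{-k}\alpha\big)\in[0,+\infty]$ for $m\ge0$ (so that $g_0=H(\alpha)$), I thus get the recursion $H_n(T,\alpha)=H_{n-1}(T,\alpha)+g_{n-1}$ for $n\ge1$, that is, $H_n(T,\alpha)=\sum_{m=0}^{n-1}g_m$. I would stress that this step uses only that $T$ preserves $\pi$, not that $T$ is invertible.

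Part~1 then follows almost immediately. The conditioning $\sigma$-fields $\sigma\big(\bigvee_{k=1}^{m}T^{-k}\alpha\big)$ increase with $m$, so the monotonicity of conditional entropy in its conditioning field (item~4) shows that $(g_m)_{m\ge0}$ is non-increasing in $[0,+\infty]$. Consequently $H_n(T,\alpha)/n=\frac1n\sum_{m=0}^{n-1}g_m$ is a Ces\`aro average of a non-increasing sequence, hence is itself non-increasing in $n$; being bounded below by $0$, it converges in $[0,+\infty]$, and its limit is $h(T,\alpha)$. The concavity of $(H_n(T,\alpha))_n$ claimed in part~1 is merely the statement that its successive increments $g_{n-1}$ decrease.

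For part~2 I would invoke the extra hypothesis $H(\alpha)<+\infty$, which forces every $g_m$ to be finite; a convergent sequence and its Ces\`aro means share the same limit, so $h(T,\alpha)=\lim_{m}g_m$. It remains to identify this limit: the partitions $\bigvee_{k=1}^{m}T^{-k}\alpha$ generate $\sigma$-fields increasing to $\sigma(\alpha_1^\infty)$, so the non-decreasing case of proposition~\ref{conditional entropy and monotone sequence of sigma-fields} yields $g_m\to H(\alpha\,|\,\alpha_1^\infty)$, whence $h(T,\alpha)=H(\alpha\,|\,\alpha_1^\infty)$. I do not anticipate a genuine obstacle: once the recursion of the first paragraph is in place the rest is bookkeeping. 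The only points needing care are the extended-real arithmetic in the infinite-entropy case (the reason I keep part~1 and the finiteness assumption of part~2 apart, since $\lim_m g_m$ need not equal $h(T,\alpha)$ when $H(\alpha)=+\infty$, where $h(T,\alpha)=+\infty$) and the observation that the decomposition rests only on measure-preservation, so that the whole argument remains valid for possibly non-invertible $T$.
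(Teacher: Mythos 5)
Your proof is correct and follows essentially the same route as the paper: peel off the $k=0$ term with the addition formula, use $T$-invariance of entropy to identify the remainder with $H_{n-1}(T,\alpha)$, observe that the increments $H(\alpha\,|\,\bigvee_{k=1}^{m}T^{-k}\alpha)$ are non-increasing, and conclude part 2 via proposition~\ref{conditional entropy and monotone sequence of sigma-fields} and Ces\`aro. Writing $H_n(T,\alpha)$ as the sum of its increments (rather than as a telescoping difference) is a slightly cleaner way to handle the case $H(\alpha)=+\infty$, but the argument is the same.
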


\begin{proof}
The first statement follows from the equality 
\begin{eqnarray*}
H_{n+1}(T,\alpha) - H_n(T,\alpha) 
&=& H(\alpha \vee T^{-1}\alpha \vee \cdots \vee T^{-n}\alpha)
-H (T^{-1}\alpha \vee \cdots \vee T^{-n}\alpha) \\
&=& H(\alpha | T^{-1}\alpha \vee \cdots \vee T^{-(n-1)}\alpha), 
\end{eqnarray*}
and the fact that $H(\alpha|\fc)$ is non-increasing with regard to $\fc$. 
Using 
proposition~\ref{conditional entropy and monotone sequence of sigma-fields} 
and Ces\`aro's lemma yields the second statement.
\end{proof}

\begin{defi}
The entropy of $T$ is 
\begin{eqnarray*}
h(T) 
&=& \sup\{h(T,\alpha) : \alpha 
\mathrm{~partition~of~} Z \mathrm{~with~finite~entropy}\} \\
&=& \sup\{h(T,\alpha) : \alpha 
\mathrm{~finite~measurable~partition~of}~Z\}.
\end{eqnarray*}
These two quantities coincide thanks to propositions~\ref{density} 
and~\ref{continuity}.
\end{defi}

\begin{prop}
For every $r \ge 1$, $h(T^r) = rh(T)$. 
If $T$ is also invertible, one has also $h(T^{-1}) = h(T)$. 
\end{prop}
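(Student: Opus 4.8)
The plan is to reduce both equalities to the single-partition quantities $h(T,\alpha)$, using the two equivalent expressions of $h(T)$ as a supremum over finite-entropy (equivalently, finite) partitions, and exploiting the fact from proposition~\ref{entropy along a partition} that the normalised entropies $H_n(T,\alpha)/n$ converge, so their limit may be computed along any subsequence.

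For $h(T^r)=rh(T)$ I would prove the two inequalities separately. For the lower bound, given a finite-entropy partition $\alpha$, set $\alpha^{(r)} := \bigvee_{k=0}^{r-1} T^{-k}\alpha$; this again has finite entropy by subadditivity, so it is an admissible competitor for $h(T^r)$. The key is the telescoping identity
$$\bigvee_{j=0}^{m-1} (T^r)^{-j}\alpha^{(r)} = \bigvee_{j=0}^{m-1}\bigvee_{k=0}^{r-1} T^{-(rj+k)}\alpha = \bigvee_{i=0}^{rm-1} T^{-i}\alpha,$$
so that $H\big(\bigvee_{j=0}^{m-1}(T^r)^{-j}\alpha^{(r)}\big) = H_{rm}(T,\alpha)$. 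Dividing by $m$ and letting $m\to+\infty$, the convergence of $H_n(T,\alpha)/n$ gives $h(T^r,\alpha^{(r)}) = r\,h(T,\alpha)$; taking the supremum over $\alpha$ yields $h(T^r)\ge r\,h(T)$.

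For the matching upper bound I would instead start from an arbitrary finite-entropy $\beta$ and compare refinements. Since the indices $rj$ for $0\le j\le m-1$ all lie in $\{0,\dots,rm-1\}$, the partition $\bigvee_{j=0}^{m-1}(T^r)^{-j}\beta$ is coarser than $\bigvee_{i=0}^{rm-1}T^{-i}\beta$, so monotonicity of entropy gives $H\big(\bigvee_{j=0}^{m-1}(T^r)^{-j}\beta\big)\le H_{rm}(T,\beta)$. Dividing by $m$ and passing to the limit yields $h(T^r,\beta)\le r\,h(T,\beta)\le r\,h(T)$, and the supremum over $\beta$ gives $h(T^r)\le r\,h(T)$. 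For the invertible case $h(T^{-1})=h(T)$, the idea is to transport blocks by a measure-preserving power: for finite-entropy $\alpha$, applying the invariance $H(T^{-1}\gamma)=H(\gamma)$ from proposition~\ref{properties of conditional entropy} to $\gamma = \bigvee_{k=0}^{n-1}T^k\alpha$ together with the reindexing $j=n-1-k$ gives $H\big(\bigvee_{k=0}^{n-1}T^k\alpha\big) = H\big(T^{-(n-1)}\bigvee_{k=0}^{n-1}T^k\alpha\big) = H\big(\bigvee_{j=0}^{n-1}T^{-j}\alpha\big) = H_n(T,\alpha)$, whence $h(T^{-1},\alpha)=h(T,\alpha)$ for every $\alpha$, and the conclusion follows by taking suprema.

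All the computations are elementary; there is no deep obstacle here. The only points requiring care are the bookkeeping of the index sets in the two refinement identities — in particular checking that one genuinely has the claimed coarser/finer inclusion in each direction — and the mild observation that $\alpha^{(r)}$ has finite entropy, so that it is legitimately a competitor in the definition of $h(T^r)$.
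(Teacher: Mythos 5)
Your proof is correct and follows essentially the same route as the paper: the same telescoping identity $\bigvee_{j=0}^{m-1}(T^r)^{-j}\alpha^{(r)}=\bigvee_{i=0}^{rm-1}T^{-i}\alpha$ giving the lower bound, the same coarseness comparison giving the upper bound, and the same reindexing by $T^{-(n-1)}$ for the invertible case. The only cosmetic difference is that the paper packages both inequalities into the single chain $H_n(T^r,\alpha)\le H_n(T^r,\alpha\vee\cdots\vee T^{-(r-1)}\alpha)=H_{rn}(T,\alpha)$, whereas you separate them.
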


\begin{proof}
For every $n \ge 1$ and every partition $\alpha$ with finite entropy, 
$$H_n(T^r,\alpha) \le H_n(T^r,\alpha \vee \cdots \vee T^{-(r-1)}\alpha) 
= H_{rn}(T,\alpha).$$
Dividing by $n$ and letting $n$ go to infinity yields 
$$h(T^r,\alpha) \le h(T^r,\alpha \vee \cdots \vee T^{-(r-1)}\alpha) 
= rh(T,\alpha).$$
The inequalities $h(T^r) \le rh(T)$ and $rh(T) \le h(T^r)$ follow.

If $T$ is invertible, the equalities 
$\alpha \vee \cdots \vee T^{-(n-1)}\alpha 
= T^{-(n-1)}(\alpha \vee \cdots \vee T^{{n-1}}\alpha)$
follow from proposition~\ref{properties of conditional entropy} item~1 and
yield $H_n(T,\alpha) = H_n(T^{-1},\alpha)$, so $h(T^{-1}) = h(T)$. 
\end{proof}

\subsection{Generators}

Countable generating partitions help us to compute the entropy 
of invertible measure-preserving maps. 

\begin{defi}
Assume that $T$ is invertible. A countable measurable partition 
$\gamma$ is generating (with regard to $T$) if the partitions 
$(T^k\gamma)_{k \in \zzf}$ generate $\zc$ modulo the null sets. 
\end{defi}

\begin{theo}\label{Kolmogorov - Sinai}(Kolmogorov - Sinai theorem)
If $T$ is invertible and $\gamma$ is a countable generator 
(with regard to $T$), then $h(T) = h(T,\gamma)$. 
\end{theo}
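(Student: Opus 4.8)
The inequality $h(T,\gamma) \le h(T)$ is immediate from the definition of $h(T)$ as a supremum, so the entire content of the theorem is the reverse inequality $h(T) \le h(T,\gamma)$, equivalently $h(T,\alpha) \le h(T,\gamma)$ for every finite-entropy partition $\alpha$. The plan is to compare an arbitrary such $\alpha$ with the refined partitions $\gamma_{-N}^N := \bigvee_{k=-N}^N T^{-k}\gamma$ and then let $N \to +\infty$, exploiting the fact that $\gamma$ is generating.

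First I would establish the elementary comparison lemma: for any two finite-entropy partitions $\alpha$ and $\beta$,
$$h(T,\alpha) \le h(T,\beta) + H(\alpha|\beta).$$
This follows by writing $H_n(T,\alpha) \le H_n(T,\alpha \vee \beta)$, applying the addition formula (item 6 of proposition~\ref{properties of conditional entropy}) to split off the term $H_n(T,\beta)$, and bounding the remaining conditional term $H(\bigvee_{j=0}^{n-1}T^{-j}\alpha \mid \bigvee_{l=0}^{n-1}T^{-l}\beta)$ by $\sum_{j=0}^{n-1} H(T^{-j}\alpha \mid T^{-j}\beta) = nH(\alpha|\beta)$, using subadditivity, the monotonicity of conditional entropy in the conditioning $\sigma$-field (item 4), and the invariance $H(T^{-j}\alpha|T^{-j}\beta)=H(\alpha|\beta)$ (item 1). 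Dividing by $n$ and passing to the limit yields the claim.

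Next I would record the identity $h(T,\gamma_{-N}^N) = h(T,\gamma)$, a direct computation relying on invertibility of $T$: since $\bigvee_{j=0}^{n-1} T^{-j}\gamma_{-N}^N = \bigvee_{i=-N}^{n-1+N} T^{-i}\gamma$, one has $H_n(T,\gamma_{-N}^N) = H_{n+2N}(T,\gamma)$, and dividing by $n$ and letting $n \to +\infty$ leaves the limit unchanged.

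Finally I would combine these with the comparison lemma applied to $\beta = \gamma_{-N}^N$, giving
$$h(T,\alpha) \le h(T,\gamma) + H(\alpha \mid \gamma_{-N}^N).$$
Since $\gamma$ is generating (and $T$ invertible), the $\sigma$-fields $\sigma(\gamma_{-N}^N)$ increase to $\zc$ modulo $\pi$; as $H(\alpha)<+\infty$, the monotone convergence of conditional entropy (part 1 of proposition~\ref{conditional entropy and monotone sequence of sigma-fields}) gives $H(\alpha \mid \gamma_{-N}^N) \to H(\alpha \mid \zc) = 0$, the last equality because $\alpha \subset \zc$ (item 2 of proposition~\ref{properties of conditional entropy}). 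Letting $N \to +\infty$ yields $h(T,\alpha) \le h(T,\gamma)$, and taking the supremum over all finite-entropy $\alpha$ finishes the proof. The only genuinely delicate point is the comparison lemma: the subtlety there is to keep the conditioning $\sigma$-field fixed while splitting the joint entropy of $\bigvee_{j=0}^{n-1}T^{-j}\alpha$, so that the subadditivity and invariance estimates combine to give exactly the factor $nH(\alpha|\beta)$; the remaining steps are routine bookkeeping with the addition formula and the convergence theorem established earlier in the Annex.
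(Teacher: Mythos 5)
Your proof is correct and follows essentially the same route as the paper: the paper obtains this statement as the special case $\bc=\{\emptyset,Z\}$ of the conditional version (theorem~\ref{conditional Kolmogorov - Sinai}), whose proof uses exactly your three ingredients — the Lipschitz comparison $h(T,\alpha)-h(T,\gamma)\le H(\alpha|\gamma)$ (proposition~\ref{continuity}, proved just as you describe), the identity $h(T,\gamma_{-r}^r)=h(T,\gamma)$ via $H_n(T,\gamma_{-r}^r)=H_{n+2r}(T,\gamma)$, and the convergence $H(\alpha|\gamma_{-r}^r)\to H(\alpha|\zc)=0$ from the generating hypothesis. No substantive difference to report.
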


In the next subsection, we will prove a conditional version of this 
classical theorem, namely theorem~\ref{conditional Kolmogorov - Sinai}.  

Here is the basic example of generator. 

\begin{exam}
Let $\Lambda$ be a countable set, $p_0 : (y_k)_{k \in \zzf} \mapsto y_0$ 
the $0$-coordinate projection from $\Lambda^\zzf$ to $\Lambda$, 
$S : (y_k)_{k \in \zzf} \mapsto (y_{k+1})_{k \in \zzf}$ the shift operator 
on $\Lambda^\zzf$, and $\mu$ any shift-invariant probability measure 
on $\Lambda^\zzf$. Then the partition 
$\big\{p_0^{-1}\{\lambda\} : \lambda \in \Lambda\big\}$ 
is generating with regard to $S$.
\end{exam}

The interesting fact is that many situations can be reduced to this 
particular case. The proof of next theorem is outlined 
in~\cite{Kalikow - McCutcheon}.

\begin{theo}(Rohlin's countable generator theorem)
If $(Z,\zc,\pi)$ is a Lebesgue space, $T$ is invertible and 
{\rm aperiodic}, i.e. if $\pi\{z \in Z : \exists n \ge 1 : T^n(z)=z\}=0$,
then $T$ admits a countable generating partition 
$\gamma = \{C_\lambda : \lambda \in \Lambda\}$. Moreover, 
the $\gamma$-name map $\Phi$ from $Z$ to $\Lambda^\zzf$, defined by 
$\Phi(z)_k = \lambda$ whenever $T^k(z) \in C_\lambda$,
is invertible modulo the null sets, when $\Lambda^\zzf$ 
is endowed with the probability measure $\Phi(\pi)$. 
The measure $\Phi(\pi)$ is shift-invariant, so $T$ is isomorphic 
modulo the null sets to the shift operator on $\Lambda^\zzf$.
\end{theo}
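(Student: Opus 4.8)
The plan is to derive $\gamma$ from Rokhlin's tower lemma, which is the only nontrivial input. Recall that lemma: since $T$ is aperiodic on a standard space, for every height $n$ and every $\ep>0$ there is a base $B\in\zc$ with $B,TB,\dots,T^{n-1}B$ pairwise disjoint and $\pi\big(Z\setminus\bigsqcup_{i=0}^{n-1}T^iB\big)<\ep$. First I would reduce the statement to a separation property. Because $(Z,\zc,\pi)$ is a Lebesgue space, $\zc$ is generated modulo $\pi$ by a countable family, so there is a measurable map $z\mapsto(c_i(z))_{i\ge1}$, with each $c_i:Z\to\{0,1\}$, which is injective modulo the null sets. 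A countable partition $\gamma$ is generating exactly when every $c_i$ is measurable with respect to $\bigvee_{k\in\zzf}T^k\gamma$ modulo $\pi$; equivalently, when the $\gamma$-name map $\Phi$ is injective modulo the null sets. Thus it suffices to build a single countable partition whose two-sided orbit recovers each bit $c_i$.

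The construction proceeds through a sequence of Rokhlin towers of heights $n_k\uparrow\infty$ with errors $\ep_k$ chosen so that $\sum_k\ep_k<\infty$. I would design $\gamma$ as a refinement of a distinguished two-block ``marker'' partition: on the base $B_k$ of the $k$-th tower there is ample measure-theoretic room (the restricted space is again nonatomic standard), so one can attach to each point of the base a countable label that simultaneously marks it as a base point, records the level index inside the column, and encodes the values $c_1,\dots,c_k$ read along the whole orbit segment sitting over that base point. Merging these labels over all $k$ into one countable alphabet $\Lambda$ yields $\gamma$ and the associated name map $\Phi:Z\to\Lambda^{\zzf}$, with $\Phi(z)_k$ the label of $T^k z$.

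To see that $\gamma$ generates, I would invoke Borel--Cantelli: since $\sum_k\ep_k<\infty$, almost every $z$ lies, for all large $k$, inside the $k$-th tower and away from its error set. Reading backwards in the stream $\Phi(z)$ to the nearest base marker of stage $k$ locates $z$ in its column and exposes the encoded bits, so one recovers $c_1(z),\dots,c_k(z)$; letting $k\to\infty$ determines $(c_i(z))_{i\ge1}$, hence $z$ itself modulo the null sets. This gives $\bigvee_{k\in\zzf}T^k\gamma=\zc \mod \pi$, which is precisely the generating property and the mod-null injectivity of $\Phi$. Finally the intertwining $S\circ\Phi=\Phi\circ T$ is immediate from the definition of the name map, so $\Phi(\pi)$ is shift-invariant and $\Phi$ conjugates $T$ to the shift on $\Lambda^{\zzf}$ modulo the null sets --- this last step being exactly the Rohlin correspondence already recalled in the footnote of subsection~\ref{factors}.

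The main obstacle is the simultaneous, self-consistent encoding across scales: one must produce a single partition, not a sequence, so that the markers placed at different heights are compatible and the decoding is unambiguous almost everywhere. Concretely, the label stream must be self-synchronizing --- from it alone one must be able to locate the relevant base --- and the stage-$k$ labeling must not overwrite the information placed at earlier stages. Controlling the error sets, where the tower structure fails, and arranging them to be summable so that Borel--Cantelli applies, is the accompanying technical hurdle; once the encoding is set up correctly, everything else is bookkeeping.
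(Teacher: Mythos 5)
The paper itself does not prove this theorem: it only states that the proof is outlined in the book of Kalikow and McCutcheon, so there is no internal argument to compare yours against. Judged on its own, your sketch follows the standard route (Rokhlin tower lemma, a countable separating family $(c_i)_{i\ge 1}$, markers on tower bases, Borel--Cantelli over summable error sets, and the Rokhlin correspondence to pass from injectivity of $\Phi$ modulo null sets to an isomorphism with the shift). These are indeed the right ingredients, and the reductions at the beginning and the end (generating $\Leftrightarrow$ injectivity of the name map mod $\pi$; $S\circ\Phi=\Phi\circ T$ gives shift-invariance of $\Phi(\pi)$) are correct on a Lebesgue space.

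However, the central step is missing, and the one concrete recipe you do give would fail as stated. You propose to ``merge these labels over all $k$ into one countable alphabet,'' where the stage-$k$ label of a base point of the $k$-th tower records its column data and the bits $c_1,\dots,c_k$ along the orbit segment above it. A single point $z$ may belong to the bases $B_k$ of infinitely many towers, so its merged label is an element of an infinite product of finite label sets; that product is uncountable, and $\Lambda$ is no longer a countable alphabet. To repair this one must arrange the towers so that each point carries only a finite amount of label data --- for instance by nesting the bases so that $\bigcap_k B_k$ is null and each point is a base point of only finitely many stages, or by writing the stage-$k$ information only at finitely many designated positions of the stage-$k$ column rather than at every base point --- and one must then verify that the resulting symbol stream is still self-synchronizing, i.e.\ that the stage-$k$ markers can be located from the name alone and that later stages do not overwrite earlier ones. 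You explicitly identify this compatibility problem as ``the main obstacle'' and then set it aside as bookkeeping; it is not bookkeeping, it is the content of the theorem, and without it the argument is a plan rather than a proof.
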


When $T$ is invertible, ergodic and has finite entropy, 
Krieger's theorem ensures the existence of a finite 
generator with size at most $\lfloor 2^{h(T)} \rfloor +1$. 
We do not use this refinement in the present paper. 

Using the remark given in footnote in subsection~\ref{factors}, 
one checks that if $T$ is invertible and $(Z,\zc,\pi)$ is a Lebesgue space, 
then any factor of $T$ admits a countable generating partition.

\subsection{Conditional entropy given a factor. Pinsker's formula}

Assume that $T$ is invertible and that $\bc$ is a factor of $T$.
One may define the entropy of $T$ given $\bc$ as follows.   

\begin{prop}\label{conditional entropy given a factor}
\begin{enumerate}(Definition and formula for $h(T,\alpha|\bc)$)
\item The sequence $(H_n(T,\alpha|\bc))_{n \ge 0}$ defined by
$$H_n(T,\alpha|\bc) = 
H(\alpha \vee T^{-1}\alpha \vee \cdots \vee T^{-(n-1)}\alpha|\bc)$$
is concave. Since $H_0(T,\alpha|\bc) = 0$, the sequence 
$(H_n(T,\alpha|\bc)/n)_{n \ge 1}$ is non-increa\-sing so the limit 
$h(T,\alpha|\bc) = \lim_{n \to +\infty} H_n(T,\alpha|\bc)/n$ 
exists in $[0,+\infty]$. 
\item If $H(\alpha)<+\infty$, then $h(T,\alpha|\bc) 
= H(\alpha | \alpha_1^\infty\vee \bc)$, 
where $\alpha_1^\infty = \bigvee_{k \ge 1} T^{-k}\alpha$ denotes the $\sigma$-field 
generated by the partitions $(T^{-k}\alpha)_{k \ge 1}$. 
\end{enumerate}
\end{prop}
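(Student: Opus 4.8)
The plan is to follow the proof of proposition~\ref{entropy along a partition} line by line, carrying the factor $\bc$ along as extra conditioning and invoking the invariance $T^{-1}\bc = \bc \mod \pi$ at the one point where it is genuinely needed. First I would isolate the increments of the sequence. Applying the addition formula (item~6 of proposition~\ref{properties of conditional entropy}) so as to split $\alpha$ off from $H_{n+1}(T,\alpha|\bc)$ gives
$$H_{n+1}(T,\alpha|\bc) = H(T^{-1}\alpha \vee \cdots \vee T^{-n}\alpha \mid \bc) + H(\alpha \mid \bc \vee T^{-1}\alpha \vee \cdots \vee T^{-n}\alpha).$$
Writing $\beta = \alpha \vee \cdots \vee T^{-(n-1)}\alpha$, the first term on the right is $H(T^{-1}\beta|\bc)$; since $T^{-1}\bc = \bc \mod \pi$, item~1 of proposition~\ref{properties of conditional entropy} yields $H(T^{-1}\beta|\bc) = H(T^{-1}\beta \mid T^{-1}\bc) = H(\beta|\bc) = H_n(T,\alpha|\bc)$. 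Hence
$$H_{n+1}(T,\alpha|\bc) - H_n(T,\alpha|\bc) = H(\alpha \mid \bc \vee T^{-1}\alpha \vee \cdots \vee T^{-n}\alpha).$$

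Next I would read off the concavity. The conditioning $\sigma$-field on the right-hand side is non-decreasing in $n$, so by the monotonicity of conditional entropy (item~4 of proposition~\ref{properties of conditional entropy}) these increments form a non-increasing sequence, which is exactly concavity of $(H_n(T,\alpha|\bc))_{n \ge 0}$. Because $H_0(T,\alpha|\bc) = 0$, a concave sequence vanishing at the origin has non-increasing averages $H_n(T,\alpha|\bc)/n$, so the limit defining $h(T,\alpha|\bc)$ exists in $[0,+\infty]$.

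For the formula in the second part I would assume $H(\alpha)<+\infty$. The $\sigma$-fields $\bc \vee T^{-1}\alpha \vee \cdots \vee T^{-n}\alpha$ increase to $\bc \vee \alpha_1^\infty$, so proposition~\ref{conditional entropy and monotone sequence of sigma-fields} (whose hypothesis $H(\alpha)<+\infty$ is met) shows that the increments converge to $H(\alpha \mid \alpha_1^\infty \vee \bc)$. Ces\`aro's lemma then upgrades this to $H_n(T,\alpha|\bc)/n \to H(\alpha \mid \alpha_1^\infty \vee \bc)$, which is the claimed value of $h(T,\alpha|\bc)$.

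The only step that truly goes beyond the unconditional argument is the simplification $H(T^{-1}\beta|\bc) = H(\beta|\bc)$, and this is where the factor hypothesis $T^{-1}\bc = \bc \mod \pi$ is indispensable: without the invariance of $\bc$ the telescoping of the increments would fail and the sequence need not even be concave. Everything else is a verbatim transcription of the proof of proposition~\ref{entropy along a partition}, so I would expect the careful handling of this invariance to be the sole point demanding attention.
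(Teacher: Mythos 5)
Your proof is correct and follows essentially the same route as the paper: the paper likewise uses $T^{-1}\bc=\bc$ to write $H_n(T,\alpha|\bc)=H(T^{-1}\alpha\vee\cdots\vee T^{-n}\alpha|\bc)$, identifies the increment $H_{n+1}-H_n$ as $H(\alpha\mid \bc\vee T^{-1}\alpha\vee\cdots\vee T^{-n}\alpha)$ via the addition formula, and concludes by monotonicity, proposition~\ref{conditional entropy and monotone sequence of sigma-fields} and Ces\`aro's lemma. Your explicit flagging of where the factor hypothesis enters is exactly the right point of care.
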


\begin{proof}
Since $T^{-1}\bc=\bc$, one has $H_n(T,\alpha|\bc) 
= H (T^{-1}\alpha \vee \cdots \vee T^{-n}\alpha|\bc)$, so
\begin{eqnarray*}
H_{n+1}(T,\alpha|\bc) - H_n(T,\alpha|\bc) 
&=& H(\alpha \vee T^{-1}\alpha \vee \cdots \vee T^{-n}\alpha|\bc) \\
& & \quad -H (T^{-1}\alpha \vee \cdots \vee T^{-n}\alpha|\bc) \\
&=& H(\alpha | \sigma(T^{-1}\alpha \vee \cdots \vee T^{-(n-1)}\alpha \vee \bc). 
\end{eqnarray*}
The statements follow, by  
proposition~\ref{conditional entropy and monotone sequence of sigma-fields} 
and Ces\`aro's lemma.
\end{proof}

\begin{prop}~\label{continuity}
Assume that $T$ is invertible and that $\bc$ is a factor of $T$. 
If $\alpha$ and $\gamma$ are two partitions of $Z$ with finite entropy, then 
$h(T,\alpha|\bc) - h(T,\gamma|\bc) \le H(\alpha | \gamma) \le d(\alpha,\gamma)$.
Therefore, for the pseudo-metric $d$, the map 
$\alpha \mapsto h(T,\alpha|\bc)$ is $1$-Lipschitz.  
\end{prop}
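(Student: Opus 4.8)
The plan is to work from the limit definition of the conditional entropy rate, $h(T,\alpha|\bc) = \lim_{n} H_n(T,\alpha|\bc)/n$ with $H_n(T,\alpha|\bc) = H(\alpha_0^{n-1}|\bc)$ and $\alpha_0^{n-1} = \bigvee_{k=0}^{n-1}T^{-k}\alpha$ (as in proposition~\ref{conditional entropy given a factor}), and to compare the two rates at the finite level $n$ before passing to the limit. The finite-entropy hypotheses on $\alpha$ and $\gamma$ guarantee that every $H_n$ below is finite, so the subtractions make sense.

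First I would join the two $n$-step partitions. Since $\alpha_0^{n-1} \le \alpha_0^{n-1} \vee \gamma_0^{n-1}$, monotonicity and the addition formula (items~5 and~6 of proposition~\ref{properties of conditional entropy}) give
$$H(\alpha_0^{n-1}|\bc) \le H(\alpha_0^{n-1}\vee\gamma_0^{n-1}|\bc) = H(\gamma_0^{n-1}|\bc) + H(\alpha_0^{n-1}|\gamma_0^{n-1}\vee\bc),$$
so that $H_n(T,\alpha|\bc) - H_n(T,\gamma|\bc) \le H(\alpha_0^{n-1}|\gamma_0^{n-1}\vee\bc)$. Then I would bound this last term by $nH(\alpha|\gamma)$: subadditivity (item~6) splits it as $\sum_{k=0}^{n-1} H(T^{-k}\alpha|\gamma_0^{n-1}\vee\bc)$, and for each $0 \le k \le n-1$ the partition $T^{-k}\gamma$ is coarser than $\gamma_0^{n-1}$, so conditioning on the finer $\sigma$-field decreases entropy (item~4) and $H(T^{-k}\alpha|\gamma_0^{n-1}\vee\bc) \le H(T^{-k}\alpha|T^{-k}\gamma)$; finally the $T$-invariance of conditional entropy (item~1) gives $H(T^{-k}\alpha|T^{-k}\gamma)=H(\alpha|\gamma)$. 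Summing over $k$ yields $H(\alpha_0^{n-1}|\gamma_0^{n-1}\vee\bc) \le nH(\alpha|\gamma)$, hence $H_n(T,\alpha|\bc) - H_n(T,\gamma|\bc) \le nH(\alpha|\gamma)$. Dividing by $n$ and letting $n \to \infty$ gives $h(T,\alpha|\bc)-h(T,\gamma|\bc) \le H(\alpha|\gamma)$. The remaining inequality $H(\alpha|\gamma) \le d(\alpha,\gamma)$ is immediate from $d(\alpha,\gamma)=H(\alpha|\gamma)+H(\gamma|\alpha)$ together with $H(\gamma|\alpha)\ge 0$ (item~2). Exchanging the roles of $\alpha$ and $\gamma$ and using the symmetry of $d$ then gives $|h(T,\alpha|\bc)-h(T,\gamma|\bc)| \le d(\alpha,\gamma)$, which is the asserted $1$-Lipschitz property.

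I do not expect a genuine obstacle here: the argument is a routine finite-level entropy estimate, essentially the conditional analogue of the proof that $\alpha \mapsto h(T,\alpha)$ is $1$-Lipschitz. The only point requiring care is getting the direction of the conditioning inequalities right, namely keeping $T^{-k}\gamma$ coarser than the conditioning $\sigma$-field $\gamma_0^{n-1}\vee\bc$ while $T^{-k}\alpha$ is the partition being measured, so that monotonicity in item~4 is applied with the correct sign and each per-step term collapses to the single quantity $H(\alpha|\gamma)$.
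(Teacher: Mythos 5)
Your argument is correct and is essentially identical to the paper's proof: the same finite-level chain $H(\alpha_0^{n-1}|\bc) - H(\gamma_0^{n-1}|\bc) \le H(\alpha_0^{n-1}|\bc\vee\gamma_0^{n-1}) \le \sum_k H(T^{-k}\alpha|T^{-k}\gamma) = nH(\alpha|\gamma)$, followed by dividing by $n$ and passing to the limit. No differences worth noting.
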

 
\begin{proof}
%Let $\alpha$ and $\gamma$ be two partitions of $Z$ with finite entropy.
Set 
$\alpha_0^{n-1} = \alpha \vee T^{-1}\alpha \vee \cdots \vee T^{-(n-1)}\alpha$ and
$\gamma_0^{n-1} = \gamma \vee T^{-1}\gamma \vee \cdots \vee T^{-(n-1)}\gamma$
for every $n \ge 1$. Then
\begin{eqnarray*}
H(\alpha_0^{n-1} |\bc) - H(\gamma_0^{n-1}|\bc) 
&\le& H(\alpha_0^{n-1} \vee \gamma_0^{n-1}|\bc) - H(\gamma_0^{n-1}|\bc) \\
&=& H(\alpha_0^{n-1} | \bc \vee \gamma_0^{n-1}) \\
&\le& \sum_{k=0}^{n-1} H(T^{-k}\alpha | \bc \vee \gamma_0^{n-1}) \\
&\le& \sum_{k=0}^{n-1} H(T^{-k}\alpha | T^{-k}\gamma) \\
&=& nH(\alpha | \gamma).
\end{eqnarray*}
Dividing by $n$ and letting $n$ go to infinity yields
$h(T,\alpha|\bc) - h(T,\gamma|\bc) \le H(\alpha | \gamma) \le d(\alpha,\gamma)$.
The result follows.
\end{proof}

\begin{defi}
The conditional entropy of $T$ given $\bc$ is the quantity 
\begin{eqnarray*}
h(T|\bc) 
&=& \sup\{h(T,\alpha|\bc) : \alpha 
\mathrm{~partition~of~} Z \mathrm{~with~finite~entropy}\} \\
&=& \sup\{h(T,\alpha|\bc) : \alpha 
\mathrm{~finite~measurable~partition~of}~Z\}.
\end{eqnarray*}
These two quantities coincide thanks to propositions~\ref{density} 
and~\ref{continuity}.
\end{defi}

Kolmogorov - Sinai theorem admits the following generalization.

\begin{theo}\label{conditional Kolmogorov - Sinai}
If $\gamma$ is a countable generator of $T$, then 
$h(T|\bc) = h(T,\gamma|\bc)$.
\end{theo}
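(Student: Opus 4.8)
The plan is to prove the two inequalities $h(T|\bc)\le h(T,\gamma|\bc)$ and $h(T,\gamma|\bc)\le h(T|\bc)$ separately, the first being the substantial one. For both, I reduce to finite partitions: by the definition of $h(T|\bc)$ as a supremum of $h(T,\alpha|\bc)$ over all finite measurable partitions $\alpha$, it suffices to bound each such $h(T,\alpha|\bc)$ by $h(T,\gamma|\bc)$. The whole argument mirrors the proof of the unconditional Kolmogorov--Sinai theorem (Theorem~\ref{Kolmogorov - Sinai}), replacing plain entropy by entropy conditioned on the factor $\bc$ and systematically using that $T^{-1}\bc=\bc$, together with items 1 and 5 of Proposition~\ref{properties of conditional entropy} and the defining averages $H_n(T,\cdot|\bc)$ of Proposition~\ref{conditional entropy given a factor}.

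For the main inequality, fix a finite partition $\alpha$ and set $\gamma_{-N}^N:=\bigvee_{k=-N}^N T^{-k}\gamma$. Since $\gamma$ generates $\zc$ modulo $\pi$, the increasing union of algebras $\bigcup_N \sigma(\gamma_{-N}^N)$ is dense in $\zc$ for the pseudo-metric $\delta(A,B)=\pi(A\triangle B)$. Hence, approximating each block of $\alpha$ by a set in this algebra and invoking the continuity-in-the-blocks estimate of Proposition~\ref{continuity w.r.t. the blocks}, I can produce finite partitions $\alpha^{(N)}$ with $\sigma(\alpha^{(N)})\subset\sigma(\gamma_{-N}^N)$ and $d(\alpha,\alpha^{(N)})\to 0$. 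Then the $1$-Lipschitz property of $\alpha\mapsto h(T,\alpha|\bc)$ (Proposition~\ref{continuity}) gives $h(T,\alpha|\bc)\le h(T,\alpha^{(N)}|\bc)+d(\alpha,\alpha^{(N)})$, while refinement monotonicity (item 5 of Proposition~\ref{properties of conditional entropy} applied to every $H_n$ and passed to the limit) yields $h(T,\alpha^{(N)}|\bc)\le h(T,\gamma_{-N}^N|\bc)$.

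The crux is then the identity $h(T,\gamma_{-N}^N|\bc)=h(T,\gamma|\bc)$, valid for every $N$, which I prove in two moves. First, from $\bigvee_{i=0}^{n-1}T^{-i}\big(\bigvee_{l=0}^{2N}T^{-l}\gamma\big)=\bigvee_{s=0}^{n+2N-1}T^{-s}\gamma$ one gets $H_n\big(T,\bigvee_{l=0}^{2N}T^{-l}\gamma\,\big|\,\bc\big)=H_{n+2N}(T,\gamma|\bc)$, so dividing by $n$ and letting $n\to\infty$ gives $h\big(T,\bigvee_{l=0}^{2N}T^{-l}\gamma\,\big|\,\bc\big)=h(T,\gamma|\bc)$ (the computation is valid in $[0,+\infty]$). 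Second, since $T^{-N}\gamma_{-N}^N=\bigvee_{l=0}^{2N}T^{-l}\gamma$ and $T^{-N}\bc=\bc$, item 1 of Proposition~\ref{properties of conditional entropy} gives $H_n(T,T^{-N}\gamma_{-N}^N|\bc)=H_n(T,\gamma_{-N}^N|\bc)$ for all $n$, so these two partitions have the same conditional entropy rate. Combining everything and letting $N\to\infty$ yields $h(T,\alpha|\bc)\le h(T,\gamma|\bc)$, and taking the supremum over finite $\alpha$ gives $h(T|\bc)\le h(T,\gamma|\bc)$.

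The reverse inequality $h(T,\gamma|\bc)\le h(T|\bc)$ is immediate when $H(\gamma)<+\infty$, since $\gamma$ is then itself one of the partitions entering the supremum defining $h(T|\bc)$; this is the relevant case, as a finite-entropy hypothesis on $\gamma$ is genuinely needed for the statement to hold (for instance, with an invariant infinite-entropy coordinate one can have a generator $\gamma$ with $h(T,\gamma|\bc)=+\infty$ while $h(T|\bc)$ stays finite). I expect the main obstacle to be purely bookkeeping in the approximation step, namely arranging that the $\alpha^{(N)}$ are simultaneously \emph{finite}, $\sigma(\gamma_{-N}^N)$-measurable, and $d$-close to $\alpha$; this is exactly where the density of the algebra generated by $\gamma$ and Proposition~\ref{continuity w.r.t. the blocks} are used. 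The entropy-rate identity for $\gamma_{-N}^N$ is then a routine, if index-heavy, computation.
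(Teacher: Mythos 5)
Your proof is correct and shares the paper's central ingredient, namely the stationarity identity $h(T,\gamma_{-r}^r|\bc)=h(T,\gamma|\bc)$, which you establish by exactly the same index computation. Where you diverge is in how you pass from an arbitrary finite partition $\alpha$ to the partitions $\gamma_{-r}^r$. The paper skips the construction of approximating partitions entirely: it applies the inequality of proposition~\ref{continuity} directly to the pair $(\alpha,\gamma_{-r}^r)$ to get $h(T,\alpha|\bc)-h(T,\gamma|\bc)\le H(\alpha\,|\,\gamma_{-r}^r)$, and then observes that $H(\alpha\,|\,\gamma_{-r}^r)\to H(\alpha\,|\,\zc)=0$ by the martingale convergence statement of proposition~\ref{conditional entropy and monotone sequence of sigma-fields}, since $\gamma$ is a generator. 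Your route instead builds finite partitions $\alpha^{(N)}$ measurable for $\sigma(\gamma_{-N}^N)$ with $d(\alpha,\alpha^{(N)})\to 0$ and combines the $1$-Lipschitz property with monotonicity under refinement; this is the classical textbook mechanism and it does work, but it forces you through the bookkeeping you anticipate (disjointifying the approximating sets into a genuine partition and controlling $d$ via proposition~\ref{continuity w.r.t. the blocks}), all of which the paper's one-line use of $H(\alpha\,|\,\gamma_{-r}^r)$ renders unnecessary. Your closing remark is also a genuine observation: the reverse inequality, which the paper passes over with ``the conclusion follows,'' does need $H(\gamma)<+\infty$ so that $\gamma$ enters the supremum defining $h(T|\bc)$ (and so that proposition~\ref{continuity} applies to $\gamma_{-r}^r$ in the first place); this finiteness hypothesis is implicit in the paper's statement, and your example with an invariant infinite-entropy coordinate correctly shows it cannot be dropped.
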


\begin{proof}
For every integers $p \le q$, set 
$$\gamma_p^q = \bigvee_{k=p}^q T^{-k}\gamma.$$
Fix $r \ge 0$. Then for every integer $n \ge 1$, 
$$\frac{1}{n} H_n(T,\gamma_{-r}^r|\bc)
= \frac{1}{n} H(T,\gamma_{-r-n+1}^r|\bc)
= \frac{n+2r}{n} \times \frac{1}{n+2r} H(T,\gamma_{-r-n+1}^r|\bc).$$
Letting $n$ go to infinity yields $h(T,\gamma_{-r}^r|\bc) = h(T,\gamma|\bc)$.
Thus, applying proposition~\ref{continuity} any partition $\alpha$ of $Z$ 
with finite entropy and yo $\gamma_{-r}^r$ yields 
$$h(T,\alpha|\bc) - h(T,\gamma|\bc) \le H(\alpha | \gamma_{-r}^r).$$
But $H(\alpha | \gamma_{-r}^r) \to H(\alpha | \zc) = 0$ as $r \to +\infty$ 
since $\gamma$ is a countable generator of $T$. Hence 
$h(T,\alpha|\bc) \le h(T,\gamma|\bc)$. The conclusion follows. 
% Therefore, $h(T,\alpha|\bc) \le h(T,\gamma|\bc)$ for every finite partition 
% $\alpha$ such that $\alpha \le \gamma_{-r}^r$. But 
% $\gamma$ is a countable generator of $T$ so every finite measurable partition 
% $\alpha$ of $Z$ can be approached by some sequence $(\alpha_r)_{r \ge 0}$ of 
% partitions such that $\alpha_r \le \gamma_{-r}^r$ for every $r$. 
% By propositions~\ref{continuity w.r.t. the blocks},~\ref{density} and 
% ~\ref{continuity}, the inequality $h(T,\alpha) \le h(T,\gamma)$ holds for 
% every countable measurable partition $\alpha$ of $Z$. 
\end{proof}

\begin{prop}\label{Pinsker's formula}(Pinsker's formula)
Assume that $\alpha$ and $\beta$ have finite entropy. 
Let $\ac$ and $\bc$ be two factors generated by $\alpha$ and $\beta$. 
Set $\alpha_1^\infty = \bigvee_{k \ge 1} T^{-k}\alpha$ and 
$\beta_1^\infty = \bigvee_{k \ge 1} T^{-k}\beta$. Then  
$$h((T,\ac)|\bc) = h(T,\ac \vee \bc) - h(T,\bc),$$
or equivalently, 
$$H(\alpha | \alpha_1^\infty\vee \bc) 
= H(\alpha \vee \beta | \alpha_1^\infty\vee \beta_1^\infty) - H(\beta | \beta_1^\infty).$$ 
% \begin{eqnarray*}
% h(T,\ac \vee \bc) - h(T,\bc) 
% &=& H(\alpha \vee \beta | \alpha_1^\infty\vee \beta_1^\infty) 
% - H(\beta | \beta_1^\infty) 
% ~~=~~ h(\alpha \vee \beta,T) - h(\beta,T) \\
% &=& H(\alpha | \alpha_1^\infty\vee \bc) ~~=~~ h(T,\alpha|\bc) ~~
% =~~ h((T,\ac)|\bc).
% \end{eqnarray*}
\end{prop}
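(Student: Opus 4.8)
The plan is to prove the second (purely entropic) identity and to observe that the first formula is merely its translation through the entropy formulas already established. Since $\alpha\vee\beta$ generates $\ac\vee\bc$ and $\beta$ generates $\bc$, the Kolmogorov--Sinai theorem (Theorem~\ref{Kolmogorov - Sinai}) together with Proposition~\ref{entropy along a partition} gives $h(T,\ac\vee\bc)=H(\alpha\vee\beta\mid\alpha_1^\infty\vee\beta_1^\infty)$ and $h(T,\bc)=H(\beta\mid\beta_1^\infty)$. On the other side, applying the conditional Kolmogorov--Sinai theorem (Theorem~\ref{conditional Kolmogorov - Sinai}) to the factor system $(T,\ac)$, whose generator is $\alpha$, and using Proposition~\ref{continuity} to pass from $\alpha_{-r}^r$ to an arbitrary $\delta\subset\ac$ (the conditioning factor $\bc$ entering only through the $1$-Lipschitz bound), yields $h((T,\ac)\mid\bc)=h(T,\alpha\mid\bc)=H(\alpha\mid\alpha_1^\infty\vee\bc)$, with $\bc=\bigvee_{k\in\zzf}T^k\beta$. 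So both displayed formulas reduce to
\[
H(\alpha\mid\alpha_1^\infty\vee\bc)
=H(\alpha\vee\beta\mid\alpha_1^\infty\vee\beta_1^\infty)-H(\beta\mid\beta_1^\infty).
\]

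Next I would pass to the finite-time quantities. Writing $\alpha_0^{n-1}=\bigvee_{k=0}^{n-1}T^{-k}\alpha$ and similarly for $\beta$, the addition formula of Proposition~\ref{properties of conditional entropy} gives $H(\alpha_0^{n-1}\vee\beta_0^{n-1})=H(\beta_0^{n-1})+H(\alpha_0^{n-1}\mid\beta_0^{n-1})$; dividing by $n$, letting $n\to\infty$, and using $H(\alpha\vee\beta)\le H(\alpha)+H(\beta)<+\infty$, the right-hand side above becomes $\lim_n \tfrac1n H(\alpha_0^{n-1}\mid\beta_0^{n-1})$. It therefore suffices to identify this Ces\`aro limit with $H(\alpha\mid\alpha_1^\infty\vee\bc)$.

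To do so I would expand $H(\alpha_0^{n-1}\mid\beta_0^{n-1})$ by the addition formula one copy of $\alpha$ at a time, and use the $T$-invariance of conditional entropy (Proposition~\ref{properties of conditional entropy}, item~1) to rewrite the $k$-th term, after applying $T^k$, as $H(\alpha\mid\alpha_1^{\,n-1-k}\vee\beta_{-k}^{\,n-1-k})$. Setting $\phi(i,L)=H(\alpha\mid\alpha_1^i\vee\beta_{-L}^i)$ and reindexing by $i=n-1-k$, this reads $H(\alpha_0^{n-1}\mid\beta_0^{n-1})=\sum_{i=0}^{n-1}\phi(i,n-1-i)$. The function $\phi$ is non-increasing in each argument (enlarging $i$ grows both $\alpha_1^i$ and the future of $\beta$, enlarging $L$ grows its past), it is bounded below by $L_0:=H(\alpha\mid\alpha_1^\infty\vee\bc)$, and, since $\alpha_1^i\vee\beta_{-i}^i\uparrow\alpha_1^\infty\vee\bc$, Proposition~\ref{conditional entropy and monotone sequence of sigma-fields} gives $\phi(i,i)\downarrow L_0$; hence for $i,L\ge M$ one has $L_0\le\phi(i,L)\le\phi(M,M)$, which is within $\varepsilon$ of $L_0$ once $M$ is large.

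Finally I would run the standard Ces\`aro estimate: in $\sum_{i=0}^{n-1}\phi(i,n-1-i)$ all but at most $2M$ indices satisfy both $i\ge M$ and $n-1-i\ge M$, so those terms lie within $\varepsilon$ of $L_0$, while the at most $2M$ boundary terms stay bounded by $H(\alpha)<+\infty$; dividing by $n$, letting $n\to\infty$ and then $\varepsilon\to0$ gives $\tfrac1n H(\alpha_0^{n-1}\mid\beta_0^{n-1})\to L_0$, which is the required identity. I expect this last step to be the only real obstacle: the entropy-rate formulas naturally produce the \emph{one-sided} conditioning $\beta_1^\infty$, whereas the factor $\bc$ is the \emph{two-sided} tail of $\beta$, and since adjoining the past of $\beta$ changes each individual conditional entropy (though not the rate), the passage from one to the other genuinely requires this averaging argument rather than a direct algebraic use of the addition formula.
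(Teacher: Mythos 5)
Your proposal is correct and takes essentially the same route as the paper: both telescope the difference $H_{n}(T,\alpha\vee\beta)-H_{n}(T,\beta)$ (equivalently $H(\alpha_0^{n-1}\mid\beta_0^{n-1})$) into a sum of terms of the form $H(\alpha\mid\alpha_1^{i}\vee\beta_{-L}^{i})$, then conclude by monotone convergence of conditional entropies toward $H(\alpha\mid\alpha_1^\infty\vee\bc)$ together with a Ces\`aro argument using the uniform bound $H(\alpha)<+\infty$, and finally identify the entropy rates via the (conditional) Kolmogorov--Sinai theorem. Your write-up only differs in making the two-parameter Ces\`aro step more explicit than the paper does.
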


\begin{proof}
For every integers $p \le q$, define the partitions $\alpha_p^q$ and 
$\beta_p^q$ like in the proof above.  
% For every integers $p \le q$, set 
% $$\alpha_p^q = \bigvee_{k=p}^q T^{-k}\alpha \text{ and }
% \beta_p^q = \bigvee_{k=p}^q T^{-k}\beta.$$
Then for every non-negative integer $n$, 
\begin{eqnarray*}
H_{n+1}(T,\alpha \vee \beta) - H_{n+1}(T,\beta) 
&=& H(\alpha_{-n}^0 \vee \beta_{-n}^0) - H(\beta_{-n}^0) \\
&=& \sum_{k=0}^{n} H(T^k\alpha | \alpha_{-(k-1)}^0 \vee \beta_{-n}^0) \\
&=& \sum_{k=0}^n H(\alpha | \alpha_1^k \vee \beta_{k-n}^k).
\end{eqnarray*}
By proposition~\ref{conditional entropy and monotone sequence of sigma-fields}, 
$H(\alpha | \alpha_1^k \vee \beta_{-\ell}^k) \to 
H(\alpha | \alpha_1^\infty \vee \bc)$ as $k \to +\infty$ and $\ell \to +\infty$. 
Since the quantities $H(\alpha | \alpha_1^k \vee \beta_{k-n}^k)$ 
belong to the finite interval $[0,H(\alpha)]$, one deduces that  
$$h(T,\alpha \vee \beta) - h(T,\beta) 
= \lim_{n \to +\infty} \frac{1}{n+1} 
\big( H_{n+1}(T,\alpha \vee \beta) - H_{n+1}(T,\beta) \big)
= H(\alpha | \alpha_1^\infty\vee \bc).$$ 
Hence, the statement follows from proposition~\ref{entropy along a partition}, 
theorem~\ref{Kolmogorov - Sinai},
proposition~\ref{conditional entropy given a factor}
and theorem~\ref{conditional Kolmogorov - Sinai}.
\end{proof}

\section*{Acknowledgements}

I thank A.~Coquio, J.~Brossard, M.~\'Emery, S.~Laurent, J.P.~Thouvenot for their useful remarks and for stimulating conversations.

\end{document}